\documentclass[11pt]{amsart}
\usepackage{amssymb,amsmath,amsfonts,amscd,euscript}
\usepackage[matrix,arrow,curve]{xy}
\usepackage[backref=page]{hyperref}
\usepackage{tikz-cd}
\usepackage{fullpage}


\newcommand{\nc}{\newcommand}
\newtheorem{theoremA}{Theorem}

\numberwithin{equation}{section}

\newtheorem{thm}[equation]{Theorem}
\newtheorem{prop}[equation]{Proposition}
\newtheorem{lem}[equation]{Lemma}
\newtheorem{cor}[equation]{Corollary}
\newtheorem{rem}[equation]{Remark}



\newtheorem{example}[equation]{Example}
\newtheorem{dfn}[equation]{Definition}

\nc{\fB}{\mathfrak{B}}
\nc{\gl}{\mathfrak{gl}}
\nc{\GL}{\mathfrak{GL}}
\nc{\g}{\mathfrak{g}}
\nc{\gh}{\widehat\g}
\nc{\h}{\mathfrak{h}}
\nc{\wfh}{\widehat{\mathfrak{h}}}
\nc{\la}{\lambda}
\nc{\al}{\alpha }
\nc{\be}{\beta }
\nc{\ve}{\varepsilon }
\nc{\om}{\omega }
\nc{\lr}{\text{-}}
\nc{\ta}{\theta}
\nc{\ch}{{\mathop {\rm ch}}}
\nc{\Tr}{{\mathop {\rm Tr}\,}}
\nc{\Id}{{\mathop {\rm Id}}}
\nc{\ad}{{\mathop {\rm ad}}}
\nc{\coker}{{\mathop {\rm coker}}}
\nc{\bra}{\langle}
\nc{\ket}{\rangle}
\nc{\bi}{{\bf i}}
\nc{\pa}{\partial}
\nc{\ld}{\ldots}
\nc{\cd}{\cdots}
\nc{\hk}{\hookrightarrow}
\nc{\T}{\otimes}
\nc{\gr}{\mathrm{gr}}
\nc{\ov}{\overline}

\nc{\msl}{\mathfrak{sl}}
\nc{\mgl}{\mathfrak{gl}}
\nc{\U}{\mathrm U}
\nc{\V}{\EuScript V}
\nc{\cO}{\mathcal{O}}
\nc{\cL}{\mathcal{L}}
\nc{\Res}{\mathrm{Res\ }}

\newcommand{\bC}{{\Bbbk}}

\newcommand{\cD}{{\mathcal D}}

\newcommand{\bZ}{{\mathbb Z}}

\newcommand{\bI}{{\mathbb I}}
\newcommand{\bP}{{\mathbb P}}

\newcommand{\fh}{{\mathfrak h}}

\newcommand{\fs}{{\mathfrak s}}
\newcommand{\fg}{{\mathfrak g}}

\newcommand{\fb}{{\mathfrak b}}

\newcommand{\cF}{{\mathcal{F}}}
\newcommand{\fn}{{\mathfrak n}}

\newcommand{\Hom}{\mathrm{Hom}}
\newcommand{\RHom}{\mathrm{RHom}}
\newcommand{\wt}{\mathrm{wt}}
\newcommand{\qwt}{\mathrm{qwt}}
\renewcommand{\dir}{\mathrm{dir}}

\newcommand{\id}{\mathrm{id}}
\nc{\I}{\mathfrak I}
\nc{\bfI}{\mathbf I}
\newcommand{\fC}{{\mathfrak C}}
\nc{\Q}{\mathfrak Q}
\nc{\fr}{\mathfrak r}
\nc{\W}{\mathbb W}
\nc{\bU}{\mathbb U}

\nc{\Gm}{\mathbb{G}_{m}}
\nc{\bA}{\mathbb A}

\newcommand{\ZZ}{\mathbb{Z}}
\newcommand{\QQ}{\mathbb{Q}}

\newcommand{\cA}{\mathcal{A}}

\newcommand{\de}{\text{-}}

\DeclareMathOperator{\Ext}{Ext}

\newcommand{\ldot}{{\:\raisebox{1.5pt}{\selectfont\text{\circle*{1.5}}}}}
\newcommand{\udot}{{\:\raisebox{4pt}{\selectfont\text{\circle*{1.5}}}}}

\let\leq\leqslant
\let\geq\geqslant
\newcommand{\ttt}{\text{-}}




\usepackage{ulem}
\usepackage{color}


\title
{Peter-Weyl theorem for Iwahori groups and highest weight categories}

\author{Evgeny Feigin}
\address{Evgeny Feigin:\newline
School of Mathematical Sciences, Tel Aviv University, Tel Aviv
69978, Israel
}
\email{evgfeig@gmail.com}

\author[Khoroshkin]{Anton Khoroshkin$\phantom{}^{*}$}
\thanks{$\phantom{}^{*}$Corresponding author}
\address{Anton Khoroshkin: \newline
Department of Mathematics, University of Haifa, Mount Carmel, 3103301, Haifa, Israel
}

\email{khoroshkin@gmail.com}

\author[Makedonskyi]{Ievgen Makedonskyi}
\address{Ievgen Makedonskyi:\newline
Yanqi Lake Beijing Institute of Mathematical Sciences And Applications (BIMSA), 
No. 544, Hefangkou Village, Huaibei Town, Huairou District, Beijing 101408.}
\email{makedonskii\_e@mail.ru}

\author[Orr]{Daniel Orr}
\address{Daniel Orr:\newline  
Department of Mathematics, Virginia Tech, 225 Stanger St., Blacksburg,
VA 24061 USA\newline
Max Planck Institute for Mathematics, Vivatsgasse 7, 53111 Bonn, Germany}
\email{dorr@math.vt.edu}

\begin{document}

\begin{abstract}
We study the algebra of functions on the Iwahori group via the category of graded bounded representations of the Iwahori Lie algebra. In particular, we identify the standard and costandard objects in this category with certain generalized Weyl modules. 
Using this identification we express the characters of the standard and costandard objects in terms of specialized nonsymmetric Macdonald polynomials. 
We also prove that our category of interest admits a generalized highest weight structure (known as stratified structure). We show, more generally,
that such a structure on a category of representations of a Lie algebra implies the Peter-Weyl type theorem for the corresponding algebraic group. In the Iwahori case, standard filtrations of indecomposable projective objects correspond to new ``reciprocal'' Macdonald-type identities. 
\end{abstract}

\maketitle

\setcounter{tocdepth}{1}
\tableofcontents

\setcounter{section}{-1}
\section{Introduction}
The main goal of this paper is to study the algebra of functions on the Iwahori subgroup of an affine Kac-Moody group. The cases of the affine Lie algebras associated with $\mgl_n$ and $\msl_n$ were considered in \cite{FMO2}.
From one hand, the type $A$ case is the most natural starting point due to numerous applications in combinatorics and in classical representation theory
(such as Cauchy identities, Schur polynomials and spaces of functions on matrix spaces). On the other hand, the $\mgl_n$ case, which is the most ``classical'', does not 
completely fit into the realm of affine Kac-Moody Lie algebras. Moreover, the
methods and tools developed in \cite{FMO2} are somewhat special and do not allow one to study the general case. The main new feature of the present paper is the use of categorical methods, extending those of \cite{BBCKhL,CI} to the Iwahori setting. More precisely, we study certain categories of representations of the Iwahori algebra
and describe standard and costandard objects. This turns out to be the key for the description of the space of functions on the Iwahori group. 

Let us describe our results in more detail.
Let $S$ be a simple finite-dimensional algebraic group and $\fs$ be the corresponding Lie algebra. Then the Peter-Weyl theorem  \cite{PW,TY}  describes the bimodule of functions 
$\Bbbk[S]$ as the direct sum $\bigoplus_{\la\in P_+} L_\la\T L_\la^o$, where $P_+$ is the set of dominant integral weights, $L_\la$ is the (left) irreducible $\fs$-module  and $L_\la^o$ is the irreducible right module. A similar (nonsymmetric) problem for $\Bbbk[B]$, where $B\subset S$ is a Borel subgroup, was considered by van der Kallen in \cite{vdK} (see also \cite{BN} for the quantum affine case). 
He proved that there exists a filtration on $\Bbbk[B]$ such that the homogeneous pieces of the associated graded space decompose
as tensor products of Demazure and van der Kallen modules (quotients of the Demazure modules by the sum of all smaller Demazure submodules). The main motivation for this work was to extend van der Kallen's result to the Iwahori subgroup  $\bfI$, which is, of course, the affine analog of the Borel subgroup. More precisely, $\bfI\subset S[[t]]$ is the preimage of $B$ with respect to the $t=0$ specialization map $S[[t]]\to S$.
The bimodule of functions $\Bbbk[S[[t]]]$ on the current group was described in \cite{FKhM}. 

Let $\mathcal{I}$ be the Iwahori Lie algebra.
In order to study the space $\Bbbk[\bfI]$ as an $\mathcal{I}\de \mathcal{I}$ bimodule we need to understand the category of graded representations of $\mathcal{I}$; we note that 
categorical arguments also play crucial role in the van der Kallen approach \cite{vdK} (see also \cite{ChKa}). We start with a more general situation of (possibly infinite-dimensional) graded Lie algebra $\fg$ satisfying certain technical assumptions (see section \ref{progroup}). In particular, $\fg$ is a direct sum of a reductive subalgebra $\fg_0$ and a graded ideal $\fr$. 
In particular, for $\fg=\mathcal{I}$  the reductive subalgebra is $\fg_0=\fh$, the Cartan subalgebra of $\fs$.  We consider the category $\fC$ of graded bounded $\fg$-modules and apply a general categorical result to the description of the space of functions.

More precisely, we introduce the notion of a stratified category, which is an upgrade of the notion of the highest weight category in the graded situation (see \cite{CG,CPS,Kl,LW,Kh} for the foundations and applications of the highest weight categories and \cite{B,BWW} for the graded version in different settings). 
Recall that in \cite{Kh} basic properties of the highest weight categories were derived; in particular, the BGG reciprocity relating certain
multiplicities in projective (injective) and costandard (standard) modules was established (see \cite{BBCKhL,CI,CFK} for the representations of current algebras). In \cite{Kh}, a criterion for the category to be stratified was given. We give the generalization of this criterion (see Theorem \ref{thm::Stratified}).

In order to formulate our first theorem, we need the following notation. Let   
$P_+=P_+(\fg_0)$ be the set of dominant integral weights for the reductive subalgebra $\fg_0\subset\fg$. Then the simple modules in $\fC$ are indexed by $P_+\times\ZZ$. For $(\la,k)\in P_+\times\ZZ$, let $\nabla_{(\la,k)}$ and $\Delta_{(\la,k)}$ be costandard and standard objects, respectively. Often we only need to work with $\nabla_\la := \nabla_{(\la,0)}$ and $\Delta_\la=\Delta_{(\la,0)}$. We denote by $\cA_\la$ the 
algebra of endomorphisms of $\nabla_\la$, which we assume to coincide with the algebra of endomorphisms of the right module $(\Delta_\la^\vee)^o$, where $\vee$ denotes the restricted dual and $o$ denotes twisting by the anti-automorphism $x\mapsto-x$ of $\fg$.

\begin{theoremA}(Theorem \ref{FT})
\label{thmA}
If the category $\fC$ of graded representations of a Lie algebra $\fg$ (satisfying technical assumptions from section \ref{progroup}) is stratified, then the algebra of functions $\Bbbk[G]$ on the corresponding simply-connected algebraic group $G$ admits a filtration of $G$--$G$ bimodules such that 
\[
{\rm gr}\ \! \Bbbk[G] \simeq \bigoplus_{\la\in P_+} \nabla_\la\T_{\cA_\la} (\Delta_\la^\vee)^o.
\]
\end{theoremA}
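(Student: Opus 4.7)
My plan is to use the stratified structure on $\fC$ to induce a filtration on $\Bbbk[G]$ via matrix coefficients, and then identify each graded piece using the BGG-type reciprocity built into the stratified category (cf.\ Theorem \ref{thm::Stratified}). The starting point is the standard realization of $\Bbbk[G]$ as an inductive limit of finite-dimensional bounded graded $\fg$-bimodules $V \T V^\vee$, together with the matrix coefficient map $v \T \phi \mapsto (g \mapsto \langle \phi, g v\rangle)$. In the semisimple case this recovers the classical Peter-Weyl decomposition $\Bbbk[G] \cong \bigoplus_\la L_\la \T L_\la^o$; for a general stratified $\fC$, the direct sum must be replaced by a filtration because composition series no longer split.

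Next I would define $F_{\leq \la} \Bbbk[G]$ to be the subbimodule spanned by matrix coefficients of objects in the Serre subcategory $\fC_{\leq \la} \subset \fC$ generated by the simples $L_{(\mu,k)}$ with $\mu \leq \la$ in the stratification order. This gives a manifestly $G$-$G$-stable filtration, and the task reduces to identifying $\gr_\la \Bbbk[G]$. The natural candidate is the map $m_\la : \nabla_\la \T_\Bbbk (\Delta_\la^\vee)^o \to \gr_\la \Bbbk[G]$ obtained by pairing vectors in $\nabla_\la$ against functionals in $\Delta_\la^\vee$; using $\cA_\la = \mathrm{End}_\fg(\nabla_\la)$ together with the hypothesized identification $\cA_\la \cong \mathrm{End}_\fg((\Delta_\la^\vee)^o)^{\mathrm{op}}$, one checks that $m_\la$ is $\cA_\la$-balanced and so descends to a map from $\nabla_\la \T_{\cA_\la} (\Delta_\la^\vee)^o$.

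Surjectivity of the descended map should follow from the defining property of a stratified category: every object of $\fC_{\leq \la}$ admits a filtration whose associated graded either lies in $\fC_{<\la}$ or is a sum of copies of $\nabla_{(\la,k)}$'s, so matrix coefficients modulo $F_{<\la} \Bbbk[G]$ come from $\nabla_\la$. The main obstacle, where I expect the bulk of the work to lie, is proving injectivity: a priori, distinct elements of the $\cA_\la$-tensor product could yield coincident matrix coefficients. To handle this, I would dualize and study a projective-injective bimodule generator whose $\Delta$-filtrations on one side and $\nabla$-filtrations on the other have multiplicities explicitly controlled by BGG reciprocity (part of Theorem \ref{thm::Stratified}); the computation of $\gr_\la$ then reduces to $\Hom$-spaces that become block-diagonal by the orthogonality $\Hom_\fg(\Delta_\mu, \nabla_\la) = \delta_{\mu,\la}\,\cA_\la$. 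The appearance of $\T_{\cA_\la}$ is forced precisely by the need to identify the $\cA_\la$-action on $\nabla_\la$ (by right multiplication of endomorphisms) with the $\cA_\la$-action on $(\Delta_\la^\vee)^o$ (by left multiplication), so that matrix coefficients related by $\cA_\la$ are collapsed to the same function on $G$.
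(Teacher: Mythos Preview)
Your overall strategy---filter $\Bbbk[G]$ by the stratification order and use BGG reciprocity to identify the associated graded---is correct and is what the paper does. However, the implementation via matrix coefficients has a genuine gap, and the paper's route is different in an essential way.

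The problem is your map $m_\la : \nabla_\la \otimes_\Bbbk (\Delta_\la^\vee)^o \to \gr_\la \Bbbk[G]$. Matrix coefficients of a module $V$ naturally produce a map $V \otimes (V^\vee)^o \to \Bbbk[G]$, pairing $V$ with its own dual. But $(\Delta_\la^\vee)^o$ is \emph{not} the dual of $\nabla_\la$: in general $\nabla_\la$ and $\Delta_\la$ are different modules (indeed $\nabla_\la^\vee \cong \Delta_{-w_0\la}$ in the paper's conventions, not $\Delta_\la$). So ``pairing vectors in $\nabla_\la$ against functionals in $\Delta_\la^\vee$'' is not a priori meaningful, and there is no obvious matrix-coefficient construction with source $\nabla_\la \otimes (\Delta_\la^\vee)^o$. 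Your $\cA_\la$-balancedness argument presupposes this map exists.

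The paper avoids this by going in the \emph{opposite} direction. First, it uses the injective decomposition $\Bbbk[G] \simeq \bigoplus_\mu \mathbb{I}_\mu \otimes (L_{-w_0\mu})^o$ (Proposition~\ref{moduleFunctions Decomposition}), coming from Peter--Weyl for $G_0$ plus coinduction. Applying $\imath_{\leq\la}^!$ and BGG reciprocity then identifies $\gr_\la\Bbbk[G]$ as a \emph{left} module with $\nabla_\la \otimes_\Bbbk \overline\Delta_\la^\vee$, where the second factor is just a multiplicity space. Second---and this is the step replacing your balancedness argument---the paper characterizes $\mathbb{T}_\la = \nabla_\la \otimes_{\cA_\la} (\Delta_\la^\vee)^o$ by a universal property as a cocyclic bimodule: cocyclic of bi-weight $(\la,\la)$, all bi-weights $\preceq\la$, and the left and right $\U(\overline\fg)$-actions on the $(\la,\la)$-weight space coincide. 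The last condition is checked for $\gr_\la\Bbbk[G]$ directly (Lemma preceding Theorem~\ref{FT}), using that $\overline\fg$ is abelian and that conjugation by $\overline G\subset R$ fixes $\Bbbk[G/R]$; this is where assumption~\eqref{abelian} is used. The universal property then gives an embedding $\gr_\la\Bbbk[G] \hookrightarrow \mathbb{T}_\la$, and character comparison (from the left-module identification) forces equality.

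In short: you need a map, and none is available in your direction; the paper instead produces an embedding the other way via a universal property, and closes the argument by character count.
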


Now let us focus on the case of the Iwahori algebra. In this case, we are able to identify standard and costandard objects
with certain cyclic $\mathcal{I}$-modules  \cite{FeMa,FKM,Ka} by an explicit presentation; these are the generalized Weyl modules with characteristics. (We note that Weyl modules have proved to be very useful in various problems of geometric representation theory; see \cite{BF,FKhM}.) 
This identification allows us to compute the characters of the modules $\Delta_\la$ and $\nabla_\la$ in terms of the nonsymmetric Macdonald polynomials (\cite{Ch1,Ch2,OS,FMO1,I}). Here we note that the set $P_+$ coincides with the full weight lattice of $\fs$, i.e., $P_+(\fg_0)=P(\fs)$.

We use also the proper standard and proper costandard modules
$\overline\Delta_\la ,\overline\nabla_\la$ which are ``local'' versions of $\Delta_\la ,\nabla_\la$. The characters of proper modules differ from the characters of $\Delta_\la$ and $\nabla_\la$ by the character of their endomorphism algebras. 
We note that a crucial step for checking that a category is stratified
is the computation of Ext spaces between (proper) standard and costandard objects. In the simply laced case this computation can be
found in \cite{FKM}, see also \cite{ChKa}. Our approach works in the general situation.

\begin{theoremA}
\label{thmB}
Let $\fC$ be the category of graded representations of the Iwahori algebra. Then:
\begin{enumerate}
\item (Theorem \ref{DUStCost})
$\overline\Delta_\la$ and $\overline\nabla_\la^\vee$ are isomorphic to generalized Weyl modules (with characteristics).
\item   (Theorem \ref{Dcharacter}, Proposition \ref{UUo}) One has the character identities
\[
\ch\,\overline\Delta_\la = E_\la(x,q,0),\qquad
\ch\,\overline\nabla_\la^o = E_\la(x,q^{-1},\infty),
\]
where $E_\la(x,q,t)$ are the non-symmetric Macdonald polynomials.
\item (Theorem \ref{TheoremStr}) $\fC$ is stratified.
\end{enumerate}   
\end{theoremA}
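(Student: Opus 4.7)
The plan is to handle the three statements in the natural order (1), (2), (3), because the later parts will feed on the explicit presentations obtained in (1). The overall strategy is to identify the proper standard and the restricted dual of the proper costandard objects in $\fC$ with concretely presented generalized Weyl modules, and then to exploit this identification both to compute characters via nonsymmetric Macdonald polynomials and to verify the Ext-vanishing needed for the stratification criterion of Theorem~\ref{thm::Stratified}.

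For part (1), I would proceed by matching universal properties. The proper standard $\overline\Delta_\la$ is, by definition, the maximal quotient of the induced/universal highest weight object attached to $\la$ whose weights (with respect to $\fh$ and the grading) lie below $\la$ and which is generated by a single weight vector of weight $\la$ with trivial endomorphism algebra. On the other side, a generalized Weyl module with characteristics is defined by explicit generators and Serre-type relations involving the Iwahori raising operators. First I would check that the relations defining the generalized Weyl module are forced on any cyclic $\mathcal{I}$-module whose weights obey the required inequalities, giving a canonical surjection from the generalized Weyl module onto $\overline\Delta_\la$. Next I would show the reverse direction by producing a nonzero map from $\overline\Delta_\la$ (or by producing enough linearly independent vectors inside the Weyl module) to force dimensional agreement in each weight space; the character identity in part (2), coming from known results on generalized Weyl modules, will a posteriori confirm that no further relations are hidden. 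The argument for $\overline\nabla_\la^\vee$ is analogous after dualizing and twisting by the anti-involution $o$.

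Part (2) should then be a direct consequence of part (1) combined with the existing character formulas for generalized Weyl modules (with characteristics) in terms of specialized nonsymmetric Macdonald polynomials; these are exactly the specializations $t=0$ and $t=\infty$ of $E_\la(x,q,t)$ appearing in the statement, by the results of Cherednik, Ion, Orr–Shimozono, and Feigin–Makedonskyi–Orr cited in the excerpt. The only work here is bookkeeping: matching the grading conventions and the action of the twisting $o$ on characters, and verifying that the inversion $q\mapsto q^{-1}$ in the second identity corresponds to reversing the grading when passing to $\overline\nabla_\la^o$.

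Part (3) is the conceptual heart. I would apply the generalized criterion of Theorem~\ref{thm::Stratified}, which reduces stratifiedness to the vanishing of $\Ext^{>0}_\fC(\overline\Delta_\mu,\overline\nabla_\la)$ for all pairs $(\la,\mu)$ (and finiteness/good-filtration properties that come for free once the presentations are in hand). Using the explicit Weyl-module presentation from part (1), I would compute these Ext groups by resolving $\overline\Delta_\mu$ via its defining relations (or by resolving from the costandard side using the anti-involution to swap standards and costandards) and reading off the vanishing from the character identity in part (2): any potential obstruction class would produce extra weight-space dimensions incompatible with the Macdonald character. The main obstacle will be this Ext-vanishing step in full generality, because in the non-simply-laced case one cannot import the computation from \cite{FKM,ChKa} directly, and the interaction of short and long roots forces a more delicate analysis of the Iwahori raising relations; the payoff, however, is that once the vanishing holds, the criterion of Theorem~\ref{thm::Stratified} delivers the stratified structure and, in combination with Theorem~\ref{thmA}, the desired Peter–Weyl decomposition for $\Bbbk[\bfI]$.
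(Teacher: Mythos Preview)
Your plan for part~(1) is close to the paper's, but the logical dependence between (1) and (2) is inverted. In the paper, the character formulas are proved \emph{first} for the explicitly presented modules $D_\la$ and $U_{-\la}$ (Theorem~\ref{Dcharacter}, via an identification of $D_\la$ with a generalized Weyl module with characteristics and the quantum alcove path formula of \cite{OS}; the $U_{-\la}$ case is already in \cite{FKM}). These character formulas are then used as input to show that $D_\la$ and $U_{-\la}^\vee$ lie in $\fC^\star_{\preceq\la}$, after which the universal property of $\overline\Delta_\la$ is verified directly: the weight constraint $\preceq\la$ forces the defining relations of $D_\la$ on any highest-weight vector, so $D_\la$ maps to every such $M$. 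No separate dimension-matching step is needed.

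The real gap is in part~(3). You invoke Theorem~\ref{thm::Stratified} and propose to verify $\Ext^{>0}(\overline\Delta_\mu,\overline\nabla_\la)=0$ by resolving $\overline\Delta_\mu$. This is both the wrong criterion and the wrong method. First, the relevant condition in Theorem~\ref{thm::Stratified} is $\Ext^{>0}(\Delta_\la,\overline\nabla_\mu)=0$ (standard, not proper standard); the vanishing between \emph{proper} objects is a \emph{consequence} of stratification (Proposition~\ref{prp::Ext::propers}), not a criterion for it. Second, and more importantly, the paper does not compute any Ext groups directly. Instead it applies the character-theoretic criterion Theorem~\ref{thm::characters::stratified}: one needs only that $([\Delta_\la],[\overline\nabla_\mu])=\delta_{\la\mu}$ in $K_0$, which follows from the orthogonality \eqref{EOrth0} of specialized Macdonald polynomials under the Cherednik pairing together with Lemma~\ref{ExtCherednik} identifying that pairing with the Euler characteristic of $\Ext$. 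The remaining hypothesis of Theorem~\ref{thm::characters::stratified}, that $[\bP_\la]$ expands in $[\Delta_\mu]$'s, is supplied by the character identity of Corollary~\ref{CorPIntoE}. This completely bypasses the ``main obstacle'' you anticipate; your proposed direct Ext computation would be substantially harder and is not what the paper does.
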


Note that the character formula for the generalized Weyl module which is isomorphic to $\overline\nabla_\la$ was proven in \cite{FKM}, and the character formula for $\overline\Delta_\la$ in types $ADE$ was proven in \cite{Sa,I} (in these types they are isomorphic to affine Demazure modules). 

The existence of a filtration on $\Bbbk[\bfI]$ such that the
homogeneous pieces are isomorphic to the tensor products over the highest weight algebras of certain 
generalized Weyl modules was conjectured in \cite[Conjecture 1]{FMO2}. Our Theorems~\ref{thmA} and \ref{thmB} settle this conjecture. We also prove another conjecture from \cite{FMO2}. 
Namely, let $\bP_\la$ and $L_\la$ for $\la\in P(\fs)$ be the indecomposable projective and simple representations of the Iwahori algebra (with minimal degree $0$). The following theorem can be understood as the equality of two different expressions for the quadratic ``Casimir operator'' and settles \cite[Conjecture 2]{FMO2}; we note that the equality requires genuinely infinite linear combinations, unlike the Cauchy kernel for symmetric functions (cf. \cite[Ch. I, (4.6)]{M1}).
\begin{theoremA} (Theorem \ref{conj})
\label{thmC}
One has $\sum_{\la\in P(\fs)} \ch(\bP_\la \otimes L_\la) = \sum_{\la\in P(\fs)} \ch(\Delta_\la\otimes \overline\nabla_\la^o)$, where both sides are well-defined series in $q$ and (multivariables) $x$ and $y$ and are equal to the graded bicharacter of $\Bbbk[\bfI]^\vee$. More concretely,
\begin{equation}
\label{IntroBiCharacter}
\frac{\sum_{\la\in P(\fs)} x^\la y^\la}{(q;q)_\infty^{\mathrm{rk}(\fs)}\prod_{\al\in\Delta_+} (1-x^\al)\prod_{\al\in\Delta} (qx^\al;q)_\infty}=\\
\sum_{\lambda\in P(\fs)} 
(q)_\la^{-1} E_{\lambda}(x,q,0)E_{\lambda}(y, q^{-1},\infty)
\end{equation}
where $(z;q)_\infty=(1-z)(1-qz)(1-q^2z)\dotsm$ and $(q)_\la$ is defined by \eqref{Norm}.
\end{theoremA}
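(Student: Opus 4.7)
The plan is to compute the graded bicharacter of $\Bbbk[\bfI]^\vee$ directly and then match it against each of the two sums in the theorem via the stratified structure of $\fC$ established in Theorem \ref{thmB}(3). The explicit rational form in \eqref{IntroBiCharacter} follows from the Macdonald polynomial character formulas of Theorem \ref{thmB}(2).

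For the direct computation, I would identify $\Bbbk[\bfI]^\vee$, as a graded bimodule, with $\bigoplus_{\la\in P(\fs)} \bP_\la \otimes L_\la$. In the graded setting this is the standard Peter-Weyl type statement that the continuous dual of the coordinate ring of a pro-algebraic group splits, as a left module, into indecomposable projective covers $\bP_\la$ with multiplicity $\dim L_\la = 1$, with the right action reassembling the simple factor. Since each $L_\la$ is one-dimensional of weight $\la$, its contribution to the bicharacter is $y^\la$, so the bicharacter equals $\sum_\la \ch(\bP_\la)\, y^\la = \sum_\la \ch(\bP_\la \otimes L_\la)$. A parallel direct PBW computation from the $\fh$-weight and degree decomposition of $\mathcal{I}=\fb\oplus t\fs[[t]]$ produces the rational expression on the left-hand side of \eqref{IntroBiCharacter}: the denominator $(q;q)_\infty^{\mathrm{rk}(\fs)}\prod_{\al\in\Delta_+}(1-x^\al)\prod_{\al\in\Delta}(qx^\al;q)_\infty$ is precisely $\ch(U(\mathcal{I}))^{-1}$, while the numerator $\sum_\la x^\la y^\la$ records the diagonal Cartan twist between the two sides of the bimodule.

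For the equality with the second sum, I would apply BGG reciprocity in the stratified category $\fC$. By Theorem \ref{thmB}(3) and the general criterion of Theorem \ref{thm::Stratified}, the graded multiplicities in the standard filtration of $\bP_\la$ satisfy $(\bP_\la:\Delta_\mu)=[\overline\nabla_\mu:L_\la]$. Substituting $\ch(\bP_\la)=\sum_\mu(\bP_\la:\Delta_\mu)\,\ch(\Delta_\mu)$ into $\sum_\la\ch(\bP_\la\otimes L_\la)$ and exchanging the order of summation, the inner sum $\sum_\la[\overline\nabla_\mu:L_\la]\,y^\la$ reassembles into $\ch(\overline\nabla_\mu^o)$, yielding $\sum_\mu\ch(\Delta_\mu\otimes\overline\nabla_\mu^o)$. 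Inserting the characters from Theorem \ref{thmB}(2) — namely $\ch(\overline\nabla_\la^o)=E_\la(y,q^{-1},\infty)$ and $\ch(\Delta_\la)=\ch(\cA_\la)\,E_\la(x,q,0)$, the latter following from freeness of $\Delta_\la$ over its endomorphism algebra $\cA_\la$ with cocharacter $\overline\Delta_\la$ — and identifying $\ch(\cA_\la)=(q)_\la^{-1}$ via the normalization \eqref{Norm}, one recovers the Macdonald-polynomial form $\sum_\la(q)_\la^{-1}E_\la(x,q,0)E_\la(y,q^{-1},\infty)$.

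The main obstacle will be the careful bookkeeping of variables and twists: matching the restricted dual $\vee$, the anti-automorphism twist $o$, the substitution $q\leftrightarrow q^{-1}$, and the specializations $t=0$ vs.\ $t=\infty$ in the Macdonald polynomials; and simultaneously verifying that all series converge in the topology coming from the bigrading (the $\fh$-weight spaces of $\Bbbk[\bfI]^\vee$ are infinite-dimensional, but become finite after fixing the $q$-grade). A secondary subtlety is the precise statement of the projective-simple decomposition $\Bbbk[\bfI]^\vee\cong\bigoplus_\la\bP_\la\otimes L_\la$ in the pro-algebraic Iwahori setting, which can alternatively be deduced from Theorem \ref{thmA} by identifying the associated graded $\bigoplus_\la\nabla_\la\otimes_{\cA_\la}(\Delta_\la^\vee)^o$ with $\bigoplus_\la\Delta_\la\otimes\overline\nabla_\la^o$ at the level of characters, using freeness of $\cA_\la$ on $\nabla_\la$.
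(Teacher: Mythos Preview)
Your argument is circular within the paper's logical structure. You propose to deduce Theorem~C from Theorem~B(3) (stratification of $\fC$) via BGG reciprocity, but the paper proves Theorem~B(3) \emph{using} Theorem~C: the character identity \eqref{BiCharacterEquality} (more precisely, its one-variable consequence Corollary~\ref{CorPIntoE}) is exactly what supplies condition~(ii) of Theorem~\ref{thm::characters::stratified}, namely the expression of $[\bP_\la]$ as a linear combination of $[\Delta_\mu]$'s in $K_0(\fC^\star)$ (see Corollary~\ref{ProjStand} and the proof of Theorem~\ref{TheoremStr}). Thus in the paper, Theorem~C is logically prior to stratification, not a consequence of it.

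The paper's actual proof of Theorem~C is purely analytic and uses no categorical input: one first establishes convergence of the right-hand side in $(\ZZ[[P\times P]])[[q]]$ by a direct estimate on the $q$-degrees appearing in $E_\mu(x,q,0)$ and $E_\mu(y^{-1},q^{-1},\infty)$ (Proposition~\ref{PropWellDef}), and then proves the coefficient-of-$y^\la$ identity $\ch\,\bP_\la=\sum_\mu m_{\la,\mu}(q)(q)_\mu^{-1}E_\mu(x,q,0)$ by pairing the difference against each $E_\mu(x^{-1},q^{-1},\infty)$ in the specialized Cherednik inner product $\langle\,,\,\rangle_0$ and invoking the orthogonality~\eqref{EOrth0} (Corollary~\ref{CorPIntoE}). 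Your BGG-reciprocity route would be a legitimate alternative \emph{if} stratification were established independently (e.g., via direct $\Ext$-vanishing computations, which are available in simply-laced types), but the whole point of the paper's approach is that the character-theoretic argument for Theorem~C works uniformly in all types and then \emph{feeds into} the criterion of Theorem~\ref{thm::characters::stratified} to obtain stratification without such computations.
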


Taking the coefficient of $y^\la$ on both sides of \eqref{IntroBiCharacter}, we arrive at ``reciprocal'' Macdonald-type identities (see \eqref{PIntoE} and \eqref{ProjIntoSt2}) expressing each indecomposable projective character $\ch\,\bP_\la$ as an infinite linear combination $\sum_{\mu} m_{\la\mu}(q)\ch\,\Delta_\mu$ of characters of standard objects with non-negative coefficients $m_{\la\mu}(q)\in\ZZ_{\ge 0}[q]$. Our work gives the following categorical explanation for this non-negativity: the coefficient $m_{\la\mu}(q)$ is exactly the graded multiplicity $\Delta_\mu$ in an excellent $\Delta$-filtration of $\bP_\la$. That such filtrations exist is a consequence of the stratified structure. 
We note that, by BGG reciprocity (see Corollary~\ref{CorBGG}), $m_{\la\mu}(q)$ is also the graded multiplicity of the simple $L_\la$ in the proper costandard object $\overline\nabla_\mu$.

In type $A$, the identity \eqref{IntroBiCharacter} is essentially the $t=0$ specialization of the Cauchy-type identity due to Mimachi-Noumi \cite{MN}, which was the starting point of our approach in \cite{FMO2}. Our Theorem C gives a new proof of the $t=0$ specialized Mimachi-Noumi identity and extends it to arbitrary types. Specializing \eqref{IntroBiCharacter} to $q=0$, one arrives at a nonsymmetric Cauchy-type identity which is the combinatorial shadow of van der Kallen's result \cite{vdK} mentioned above; here we note that the specialized polynomials $E_\la(x,0,0)$ and $E_\la(x,\infty,\infty)$ coincide with the (finite-dimensional) Demazure characters and Demazure atoms, respectively. In type $A$, the $q=t=0$ specialization of \eqref{IntroBiCharacter} (or the Mimachi-Noumi formula) gives a special case of Lascoux's nonsymmetric Cauchy identities \cite{L}, which have been studied in several works including \cite{ChKw,AE,AGL}. Our work incorporates the parameter $q$ and gives categorical meaning to the identity \eqref{IntroBiCharacter} in arbitrary type. We anticipate a crystal-theoretic interpretation of \eqref{IntroBiCharacter} along the lines of \cite{IMS,KL} but do not take this approach in the present work.

This paper is organized as follows. In Section \ref{HWC} we develop the theory of the stratified categories.
In Section \ref{BimoduleFunctions} we study the bimodule of functions on an algebraic group satisfying certain assumptions that are outlined in Subsection~\ref{progroup} and prove Theorem~\ref{thmA}.
In Section \ref{MacdonaldPolynomials} we collect basics on the nonsymmetric Macdonald polynomials and related combinatorics. 
In Section \ref{Iwahori} we study certain modules and bimodules over the Iwahori algebra, and in Section \ref{IwahoriStratified} we prove that categories of representations defined in \S\ref{progroup} for the Iwahori algebra are stratified, leading to proofs of Theorems~\ref{thmB} and \ref{thmC}.

\subsection*{Acknowledgments}
We thank Syu Kato and Satoshi Naito for their helpful discussions. 
The work of Ie. M. partially supported by the International Scientists Program of the Beijing Natural Sciences
Foundation (grant number IS24002).

\section{Highest weight categories and stratified categories}\label{HWC}

The goal of this section is to review the homological framework that will be used in the main body of the text for the category of representations of the Iwahori Lie algebra.  
Most of this material is standard, but we include it here to make the text more self-contained. The discussion below begins with an arbitrary abelian category \( \mathcal{O} \). However, since the main focus of this paper is on categories of representations of various algebras, we refer to the objects of \( \mathcal{O} \) as modules.  

For our applications, we need to consider two dual situations. In the first case, we work with an abelian category that has enough projective objects, while in the second case, the category has enough injective objects.  
We first formulate all the relevant results, providing full proofs in the case of projective modules, and then outline the corresponding statements for the dual "injective" case.

\subsection{Ordering irreducibles and Serre subcategories}

Let $\Theta$ be a lower finite partially ordered set (i.e. for any $\mu\in\Theta$ there are finitely many elements smaller than $\mu$). Let $\cO$ be an abelian $\Bbbk$-linear category
with simple objects $L_{\lambda}$ labeled by elements $\lambda$ of a set $\Lambda$ 
(in all applications we work over algebraically closed field  $\Bbbk$ of characteristic zero). 

\begin{dfn}\label{Ordered:Category}
We call an abelian category  $\cO$ $\Theta$-ordered if 
\begin{itemize}
\item $\cO$ enjoys the Krull-Schmidt property;
\item the multiplicity $[M:L_{\lambda}]$ of each simple object $L_{\lambda}$ in any object $M$ of $\cO$ is finite;
\item $\cO$ is equipped with the map $\rho:\Lambda\to \Theta$.
\end{itemize}
\end{dfn}
For each $\lambda\in\Lambda$ we define the Serre subcategory $\cO_{\leq\lambda}$ in $\cO$ to be a full subcategory generated by  irreducibles $L_\mu$ with $\rho(\mu)\leq\rho(\lambda)$:
$$
M\in \cO_{\leq\lambda} \Leftrightarrow [M:L_{\mu}]=0 \text{ whenever } \rho(\mu)\not\leq\rho(\lambda).
$$
The fully faithful embedding $\imath_{\leq\lambda}:\cO_{\leq\lambda} \rightarrow \cO$ is exact and admits a left adjoint functor $\imath_{\leq\lambda}^*$ and the right adjoint functor $\imath_{\leq\lambda}^!$:
\begin{equation}\label{diagram}
\begin{tikzcd}
\cO_{\leq\lambda}
\arrow[rrr,"\imath_{\leq\lambda}"description,"\perp" near start,"\perp"' near start]
&&&
\cO
\arrow[lll,shift left=2ex,"\imath_{\leq\lambda}^!(M):=M_{\leq\lambda}"]
\arrow[lll,shift right=2ex,"\imath_{\leq\lambda}^*(M):=M/((M)_{\not\leq\lambda})"']
\end{tikzcd}
\end{equation}
where $\imath_{\leq\lambda}^*(M)$ is the maximal quotient module of $M$ that belongs to the subcategory $\cO_{\leq\lambda}$ and $\imath_{\leq\lambda}^{!}M$ is the maximal submodule of $M$ that belongs to $\cO_{\leq\lambda}$.

Similarly, for any finite subset $S\subset\Lambda$ there exists a Serre subcategory $\cO_{\leq S}$, an exact fully faithful embedding $\imath_{\leq S}:\cO_{\leq S}\to \cO$, and its adjoints $\imath_{\leq S}^{*},\imath_{\leq S}^{!}$. 
For each simple object $L_\mu$ showing up in the Krull-Schmidt decomposition of $M\in\cO_{\leq S}$ there exists $s\in S$ such that $\rho(\mu)\leq\rho(s)$.

\begin{rem}
\label{remark::filtration}
Note that for $M \in \cO$ we have two filtrations indexed by $\Theta$: the increasing filtration $\imath_{\leq\lambda}^!(M):=M_{\leq\lambda}$ and the decreasing filtration $((M)_{\not\leq\lambda})$. In other words,  for $\mu \leq \lambda$ we have:
\begin{gather}
\label{eq::filtr::sub}
  \imath_{\leq\mu}^!(M)\subset \imath_{\leq\lambda}^!(M);
\\
\label{eq::filtr::factor}
((M)_{\not\leq\mu})\supset((M)_{\not\leq\lambda}) \ \Leftrightarrow \ \imath_{\leq\lambda}^{*}(M) \twoheadrightarrow \imath_{\leq\mu}^{*}(M).   
\end{gather}
Without loss of generality, we assume that these filtrations are exhaustive and separated:
$$
\bigcap_{\lambda\in\Lambda} \imath_{\leq\lambda}^!(M) = \bigcap_{\lambda\in\Lambda} (M)_{\not\leq\lambda} = 0; \quad 
\bigcup_{\lambda\in\Lambda} \imath_{\leq\lambda}^!(M) = \bigcup_{\lambda\in\Lambda} (M)_{\not\leq\lambda} = M.
$$
\end{rem}

\begin{rem}
Assume that the category $\cO$ consists of representations of a Lie algebra with a fixed Cartan subalgebra. 
Moreover, assume that $\Lambda$ is the set of weights for the Cartan subalgebra
and each $L_\lambda$ is one-dimensional. Then each $M$ (as a module over the Cartan subalgebra) is a direct sum of simples and
$\imath_{\leq\lambda}^*(M)$ is the quotient of $M$ by the submodule generated by the $L_{\mu}$ with $\rho(\mu)\not\leq \rho(\lambda)$.
\end{rem}

In this paper we focus on categories with an infinite number of simples and modules we are considering usually have an infinite collection of simple subquotients, so let us be a bit more precise.
Namely, we are working with one of the two possibilities for the category $\cO$: 
\begin{itemize}
    \item $\cO$ has enough injectives and each module $M\in\cO$ admits an increasing exhaustive filtration $0=F_0M \subset F_1 M \subset \ldots$, such that the consecutive subquotients $F_iM/F_{i-1}M$ are simple and each simple appears as a subquotient finite number of times.
    
    \item $\cO$ has enough projectives and each module $M\in\cO$ has a decreasing separated filtration $M=F_0 M\supset F_1 M\supset \ldots$, such that the consecutive subquotients $F_iM/F_{i+1}M$ are simple and each simple appears as a subquotient finite number of times.
\end{itemize}

Assuming the existence of the filtrations as above, the Krull-Schmidt property means that for any given object $M$ the multiplicity of a given simple as a component of the associated graded does not depend on the filtration. In other words, one can assign to each object $M$ in a $\Theta$-ordered category $\cO$ its character which is a formal sum of simples with their multiplicities:
\begin{equation}
\label{eq::K0::map}    
M \mapsto \sum_{i=0}^{\infty} [F_iM/F_{i+1}M] = \sum_{\lambda\in \Lambda} m_{\lambda}[L_{\lambda}], \quad m_{\lambda}:=\#\{i\colon F_iM/F_{i+1}M \simeq L_{\lambda}\}.
\end{equation}
Note that this sum might be infinite, but we still denote the corresponding abelian group of characters by $K_0(\cO)$. We call the partially ordered set $\Theta$ the set of weights. 
We denote the localization map~\eqref{eq::K0::map} by $M\mapsto [M]$.
 Note that thanks to the Krull-Schmidt property we know that 
$$M=0 \text{ in } \cO \Leftrightarrow [M]=0 \text{ in } K_0(\cO).$$

Let us also introduce one of the main "protagonists" of this paper.
\begin{dfn}\label{StandardCostandard}
Let $\la\in\Lambda$.
\begin{itemize}
    \item 
    The standard module $\Delta_{\lambda}\in \cO_{\leq\la}$ is the projective cover of the irreducible module $L_\lambda$.
    \item 
    The costandard module $\nabla_{\lambda}\in \cO_{\leq\la}$ is the injective hull of the same irreducible $L_{\lambda}$.
\item
The proper costandard module $\overline{\nabla}_{\lambda}$ is an indecomposable object of $\cO_{\leq\la}$ defined by the following universal properties:
for each module $M\in \cO_{\leq\la}$ with 
$$[M:L_{\lambda}]=1 \text{ and } [M:L_{\nu}]=0 \text{ for all }\nu\neq\lambda \text{ such that } \rho(\nu)=\rho(\lambda)$$ 
and an embedding $i:L_\lambda\rightarrow M$ there exists a map $\psi_{M}:M\to \overline{\nabla}_{\lambda}$ that commutes with the standard embedding $L_{\lambda}\to \overline\nabla_{\lambda}$. Moreover, for any embedding $M\stackrel{\varphi}\hookrightarrow N$ 
of modules in $\cO_{\leq\la}$ the maps $\psi_M$ and $\psi_N$ may be chosen consistently:
\begin{equation}
\label{eq::proper::costandard}
\begin{tikzcd}
& \overline\nabla_{\lambda} & & \\
0 \arrow[r] & L_{\lambda} \arrow[r,hookrightarrow] \arrow[u,hookrightarrow] & M \arrow[r,hookrightarrow,"\varphi"description] \arrow[ul,"\psi_M"description] & N \arrow[ull,"\psi_N"description]
\end{tikzcd}
\end{equation}
\item
Similarly, one defines the proper standard module $\overline\Delta_{\lambda}$ using the following commutative diagram
\[
\begin{tikzcd}
& & \overline\Delta_{\lambda} \arrow[dll,"\psi_M"description] \arrow[dl,"\psi_N"description] \arrow[d,twoheadrightarrow] & 
\\
M \arrow[r,twoheadrightarrow,"\varphi"description] & N \arrow[r,twoheadrightarrow] & L_{\lambda} \arrow[r] & 0  
\end{tikzcd}
\]
\end{itemize}
\end{dfn}

In other words, the proper costandard module $\overline\nabla_{\lambda}$ (respectively the proper standard module $\overline{\Delta}_{\lambda}$) is an injective hull (resp. projective cover) of the module $L_\lambda$ in the exact subcategory of $\cO_{\leq\la}$ consisting of modules with the multiplicity of the irreducible modules $\{L_\nu\colon \rho(\nu)=\rho(\lambda)\}$ not greater than one.
In particular, one can easily prove the following:
\begin{lem}
\label{lem::ext::1}
\begin{itemize}
    \item 
    If $M\in\cO_{<\lambda}$, then 
    $$\Hom_{\cO}(M,\overline{\nabla}_{\lambda}) = \Hom_{\cO}(\overline{\Delta}_{\lambda},M) = 0.$$
    \item 
    If $\cO$ contains standard objects and proper costandard objects and $\rho(\lambda)$ and $\rho(\mu)$ are comparable elements of $\Theta$, then
    $$\dim\Hom_{\cO}(\Delta_{\lambda},\overline{\nabla}_{\mu}) =\delta_{\lambda,\mu}, \qquad
    \Ext_{\cO}^{1}(\Delta_{\lambda},\overline{\nabla}_{\mu}) =0;$$
\item 
Similarly, if $\cO$ contains proper standard objects and costandard objects, then for comparable $\rho(\lambda)$ and $\rho(\mu)$ we have:
    $$\dim\Hom_{\cO}(\overline\Delta_{\lambda},{\nabla}_{\mu}) =\delta_{\lambda,\mu}, \qquad
    \Ext_{\cO}^{1}(\overline\Delta_{\lambda},{\nabla}_{\mu}) =0.$$
\end{itemize}
\end{lem}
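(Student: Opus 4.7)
My plan is to exploit the simple socle (resp.\ simple top) of the proper costandard (resp.\ proper standard) modules, together with the defining universal property and the Serre subcategory structure.

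For part (i), suppose $f\colon M\to\overline\nabla_\lambda$ is nonzero with $M\in\cO_{<\lambda}$. Then $f(M)$ is a nonzero subobject of $\overline\nabla_\lambda$, and since $\overline\nabla_\lambda$ is the injective hull of $L_\lambda$ in its defining subcategory, its socle is $L_\lambda$; hence $[f(M):L_\lambda]\ge 1$, contradicting the fact that $f(M)$, being a quotient of $M\in\cO_{<\lambda}$, lies in the Serre subcategory $\cO_{<\lambda}$. The vanishing of $\Hom_\cO(\overline\Delta_\lambda,M)$ is dual, using the simple top $L_\lambda$ of $\overline\Delta_\lambda$.

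For the $\Hom$ identification in (ii), I would analyze any nonzero $f\colon\Delta_\lambda\to\overline\nabla_\mu$ via its image. Since $\mathrm{top}(\Delta_\lambda)=L_\lambda$, the image $f(\Delta_\lambda)$ has top $L_\lambda$, so $[f(\Delta_\lambda):L_\lambda]\ge 1$; and since $f(\Delta_\lambda)$ is a nonzero subobject of $\overline\nabla_\mu$, whose socle is $L_\mu$, also $[f(\Delta_\lambda):L_\mu]\ge 1$. A case analysis on the comparability of $\rho(\lambda)$ and $\rho(\mu)$ yields a contradiction except when $\lambda=\mu$: if $\rho(\lambda)>\rho(\mu)$ or $\rho(\lambda)=\rho(\mu)$ with $\lambda\ne\mu$, then $[\overline\nabla_\mu:L_\lambda]=0$, either by containment in $\cO_{\leq\mu}$ or by the defining multiplicity constraint on $\overline\nabla_\mu$; if $\rho(\lambda)<\rho(\mu)$, then $\Delta_\lambda\in\cO_{\leq\lambda}\subseteq\cO_{<\mu}$ excludes $L_\mu$ among the composition factors of $f(\Delta_\lambda)$. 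In the surviving case $\lambda=\mu$, the composite $\Delta_\lambda\twoheadrightarrow L_\lambda\hookrightarrow\overline\nabla_\lambda$ provides a nonzero map, and $[\overline\nabla_\lambda:L_\lambda]=1$ forces any nonzero $f$ to have image equal to $L_\lambda$, reducing $f$ to a scalar multiple of this canonical map.

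For $\Ext^1_\cO(\Delta_\lambda,\overline\nabla_\mu)=0$, I plan to split any extension $0\to\overline\nabla_\mu\to E\to\Delta_\lambda\to 0$ by one of two complementary strategies covering all comparable cases. If $\rho(\mu)\le\rho(\lambda)$, every composition factor of $E$ lies in $\cO_{\leq\lambda}$, so $E\in\cO_{\leq\lambda}$, and the projectivity of $\Delta_\lambda$ in $\cO_{\leq\lambda}$ splits the sequence. If instead $\rho(\lambda)<\rho(\mu)$ strictly, then $\Delta_\lambda$ contributes no composition factor at weight level $\rho(\mu)$, so $E$ inherits from $\overline\nabla_\mu$ the bound $[E:L_\nu]\le\delta_{\mu,\nu}$ for $\rho(\nu)=\rho(\mu)$; hence $E$ lies in the (non-abelian) subcategory of $\cO_{\leq\mu}$ in which $\overline\nabla_\mu$ is the injective hull of $L_\mu$, and applying injectivity to the inclusion $\overline\nabla_\mu\hookrightarrow E$ together with the identity $\overline\nabla_\mu\to\overline\nabla_\mu$ yields a retraction. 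Part (iii) follows by the entirely dual argument, interchanging projectives/injectives and tops/socles. The principal obstacle is this second $\Ext^1$ case: since $\overline\nabla_\mu$ is only injective in its multiplicity-restricted subcategory, which is not closed under extensions in general, one must verify carefully that $E$ belongs to this subcategory by tracking the composition factors at top weight level $\rho(\mu)$ contributed by each end of the extension.
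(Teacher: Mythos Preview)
Your proof is correct and follows essentially the same approach as the paper's own proof, which is a terse one-liner: ``all objects we consider belong to the Serre subcategory $\cO_{\leq\lambda}$ or $\cO_{\leq\max(\lambda,\mu)}$; thus one can use the properties of (proper) (co)standard modules.'' You have faithfully unpacked this sketch, supplying the case analysis and verifying carefully the one genuinely delicate point---that in the case $\rho(\lambda)<\rho(\mu)$ the extension $E$ lies in the multiplicity-restricted exact subcategory where $\overline\nabla_\mu$ is injective---which the paper passes over in silence.
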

\begin{proof}
 Note that all objects we consider in the statement of the lemma belong to the Serre subcategory $\cO_{\leq \lambda}$ or to the subcategory $\cO_{\leq \max(\lambda,\mu)}$. 
 Thus one can use the properties of (proper) (co)standard modules.
 In particular, the unique nontrivial map from $\Delta_\lambda\to \overline{\nabla}_{\lambda}$ factors through $L_\lambda$:
\begin{equation}
\label{eq::delta::nabla::map}
    \Delta_\lambda \twoheadrightarrow L_{\lambda} \hookrightarrow  \overline{\nabla}_{\lambda}.
\end{equation} 
\end{proof}

\begin{rem}\label{StandardExist}
The (proper) (co)standard object may not exist and is defined up to non-unique isomorphism.
If $\cO$ has enough projectives, then $\Delta_{\lambda}$ does exist and  coincides with the image $\imath_{\leq\la}^{*}(\bP_{\lambda})$ of the projective cover $\bP_{\lambda}$ of $L_\la$ in $\cO$.
Respectively, if $\cO$ has enough injectives, then costandard object $\nabla_{\lambda}$ does exist and coincides  with $\imath_{\leq\la}^{!}(\bI_{\lambda})$, where $\bI_{\lambda}$ is the injective hull of $L_\lambda$ in $\cO$.
\end{rem}

\subsection{Stratified categories}

In what follows, we use the notion of a stratified category \cite{BS, EL, LW}, which generalizes the concept of a highest weight category. 
In representation theory, the first example of a highest weight category was introduced by Bernstein, Gelfand, and Gelfand in~\cite{BGG} to study the category \( \mathcal{O} \) associated with the Cartan decomposition of a semisimple Lie algebra. The first formal definition of a highest weight category, consistent with the BGG construction, was provided by Cline, Parshall, and Scott in \cite{CPS}. 

It is important to note that, in the classical setting of~\cite{CPS}, all the categories under consideration contain a finite number of irreducible objects and possess both enough projectives and enough injectives simultaneously. 
However, the primary focus of this paper is on certain categories of representations of 
infinite-dimensional Lie algebras, which generally have an infinite number of simple objects and 
may have either enough projectives or enough injectives, but not necessarily both.

\subsubsection{Stratified categories with enough projectives}

From now on, we assume that \( \cO^{\star} \) is a \( \Theta \)-ordered category yielding the following assumptions:
\begin{itemize}
\item $\cO^{\star}$ has enough projectives, in particular, each irreducible $L_{\lambda}$ admits a projective cover $\bP_{\lambda}$;
\item the proper costandard objects $\overline{\nabla}_{\lambda}$ belong to $\cO^{\star}$;
\item each object \( M \in \cO^\star \) admits 
a decreasing separated filtration $M=F_0 M\supset F_1 M\supset \ldots$, such that the consecutive subquotients $F_iM/F_{i+1}M$ are simple and each simple appears as a subquotient finite number of times.
\item each object \( M \in \cO^\star \) admits a projective resolution \( P_{\ldot} \twoheadrightarrow M \) such that for each \( \lambda \in \Lambda \) the number of integers
\( k \) for which \( [P_k(M) : L_\lambda] \neq 0 \) is finite. 
\end{itemize}

The corresponding derived category of bounded above complexes, where each irreducible appears only finitely many times, is denoted by \( \mathcal{D}(\cO^\star) \). 
Consequently, the Euler characteristic of a complex is a well-defined map from the derived category to the Grothendieck group:
\[
\chi: \mathcal{D}(\cO^\star) \longrightarrow K_0(\cO^\star), \qquad C^{\udot} \mapsto \sum_{i=-\infty}^\infty (-1)^i [C^i].
\]
Furthermore, for each object \( N \in \cO^\star \) that contains a finite set of irreducibles in its Jordan-H\"older decomposition, we have a well-defined pairing:
\begin{equation}
\label{eq::pairing::def}
([M^{\udot}], [N]) := \sum_{i=-\infty}^\infty (-1)^{i} \dim \RHom^i_{\mathcal{D}}(M^{\udot}, N).
\end{equation}
The result of this pairing depends only on the images of \( M^{\udot} \) and \( N \) in the Grothendieck group \( K_0(\mathcal{O}) \).

\begin{prop}
\label{prp::standard::filtration}
Let $M\in\cO^\star$. 
Under the assumptions of this section, each successive subquotient of the filtration $(M)_{\not\leq \lambda}$~\eqref{eq::filtr::factor} is covered by several copies of the standard modules:
\begin{equation}
\label{eq::stand::filtr}
\bigoplus_{\mu\colon \rho(\mu)=\lambda} \Delta_{\mu}^{m_\mu} \twoheadrightarrow 
\frac{\imath_{\leq \lambda}\imath^{*}_{\leq \lambda} M}{\imath_{<\lambda}\imath^{*}_{<\lambda} M} = \frac{(M)_{\not<\lambda}}{(M)_{\not\leq \lambda}},
\end{equation}
where the multiplicity \( m_{\mu} \) is given by \( \dim \Hom(M,\overline\nabla_{\mu}) \).
\end{prop}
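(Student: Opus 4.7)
The plan is to cover $Q_\lambda := (M)_{\not<\lambda}/(M)_{\not\leq\lambda}$ by standards at level $\lambda$, using projectivity of $\Delta_\mu$ in the Serre subcategory $\cO_{\leq\lambda}$. I would begin by observing that $Q_\lambda$ fits into the short exact sequence $0 \to Q_\lambda \to \bar M \to \imath^*_{<\lambda}M \to 0$ where $\bar M := \imath^*_{\leq\lambda}M$, and then show $\imath^*_{<\lambda}Q_\lambda = 0$: pulling back a nonzero quotient $Q_\lambda\twoheadrightarrow N$ in $\cO_{<\lambda}$ along $Q_\lambda\hookrightarrow\bar M$ would yield a quotient of $\bar M$ in $\cO_{<\lambda}$ with strictly more composition factors than $\imath^*_{<\lambda}M$, contradicting the maximality of $\imath^*_{<\lambda}M$. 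Consequently, the head of $Q_\lambda$ is concentrated at level $\lambda$. Separately, the adjunction $\imath^*_{\leq\lambda}\dashv\imath_{\leq\lambda}$ combined with the full faithfulness of $\imath_{\leq\lambda}$ gives $\Hom_{\cO_{\leq\lambda}}(\Delta_\mu,-)\cong\Hom_{\cO^\star}(\bP_\mu,-)$ restricted to $\cO_{\leq\lambda}$, so each $\Delta_\mu$ with $\rho(\mu)=\lambda$ is projective in $\cO_{\leq\lambda}$.

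The core of the proof is to identify the multiplicity via the chain of isomorphisms
$$\Hom(M,\overline\nabla_\mu) \cong \Hom(\bar M,\overline\nabla_\mu) \cong \Hom(Q_\lambda,\overline\nabla_\mu) \cong \Hom(Q_\lambda,L_\mu).$$
The first is adjunction, since $\overline\nabla_\mu\in\cO_{\leq\lambda}$. For the third, I would analyze the image $I$ of an arbitrary nonzero $\phi:Q_\lambda\to\overline\nabla_\mu$: as a subobject of $\overline\nabla_\mu$, $I$ satisfies $[I:L_\mu]\leq 1$ and $[I:L_\nu]=0$ for $\nu\neq\mu$ at level $\lambda$; as a quotient of $Q_\lambda$, its head lies at level $\lambda$ and must therefore equal $L_\mu$. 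Since $L_\mu$ is simultaneously the socle of $\overline\nabla_\mu$ (a consequence of Lemma \ref{lem::ext::1} combined with $[\overline\nabla_\mu:L_\mu]=1$), Schur's lemma applied to the composite $L_\mu\hookrightarrow I\twoheadrightarrow L_\mu$ together with the socle property of $\overline\nabla_\mu$ forces $I=L_\mu$.

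The hardest step is the middle isomorphism. The long exact sequence obtained by applying $\Hom(-,\overline\nabla_\mu)$ to the short exact sequence above has leftmost term $\Hom(\imath^*_{<\lambda}M,\overline\nabla_\mu)=0$ by Lemma \ref{lem::ext::1}, giving injectivity of the restriction map. For surjectivity, given $\phi:Q_\lambda\to\overline\nabla_\mu$ factoring as $Q_\lambda\twoheadrightarrow L_\mu\hookrightarrow\overline\nabla_\mu$ with kernel $K$, I would form $\bar M/K$, which sits in $0\to L_\mu\to\bar M/K\to\imath^*_{<\lambda}M\to 0$. The conditions $[\bar M/K:L_\mu]=1$ and $[\bar M/K:L_\nu]=0$ for $\nu\neq\mu$ at level $\lambda$ are immediate, so the universal property of $\overline\nabla_\mu$ produces a map $\bar M/K\to\overline\nabla_\mu$ extending the embedding of $L_\mu$, whose composition with $\bar M\twoheadrightarrow\bar M/K$ extends $\phi$. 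With $m_\mu=\dim\Hom(Q_\lambda,L_\mu)$ thereby identified, the desired surjection $\bigoplus_\mu\Delta_\mu^{m_\mu}\twoheadrightarrow Q_\lambda$ is produced by using projectivity of $\bigoplus_\mu\Delta_\mu^{m_\mu}$ in $\cO_{\leq\lambda}$ to lift the head quotient $Q_\lambda\twoheadrightarrow\bigoplus_\mu L_\mu^{m_\mu}$ along $\bigoplus_\mu\Delta_\mu^{m_\mu}\twoheadrightarrow\bigoplus_\mu L_\mu^{m_\mu}$; surjectivity of the lift follows by a standard Nakayama-type argument using the decreasing separated filtration on modules in $\cO^\star$.
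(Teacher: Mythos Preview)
Your proof is correct and shares the same essential ingredients as the paper's argument---projectivity of $\Delta_\mu$ in $\cO^\star_{\leq\lambda}$, the universal property of $\overline\nabla_\mu$, and the decreasing separated filtration---but organizes them differently. The paper first constructs the surjection inductively: it reduces to $M\in\cO^\star_{\leq\lambda}$ with $\imath^*_{<\lambda}M=0$, peels off the top simple $L_{\mu_0}=F_0M/F_1M$ via a map $\Delta_{\mu_0}\to M$ coming from projectivity, passes to the cokernel $M_1$ (which still satisfies $\imath^*_{<\lambda}M_1=0$ by right-exactness of $\imath^*_{<\lambda}$), and iterates. Only after obtaining the cover $\bP(M)=\bigoplus_\mu\Delta_\mu^{n_\mu}\twoheadrightarrow M$ does it verify that $n_\mu=\dim\Hom(M,\overline\nabla_\mu)$ by showing $\psi^*:\Hom(M,\overline\nabla_\mu)\to\Hom(\bP(M),\overline\nabla_\mu)$ is an isomorphism. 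You instead compute multiplicities first, via the chain $\Hom(M,\overline\nabla_\mu)\cong\Hom(Q_\lambda,\overline\nabla_\mu)\cong\Hom(Q_\lambda,L_\mu)$, and only then build the cover. The paper's inductive peeling handles the possibly-infinite direct sum $\bigoplus_\mu\Delta_\mu^{m_\mu}$ quite concretely (finitely many summands for each fixed $\mu$, but possibly infinitely many $\mu$ with $\rho(\mu)=\lambda$); your final ``Nakayama-type'' step needs the same care in this infinite setting, though you correctly signal that the decreasing separated filtration is what provides it. Your organization has the advantage of making the role of $\overline\nabla_\mu$'s universal property (for surjectivity of the restriction $\Hom(\bar M,\overline\nabla_\mu)\to\Hom(Q_\lambda,\overline\nabla_\mu)$) more transparent.
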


\begin{proof}
It is sufficient to consider the case \( M \in \cO^{\star}_{\leq\lambda} \), \( \imath^{*}_{<\lambda} M=0 \).
Since \( M \) admits a decreasing separated filtration \( M=F_0M\supset F_1M \supset \ldots \) with simple successive subquotients \( L_{\mu_i}=F_iM/F_{i+1}M \) satisfying \( \rho(\mu_i)\leq\rho(\lambda) \), we note that \( \rho(\mu_0)=\rho(\lambda) \). Otherwise, if \( F_0M/F_1M \in \cO^{\star}_{<\lambda} \), then \( \imath_{<\lambda}^{*}M\neq 0 \), leading to a contradiction.

Since the projective cover of \( L_{\mu_0} \) in \( \cO^{\star}_{\leq\lambda} \) is \( \Delta_{\mu_0} \), we obtain a map \( \psi_0:\Delta_{\mu_0}\rightarrow M \), fitting into the commutative diagram:
\[
\begin{tikzcd}
 & \Delta_{\mu_0} \arrow[dl,"\psi_0"description] \arrow[d,twoheadrightarrow] & 
\\
M \arrow[r,twoheadrightarrow,"\varphi"description] &  F_0M/F_1M =L_{\mu_0} \arrow[r] & 0  
\end{tikzcd}
\]
Setting \( M_1:=\coker(\psi_0) \), we obtain:
$$
\begin{tikzcd}
 \Delta_{\mu_0} \arrow[r] \arrow[d,"\imath_{<\lambda}^{*}"] &  M \arrow[r] \arrow[d,"\imath_{<\lambda}^{*}"] & M_1  \arrow[r] \arrow[d,"\imath_{<\lambda}^{*}"] & 0 \\
 0 \arrow[r]& 0 \arrow[r] & \imath_{<\lambda}^{*}(M_1) \arrow[r] & 0
\end{tikzcd} 
 \ \Rightarrow \ \imath_{<\lambda}^{*}M_1=0,
$$
because \( \imath_{<\lambda}^{*} \) is the left adjoint to the exact functor \( \imath_{<\lambda} \), making it right exact.

We now proceed by induction, removing the top irreducible module from \( M_1 \). Since the multiplicity of each irreducible \( L_{\mu} \) in \( M \) is finite, after (possibly infinitely) many steps, we obtain the projective cover \( \psi:\bP(M)\twoheadrightarrow M \). Such that $\bP(M)$ consists of a (possibly infinite) direct sum of modules \( \Delta_{\mu} \) for \( \rho(\mu)=\rho(\lambda) \). However, for each particular \( \mu \), the number of summands is finite.

By construction, for each \( \mu \) with \( \rho(\mu)=\rho(\lambda) \), the map 
\begin{equation}
\label{eq::proj::iso}
\psi^{*}:\Hom(M,\overline{\nabla}_{\mu}) \rightarrow \Hom(\bP(M),\overline{\nabla}_{\mu})
\end{equation}
is surjective. Indeed, each nonzero map from \( \Delta_{\mu} \) to \( \overline{\nabla}_{\mu} \) factors through the irreducible \( L_{\mu} \), as mentioned in~\eqref{eq::delta::nabla::map}. In particular, the nontrivial map from the summand \( \Delta_{\mu_0} \) (created in the first step) to the proper costandard object \( \overline{\nabla}_{\mu_0} \) factors through the irreducible \( L_{\mu_0}=F_0M/F_1M \) (see Lemma~\ref{lem::ext::1} and formula~\eqref{eq::delta::nabla::map}). Thus, this map arises from a map \( \varphi_0\in \Hom(M,\overline{\nabla}_{\mu_0}) \).
Proceeding inductively, we find a preimage (with respect to $\psi^*$) 
of a map from a summand $\Delta_{\mu}\subset \bP(M)$ to $\overline{\nabla}_{\mu}$ in the space $\Hom(M,\overline{\nabla}_{\mu})$.
 Additionally, we observe that \( \psi^{*} \) is injective because \( \RHom(\ttt,\overline{\nabla}_{\mu}) \) is a right-exact functor. Consequently, \( \psi^{*} \) is an isomorphism, and Lemma~\ref{lem::ext::1} completes the proof.
\end{proof}

\begin{dfn}
\label{def::stratified}
\label{item::stratified}
Suppose that the standard object $\Delta_\lambda$ is a projective cover of $L_\lambda$ in all categories $\cO^{\star}_{\le S}$ for $S\ni\la$ such that $\rho(\lambda)$ is the maximal element in $\rho(S)$.   
We say that the category $\cO^{\star}$ is {stratified} iff for any finite subset $S\subset \Lambda$ the derived functor $\imath_{\leq S}: \cD(\cO^{\star}_{\leq S}) \rightarrow \cD(\cO^{\star})$ is a fully faithful embedding.
\end{dfn}

The following Theorem gives a criterion of a stratified category (with projectives) and already appeared in the literature in many different places for the case of highest weight categories
(\cite{BGG},\cite{CPS},\cite{Dlab},\cite{Kh}). However, we would like to add it here to make the presentation more self-contained.

\begin{thm}\label{thm::Stratified}
Suppose that category $\cO^{\star}$ has enough projectives, $\cD(\cO^{\star})$ is generated by projectives and proper costandard objects $\overline\nabla_{\lambda}$ exist in $\cO^{\star}$. Then the following conditions are equivalent:
\begin{enumerate}
    \item[(s1$^\star$)] the category $\cO^{\star}$ is stratified;
   \label{item::D} 
    \item[(s2$^\star$)] $\forall \lambda\in\Lambda$ the kernel of the projection $\imath_{\leq\lambda}^{*}:\bP_\lambda\rightarrow \Delta_{\lambda}$ admits a filtration by $\Delta(\mu)$ with $\rho(\mu)>\rho(\lambda)$;
    \label{item::P}
    \item[(s3$^\star$)] there are no higher derived homomorphisms $\RHom_{\cD(\cO^{\star})}(\Delta_{\lambda},\overline\nabla_{\mu}) = 0$ for all pairs of $\lambda,\mu$ such that $\rho(\lambda)\neq\rho(\mu)$;
    \label{item::Ext}
    \item[(s4$^\star$)] $\forall \lambda,\mu\in \Lambda$ with $\rho(\lambda)\neq\rho(\mu)$ we have  
    $\Ext_{\cD(\cO^{\star})}^2(\Delta_\lambda,\overline\nabla_\mu) = 0$.
    \label{item::Ext2}
\end{enumerate}
\end{thm}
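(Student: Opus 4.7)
The plan is to establish the cyclic implications $(s2^\star) \Rightarrow (s3^\star) \Rightarrow (s4^\star) \Rightarrow (s2^\star)$ together with the equivalence $(s1^\star) \Leftrightarrow (s2^\star)$. The direction $(s3^\star) \Rightarrow (s4^\star)$ is immediate since $\Ext^2$ is a component of $\RHom$.

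To prove $(s2^\star) \Rightarrow (s3^\star)$, I would iterate $(s2^\star)$ to construct a projective resolution $\bP_\bullet \twoheadrightarrow \Delta_\lambda$ in which each term $\bP_k$ is a direct sum of indecomposable projectives $\bP_\nu$ with $\rho(\nu) \geq \rho(\lambda)$, and strict inequality $\rho(\nu) > \rho(\lambda)$ for $k \geq 1$. I would then split into cases according to the relative position of $\rho(\lambda)$ and $\rho(\mu)$. When $\rho(\lambda) \not\leq \rho(\mu)$, the inequality $\rho(\nu) \geq \rho(\lambda)$ forces $\rho(\nu) \not\leq \rho(\mu)$ for every indecomposable projective appearing in the resolution, so $\Hom(\bP_\nu, \overline\nabla_\mu) = [\overline\nabla_\mu : L_\nu] = 0$ and all $\Ext^i(\Delta_\lambda, \overline\nabla_\mu)$ vanish. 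When $\rho(\lambda) < \rho(\mu)$, I would proceed by downward induction on $\rho(\lambda)$ inside the finite set $\{\lambda' : \rho(\lambda') \leq \rho(\mu)\}$ (finite by lower finiteness of $\Theta$): the short exact sequence $0 \to K_\lambda \to \bP_\lambda \to \Delta_\lambda \to 0$, combined with Lemma~\ref{lem::ext::1} at the layer $\rho(\nu) = \rho(\mu)$ and with the previous case when $\rho(\nu) \not\leq \rho(\mu)$, reduces the desired vanishing to the inductive hypothesis at intermediate layers $\rho(\lambda) < \rho(\nu) < \rho(\mu)$.

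For $(s4^\star) \Rightarrow (s2^\star)$ I would apply Proposition~\ref{prp::standard::filtration} directly to the indecomposable projective $\bP_\lambda$: its successive strata are covered by surjections $\bigoplus_{\mu:\rho(\mu)=\nu}\Delta_\mu^{m_\mu} \twoheadrightarrow (\bP_\lambda)_{\not<\nu}/(\bP_\lambda)_{\not\leq\nu}$ with multiplicities $m_\mu = \dim \Hom(\bP_\lambda, \overline\nabla_\mu)$. The task is then to promote these surjections to isomorphisms layer by layer, starting at $\nu = \rho(\lambda)$ and moving upward. For the induction step I would use the Euler pairing~\eqref{eq::pairing::def} against each $\overline\nabla_\sigma$: by projectivity all $\Ext^{\geq 1}(\bP_\lambda, \overline\nabla_\sigma)$ vanish, and by $(s4^\star)$ together with the inductive dimension-shifting from the already-established lower layers, the Euler characteristic of the putative cokernel vanishes, forcing the cokernel itself to be zero. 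Finally, for $(s1^\star) \Leftrightarrow (s2^\star)$: given $(s2^\star)$, one builds projective resolutions inside $\cO^\star_{\leq S}$ by truncating $\Delta$-filtrations of ambient projectives to keep only summands $\Delta_\mu$ with $\mu$ below some $s \in S$, which yields the fully faithful derived embedding; conversely, $(s1^\star)$ applied with $S = \{\lambda\}$ identifies $\RHom$ out of $\Delta_\lambda$ with the analogue in $\cO^\star_{\leq \lambda}$, and then Proposition~\ref{prp::standard::filtration} applied to $K_\lambda = \ker(\bP_\lambda \twoheadrightarrow \Delta_\lambda)$ recovers the $\Delta$-filtration of $(s2^\star)$.

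The main obstacle I anticipate is the $(s4^\star) \Rightarrow (s2^\star)$ step: promoting surjections from Proposition~\ref{prp::standard::filtration} to isomorphisms requires careful Euler characteristic bookkeeping through potentially infinite $\Delta$-filtrations while tracking multiplicities stratum by stratum, and setting up the induction consistently with the weaker $\Ext^2$-only hypothesis (rather than the full $\Ext^{\geq 1}$-vanishing of $(s3^\star)$) is the technically delicate point. Lower finiteness of $\Theta$ and the assumed finiteness of each simple multiplicity in projective resolutions are essential to make the Euler characteristics well-defined and the induction proceed.
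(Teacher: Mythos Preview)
Your overall cycle is close to the paper's, which runs $(s2^\star)\Rightarrow(s1^\star)\Rightarrow(s3^\star)\Rightarrow(s4^\star)\Rightarrow(s2^\star)$. Routing $(s2^\star)\Rightarrow(s3^\star)$ directly via a case analysis on the projective resolution is a legitimate alternative to the paper's detour through $(s1^\star)$, and your $(s2^\star)\Rightarrow(s1^\star)$ sketch matches the paper's semiorthogonal-decomposition argument. Your converse $(s1^\star)\Rightarrow(s2^\star)$ is less convincing, however: Proposition~\ref{prp::standard::filtration} applied to $K_\lambda$ only gives surjections onto strata, not a $\Delta$-filtration, and the paper in fact recovers $(s2^\star)$ from $(s1^\star)$ only by passing through $(s3^\star)$ and $(s4^\star)$.

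The genuine gap is in $(s4^\star)\Rightarrow(s2^\star)$. Your Euler-pairing approach needs $([\Delta_\mu],[\overline\nabla_\sigma])=\delta_{\mu,\sigma}$ in order to conclude that the \emph{kernel} (you wrote cokernel) of the surjection from Proposition~\ref{prp::standard::filtration} has vanishing class in $K_0$. But this orthogonality amounts to full $\Ext^{\geq 1}$-vanishing between standards and proper costandards, which is essentially $(s3^\star)$; it is the \emph{hypothesis} of Theorem~\ref{thm::characters::stratified}, not a consequence of $(s4^\star)$. The phrase ``inductive dimension-shifting from the already-established lower layers'' does not close this: knowing that lower strata of $\bP_\lambda$ are sums of standards tells you nothing about $\Ext^{\geq 3}(\Delta_\mu,\overline\nabla_\sigma)$ for the $\Delta_\mu$ sitting at the current stratum, so the Euler characteristic of the kernel cannot be computed from $(s4^\star)$ alone.

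The paper avoids Euler characteristics entirely. If the kernel $K$ at stratum $\nu$ is nonzero, one picks a \emph{maximal} $\nu'$ with $[K:L_{\nu'}]\neq 0$, so $\Hom(K,\overline\nabla_{\nu'})\neq 0$, and chases the long exact sequences coming from the four-term sequence
\[
0\to K\to\bigoplus_{\rho(\mu)=\nu}\Delta_\mu^{m_\mu}\to\imath_{\leq\nu}^*(\bP_\lambda)\to\imath_{<\nu}^*(\bP_\lambda)\to 0.
\]
The crucial ingredient---absent from your sketch---is that $\Ext^1(\imath_{\leq\nu}^*(\bP_\lambda),\overline\nabla_{\nu'})=0$ holds \emph{not} by $(s4^\star)$ but because $\imath_{\leq\nu}^*(\bP_\lambda)$ is projective in $\cO^\star_{\leq\nu}$ and any extension by $\overline\nabla_{\nu'}\in\cO^\star_{\leq\nu}$ automatically lies in that Serre subcategory and hence splits. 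The hypothesis $(s4^\star)$ is then invoked exactly once, for the $\Ext^2$ term coming from $\imath_{<\nu}^*(\bP_\lambda)$, which by induction is filtered by standards. This projectivity-in-the-Serre-subcategory step is what makes the weaker $\Ext^2$-only hypothesis sufficient.
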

\begin{rem}
Note that a highest weight category defined in~\cite{CPS} is the following simplification of a stratified category:
\begin{itemize}
    \item 
$\Lambda$ coincides with $\Theta$ and is a finite set, 
\item
there are no nontrivial endomorphisms of a standard module $\Delta_\lambda$, 
\item
the standard module $\Delta_\lambda$ coincides with the proper standard module $\overline\Delta_\lambda$, 
\item the derived category $\cD(\cO^{\star})$ is the bounded derived category and is stratified with successive quotient isomorphic to the derived category of vector spaces. 
\end{itemize}
Moreover, \cite{CPS} uses item~$(s2^{\star})$ as a definition of a highest weight category and shows that $\Delta_{\lambda}$ assemble a finite exceptional collection of $\cD(\cO^{\star})$.
Unfortunately, the categories we are dealing with are not of that sort and we have to use the more general notion of a {stratified category}. The notion of a stratified category can be found in more recent papers like \cite{Kh}, \cite{BS} and in \cite{EL}.
\end{rem}
\begin{proof}
We give the proof of the theorem by showing implications $(s2^\star)\Rightarrow (s1^\star)\Rightarrow (s3^\star)\Rightarrow (s4^\star) \Rightarrow (s2^\star).$ 

$(s2^\star)\Rightarrow (s1^\star)$
Let $S$ be a subset of $\Lambda$.
Consider the triangulated category \( \mathcal{D}(\cO^\star)_{\not\leq S} \) as the 
subcategory of \( \mathcal{D}(\cO^\star) \) generated by the set of projectives 
\( \bP_{\mu} \) with $\mu$ satisfying $\rho(\mu) \not\leq \rho(s)$ for all $s\in S$.
The subcategory \( \mathcal{D}(\cO^\star)_{\not\leq S} \) is right 
admissible, and there exists a left orthogonal subcategory 
\( \mathcal{D}(\mathcal{O}^\star)_{\leq S} \subset \mathcal{D}(\cO^{\star})\), consisting of complexes of modules in 
\( \mathcal{O}^\star \), whose homology belongs to the subcategory 
\( \mathcal{O}^\star_{\leq S} \). 
Now, consider the exact sequence of modules:
\[
0 \to K \to \bP_{\mu} \to \imath_{\leq S}^{*}(\bP_{\mu}) \to 0.
\]
By assumption \( (s2^\star) \), the kernel \( K \) admits a filtration with successive quotients isomorphic to \( \Delta(\nu) \), where \( \rho(\nu) \not\leq \rho() \) for all $s\in S$. 
Consequently, \( K \) has a projective resolution by projectives \( P_\nu \) with \( \rho(\nu) \not\leq \rho(s) \) for all $s\in S$. 
Since the modules \( \imath_{S}^{*}(\bP_{\mu}) \) belong to 
\( \mathcal{O}^\star_{\leq S} \), every object in the left orthogonal category \( {}^{\perp} \mathcal{D}(\mathcal{O}^\star)_{\not\leq S} \) admits a resolution by these modules and, therefore, belongs to the derived category \( \mathcal{D}(\mathcal{O}^\star_{\leq S}) \). 
It follows that the triangulated categories \( \mathcal{D}(\mathcal{O}^\star_{\leq S}) \) and 
\( \mathcal{D}(\mathcal{O}^\star)_{\leq S} \) are equivalent, and the embedding 
\( \imath: \mathcal{D}(\mathcal{O}^\star_{\leq S}) \rightarrow \mathcal{D}(\mathcal{O}^\star) \) coincides with the derived functor of the corresponding embedding of abelian categories.

$(s1^\star)\Rightarrow(s3^\star)$
If \( \rho(\lambda) \geq \rho(\mu) \), then both \( \Delta_\lambda \) and \( \overline{\nabla}_{\mu} \) belong to the subcategory \( \mathcal{O}^\star_{\leq\lambda} \). Consequently, by assumption \( (s1^\star) \), we obtain:
\[
\RHom_{\mathcal{D}(\mathcal{O}^\star)}(\Delta_{\lambda},\overline{\nabla}_{\mu}) = 
\RHom_{\mathcal{D}(\mathcal{O}^\star_{\leq\lambda})}(\Delta_{\lambda},\overline{\nabla}_{\mu}) = 
\Hom_{\mathcal{O}^\star_{\leq\lambda}}(\Delta_{\lambda},\overline{\nabla}_{\mu}) = [\overline{\nabla}_{\mu}:L_{\lambda}].
\]
The second equality follows from the fact that \( \Delta_\lambda \) is the projective cover of \( L_\lambda \) in \( \mathcal{O}^\star_{\leq\lambda} \). 
If \( \rho(\lambda) \) and \( \rho(\mu) \) are incomparable, then \( \Delta_{\lambda} \) remains the projective cover of \( L_{\lambda} \) in the subcategory \( \mathcal{O}^\star_{\leq\{\lambda,\mu\}} \) due to Definition~\ref{def::stratified}.
Similarly, for \( \rho(\lambda) \leq \rho(\mu) \), the $\RHom$ groups vanish due to the universal property~\eqref{eq::proper::costandard} of proper costandard modules.

Assumption \( (s4^\star) \) is a special case of \( (s3^\star) \), taking care of 
the vanishing of the second cohomology groups.

$(s4^\star)\Rightarrow(s2^\star)$ 
Consider the standard filtration of \( (\bP_\lambda)_{\not\leq\mu} \) discussed in 
Remark~\ref{remark::filtration}.
We prove by induction on the lower finite set \( \Theta \) that the successive quotients of this filtration admit a further filtration by standard modules.

The base case of the induction is straightforward since \( \imath_{\leq\lambda}^{*}(\bP_\lambda) = \Delta_\lambda \).
Now, assume that \( \imath_{<\lambda}^{*}(\bP_{\lambda}) \) admits a filtration of the desired type. Then, the kernel $M$ of the surjective map
\[
\imath_{\leq\lambda}^{*}(\bP_{\mu})\twoheadrightarrow \imath_{<\lambda}^{*}(\bP_\mu)
\]
is generated by irreducible modules indexed by \( \nu \) with \( \rho(\nu) = \rho(\lambda) \).
Thanks to Proposition~\ref{prp::standard::filtration} there exists a projective cover $P_{\mu}^{=\lambda}\twoheadrightarrow M$ that consists of the direct sum of projective covers $\Delta_{\nu}\twoheadrightarrow L_{\nu}$ with $\rho(\nu)=\rho(\lambda)$.
Consequently, we obtain the following exact sequence of objects of $\cO^{\star}_{\leq\lambda}$:
\begin{equation}
\label{eq::seq::lambda}
0 \to K \to P_{\mu}^{=\lambda} \to \imath_{\leq\lambda}^*(\bP_{\mu}) \twoheadrightarrow \imath_{<\lambda}^*(\bP_\mu) \to 0.
\end{equation}
If \( K \neq 0 \), then there exists a maximal element \( \nu \) such that \( [K:L_{\nu}] \neq 0 \), which implies the existence of a nontrivial homomorphism from \( K \) to \( \overline\nabla_{\nu} \).
Applying \( \RHom(-,\overline\nabla_{\nu}) \) to \eqref{eq::seq::lambda}, we observe that there are no derived homomorphisms from \( P_{\mu}^{=\lambda} \) to 
$\overline\nabla_{\nu}$, and since \( \imath_{<\lambda}^{*}(\bP_{\mu}) \) is an extension of standard modules, there are no higher \(\Ext\)-groups from \( \imath_{<\lambda}^{*}(\bP_{\mu}) \) to \( \overline\nabla_{\nu} \).
We now show that \( \Ext^1(\imath_{\leq\lambda}^*(\bP_{\mu}),\overline\nabla_{\nu}) = 0 \). 
Suppose there exists a short exact sequence
\[
0\to \overline\nabla_{\nu} \to M \to \imath_{\leq\lambda}^*(\bP_{\mu})\to 0.
\]
Since \( M \) belongs to the subcategory \( \cO^{\star}_{\leq\lambda} \), and \( \imath_{\leq\lambda}^*(\bP_{\mu}) \) is projective in \( \cO^{\star}_{\leq\lambda} \), the sequence splits in \( \cO^{\star}_{\leq\lambda} \), and hence also in \( \cO^{\star} \).
This contradicts the assumption that \( \Hom(K,\overline\nabla_{\nu}) \neq 0 \).

It is important to note that we use induction only to add a maximal element to a set. 
On the other hand, if we assume that for a finite set \( S \) and all \( \lambda\in \Theta_{\leq S} \), the standard filtration of \( \imath_{\leq\lambda}^{*}(\bP_\mu) \) admits a refinement with successive 
quotients isomorphic to \( \Delta_{\mu} \), then the standard filtration of \( \imath_{\leq S}^{*}(\bP_{\mu}) \) also has successive quotients isomorphic to \( \Delta_{\mu} \) with \( \rho(\mu) \leq \rho(s) \) for all $s\in S$.
\end{proof}

The following definitions and results are well known to specialists in the highest weight categories and can be found for example in~\cite{Kh}. 

\begin{dfn}
A decreasing filtration $\cF^{\bullet} M$ of a module $M$ in a stratified category $\cO^{\star}$ is called excellent $\Delta$-filtration iff the successive quotients are isomorphic to the direct sums of standard modules.
\end{dfn}

\begin{prop}
If the category $\cO^{\star}$ is stratified, has enough projectives, and contains proper costandard modules, then
a module $M$ admits an excellent $\Delta$-filtration by standard modules if and only if $\Ext^1(M,\overline\nabla_\nu)=0$ for all $\nu\in\Lambda$.
\end{prop}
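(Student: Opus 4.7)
The plan is to prove both implications separately.

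\emph{Forward direction.} I would first combine Lemma~\ref{lem::ext::1} with Theorem~\ref{thm::Stratified}$(s3^\star)$ to obtain $\Ext^i_{\cO^\star}(\Delta_\mu,\overline\nabla_\nu)=0$ for every $i\geq 1$ and every pair $\mu,\nu\in\Lambda$: the case $\rho(\mu)\neq\rho(\nu)$ is immediate from $(s3^\star)$, and for $\rho(\mu)=\rho(\nu)$ one uses that $\Delta_\mu$ is a projective cover of $L_\mu$ in the Serre subcategory $\cO^\star_{\leq\{\mu,\nu\}}$ (by Definition~\ref{def::stratified}) together with the stratified identification $(s1^\star)$ of $\Ext$'s in the subcategory with those in $\cO^\star$. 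Given this vanishing on each layer $F^iM/F^{i+1}M$ of an excellent $\Delta$-filtration, a routine inductive argument using the long exact sequence of $\Ext$ propagates $\Ext^1(-,\overline\nabla_\nu)=0$ from the layers to $M$; the infinite case is handled by a Mittag-Leffler argument, whose hypothesis holds because each simple occurs in $M$ with finite multiplicity, so $\Hom(M/F^iM,\overline\nabla_\nu)$ stabilizes in $i$.

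\emph{Backward direction.} Assume $\Ext^1(M,\overline\nabla_\nu)=0$ for every $\nu$. The plan is to show by well-founded induction on $\lambda\in\Theta$ (valid since $\Theta$ is lower finite) that each layer $M_{=\lambda}:=(M)_{\not<\lambda}/(M)_{\not\leq\lambda}$ of the standard filtration of Remark~\ref{remark::filtration} is a direct sum of standards; assembling these gives the desired excellent $\Delta$-filtration. In the inductive step, the key intermediate task is to bootstrap $\Ext^1$-vanishing from $M$ down to $M_{=\lambda}$. The induction hypothesis endows $\imath^*_{<\lambda}(M)$ with a finite excellent $\Delta$-filtration, so by the forward direction (upgraded to higher $\Ext$'s as above) $\Ext^i(\imath^*_{<\lambda}(M),\overline\nabla_\nu)=0$ for all $i\geq 1$. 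Feeding this into the long exact sequences attached to $0\to(M)_{\not<\lambda}\to M\to\imath^*_{<\lambda}(M)\to 0$ and $0\to(M)_{\not\leq\lambda}\to(M)_{\not<\lambda}\to M_{=\lambda}\to 0$, together with the socle-based vanishing $\Hom((M)_{\not\leq\lambda},\overline\nabla_\nu)=0$ whenever $\rho(\nu)\leq\rho(\lambda)$ (since $(M)_{\not\leq\lambda}$ has no $L_\nu$-constituent for such $\nu$), yields $\Ext^1(M_{=\lambda},\overline\nabla_\nu)=0$ for every $\nu$ with $\rho(\nu)\leq\rho(\lambda)$.

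Next, apply Proposition~\ref{prp::standard::filtration} to obtain a surjection $\pi\colon P:=\bigoplus_{\mu\colon\rho(\mu)=\rho(\lambda)}\Delta_\mu^{m_\mu}\twoheadrightarrow M_{=\lambda}$ with kernel $K$; note that $K$ has constituents $L_\nu$ only for $\rho(\nu)<\rho(\lambda)$, since the construction in the proof of Proposition~\ref{prp::standard::filtration} makes $\pi$ an isomorphism on the top level. Applying $\Hom(-,\overline\nabla_\nu)$ to $0\to K\to P\to M_{=\lambda}\to 0$: when $\rho(\nu)=\rho(\lambda)$ the isomorphism~\eqref{eq::proj::iso} together with the absence of $L_\nu$ in $K$ give $\Hom(K,\overline\nabla_\nu)=0$; when $\rho(\nu)<\rho(\lambda)$ the terms $\Hom(P,\overline\nabla_\nu)$ and $\Hom(M_{=\lambda},\overline\nabla_\nu)$ both vanish by weight considerations, reducing the long exact sequence to $\Hom(K,\overline\nabla_\nu)\cong\Ext^1(M_{=\lambda},\overline\nabla_\nu)=0$ by the bootstrap; for the remaining $\nu$ the constituent $L_\nu$ is absent from $K$ and the vanishing is automatic. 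Hence $\Hom(K,\overline\nabla_\nu)=0$ for every $\nu$. If $K$ were nonzero, the projective cover $\bP(K)\twoheadrightarrow K$ would provide a simple quotient $K\twoheadrightarrow L_\nu$ whose composition with $L_\nu\hookrightarrow\overline\nabla_\nu$ contradicts this vanishing, so $K=0$ and $M_{=\lambda}\simeq P$.

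The hard part will be the $\Ext^1$-inheritance bootstrap described above: propagating the hypothesis $\Ext^1(M,\overline\nabla_\nu)=0$ to each $M_{=\lambda}$ requires using the induction hypothesis not only to identify the earlier layers as standards but, more subtly, to upgrade this to the full higher-$\Ext$ vanishing of $\imath^*_{<\lambda}(M)$ necessary to cancel the potential $\Ext^2$-obstructions that naively appear in the long exact sequences. Once this bookkeeping between the $\Theta$-stratification and the $\Lambda$-indexing of proper costandards is correctly aligned, the remainder of the argument reduces to the direct kernel-vanishing verification via Proposition~\ref{prp::standard::filtration} combined with the socle/projective-cover device.
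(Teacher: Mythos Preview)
The paper defers this proof to \cite[\S3.5]{Kh}, so there is no in-paper argument to compare against. Your overall strategy---inducting on $\Theta$ and bootstrapping the $\Ext^1$-vanishing from $M$ down to each layer $M_{=\lambda}$ via Proposition~\ref{prp::standard::filtration}---is the standard one and is sound, but two of your intermediate justifications are false as written.

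First, ``$(M)_{\not\leq\lambda}$ has no $L_\nu$-constituent for $\rho(\nu)\leq\rho(\lambda)$'' is wrong: for a nonsplit extension $0\to L_\nu\to E\to L_\mu\to 0$ with $\rho(\nu)\leq\rho(\lambda)<\rho(\mu)$ one has $(E)_{\not\leq\lambda}=E$, which visibly contains $L_\nu$. The correct reason for $\Hom((M)_{\not\leq\lambda},\overline\nabla_\nu)=0$ is that $(M)_{\not\leq\lambda}$ has no nonzero \emph{quotient} in $\cO^\star_{\leq\lambda}$: if it did, closure of the Serre subcategory under extensions would force a strictly larger quotient of $M$ into $\cO^\star_{\leq\lambda}$, contradicting minimality of $(M)_{\not\leq\lambda}$. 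Second, ``$K$ has constituents $L_\nu$ only for $\rho(\nu)<\rho(\lambda)$'' fails whenever $[\Delta_\mu:L_\mu]>1$, which is allowed in a stratified (as opposed to highest weight) category. Fortunately you do not need this claim: for $\rho(\nu)=\rho(\lambda)$ the isomorphism~\eqref{eq::proj::iso} already makes $\pi^*:\Hom(M_{=\lambda},\overline\nabla_\nu)\to\Hom(P,\overline\nabla_\nu)$ bijective, so the next map $\Hom(P,\overline\nabla_\nu)\to\Hom(K,\overline\nabla_\nu)$ in the long exact sequence vanishes and $\Hom(K,\overline\nabla_\nu)$ injects into $\Ext^1(M_{=\lambda},\overline\nabla_\nu)=0$ by your bootstrap. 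With these two local repairs the argument goes through.
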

\begin{proof}
See e.g.~\cite[\S3.5]{Kh}
\end{proof}

\begin{cor}
\label{CorBGG}
Let $\cO^{\star}$ be a stratified category with enough projectives, then each projective $\bP_{\lambda}$ admits an excellent $\Delta$-filtration and the multiplicities of subquotients isomorphic to $\Delta_\mu$ in the projective satisfy the BGG reciprocity
\[
[\bP_\lambda:\Delta_{\mu}] = [\overline\nabla_{\mu}:L_\lambda].
\]
\end{cor}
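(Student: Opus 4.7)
The plan is to derive the corollary directly from the preceding proposition together with the $\RHom$-vanishings already established inside the proof of Theorem~\ref{thm::Stratified}. First I would observe that since $\bP_\lambda$ is projective, $\Ext^1(\bP_\lambda, \overline{\nabla}_\nu) = 0$ for every $\nu \in \Lambda$, so the preceding proposition immediately supplies an excellent $\Delta$-filtration $\bP_\lambda = F^0 \supset F^1 \supset \dotsb$ whose successive quotients are direct sums of standards. The multiplicities $[\bP_\lambda : \Delta_\mu]$ are thus well-defined, independent of the chosen filtration (as for any stratified category).

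Next I would compute $[\bP_\lambda : \Delta_\mu]$ in two independent ways, each identifying it with $\dim \Hom(\bP_\lambda, \overline{\nabla}_\mu)$ (working throughout with the assumption $\End(L_\nu) = \Bbbk$, which holds in the $\Bbbk$-linear setting over an algebraically closed field of characteristic zero). On one hand, the implication $(s1^\star) \Rightarrow (s3^\star)$ in the proof of Theorem~\ref{thm::Stratified} shows that $\RHom(\Delta_\nu, \overline{\nabla}_\mu)$ is concentrated in degree zero and equals $\delta_{\nu\mu}\,\Bbbk$. Applying $\Hom(-, \overline{\nabla}_\mu)$ layer-by-layer to the excellent $\Delta$-filtration then collapses the long exact sequences and yields
\[
\dim \Hom(\bP_\lambda, \overline{\nabla}_\mu) = [\bP_\lambda : \Delta_\mu].
\]
On the other hand, $\bP_\lambda$ is the projective cover of $L_\lambda$, so the functor $\Hom(\bP_\lambda,-)$ is exact and satisfies $\Hom(\bP_\lambda, L_\nu) = \delta_{\lambda\nu}\,\Bbbk$; a short induction on the composition series (which is of finite length for each individual simple) gives $\dim \Hom(\bP_\lambda, \overline{\nabla}_\mu) = [\overline{\nabla}_\mu : L_\lambda]$. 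Comparing these two expressions produces the BGG reciprocity.

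The main subtlety I expect is that the excellent $\Delta$-filtration of $\bP_\lambda$ is potentially infinite, so the passage from layer-wise vanishings to a global identity requires some care. To handle this I would use that any homomorphism into $\overline{\nabla}_\mu \in \cO^\star_{\leq\mu}$ factors through the maximal quotient of $\bP_\lambda$ belonging to $\cO^\star_{\leq\mu}$, and among the $\Delta$-layers of that quotient only the summands isomorphic to $\Delta_\mu$ contribute to maps into $\overline{\nabla}_\mu$. Their number is controlled by the finite integer $[\bP_\lambda : L_\mu]$ (and matches the count produced by Proposition~\ref{prp::standard::filtration}), so the sum really is a finite sum for each fixed $\mu$. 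Beyond this bookkeeping the whole argument is formal and does not require any new ingredient.
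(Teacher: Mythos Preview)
Your proposal is correct and follows exactly the standard argument the paper has in mind: the paper does not write out a proof of Corollary~\ref{CorBGG} but simply refers to \cite{Kh}, and the argument there is precisely the one you give---use projectivity of $\bP_\lambda$ together with the preceding proposition to obtain the excellent $\Delta$-filtration, then compute $\dim\Hom(\bP_\lambda,\overline\nabla_\mu)$ in the two ways you describe, invoking Lemma~\ref{lem::ext::1} and $(s3^\star)$ for the orthogonality. Your handling of the infinite-filtration subtlety via Proposition~\ref{prp::standard::filtration} is also in line with how the paper sets things up.
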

The following results are very useful for applications.
\begin{prop}
\label{prp::substratified}
For any $\lambda\in\Lambda$ the subcategory $\cO^{\star}_{\leq\lambda}$ of a $\Theta$-stratified category $\cO^{\star}$ is also stratified.
\end{prop}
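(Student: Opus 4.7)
The strategy is to verify the equivalent condition $(s1^{\star})$ of Theorem~\ref{thm::Stratified} for $\tilde{\cO} := \cO^{\star}_{\leq\lambda}$, using that $\cO^{\star}$ is stratified. First I would confirm the basic structural data: $\tilde{\cO}$ is a $\Theta_{\leq\rho(\lambda)}$-ordered abelian category whose simple objects are indexed by $\tilde{\Lambda} := \{\mu\in\Lambda : \rho(\mu)\leq\rho(\lambda)\}$. For $\mu\in\tilde{\Lambda}$, the object $\imath_{\leq\lambda}^{*}(\bP_{\mu})$ is the projective cover of $L_{\mu}$ in $\tilde{\cO}$, because by the adjunction $(\imath_{\leq\lambda}^{*},\imath_{\leq\lambda})$
\[
\Hom_{\tilde{\cO}}(\imath_{\leq\lambda}^{*}(\bP_{\mu}), N) = \Hom_{\cO^{\star}}(\bP_{\mu}, \imath_{\leq\lambda} N)
\]
is exact in $N\in\tilde{\cO}$, and its top is $L_{\mu}$. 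Similarly, for $\mu\in\tilde{\Lambda}$ the standard objects $\Delta_{\mu}$ and the proper costandard objects $\overline{\nabla}_{\mu}$ of $\tilde{\cO}$ coincide with their counterparts in $\cO^{\star}$, since each of these objects already belongs to $\cO^{\star}_{\leq\mu}\subset\tilde{\cO}$.

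Next, I would observe that for any finite $S\subset\tilde{\Lambda}$, the Serre subcategory $\tilde{\cO}_{\leq S}$ of $\tilde{\cO}$ literally coincides with $\cO^{\star}_{\leq S}$, because the extra condition $\rho(\nu)\leq\rho(\lambda)$ is automatic from $\rho(\nu)\leq\rho(s)\leq\rho(\lambda)$. Applying condition $(s1^{\star})$ to $\cO^{\star}$ twice (once for $S$ and once for $\{\lambda\}$), both embeddings of derived categories
\[
\cD(\cO^{\star}_{\leq S}) \hookrightarrow \cD(\cO^{\star}) \quad \text{and} \quad \cD(\cO^{\star}_{\leq\lambda}) \hookrightarrow \cD(\cO^{\star})
\]
are fully faithful. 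The natural factorization
\[
\cD(\tilde{\cO}_{\leq S}) = \cD(\cO^{\star}_{\leq S}) \longrightarrow \cD(\tilde{\cO}) = \cD(\cO^{\star}_{\leq\lambda}) \longrightarrow \cD(\cO^{\star})
\]
has fully faithful composition and fully faithful second arrow, so a standard argument forces the first arrow to be fully faithful as well. This is exactly condition $(s1^{\star})$ for $\tilde{\cO}$.

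The main technical obstacle is verifying that $\tilde{\cO}$ satisfies the background assumptions for Theorem~\ref{thm::Stratified}, namely enough projectives together with appropriate finiteness conditions on projective resolutions and on decreasing filtrations with simple subquotients. The existence of projective covers is handled by the formula $\tilde{\bP}_{\mu} = \imath_{\leq\lambda}^{*}(\bP_{\mu})$ above, and the finiteness $[\tilde{\bP}_{\mu}:L_{\nu}] \leq [\bP_{\mu}:L_{\nu}] < \infty$ follows from the exact, fully faithful embedding $\imath_{\leq\lambda}$. The existence of projective resolutions in $\tilde{\cO}$ in which each simple appears finitely often can be built iteratively from these projective covers, and the decreasing filtration axiom is inherited from $\cO^{\star}$ by taking the intersection with $\tilde{\cO}$. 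Alternatively, one may sidestep $(s1^{\star})$ and directly verify condition $(s4^{\star})$ for $\tilde{\cO}$: the fully faithful embedding $\cD(\tilde{\cO})\hookrightarrow\cD(\cO^{\star})$ identifies $\Ext^{2}_{\tilde{\cO}}(\Delta_{\mu},\overline{\nabla}_{\nu})$ with $\Ext^{2}_{\cO^{\star}}(\Delta_{\mu},\overline{\nabla}_{\nu})$, which vanishes for $\rho(\mu)\neq\rho(\nu)$ by the stratification of $\cO^{\star}$.
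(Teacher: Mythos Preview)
Your proposal is correct and follows essentially the same route as the paper. The paper's proof is a single sentence invoking Definition~\ref{def::stratified} (the stratification of derived categories), and your core argument---factoring $\cD(\tilde{\cO}_{\leq S}) \to \cD(\tilde{\cO}) \to \cD(\cO^{\star})$ and using that a fully faithful composite with fully faithful second factor forces the first factor to be fully faithful---is exactly the ``straightforward application'' the paper has in mind; your additional verification of the background hypotheses on $\tilde{\cO}$ is more careful than the paper, but not a different idea.
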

\begin{proof}
    The straightforward application of Definition~\ref{def::stratified} through the stratification of derived categories.
\end{proof}
\begin{cor}\label{BGGreciprocity}
Suppose that $\Theta$-ordered category $\cO^{\star}$ is stratified and has enough projectives.
Then for any $\lambda\in\Lambda$ the BGG reciprocity holds for any subcategory $\cO^{\star}_{\leq\lambda}$.
In other words, for all $\mu\in\Lambda$
the module $\imath_{\leq\lambda}^{*}\bP_\mu$ admits an excellent filtration by standards $\Delta_{\nu}$ with $\rho(\lambda)\geq\rho(\nu)\geq\rho(\mu)$ and 
$$
[\imath_{\leq\lambda}^*(\bP_\mu): \Delta_\nu] = [\overline\nabla_{\nu}:L_{\mu}].
$$
\end{cor}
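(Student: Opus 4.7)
The plan is to derive Corollary \ref{BGGreciprocity} as an immediate consequence of Proposition \ref{prp::substratified} combined with Corollary \ref{CorBGG}, once one verifies that standard and proper costandard objects behave well under restriction to the Serre subcategory $\cO^{\star}_{\leq\lambda}$.

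First I would apply Proposition \ref{prp::substratified} to conclude that $\cO^{\star}_{\leq\lambda}$ is itself a stratified category. Next I would observe that $\cO^{\star}_{\leq\lambda}$ inherits enough projectives from $\cO^{\star}$: by the general adjunction formalism recalled in Remark \ref{StandardExist}, the module $\imath_{\leq\lambda}^{*}(\bP_\mu)$ is the projective cover of $L_\mu$ inside $\cO^{\star}_{\leq\lambda}$ whenever $\rho(\mu)\le\rho(\lambda)$.

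The key bookkeeping step is to identify the standard and proper costandard objects in the restricted category with those in the ambient one. For the proper costandards this is immediate from the universal property \eqref{eq::proper::costandard}: $\overline\nabla_\nu$ already lives in $\cO^{\star}_{\leq\nu}\subset\cO^{\star}_{\leq\lambda}$, and its defining diagram only involves modules in $\cO^{\star}_{\leq\nu}$, so the object is insensitive to whether one works in $\cO^{\star}$ or in $\cO^{\star}_{\leq\lambda}$. For the standards I would note that both interpretations recover the projective cover of $L_\nu$ inside $(\cO^{\star}_{\leq\lambda})_{\leq\nu}=\cO^{\star}_{\leq\nu}$, which by Remark \ref{StandardExist} is $\Delta_\nu$. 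With these identifications in hand, Corollary \ref{CorBGG} applied inside the stratified category $\cO^{\star}_{\leq\lambda}$ produces the excellent $\Delta$-filtration of $\imath_{\leq\lambda}^{*}(\bP_\mu)$ together with the multiplicity identity $[\imath_{\leq\lambda}^{*}(\bP_\mu):\Delta_\nu]=[\overline\nabla_\nu:L_\mu]$.

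Finally, the range constraint $\rho(\lambda)\geq\rho(\nu)\geq\rho(\mu)$ comes essentially for free: the upper bound holds because every composition factor of any object of $\cO^{\star}_{\leq\lambda}$ is indexed by some $\nu$ with $\rho(\nu)\le\rho(\lambda)$, while the lower bound holds because the right-hand multiplicity $[\overline\nabla_\nu:L_\mu]$ vanishes unless $L_\mu$ is a composition factor of $\overline\nabla_\nu\in\cO^{\star}_{\leq\nu}$, forcing $\rho(\mu)\le\rho(\nu)$. The only part demanding genuine care is the identification of (proper co)standard objects under the Serre restriction, and this reduces to a routine comparison of their universal properties; beyond that, the corollary is formal.
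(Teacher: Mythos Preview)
Your proof is correct and follows the same route as the paper, which simply says the result ``follows from Proposition~\ref{prp::substratified} and Corollary~\ref{CorBGG}.'' You have spelled out the implicit details---that $\imath_{\leq\lambda}^{*}\bP_\mu$ is the projective cover in the subcategory, that standards and proper costandards are unchanged under restriction, and the range constraint on $\rho(\nu)$---all of which the paper leaves to the reader.
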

\begin{proof}
    Follows from Propositions~\ref{prp::substratified} and Corollary~\ref{CorBGG}.
\end{proof}

\begin{thm}
\label{thm::characters::stratified}
 A $\Theta$-ordered category $\cO^{\star}$ is stratified if the following assumptions are satisfied:
 \begin{enumerate}
     \item $\cO^{\star}$ has enough projectives $\bP_\lambda$ and contains standard modules $\Delta_{\lambda}$ satisfying assumptions from Definition~\ref{def::stratified};
     \item there exists a collection of integers $\{m_{\lambda,\mu}\colon \lambda,\mu\in \Lambda\}$ such that, for all $\la\in\Lambda$, the sum
     $\sum_{\mu} m_{\lambda,\mu} [\Delta_\mu]$ is well defined in $K_0(\cO^{\star})$ and is equal to $[\bP_\lambda]$;
     \item for all $\lambda\in\Lambda$, the proper costandard object $\overline\nabla_{\lambda}$ belongs to $\cO^{\star}$ and its pairing with any $C\in\cD(\cO^{\star})$ is well defined (for example, it is enough to have finite-dimensional $\overline\nabla_\lambda$); 
     \item the characters of standard modules $\Delta_{\lambda}$ and proper costandard modules $\overline\nabla_{\mu}$ constitute a dual basis in the Grothendieck group $K_0(\cO^{\star})$:
\begin{equation}
\label{eq::standard::orthogonal}
 \forall\, \lambda,\mu \in \Lambda:  \qquad 
 ([\Delta_\lambda],[\overline\nabla_\mu]) = \chi(\Ext_{\cD(\cO^{\star})}(\Delta_\lambda,\overline\nabla_{\mu})) = \delta_{\lambda,\mu}.
\end{equation}          
 \end{enumerate}
 \end{thm}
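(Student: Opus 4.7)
The plan is to verify the criterion $(s2^{\star})$ of Theorem~\ref{thm::Stratified}, namely that for each $\lambda$ the kernel of $\bP_\lambda\twoheadrightarrow\Delta_\lambda$ admits a filtration by standard modules $\Delta_\mu$ with $\rho(\mu)>\rho(\lambda)$. To do so, I would use a character comparison to upgrade the surjections produced by Proposition~\ref{prp::standard::filtration} to isomorphisms, thereby exhibiting an excellent $\Delta$-filtration of $\bP_\lambda$.

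The first step is to identify the constants $m_{\lambda,\mu}$ of assumption~(2) with the composition multiplicities $[\overline\nabla_\mu:L_\lambda]$. Since $\bP_\lambda$ is projective, $\RHom(\bP_\lambda,\overline\nabla_\mu)$ is concentrated in degree zero, so
\[
([\bP_\lambda],[\overline\nabla_\mu])=\dim\Hom(\bP_\lambda,\overline\nabla_\mu)=[\overline\nabla_\mu:L_\lambda].
\]
Expanding the left-hand side via assumption~(2) and applying the orthogonality~\eqref{eq::standard::orthogonal} yields $([\bP_\lambda],[\overline\nabla_\mu])=m_{\lambda,\mu}$, hence $m_{\lambda,\mu}=[\overline\nabla_\mu:L_\lambda]\ge 0$. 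Because $\overline\nabla_\mu\in\cO^{\star}_{\le\mu}$ contains $L_\mu$ with multiplicity one and no other $L_\nu$ with $\rho(\nu)=\rho(\mu)$, one immediately deduces $m_{\lambda,\lambda}=1$ and $m_{\lambda,\mu}=0$ whenever $\rho(\mu)\not\ge\rho(\lambda)$.

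Next, I would apply Proposition~\ref{prp::standard::filtration} to $\bP_\lambda$: for each $\eta\in\Theta$ it produces a surjection
\[
\phi_\eta\colon \bigoplus_{\nu\colon\rho(\nu)=\eta}\Delta_\nu^{m_{\lambda,\nu}}\twoheadrightarrow (\bP_\lambda)_{\not<\eta}/(\bP_\lambda)_{\not\le\eta},
\]
where the exponents are identified with $m_{\lambda,\nu}$ via the first step. Summing over the exhaustive separated filtration of Remark~\ref{remark::filtration} in $K_0(\cO^{\star})$ and using assumption~(2) gives
\[
[\bP_\lambda]=\sum_\eta\bigl[(\bP_\lambda)_{\not<\eta}/(\bP_\lambda)_{\not\le\eta}\bigr]\le\sum_\eta\sum_{\nu\colon\rho(\nu)=\eta}m_{\lambda,\nu}[\Delta_\nu]=\sum_\nu m_{\lambda,\nu}[\Delta_\nu]=[\bP_\lambda].
\]
Since each term is a non-negative combination of simples, equality must hold layer by layer, and the Krull--Schmidt implication $[\ker\phi_\eta]=0\Rightarrow\ker\phi_\eta=0$ then forces every $\phi_\eta$ to be an isomorphism, yielding an excellent $\Delta$-filtration of $\bP_\lambda$ with layers $\bigoplus_{\nu\colon\rho(\nu)=\eta}\Delta_\nu^{m_{\lambda,\nu}}$.

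To finish, the layer at $\eta=\rho(\lambda)$ reduces to $\Delta_\lambda$ (using $m_{\lambda,\lambda}=1$ and the vanishing of $m_{\lambda,\mu}$ for $\rho(\mu)=\rho(\lambda)$, $\mu\ne\lambda$), and all remaining nonzero layers sit at $\eta>\rho(\lambda)$. Hence the kernel of $\bP_\lambda\twoheadrightarrow\Delta_\lambda$ is filtered by standards $\Delta_\mu$ with $\rho(\mu)>\rho(\lambda)$, which is precisely condition $(s2^{\star})$, and Theorem~\ref{thm::Stratified} then concludes that $\cO^{\star}$ is stratified. The main subtlety I anticipate is the manipulation of characters in $K_0(\cO^{\star})$ in the presence of possibly infinite sums of simples; assumptions~(2) and~(3) are designed exactly so that the relevant sums and pairings are well defined, after which the non-negativity of layer contributions together with Krull--Schmidt promotes the layer-by-layer character equality to genuine module isomorphisms.
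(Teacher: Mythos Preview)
Your proposal is correct and follows essentially the same route as the paper: identify $m_{\lambda,\mu}=[\overline\nabla_\mu:L_\lambda]$ via the pairing $([\bP_\lambda],[\overline\nabla_\mu])$ and the orthogonality~\eqref{eq::standard::orthogonal}, then apply Proposition~\ref{prp::standard::filtration} to cover each layer of the standard filtration of $\bP_\lambda$ by $\bigoplus_{\nu:\rho(\nu)=\eta}\Delta_\nu^{m_{\lambda,\nu}}$, and use the character equality $[\bP_\lambda]=\sum_\mu m_{\lambda,\mu}[\Delta_\mu]$ together with Krull--Schmidt to force all kernels to vanish. The paper writes the final step as $\sum_\mu [M_{\lambda,\mu}]=0\Rightarrow M_{\lambda,\mu}=0$ rather than your inequality-then-equality phrasing, but the argument is the same.
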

\begin{rem}
Note that condition $(s3^{\star})$ of Theorem~\ref{thm::Stratified} implies that if the category $\cO^{\star}$ is stratified, then standard and proper costandard modules
assemble dual bases in $K_0(\cO^{\star})$, provided 
the pairings between standard and proper costandard modules are well defined.
The surprising fact is that if all summations are well defined, then it is sufficient to have vanishing of the Euler characteristic rather than all $\Ext$-groups.
\end{rem}
\begin{proof}
Let us look more carefully at the pairing with projectives:
\begin{equation}
\label{eq::pairing::with::projective} 
([\bP_{\lambda}],[M]) = \chi(\Ext(\bP_\la,M))= \dim\Hom(\bP_{\lambda},M) = [M:L_\lambda].
\end{equation}
In particular, $[\bP_\lambda]$ and $[L_\mu]$ constitute a dual basis in $K_0(\cO^{\star})$: 
$$
\dim \Ext^{i}_{\cO^{\star}}(\bP_\lambda,L_\mu) = \delta_{\lambda,\mu}\delta_{i,0} \Rightarrow ([\bP_\lambda],[L_\mu])=\delta_{\lambda,\mu}.$$
From assumption $\mathrm{(ii)}$, 
we know that
$$
[\bP_\lambda] = \sum_{\mu\in\Lambda} m_{\lambda,\mu} [\Delta_{\mu}]
$$
holds in $K_0(\cO^{\star})$, and thanks to $\mathrm{(iii)}$ we can take the pairing with any proper costandard module $\overline\nabla_{\mu}$:
$$
[\overline\nabla_{\mu}:L_\lambda] \stackrel{\eqref{eq::pairing::with::projective}}{=} ([\bP_\lambda],[\overline\nabla_\mu]) = \left(\sum_{\nu} m_{\lambda,\nu} [\Delta_\nu], [\overline\nabla_{\mu}]\right) = \sum_{\nu}
m_{\lambda,\nu} \left([\Delta_\nu], [\overline\nabla_{\mu}]\right) = m_{\lambda,\mu}.
$$
The proper standard modules $\overline\nabla_{\mu}$ belong to $\cO^{\star}_{\leq\mu}$ and consequently we have the following equality in $K_0(\cO^{\star})$:
$$
[\overline\nabla_{\mu}] = \sum_{\lambda} [\overline\nabla_{\mu}:L_\lambda] = 
[L_\mu] + \sum_{\lambda\colon \rho(\lambda)<\rho(\mu)} m_{\lambda,\mu} [L_\lambda].
$$
In particular, $m_{\lambda,\mu} = 0$ whenever $\lambda\not\leq\mu$.

Let us consider the standard filtration of the projective cover $\bP_{\lambda}$. Thanks to Proposition~\ref{prp::standard::filtration} we know that the $\mu$-component of the associated graded is covered by the sum of standard modules $\Delta_{\mu}$ with multiplicity $m_{\lambda,\mu} = [\overline\nabla_{\mu}:L_{\lambda}]$. Let us denote the kernel of the corresponding map by $M_{\lambda,\mu}$: $$0\to M_{\lambda,\mu} \rightarrow \Delta_{\mu}^{\oplus m_{\lambda,\mu}}\rightarrow \imath_{\leq\mu}^*(\bP_{\lambda})/\imath_{<\mu}^*(\bP_{\lambda}) \rightarrow 0.$$
Then we have the following equality in the Grothendieck group:
$$
[\bP_{\lambda}] = [\Delta_{\lambda}] + \sum_{\mu\colon \rho(\mu)>\rho(\lambda)} (m_{\lambda,\mu}[\Delta_{\mu}] -[M_{\la,\mu}]).
$$
It follows that
$$
\sum_{\mu\colon \rho(\mu)>\rho(\lambda)} [M_{\la,\mu}] = 0,\quad \text{and hence} \quad M_{\lambda,\mu} = 0 \quad \text{for all $\la,\mu.$}
$$
We deduce that $\bP_\lambda$ admits a filtration by standard modules $\Delta_{\mu}$ with $\rho(\mu)\geq\rho(\lambda).$
\end{proof}

\subsubsection{Stratified categories with enough injectives}
Let us outline similar results for the case of ordered categories with enough injectives. We do not include the proofs, as they are identical to those in the preceding section, with the directions of the arrows reversed.

In this section we assume that \( \cO \) is a \( \Theta \)-ordered category satisfying the following assumptions:
\begin{itemize}
\item $\cO$ has enough injectives, in particular, each irreducible $L_{\lambda}$ admits an injective hull $\bI_{\lambda}$;
\item the costandard object $\nabla_{\lambda}$ is an injective hull of $L_\lambda$ in all categories $\cO_{\le S}$ for $S\ni\la$ such that $\rho(\lambda)$ is the maximal element in $\rho(S)$;
\item the proper standard objects $\overline{\Delta}_{\lambda}$ belongs to $\cO$;
\item each object \( M \in \mathcal{O} \) admits 
an increasing exhaustive filtration $0=F_0 M\subset F_1 M\subset \ldots$, such that the consecutive subquotients $F_iM/F_{i-1}M$ are simple and each simple appears as a subquotient finite number of times;
\item each object \( M \in \mathcal{O} \) admits an injective  resolution \(M\hookrightarrow \bI_{\ldot} \) such that, for each \( \lambda \in \Lambda \), the number of integers \( k \) for which \( [\bI_k(M) : L_\lambda] \neq 0 \) is finite. 
\end{itemize}
We denote by $\cD(\cO)$ the corresponding derived category consisting of bounded below complexes, where each irreducible appears only a finite number of times.

\begin{prop}
For each module $M$ there exist embeddings of the subquotients of the filtration~\eqref{eq::filtr::sub} into the sum of several copies of the costandard modules:
\[
\imath_{\leq\lambda}\imath^{!}_{\leq\lambda} M / \imath_{<\lambda}\imath^{!}_{<\lambda} M \hookrightarrow \oplus_{\mu\colon \rho(\mu)=\lambda} \nabla_{\mu}^{\oplus m_{\mu}}
\text{ with } m_{\mu}:=\dim\Hom(\overline\Delta_{\mu},M).
\]
\end{prop}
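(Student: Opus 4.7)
The proof proceeds by dualising that of Proposition~\ref{prp::standard::filtration}: projective covers and standard modules are replaced by injective hulls and costandard modules, and decreasing filtrations by increasing ones.

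First I would reduce to the case $M \in \cO_{\leq\lambda}$ with $\imath^{!}_{<\lambda} M = 0$, since the subquotient $\imath^{!}_{\leq\lambda} M / \imath^{!}_{<\lambda} M$ lies in $\cO_{\leq\lambda}$ and satisfies this annihilation condition by the Serre property of $\cO_{<\lambda}$. Under this reduction, no simple submodule of $M$ can lie in $\cO_{<\lambda}$, so the socle decomposes as $\mathrm{soc}(M) = \bigoplus_{\mu:\rho(\mu)=\rho(\lambda)} L_\mu^{\oplus s_\mu}$, with each $s_\mu$ finite by the hypothesis on Jordan--H\"older multiplicities.

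Next I would build the embedding. For each copy of $L_\mu$ in $\mathrm{soc}(M)$, the inclusion $L_\mu \hookrightarrow \nabla_\mu$ extends along $L_\mu \hookrightarrow M$ to a map $\phi_\mu: M \to \nabla_\mu$, using the injectivity of $\nabla_\mu$ in $\cO_{\leq\lambda}$ guaranteed by the standing assumption (since $\rho(\mu)=\rho(\lambda)$ is maximal). Assembling the $\phi_\mu$ yields $\Phi: M \to I := \bigoplus_{\mu:\rho(\mu)=\rho(\lambda)} \nabla_\mu^{\oplus s_\mu}$ whose restriction to $\mathrm{soc}(M)$ is the canonical inclusion into $\mathrm{soc}(I) = \mathrm{soc}(M)$; since every nonzero object in this setting has nonzero socle (via the increasing exhaustive filtration), $\ker \Phi = 0$, so $\Phi$ is an embedding.

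To identify $s_\mu$ with $m_\mu := \dim\Hom(\overline{\Delta}_\mu, M)$, I would invoke Lemma~\ref{lem::ext::1} to obtain $\Hom(\overline{\Delta}_\mu, \nabla_{\mu'}) = \delta_{\mu,\mu'}\Bbbk$ for $\rho(\mu')=\rho(\lambda)$, so that $\Hom(\overline{\Delta}_\mu, I) \cong \Bbbk^{s_\mu}$. Left-exactness applied to the embedding $\Phi$ gives an injection $\Hom(\overline{\Delta}_\mu, M) \hookrightarrow \Hom(\overline{\Delta}_\mu, I)$. For surjectivity one observes, dually to the projective case, that any nonzero map $\overline{\Delta}_\mu \to \nabla_\mu$ factors through $L_\mu$, so any map $\overline{\Delta}_\mu \to I$ is determined by an induced map $L_\mu \to \mathrm{soc}(I) = \mathrm{soc}(M) \subset M$, which lifts through $\Phi$. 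Hence $m_\mu = s_\mu$, completing the proof.

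The principal subtlety I anticipate lies in managing potentially infinite socles: one must verify that the direct sum defining $I$ and the induced map $\Phi$ are well defined in $\cO$, and that the iterative extension of inclusions from the socle does not run into convergence problems. This is controlled by the finiteness of each $s_\mu$ together with the hypothesis that each simple appears with finite multiplicity in $M$, so that on each graded component of $M$ only finitely many $\phi_\mu$ contribute nontrivially; beyond this, the argument is a formal dualisation of the proof of Proposition~\ref{prp::standard::filtration}.
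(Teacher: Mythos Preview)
Your proposal is correct and follows essentially the same approach as the paper, which simply states that the proof is obtained by reversing all arrows in the proof of Proposition~\ref{prp::standard::filtration}. Your version is in fact a slightly cleaner dualisation---working with the whole socle at once rather than iteratively peeling off simple submodules---but the content is identical, and your identification $s_\mu=m_\mu$ via Lemma~\ref{lem::ext::1} mirrors precisely the paper's argument for the multiplicities in the projective case.
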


\begin{dfn}
We say that the category $\cO$ is stratified iff for any finite subset $S\subset \Lambda$ the derived functor $\imath_{\leq S}: \cD(\cO_{\leq S}) \rightarrow \cD(\cO)$ is a fully faithful embedding.
\end{dfn}

\begin{thm}
Suppose that category $\cO$ has enough injectives, proper standards $\overline\Delta_\lambda$ exist in $\cO$ and $\cD(\cO)$ is generated by injectives. Then the following conditions are equivalent:
\begin{enumerate}
    \item[(s1)] the derived category $\cD(\cO)$ is stratified;
    \item[(s2)] $\forall \lambda\in\Lambda$ the cokernel of the injection $\imath_{\lambda}:\nabla_{\lambda}\rightarrow \bI_{\lambda}$ admits a filtration by $\nabla(\mu)$ with $\rho(\mu)>\rho(\lambda)$;
    \item[(s3)] there are no derived homomorphisms $\RHom(\overline\Delta_{\lambda},\nabla_{\mu})$ for all pairs of $\lambda,\mu$ such that $\rho(\lambda)\neq\rho(\mu)$;
    \item[(s4)] $\forall \lambda,\mu$ with $\rho(\lambda)\neq\rho(\mu)$ we have  $ \Ext_{\cD(\cO)}^2(\overline\Delta_\lambda,\nabla_\mu) = 0$.
\end{enumerate}    
\end{thm}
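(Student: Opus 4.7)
The plan is to mirror the cyclic proof of Theorem~\ref{thm::Stratified} under the duality that exchanges projectives with injectives, standards $\Delta_\lambda$ with costandards $\nabla_\lambda$, and proper costandards $\overline\nabla_\lambda$ with proper standards $\overline\Delta_\lambda$. This yields the chain $(s2) \Rightarrow (s1) \Rightarrow (s3) \Rightarrow (s4) \Rightarrow (s2)$. Since all four conditions are manifestly dual to their counterparts in Theorem~\ref{thm::Stratified}, the proof is obtained from the one above by reversing arrows, exchanging kernels with cokernels, left adjoints with right adjoints, and surjections with injections.

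For $(s2) \Rightarrow (s1)$, given a finite $S \subset \Lambda$, I would consider the triangulated subcategory of $\cD(\cO)$ cogenerated by the injective hulls $\bI_\mu$ with $\rho(\mu) \not\leq \rho(s)$ for every $s \in S$. By iterating $(s2)$, each such $\bI_\mu$ carries a filtration whose costandard subquotients $\nabla_\nu$ also satisfy $\rho(\nu) \not\leq \rho(s)$ for all $s \in S$; this subcategory is left admissible, and its right-orthogonal complement (complexes with homology in $\cO_{\leq S}$) is equivalent to $\cD(\cO_{\leq S})$ via injective resolutions built from $\imath^!_{\leq S}(\bI_\mu)$, yielding the required full faithfulness. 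For $(s1) \Rightarrow (s3)$: when $\rho(\lambda) \leq \rho(\mu)$, both $\overline\Delta_\lambda$ and $\nabla_\mu$ lie in $\cO_{\leq\mu}$ where $\nabla_\mu$ is injective, so by $(s1)$ one reduces to $\Hom_{\cO_{\leq\mu}}(\overline\Delta_\lambda, \nabla_\mu) = \delta_{\lambda,\mu}\Bbbk$ via Lemma~\ref{lem::ext::1}; the incomparable case uses $\cO_{\leq \{\lambda,\mu\}}$, while $\rho(\lambda) > \rho(\mu)$ is handled through the universal property of proper standards together with stratification in $\cO_{\leq\lambda}$. The implication $(s3) \Rightarrow (s4)$ is tautological.

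The principal step is $(s4) \Rightarrow (s2)$. Dualizing the projective argument, I would prove by induction on $\lambda \in \Theta_{\geq\mu}$ (lower-finite) that, for every $\mu \in \Lambda$, the subquotient $\imath^!_{\leq\lambda}(\bI_\mu)/\imath^!_{<\lambda}(\bI_\mu)$ is a direct sum of costandards $\nabla_\nu^{\oplus n_\nu}$ with $\rho(\nu) = \rho(\lambda)$ and $n_\nu = \dim\Hom(\overline\Delta_\nu, \bI_\mu)$. The dual of Proposition~\ref{prp::standard::filtration} supplies an embedding of this subquotient into such a direct sum; to promote it to an isomorphism, I would consider the cokernel $K$, select a maximal $\nu'$ with $[K:L_{\nu'}] \neq 0$ to produce a nontrivial map $\nabla_{\nu'} \to K$, and apply $\Ext^1$ to the corresponding short exact sequence, invoking $(s4)$ to conclude that it splits in $\cO_{\leq\lambda}$ and thus $K = 0$. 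Specializing this inductive statement to the subquotients $\imath^!_{\leq\lambda'}(\bI_\lambda)/\nabla_\lambda$ for $\lambda' > \lambda$ yields $(s2)$. The main obstacle I anticipate is managing this potentially infinite but locally finite induction: the lower-finiteness of $\Theta$, the finite-multiplicity of simples in each $\bI_\mu$, and the control on injective resolutions in our standing hypotheses all need to be combined to ensure that the inductive construction assembles coherently into a bona fide costandard filtration of $\bI_\lambda/\nabla_\lambda$.
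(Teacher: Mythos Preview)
Your approach coincides with the paper's, which simply states that the proof is identical to that of Theorem~\ref{thm::Stratified} with arrows reversed; your cyclic chain $(s2)\Rightarrow(s1)\Rightarrow(s3)\Rightarrow(s4)\Rightarrow(s2)$ is exactly this dualization. One small slip in your $(s4)\Rightarrow(s2)$ step: the nontrivial map produced from a maximal $\nu'$ with $[K:L_{\nu'}]\neq 0$ should be $\overline\Delta_{\nu'}\to K$ (dual to the map $K\to\overline\nabla_\nu$ in the projective argument), not $\nabla_{\nu'}\to K$; it is $\RHom(\overline\Delta_{\nu'},-)$ that one applies to the four-term sequence, using injectivity of $\imath^!_{\leq\lambda}(\bI_\mu)$ in $\cO_{\leq\lambda}$ and $(s4)$ for the induction, to force $\Hom(\overline\Delta_{\nu'},K)=0$ and hence $K=0$.
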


\begin{dfn}
 An increasing filtration $\cF^{\bullet} M$ of a module $M$ in the stratified category $\cO$ is called excellent $\nabla$-filtration iff the successive quotients are isomorphic to the direct sum of costandard modules. 
\end{dfn}

\begin{prop}
Suppose the category $\cO$ is stratified and has enough injectives and proper standard. 
Then 
\begin{itemize}
    \item the module $M$ admits an excellent $\nabla$-filtration by costandard modules if and only if 
    $$\forall \mu\in \Lambda \ \Ext^1(\overline\Delta_{\mu},M)=0;$$
\item one has BGG reciprocity 
\[
[\bI_\lambda:\nabla_{\mu}] = [\overline\Delta_{\mu}:L_\lambda].
\]
for the multiplicities of costandards in the excellent $\nabla$-filtration of injectives;
\item for any $\lambda\in\Lambda$ the subcategory $\cO_{\leq\lambda}$ is also stratified;
in particular, the module $\imath_{\leq\lambda}^{!}\bI_\mu$ admits an excellent filtration by costandards $\nabla_{\nu}$ with $\rho(\lambda)\geq\rho(\nu)\geq\rho(\mu)$ and we have equality of multiplicities:
$$
[\imath_{\leq\lambda}^!(\bI_\mu): \nabla_\nu] = [\overline\Delta_{\nu}:L_{\mu}].
$$
\end{itemize}
\end{prop}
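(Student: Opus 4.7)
The plan is to establish each bullet by dualizing the corresponding projective-case results (Corollary~\ref{CorBGG}, Proposition~\ref{prp::substratified}, and Corollary~\ref{BGGreciprocity}), with all arrows reversed.

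For the first bullet, the ``if'' direction is an immediate consequence of Lemma~\ref{lem::ext::1} and the stratified property (s3), which together give $\Ext^1_{\cO}(\overline\Delta_\mu, \nabla_\nu) = 0$ for all $\mu,\nu \in \Lambda$; filtered induction (using local finiteness of composition factors to pass to direct limits) then yields $\Ext^1(\overline\Delta_\mu, M) = 0$. For the ``only if'' direction, given $\Ext^1(\overline\Delta_\mu, M) = 0$ for all $\mu$, I would pick a minimal stratum $\rho(\lambda)$ with $\imath_{\leq\lambda}^!(M) \neq 0$. By the preceding Proposition (the injective dual of Proposition~\ref{prp::standard::filtration}), this bottom piece embeds into $\bigoplus_{\rho(\mu)=\rho(\lambda)} \nabla_\mu^{m_\mu}$ with $m_\mu = \dim\Hom(\overline\Delta_\mu, M)$. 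The vanishing of $\Ext^1(\overline\Delta_\mu, M)$ allows one to lift each such embedded copy to a subobject of $M$, producing an injection of the costandard sum into $M$. The quotient inherits the Ext-vanishing via the long exact sequence of $\Ext(\overline\Delta_\mu, -)$, and iteration along the exhaustive filtration from Remark~\ref{remark::filtration} delivers the excellent $\nabla$-filtration.

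For the second bullet, apply the first bullet to the injective hull $\bI_\lambda$: by injectivity $\Ext^1(\overline\Delta_\mu, \bI_\lambda) = 0$, so $\bI_\lambda$ carries an excellent $\nabla$-filtration. To extract multiplicities, use the pairing from~\eqref{eq::pairing::def}. The identity $([\overline\Delta_\mu], [\nabla_\nu]) = \delta_{\mu,\nu}$ (from (s3) and Lemma~\ref{lem::ext::1}) provides dual bases, while exactness of $\Hom(-,\bI_\lambda)$ yields
\[
([\overline\Delta_\mu], [\bI_\lambda]) = \dim\Hom(\overline\Delta_\mu, \bI_\lambda) = [\overline\Delta_\mu : L_\lambda].
\]
Comparing with $([\overline\Delta_\mu], [\bI_\lambda]) = \sum_\nu [\bI_\lambda : \nabla_\nu] \delta_{\mu,\nu} = [\bI_\lambda : \nabla_\mu]$ gives the reciprocity $[\bI_\lambda : \nabla_\mu] = [\overline\Delta_\mu : L_\lambda]$. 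For the third bullet, the Serre subcategory $\cO_{\leq\lambda}$ is $\Theta_{\leq\rho(\lambda)}$-ordered and has enough injectives, namely $\imath_{\leq\lambda}^!(\bI_\mu)$. Its stratification follows from the observation $(\cO_{\leq\lambda})_{\leq S} = \cO_{\leq S}$ for any $S$ with $\rho(S) \leq \rho(\lambda)$, so the fully faithful embedding $\cD(\cO_{\leq S}) \to \cD(\cO)$ factors through $\cD(\cO_{\leq\lambda})$. Applying the second bullet inside $\cO_{\leq\lambda}$ then gives the formula for $[\imath_{\leq\lambda}^!(\bI_\mu) : \nabla_\nu]$.

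The main technical hurdle is the ``only if'' direction of the first bullet: since both $\Lambda$ and the stratum filtration may be infinite, the induction must be conducted carefully (possibly transfinitely), relying on separatedness and exhaustiveness from Remark~\ref{remark::filtration} together with local finiteness of simple multiplicities to guarantee convergence. The propagation of Ext-vanishing through successive quotients is the crux of the argument and is entirely parallel to the projective-case discussion in~\cite[\S3.5]{Kh}.
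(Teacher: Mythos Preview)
Your approach is exactly the paper's: the paper gives no proof of this proposition at all, stating only that the arguments are ``identical to those in the preceding section, with the directions of the arrows reversed,'' and for the first bullet the projective analogue itself is deferred to \cite[\S3.5]{Kh}. Your dualization of Corollary~\ref{CorBGG}, Proposition~\ref{prp::substratified}, and Corollary~\ref{BGGreciprocity} is precisely what is intended.

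One small correction: you have the ``if'' and ``only if'' directions swapped. The statement reads ``$M$ admits an excellent $\nabla$-filtration \emph{if and only if} $\Ext^1(\overline\Delta_\mu,M)=0$,'' so the \emph{only if} direction is the easy one (filtration $\Rightarrow$ Ext-vanishing, via (s3) and Lemma~\ref{lem::ext::1}), and the \emph{if} direction is the substantive one (Ext-vanishing $\Rightarrow$ filtration). Also, in that harder direction your phrase ``lift each such embedded copy to a subobject of $M$'' is slightly off: the bottom piece $\imath_{\leq\lambda}^!(M)$ is already a subobject of $M$; what the Ext-vanishing buys you is that the cokernel of the embedding $\imath_{\leq\lambda}^!(M)\hookrightarrow\bigoplus\nabla_\mu^{m_\mu}$ is zero (dualize the $K=0$ step in the proof of $(s4^\star)\Rightarrow(s2^\star)$ in Theorem~\ref{thm::Stratified}), so the bottom piece \emph{is} a sum of costandards. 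The rest of your sketch is fine.
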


\begin{thm} \label{thm:strat-inj}
The following conditions are sufficient for a $\Theta$-ordered category $\cO$ with enough injectives to be stratified:

Proper standard modules $\overline\Delta_{\lambda}$ and costandard modules $\nabla_\mu$ are well defined and assemble dual bases in the Grothedieck group $K_0(\cO)$; the characters of injectives $[\bI_{\lambda}]$ can be written as linear combinations of characters of costandard modules $[\nabla_{\mu}]$ in the Grothendieck group.
\end{thm}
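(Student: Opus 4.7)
The plan is to mirror the proof of Theorem~\ref{thm::characters::stratified} with all arrows reversed, working with injectives, costandards, and proper standards in place of projectives, standards, and proper costandards, and to verify criterion (s2) of the dual of Theorem~\ref{thm::Stratified}. The core tool will be the dual of Proposition~\ref{prp::standard::filtration} stated just before the theorem, which embeds each subquotient $\imath^{!}_{\leq\mu}(M)/\imath^{!}_{<\mu}(M)$ into $\bigoplus_{\nu:\rho(\nu)=\mu}\nabla_\nu^{\oplus m_\nu}$ with multiplicities $m_\nu = \dim\Hom(\overline\Delta_\nu,M)$.

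First I would record the dual of the pairing identity~\eqref{eq::pairing::with::projective}: since $\bI_\lambda$ is injective with socle $L_\lambda$, the functor $\Hom(-,\bI_\lambda)$ is exact with $\dim\Hom(L_\mu,\bI_\lambda)=\delta_{\lambda,\mu}$, so induction along the finite-multiplicity increasing filtration of any $M\in\cO$ gives $([M],[\bI_\lambda]) = \dim\Hom(M,\bI_\lambda) = [M:L_\lambda]$. In particular $\{[L_\mu]\}$ and $\{[\bI_\lambda]\}$ form dual bases of $K_0(\cO)$. Writing the assumed expansion as $[\bI_\lambda] = \sum_\mu m_{\lambda,\mu}[\nabla_\mu]$ and pairing with a proper standard $\overline\Delta_\nu$, using the assumed duality $([\overline\Delta_\nu],[\nabla_\mu])=\delta_{\nu,\mu}$, then identifies the coefficients:
$$m_{\lambda,\nu} = ([\overline\Delta_\nu],[\bI_\lambda]) = [\overline\Delta_\nu:L_\lambda].$$

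Next I would apply the dual of Proposition~\ref{prp::standard::filtration} to $M=\bI_\lambda$: its $\mu$-th subquotient $Q_\mu$ embeds into $\nabla_\mu^{\oplus m_\mu}$ where $m_\mu = \dim\Hom(\overline\Delta_\mu,\bI_\lambda) = [\overline\Delta_\mu:L_\lambda] = m_{\lambda,\mu}$; denote the cokernel by $C_\mu$. Summing the short exact sequences $0\to Q_\mu \to \nabla_\mu^{\oplus m_{\lambda,\mu}}\to C_\mu\to 0$ in $K_0(\cO)$ yields
$$\sum_\mu m_{\lambda,\mu}[\nabla_\mu] = [\bI_\lambda] + \sum_\mu [C_\mu],$$
which together with the hypothesis $[\bI_\lambda]=\sum_\mu m_{\lambda,\mu}[\nabla_\mu]$ forces $\sum_\mu[C_\mu]=0$. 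Each $[C_\mu]$ is a non-negative combination of simples, and the character map is injective on objects by Krull-Schmidt, so $C_\mu=0$ and hence $Q_\mu\simeq\nabla_\mu^{\oplus m_{\lambda,\mu}}$. Since the socle of $\bI_\lambda$ is $L_\lambda$, the filtration begins at level $\mu=\lambda$ with $Q_\lambda=\nabla_\lambda^{\oplus m_{\lambda,\lambda}}=\nabla_\lambda$ (using $[\overline\Delta_\lambda:L_\lambda]=1$), producing an excellent $\nabla$-filtration of $\bI_\lambda$. The induced filtration on the cokernel of $\nabla_\lambda\hookrightarrow \bI_\lambda$ then has successive quotients $\nabla_\mu^{\oplus m_{\lambda,\mu}}$ with $\rho(\mu)>\rho(\lambda)$, which is precisely criterion (s2); the dual of Theorem~\ref{thm::Stratified} then concludes that $\cO$ is stratified.

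The main obstacle I anticipate is bookkeeping around the potentially infinite sums involved: one must check that the pairings $([M],[\bI_\lambda])$, $([\overline\Delta_\nu],[\bI_\lambda])$ and $\sum_\mu[C_\mu]$ are all well defined in $K_0(\cO)$, which relies on the lower finiteness of $\Theta$ together with the finite multiplicity of each simple in $\bI_\lambda$, $\nabla_\mu$ and $\overline\Delta_\nu$, and that the subquotients $Q_\mu$ produced by the dual Proposition assemble coherently into a single excellent filtration of $\bI_\lambda$ across all $\mu$.
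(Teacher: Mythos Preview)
Your proposal is correct and follows exactly the approach the paper intends: the paper explicitly states that the proofs in the injective setting are ``identical to those in the preceding section, with the directions of the arrows reversed,'' and your argument is precisely the dualization of the proof of Theorem~\ref{thm::characters::stratified}, using the dual Proposition (embedding subquotients of $\imath^{!}_{\leq\mu}\bI_\lambda$ into sums of $\nabla_\mu$) in place of Proposition~\ref{prp::standard::filtration} and verifying criterion~(s2). Your computation of the multiplicities $m_{\lambda,\nu}=[\overline\Delta_\nu:L_\lambda]$ via the pairing and the vanishing of the cokernels $C_\mu$ in $K_0$ mirror the paper's argument step for step.
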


\section{Structure of bimodule of functions}\label{BimoduleFunctions}
In this section, we study the bi-module of functions on an algebraic group $G$ satisfying certain technical assumptions. We use the categorical approach developed in Section \ref{HWC}.

\subsection{Reductive Lie algebras}
We collect the main notation and definitions of reductive Lie algebras to be used in this section. Let $\fg_0=\fg_0^{ss}\oplus \fg_0^{a}$ be the 
decomposition of a reductive Lie algebra into semi-simple and abelian parts. All Lie algebras are defined over a fixed algebraically closed field $\Bbbk$ of characteristic $0$.

Let us consider the Cartan decomposition $\fg_0^{ss}=\fn_+\oplus\fh\oplus\fn_-$. Let
$\Delta=\Delta_+\sqcup \Delta_-\subset\fh^*$ be the root system of $\fg_0^{ss}$. 
For each $\al\in\Delta_+$, we denote by $e_\al\in\fn_+$ the corresponding Chevalley generator. Similarly, for
$\al\in\Delta_-$, we denote by $f_\al$ the Chevalley generator of weight $\al$ in $\fn_-$.
For $\alpha\in \Delta$, we write $\alpha>0$ for $\alpha\in\Delta_+$ and $\alpha<0$ for $\alpha\in\Delta_-$. We denote by $\{ \al_i \}_{i \in I}$ the simple roots and by
$\{\om_i\}_{i \in I}$ the fundamental weights. For a root $\al\in\Delta$, we denote by $\al^\vee\in\fh$ the corresponding coroot.
For the canonical pairing $\bra\cdot,\cdot\ket:\fh^*\times\fh\to\bC$, we have $\bra \om_i,\al_j^\vee \ket
=\delta_{ij}$. 

Let $Q(\fg_0^{ss})=\bigoplus_{i\in I}\ZZ\alpha_i$ be the root lattice and $Q_+(\fg_0^{ss})=\bigoplus_{i\in I}\ZZ_{\ge 0}\alpha_i$ the positive root cone. 
Let $P(\fg_0^{ss})=\bigoplus_{i\in I}\bZ\om_i$ be the weight lattice and $P_+(\fg_0^{ss})=\sum_{i\in I} \bZ_{\ge 0} \om_i$ the dominant weight cone. Let $\rho=\frac{1}{2}\sum_{\alpha\in\Delta_+} \alpha\in P_+$.

Now let us fix a basis $\beta_1,\dots,\beta_r\in (\fg_0^{a})^*$ of the dual space to the abelian subalgebra. Then we define
\[
P(\fg_0)=P(\fg_0^{ss})\oplus \bigoplus_{i=1}^r \bZ\beta_i,\ P_+(\fg_0)=P_+(\fg_0^{ss})\oplus \bigoplus_{i=1}^r \bZ\beta_i
\]
(note that $P_+(\fg_0)$ contains the whole sublattice $\bigoplus_{i=1}^r \bZ\beta_i$). 

Let $W=W(\fg_0)=W(\fg_0^{ss})$ be the Weyl group with the longest element $w_0$.
For a root $\al\in\Delta$, the corresponding reflection is denoted by $s_\al\in W$ and for each $i \in I$, we set $s_{i} := s_{\al_{i}}$.

For $P=P(\fg_0)$, let $\ZZ[P]$ be the group algebra of $P$, which is a free $\ZZ$-module with basis $\{x^\lambda\}_{\lambda\in P}$ satisfying $x^\lambda x^\mu=x^{\lambda+\mu}$ for all $\lambda,\mu\in P$. Denote by $\ZZ[[P]]$ be the $\ZZ[P]$-module consisting of all (possibly infinite) formal sums $\sum_{\lambda\in P} c_\lambda x^\lambda$ where $c_\lambda\in\ZZ$. Setting $x_i=x^{\omega_i}$ for $i\in I$, we can also write $x^\lambda=\prod_{i\in I}x_i^{\langle\lambda,\alpha_i^\vee\rangle}$. More generally, for any commutative ring $R$ with $1$, we define $R[P]=R\otimes\ZZ[P]$ and $R[[P]]=R\otimes\ZZ[[P]]$.

For any (left) $\fh$-module $M$ and $\lambda\in\fh^*$, let ${}_\lambda M=\{m\in M : \text{$hm=\lambda(h)m$ for all $h\in \fh$}\}$ be the corresponding weight space. If $M$ has a weight space decomposition
$M=\displaystyle\bigoplus_{\lambda \in P}{}_\lambda M$ such that $\dim {}_\lambda M<\infty$ for all $\la\in P$,
then we define the character
$\ch\, M=\sum_{\lambda \in P}(\dim {}_\lambda M) x^\lambda\in\ZZ[[P]].$

Analogously, for a right $\fh$-module $M$ and $\la\in \fh^*$, we define the weight spaces $M_\lambda=\{m\in M : \text{$mh=\lambda(h)m$ for all $h\in \fh$}\}.$ If $M$ has a decomposition
$M=\displaystyle\bigoplus_{\lambda \in P}M_\lambda$ such that $\dim M_\lambda <\infty$ for all $\la\in P$,
then we define the character
$\ch\, M=\sum_{\lambda \in P}(\dim M_\lambda) y^\lambda$
inside another copy of $\ZZ[[P]]$ with basis elements denoted $\{y^\lambda\}_{\lambda\in P}$.

Finally, for $M$ an $(\fh,\fh)$-bimodule, we set ${}_\lambda M_\mu={}_\lambda M \cap M_\mu$ for all $\lambda,\mu\in \fh^*$. If $M$ has a decomposition
$M=\displaystyle\bigoplus_{\lambda,\mu\in P} {}_\lambda M_\mu, \dim {}_\lambda M_\mu<\infty$,
then we define the bi-character
$\ch\, M=\sum_{\lambda,\mu \in P}(\dim {}_\lambda M_\mu) x^\lambda y^\mu$
as an element of $\ZZ[[P\times P]]=\{\sum_{\lambda,\mu} c_{\lambda,\mu} x^\lambda y^\mu : c_{\lambda,\mu}\in\ZZ\}$. The latter is a module over $\ZZ[P\times P]\cong\ZZ[P]\otimes\ZZ[P]$, whose elements are finite sums of the above form. 

Later (see \S\ref{curalg}), we also consider the abelian group $(\ZZ[[P\times P]])[[q]]$ consisting of power series in $q$ with coefficients in $\ZZ[[P\times P]]$. This is a module over the ring $(\ZZ[P\times P])[[q]]$.

\subsection{Pro-algebraic groups}
\label{progroup}
In this subsection, we describe the assumptions on a nonreductive Lie algebra $\fg$ (whose Lie group is proalgebraic and possibly infinite-dimensional); the categories of representation of such algebras are studied below. In particular, 
Theorem~\ref{FT} states that if these assumptions are satisfied and the category of representations is stratified, then the algebra of functions on the corresponding group admits a filtration, which provides a generalization of the Peter-Weyl theorem.

Let $\fg$ be a $\bZ_{\ge 0}$ graded Lie algebra, $\fg=\bigoplus_{m\ge 0} \fg(m)$. We assume the following:
\begin{itemize}
    \item $\fg=\fg_0\oplus \mathfrak{r}$, where $\fg_0\subset \fg(0)$ is a reductive Lie subalgebra with a Cartan subalgebra $\fh\subset\fg_0$,
    \item $\mathfrak{r}=\bigoplus_{m\ge 0} \mathfrak{r}(m)$ is a graded ideal,
    \item each graded component $\mathfrak{r}(m)$, $m>0$, is a finite-dimensional $\fg_0$-module,
    \item $\dim \mathfrak{r}(0)<\infty$ and all the  $\fh$-weight spaces of $\U(\mathfrak{r}(0))$ are finite-dimensional;
    \item $\bigcap_{i=0}^\infty (ad\, \mathfrak{r})^i.\fg =0$
    \item $\dim(\fg/(ad\, \mathfrak{r})^i.\fg)<\infty$ for any $i \geq 0$.
\end{itemize}

We assume that there exists a group $G$ such that 
\begin{itemize}
\item $G=G_0\ltimes R$, where $G_0$ is connected reductive finite-dimensional group and $R$ is a normal subgroup,
\item The sequence $R=R_0\supset R_1\supset\dots$ defined by $R_{i+1}= [R,R_i]$ satisfies  $\bigcap_{i=0}^\infty R_i=\{e\}$, where $e\in G$ is the unit element.
\item ${\rm Lie}(G_0) = \fg_0$,
${\rm Lie}(G/R_i)=\fg/(ad\, \mathfrak{r})^i.\fg$.
\item for each $i\ge 0$, $R_i/R_{i+1}\simeq \mathbb{G}_a^{M}$ for some $M\ge 0$ (depending on $i$), where $\mathbb{G}_a=(\Bbbk,+)$.
\end{itemize}

Let $\overline{\fg}\subset \fg$ be the $\fh$-weight zero subalgebra of $\fg$ (in particular, $\fg_0\supset\fh\subset\overline{\fg}$). 
In what follows we assume that
\begin{equation}\label{abelian}
\overline{\fg} \text{ is abelian Lie algebra}.
\end{equation}

We note that all the assumptions above except for assumption~\eqref{abelian} are satisfied for any finite-dimensional Lie algebra.

\begin{example}
The following Lie algebras satisfy all the  conditions above:
\begin{itemize}
    \item Borel and parabolic subalgebras of semisimple Lie algebras;
    \item current Lie algebras (untwisted and twisted);
    \item Iwahori Lie algebras.
\end{itemize}
\end{example}

\begin{rem}\label{rem:Iwahori}
Recall that the Iwahori algebra attached to a simple Lie algebra $\mathfrak{s}$ is defined as $\mathcal{I}=\fb\oplus \mathfrak{s}\T z\Bbbk[z]\subset \mathfrak{s}[z]$,
where $\fb\subset\mathfrak{s}$ is a Borel subalgebra.
Let $S$ be the connected simply-connected Lie group of $\mathfrak{s}$. The (pro-algebraic) Iwahori group $\bfI\subset S[[z]]$ is defined as the preimage of the Borel subgroup 
$B\subset S$ under the $z=0$ evaluation map $S[[z]]\to S$. Then  the pair  
$\fg=\mathcal{I}$, $G=\bfI$ satisfies the conditions above (see the proof of Theorem \ref{TheoremStr} for more details). We note that the Lie algebra of $\bfI$ is the completion $\fb\oplus \mathfrak{s}\T z\Bbbk[[z]]\subset \mathfrak{s}[[z]]$ of 
$\mathcal{I}$, not $\mathcal{I}$ itself, however the use of $\mathcal{I}$ allows to avoid infinite linear combinations.
\end{rem}

We introduce two categories $\fC$ and $\fC^\star$  of $\fg$ modules:
\begin{itemize}
    \item 
Let 
 $\fC$ be the category of $\bZ$-graded $\fg$ modules $M=\bigoplus_{m\in\bZ} M(m)$ subject to the following conditions: 
\begin{itemize}
\item $M(m)=0$ for $m$ large enough $m>>0$,
\item each $M(m)$ is a direct sum of finite-dimensional $G_0$-integrable irreducible $\fg_0$-modules with finite multiplicities,
\item all the $\fh$-weight spaces of $\ker \mathfrak{r}^i$ are finite-dimensional for all $i> 0$,
\item for any $v\in M$ there exists an $i>0$ such that $\mathfrak{r}^iv=0$, i.e. $\bigcup_{i=1}^\infty \ker \mathfrak{r}^i=M$.
\end{itemize}
The morphisms in $\fC$ are the graded homomorphisms of $\fg$ modules.
\item
The "opposite" category $\fC^\star$ consists of modules $\bZ$-graded $\fg$ modules $M=\bigoplus_{m\in\bZ} M(m)$ such that 
\begin{itemize}
\item $M(m)=0$ for $m<<0$,
\item each $M(m)$ is a direct sum of finite-dimensional $G_0$-integrable irreducible $\fg_0$ modules with finite multiplicities,
\item $M$ is generated as an $\mathfrak{r}$ module by a subspace $\overline{M}$ admitting a decomposition into integrable irreducible $\fg_0$ modules with finite multiplicities,
\item all the $\fh$-weight spaces of $M/\mathfrak{r}^i M$ are finite-dimensional for all $i>0$.
\end{itemize}
The morphisms in $\fC^\star$ are the graded homomorphisms of $\fg$ modules.
\end{itemize}
For a module $M$  from $\fC$ or from $\fC^\star$ we denote by $M^\vee$ the restricted dual module, i.e. the direct sum over $m$ of duals of $\fg_0$ modules showing up as summands of $M(m)$.

We {assume} that there exists a partial order 
$\preceq$ on $P_+(\fg_0)$ yielding the following properties:
\begin{itemize}\label{AssumptionsOrder}
\item[(o1)] For any $\la,\mu\in P_+(\fg_0)$ such that $\la-\mu$ is a sum of positive roots for $\fg_0$ one has $\la\succeq \mu$.
\item[(o2)] Let $\gamma_1,\dots,\gamma_s$ be (nonzero) roots of $\fg$ with respect to the Cartan subalgebra $\fh\subset\fg_0$. Assume that $\sum_{i=1}^s \gamma_i=0$. Then there exists a subset $S\subset \{1,\dots,s\}$ such that 
\begin{equation}\label{assumption}
\la+\sum_{i\in S}  \gamma_i \not\preceq \la.
\end{equation} 
\end{itemize}

The irreducible objects in categories $\fC$ and $\fC^{\star}$ are irreducible $\fg_{0}$-modules placed in appropriate grading, so they are indexed by $\Lambda:=P_{+}(\fg_0)\times \bZ$.
The projection $\rho:\Lambda\rightarrow P_{+}(\fg_0)$ on the first factor defines an order on the abelian categories $\fC$ and $\fC^{\star}$ in the sense of Definition~\ref{Ordered:Category}. So we can apply the machinery of ordered and stratified categories from Section~\ref{HWC} to the categories $\fC$ and $\fC^{\star}$.
In particular, Lemma~\ref{lem::standard}  describes the structure of standard objects $\Delta_{\lambda}$ that belong to $\fC^{\star}$, and costandard objects $\nabla_{\lambda}$ belonging to $\fC$. 

The final assumption we make is as follows:
\begin{equation}\label{Alambda}
{\rm End}_{\fC}(\nabla_\lambda) \simeq {\rm End}_{\fC^{\star}}(\Delta_{-w_0(\lambda)}).
\end{equation}

\begin{example}
The following Lie algebras and their appropriate categories of representations satisfy the conditions above:
    \begin{itemize}
        \item Borel subalgebra (\cite{vdK}, Remark \ref{VanDerKallen});
        \item current Lie algebras with dominance order \cite{Kh};
        \item Iwahori Lie algebras with Cherednik order (Theorem \ref{TheoremStr}).
    \end{itemize}
\end{example}

\subsection{Algebras of functions on pro-algebraic groups}\label{sec::functions}

We study the bimodule $\Bbbk[G]$ of functions on $G$. More precisely, we realize
$G$ as the projective limit of finite-dimensional algebraic groups
\[
G/R_1\twoheadleftarrow G/R_2 \twoheadleftarrow G/R_3\twoheadleftarrow \dotsm
\]
The corresponding spaces of functions form a sequence
\[
\Bbbk[G/R_1]\hookrightarrow \Bbbk[G/R_2] \hookrightarrow \Bbbk[G/R_3]\hookrightarrow \dotsm
\]
and we define $\Bbbk[G]$ as the inductive limit of this sequence.  Note that one has a natural embedding $\Bbbk[G_0]\hookrightarrow\Bbbk[G]$, since $G/R_1\simeq G_0$.

\begin{lem}
 Each element of $\Bbbk[G]$ can be evaluated at an element from $G$. The subspace $\Bbbk[G/R_k]$ consists of elements $f$ such that $f(gr)=f(g)$ for all $r\in R_k$.    
\end{lem}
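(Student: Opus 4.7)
My plan is to use directly the construction of $\Bbbk[G]$ as the inductive limit $\varinjlim_{k}\Bbbk[G/R_{k}]$ and of $G$ as the projective limit $\varprojlim_{k}G/R_{k}$. First, I will produce a well-defined evaluation map $\Bbbk[G]\times G\to\Bbbk$. Given $f\in\Bbbk[G]$, by definition there exists some $k$ and some $\tilde f\in\Bbbk[G/R_{k}]$ with $f=\pi_{k}^{*}\tilde f$, where $\pi_{k}\colon G\twoheadrightarrow G/R_{k}$ is the canonical projection; I then set $f(g):=\tilde f(\pi_{k}(g))$. The transition maps $\Bbbk[G/R_{k}]\hookrightarrow\Bbbk[G/R_{k'}]$ in the system are precisely pullback along the surjection $G/R_{k'}\twoheadrightarrow G/R_{k}$ (valid because $R_{k'}\subset R_{k}$ for $k'\ge k$), so the choice of representative $\tilde f$ does not affect the value $f(g)$: two representatives agree after pulling back to any common quotient.

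Next, for the forward inclusion, if $f=\pi_{k}^{*}\tilde f\in\Bbbk[G/R_{k}]$ and $r\in R_{k}$, then $\pi_{k}(gr)=\pi_{k}(g)\pi_{k}(r)=\pi_{k}(g)$ in $G/R_{k}$, whence $f(gr)=\tilde f(\pi_{k}(g))=f(g)$. This immediately shows that elements of $\Bbbk[G/R_{k}]$ are right $R_{k}$-invariant as functions on $G$ under the evaluation defined above.

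For the converse, suppose $f\in\Bbbk[G]$ satisfies $f(gr)=f(g)$ for every $r\in R_{k}$. Choose $m$ such that $f=\pi_{m}^{*}\tilde f$ with $\tilde f\in\Bbbk[G/R_{m}]$; by enlarging $m$ if necessary (using the injective transition maps of the directed system) I may assume $m\ge k$, so that $R_{m}\subset R_{k}$. Here I use the fact that each $R_{i}$ is a normal subgroup of $G$, which follows by induction from $R_{i+1}=[R,R_{i}]$: conjugating a commutator $[r,r_{i}]$ by $g\in G$ gives $[grg^{-1},gr_{i}g^{-1}]\in[R,R_{i}]=R_{i+1}$, using normality of $R$ and the inductive hypothesis on $R_{i}$. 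Therefore $R_{k}/R_{m}$ is a normal subgroup of the algebraic group $G/R_{m}$, it acts on $G/R_{m}$ by right translation, and $(G/R_{m})/(R_{k}/R_{m})\cong G/R_{k}$. The right $R_{k}$-invariance of $f$ translates to invariance of $\tilde f$ under this $R_{k}/R_{m}$-action, hence $\tilde f$ descends to a regular function on $G/R_{k}$, i.e.\ $f\in\Bbbk[G/R_{k}]\subset\Bbbk[G]$.

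The only mildly delicate step is ensuring that the descent of $\tilde f$ from $G/R_{m}$ to $G/R_{k}$ is regular rather than just set-theoretic, but this is a standard GIT/quotient fact once one knows that $R_{k}/R_{m}$ is a closed normal algebraic subgroup of $G/R_{m}$ and that $G/R_{m}\to G/R_{k}$ is the corresponding geometric quotient; both are built into the pro-algebraic presentation of $G$ assumed in \S\ref{progroup}. Everything else is a routine inductive-limit manipulation.
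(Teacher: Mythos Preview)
Your proof is correct. The paper states this lemma without proof, treating it as an immediate consequence of the inductive/projective limit definitions; your argument fills in exactly the routine details one would expect, and the descent step you flag is indeed standard for quotients of affine algebraic groups by closed normal subgroups.
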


We note that the space of functions $\Bbbk[G]$ admits commuting left and right actions of the Lie algebra $\fg$ thanks to the realization of $\Bbbk[G]$ as an inductive limit $\Bbbk[G/R_i]$ and the left-right group action (the actions of $\fg$ are induced by the left and right actions of the group $G$ on itself). 
In what follows we study this bimodule structure of $\Bbbk[G]$.

\begin{prop}
For each nonzero $f \in\Bbbk[G]$ there exists an element $x \in \U(\fr)$ such that $x.f \in \Bbbk[G_0]$ is a nonzero element. Similarly, for each nonzero $f \in\Bbbk[G]$ there exists an element $x \in \U(\fr)$ such that $f.x \in \Bbbk[G_0]$ is a nonzero element.  
\end{prop}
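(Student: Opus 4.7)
The strategy is to exploit the descending filtration $R \supset R_1 \supset R_2 \supset \dotsm$ with trivial intersection to inductively descend $f$ down to $\Bbbk[G_0]$. Since $\Bbbk[G] = \bigcup_k \Bbbk[G/R_k]$, any nonzero $f$ lies in $\Bbbk[G/R_k]$ for a smallest $k \geq 1$; I would induct on this $k$. For $k = 1$ there is nothing to prove, so assume $k \geq 2$ and $f \in \Bbbk[G/R_k] \setminus \Bbbk[G/R_{k-1}]$. It suffices to produce a single $y \in \U(\mathfrak{r}_{k-1}) \subseteq \U(\fr)$, where $\mathfrak{r}_{k-1} := \mathrm{Lie}(R_{k-1})$, such that $y.f$ is a nonzero element of $\Bbbk[G/R_{k-1}]$; the induction hypothesis then produces $x' \in \U(\fr)$ with $x' y . f \in \Bbbk[G_0]\setminus\{0\}$, and we take $x = x'y$.

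The key ingredient is that $R_{k-1}/R_k \simeq \mathbb{G}_a^M$ is abelian, so the action of $\mathfrak{r}_{k-1}$ on $\Bbbk[G/R_k]$ factors through the abelian Lie algebra $\mathfrak{a} := \mathfrak{r}_{k-1}/\mathfrak{r}_k$. Choose $y_1,\dotsc,y_M \in \mathfrak{r}_{k-1}$ projecting to a basis of $\mathfrak{a}$, so that $(s_1,\dotsc,s_M) \mapsto \exp(s_1 y_1)\dotsm\exp(s_M y_M)$ is an isomorphism $\mathbb{G}_a^M \xrightarrow{\sim} R_{k-1}/R_k$. Since $G/R_k$ is an affine algebraic group (quotient by a normal subgroup) and $R_{k-1}/R_k$ acts algebraically by right translation, $\Bbbk[G/R_k]$ is a locally finite rational comodule over $\Bbbk[R_{k-1}/R_k] \cong \Bbbk[t_1,\dotsc,t_M]$. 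Concretely, for $g \in G$ the function $t \mapsto f(g\exp(t_1 y_1)\dotsm\exp(t_M y_M))$ is a polynomial in $t$, and expanding it as $\sum_I f_I(g)\, t^I/I!$ with finitely many $f_I \in \Bbbk[G/R_k]$ nonzero, one verifies $(y_1^{i_1}\dotsm y_M^{i_M}).f = f_I$ for each multi-index $I$.

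The decisive observation is that top-degree coefficients are automatically $R_{k-1}$-invariant. Let $N = \max\{|I| : f_I \neq 0\}$; since $f \notin \Bbbk[G/R_{k-1}]$ we have $N \geq 1$, and I pick any $I_0$ with $|I_0| = N$ and $f_{I_0} \neq 0$. For $r \equiv \exp(s_1 y_1)\dotsm\exp(s_M y_M) \pmod{R_k}$, the abelianness of $R_{k-1}/R_k$ gives
\[
f\bigl(gr\exp(\textstyle\sum_j t_j y_j)\bigr) = \sum_J f_J(g)\,(s+t)^J/J!,
\]
and extracting the coefficient of $t^{I_0}/I_0!$ on both sides yields $f_{I_0}(gr) = \sum_{J \geq I_0} f_J(g)\, s^{J-I_0}/(J-I_0)!$. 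Maximality of $|I_0|$ forces only $J = I_0$ to contribute, so $f_{I_0}(gr) = f_{I_0}(g)$. Thus $y := y_1^{i_1}\dotsm y_M^{i_M}$ satisfies $y.f = f_{I_0} \in \Bbbk[G/R_{k-1}]\setminus\{0\}$, completing the inductive step.

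The step I expect to require the most care is the global polynomial expansion: one must verify that the $R_{k-1}/R_k$-action on $\Bbbk[G/R_k]$ is a locally finite rational representation, so that the $f_I$ are globally defined regular functions on $G/R_k$ and the sum is finite, rather than a formal Taylor expansion valid only near the identity. The affineness of $G/R_k$ together with the assumption $R_{k-1}/R_k \simeq \mathbb{G}_a^M$ are precisely what make this work. The assertion for the right action $f.x$ follows by the symmetric argument using left translation, or equivalently by applying the established statement after twisting by the anti-automorphism $\fg \to \fg$, $x\mapsto -x$.
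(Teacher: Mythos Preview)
Your proof is correct, but it takes a different route from the paper's. The paper argues in one shot: for $f\in\Bbbk[G/R_k]$, the $\fr$-action factors through the finite-dimensional nilpotent Lie algebra $\fr/(\mathrm{ad}\,\fr)^k\fg$, and since the representation of $G/R_k$ on $\Bbbk[G/R_k]$ is locally finite, the cyclic space $\U(\fr)f$ is finite-dimensional; Engel's theorem then yields a nonzero $\fr$-invariant vector in $\U(\fr)f$, and the easy identification $\Bbbk[G]^R=\Bbbk[G_0]$ finishes. Your argument instead performs an explicit descent through the filtration $R\supset R_1\supset\dotsm$, at each step using the isomorphism $R_{k-1}/R_k\simeq\mathbb{G}_a^M$ to extract a top-degree Taylor coefficient, which is automatically $R_{k-1}$-invariant.

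What each approach buys: the paper's argument is shorter and makes no use of the specific hypothesis $R_{k-1}/R_k\simeq\mathbb{G}_a^M$; it only needs that $G/R_k$ is a finite-dimensional algebraic group with unipotent radical $R/R_k$. Your argument is more constructive---it actually exhibits the element $x\in\U(\fr)$ as a product of explicit monomials in chosen generators---and isolates exactly where the additive-group hypothesis enters. One minor point: your expansion $f(g\exp(\sum t_j y_j))$ computes derivatives coming from \emph{right} translation, so strictly speaking it produces the element for the claim $f.x\in\Bbbk[G_0]$; you then need the symmetric argument (or the anti-automorphism) for $x.f$, as you note at the end. This is harmless, but be careful to keep the conventions straight.
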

\begin{proof}
Assume that $f\in \Bbbk[G/R_k]$. We prove that:
\begin{itemize}
 \item the space $\U(\fr)f$ contains an element which is invariant with respect to $R$,
 \item $\Bbbk[G]^R=\Bbbk[G_0]$.
\end{itemize}
To prove the first claim we note that the group $R_k$ stabilizes $\Bbbk[G/R_k]$. Hence $\Bbbk[G/R_k]$ is naturally a module over the Lie algebra $\fr/(ad\,\mathfrak{r})^k.\fg \subset  \fg/(ad\, \mathfrak{r})^k.\fg$. Since each element from $\fr/(ad\, \mathfrak{r})^k.\fg$ acts on $\Bbbk[G/R_k]$ locally nilpotently, the dimension of $\U(\fr)f$ is finite.
Therefore, this space (as a finite-dimensional representation of the nilpotent Lie algebra $\fr/(ad\, \mathfrak{r})^k.\fg$) contains a nonzero vector which is killed by the whole Lie algebra. 

Now let us show that $\Bbbk[G]^R=\Bbbk[G_0]$. It suffices to show that $\Bbbk[G/R_k]^R=\Bbbk[G_0]$ for all $k$, and this is easy to check.
\end{proof}

\begin{rem}
 The proof of the proposition above implies that for any $f\in\Bbbk[G]$ all the weights of the space $\U(\fg)f$ are bounded from above.  
\end{rem}

In the following proposition we use the restricted dual space for the universal enveloping algebra $\U(\mathfrak{r})$. Explicitly,
the grading $\mathfrak{r}=\bigoplus_{m\ge 0} \mathfrak{r}(m)$ induces the grading on the universal enveloping algebra of $\mathfrak{r}$ such
that each homogeneous component is finite-dimensional (see the beginning of section \ref{progroup}). 
Then $\U(\mathfrak{r})^\vee$ is the direct sum of dual spaces to all homogeneous components.

\begin{prop}\label{Gfunc}
One has an isomorphism of left $\fg$-modules
\[
\Bbbk[G]={\rm coInd}_{\fg_0}^{\fg}\Bbbk[G_0]=\Bbbk[G_0]\otimes \U(\mathfrak{r})^\vee\]
where $\Bbbk[G_0]$ has the trivial action of $\mathfrak{r}$. In particular, $\Bbbk[G]$ belongs to the category $\fC$.
\end{prop}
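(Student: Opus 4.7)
The plan is to exploit the variety isomorphism $G_{0}\times R\xrightarrow{\sim} G$, $(g_{0},r)\mapsto g_{0}r$, coming from the semidirect-product structure, and to identify $\Bbbk[R]$ with the restricted dual $\U(\fr)^{\vee}$ using the pro-unipotent structure of $R$. Together these give $\Bbbk[G]\simeq \Bbbk[G_{0}]\otimes \U(\fr)^{\vee}$ as vector spaces; what is left is to match the $\fg$-actions and to check membership in $\fC$.

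For the identification $\Bbbk[R]\simeq \U(\fr)^{\vee}$, I would argue level by level. Each $R/R_{k}$ is a finite-dimensional unipotent group, built as an iterated extension of copies of $\mathbb{G}_{a}$ by the standing hypotheses. The exponential map is then an isomorphism of varieties from $\fr/(\ad\,\fr)^{k}\fg$ onto $R/R_{k}$, so $\Bbbk[R/R_{k}]$ is the symmetric algebra on the linear dual, which is canonically the graded dual of the enveloping algebra $\U(\fr/(\ad\,\fr)^{k}\fg)$. Taking the direct limit in $k$ yields $\Bbbk[R]\simeq \U(\fr)^{\vee}$ as graded $\fg_{0}$-modules.

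For the $\fg$-action, the PBW decomposition $\U(\fg)\simeq \U(\fr)\otimes \U(\fg_{0})$ (as a right $\U(\fg_{0})$-module) gives
\[
\mathrm{coInd}_{\fg_{0}}^{\fg}\Bbbk[G_{0}]=\Hom_{\U(\fg_{0})}(\U(\fg),\Bbbk[G_{0}]) \simeq \Bbbk[G_{0}]\otimes \U(\fr)^{\vee},
\]
in which the $\fr$-action is dual to right multiplication and the $\fg_{0}$-action combines the natural action on $\Bbbk[G_{0}]$ with the coadjoint action on $\U(\fr)^{\vee}$. I would then compare with left translation on $G$: writing $g=g_{0}r$, a short computation shows that $\fg_{0}$ acts by left translation on the $G_{0}$-factor together with a conjugation twist on the $R$-factor (arising because $\mathrm{Ad}(g_{0}^{-1})$ preserves $R$), while $\fr$ kills $\Bbbk[G_{0}]$ and differentiates along the $R$-factor; these formulas agree with the coinduction above.

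Finally, the conditions defining $\fC$ are routine to check: $\U(\fr)^{\vee}$ is concentrated in non-positive degrees, so $\Bbbk[G](m)=0$ for $m>0$; each graded piece is a direct sum of finite-dimensional integrable irreducible $\fg_{0}$-modules with finite multiplicities by Peter--Weyl for $G_{0}$ combined with the integrability hypotheses on the $\fr(m)$; the filtration $\Bbbk[G]=\bigcup_{k}\Bbbk[G/R_{k}]$ with $\Bbbk[G/R_{k}]\subset\ker\fr^{k}$ gives $\bigcup_{k}\ker\fr^{k}=\Bbbk[G]$; and finite-dimensionality of weight spaces of $\ker\fr^{k}$ follows from $\dim(\fg/(\ad\,\fr)^{k}\fg)<\infty$ together with the weight-space finiteness assumed for $\U(\fr(0))$. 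The main delicate point I expect is verifying that the coadjoint twist in the $\fg_{0}$-action on the second factor matches exactly the formula coming from left translation on $G$; everything else is essentially bookkeeping once the variety-level decomposition and the pro-unipotent duality are in hand.
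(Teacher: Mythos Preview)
Your proposal is correct and follows essentially the same approach as the paper: both use the semidirect-product decomposition $G/R_{k}\simeq G_{0}\ltimes R/R_{k}$ level by level and pass to the inductive limit. The paper's proof is extremely terse (three sentences) and omits the identification $\Bbbk[R]\simeq \U(\fr)^{\vee}$, the matching of the $\fg$-action with the coinduced structure, and the verification that $\Bbbk[G]\in\fC$; your version supplies exactly these details, so it is strictly more complete than what appears in the paper.
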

\begin{proof}
 By definition, $\Bbbk[G]=\varinjlim_{k}\Bbbk[G/R_k]$ (the inductive limit). Now 
 $G/R_k\simeq G_0\ltimes R/R_k$ and one has the tensor product decomposition
 $\Bbbk[G/R_k]\simeq \Bbbk[G_0]\otimes \Bbbk[R/R_k]$. Now we derive the proposition taking the $k\to\infty$ limit. 
\end{proof}

 Finite-dimensional irreducible representations of the Lie algebras $\fg_0$ are labeled by the elements $\la\in P_+(\fg_0)$, where $P_+(\fg_0)$ is the set of dominant weights of $\fg_0$ (we note that  $P_+$
 is the sum of the cone of dominant weights of the semisimple part of $\fg_0$ and the weight lattice of the abelian part).
For $\la\in P_+$  let $L_\la$ be the corresponding  irreducible $\fg_0$ module. We denote by the same symbol the irreducible $\fg$ module obtained by extending trivially the $\fg_0$ action to the whole $\fg$.
We note that, up to degree shifts, the $\fg$ modules $L_\la$ exhaust the irreducible objects in $\fC$ and $\fC^\star$. For $(\la,k)\in P_+\times\ZZ$, let $L_{(\la,k)}$ denote the shift of $L_\la$ which is nontrivial in degree $k$. In general, here and below, objects indexed by $\la\in P_+$ are understood to be the same as those indexed by $(\la,0)\in P_+\times\ZZ$.

We identify $K_0(\fC)$ and $K_0(\fC^\star)$ with $\ZZ[[P]][[q]]$ by sending $[L_{(\la,k)}]$ to $q^k\ch\,L_\la$. Then $[M]$ is sent to $\ch\,M$.

\begin{prop}
\begin{itemize}
\item 
The category $\fC$ contains enough injective objects and contains no projective objects. 
The injective hull of the irreducible module $L_\lambda$ is given by
\[\mathbb{I}_\lambda=\mathrm{coInd}_{\fg_0}^{\fg}L_\lambda=L_\lambda\otimes \U(\fr)^\vee.\]

\item
In the same way $\fC^\star$ contains enough projective objects and contains no injective objects. 
The projective cover of the irreducible module $L_\lambda$ is given by
\[\mathbb{P}_\lambda=\mathrm{Ind}_{\fg_0}^{\fg}L_\lambda=L_\lambda\otimes \U(\fr).\]
\end{itemize}
\end{prop}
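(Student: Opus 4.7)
The plan is to prove the two bullet points simultaneously by invoking the restricted duality $M \mapsto M^\vee$, which exchanges $\fC$ and $\fC^\star$, carries $L_\lambda$ to $L_{-w_0\lambda}$, converts $\U(\fr)$ into $\U(\fr)^\vee$, and interchanges projective covers with injective hulls. I will establish the second bullet (projective covers in $\fC^\star$) and transport the first bullet by applying $(-)^\vee$.

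To construct $\bP_\lambda$, I would use the PBW isomorphism $\U(\fg) \cong \U(\fr) \otimes \U(\fg_0)$ to identify $\mathrm{Ind}_{\fg_0}^{\fg}L_\lambda$ as a graded vector space with $L_\lambda \otimes \U(\fr)$, and check that $\bP_\lambda$ satisfies all defining axioms of $\fC^\star$: boundedness from below comes from $\U(\fr)$ being non-negatively graded; finite multiplicity of $\fg_0$-isotypic components in each graded piece follows from the finite-dimensionality of $\fr(n)$ for $n>0$ together with the assumed finiteness of $\fh$-weight spaces of $\U(\fr(0))$; the generating subspace is $L_\lambda \otimes 1$; and the finite-dimensionality of $\fh$-weight spaces of $\bP_\lambda/\fr^i\bP_\lambda$ follows from PBW applied to the assumption $\dim(\fg/(\mathrm{ad}\,\fr)^i\fg)<\infty$. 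Projectivity is then a direct consequence of Frobenius reciprocity $\Hom_{\fC^\star}(\bP_\lambda, M) \cong \Hom_{\fg_0}(L_\lambda, M)$ together with the semisimplicity of the $\fg_0$-action on objects of $\fC^\star$, which makes the right-hand side an exact functor in $M$. Since $\bP_\lambda/\fr\bP_\lambda = L_\lambda$ and $\bP_\lambda$ is indecomposable, it is the projective cover of $L_\lambda$. For enough projectives: given $M \in \fC^\star$ I would decompose the graded $\fg_0$-module $M/\fr M$ into its simple summands $L_{\mu_i}[k_i]$, lift the inclusions via Frobenius to obtain $\bigoplus_i \bP_{\mu_i}[k_i] \to M$, and conclude surjectivity by a graded Nakayama argument exploiting the generation axiom of $\fC^\star$.

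For the absence of injectives in $\fC^\star$, the natural candidate for an injective hull of $L_\lambda$ is the coinduced module $L_\lambda \otimes \U(\fr)^\vee$, which is unbounded below as soon as $\fr$ has elements in positive degrees and therefore fails to lie in $\fC^\star$. To turn this obstruction into a non-existence proof, I would exhibit a family of non-split extensions $0 \to L_\lambda[0] \to E_n \to L_{\mu_n}[k_n] \to 0$ in $\fC^\star$ of arbitrarily large $k_n$, constructed from $\fg_0$-equivariant maps $\fr(k_n) \otimes L_{\mu_n} \to L_\lambda$ built out of the bracket structure of $\fr$; any hypothetical injective $J$ containing $L_\lambda[0]$ in its socle would then have to accommodate each $E_n$, forcing $L_{\mu_n}[k_n]$ to appear inside $J$ for unboundedly many $n$, which contradicts either the finite-multiplicity axiom or the generation axiom of $\fC^\star$. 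The dual statements in $\fC$ — the injective hull formula $\mathbb{I}_\lambda = L_\lambda \otimes \U(\fr)^\vee$, enough injectives, and no projectives — follow by applying $M \mapsto M^\vee$. The hardest step I expect is making the family of non-split extensions precise in the abstract framework of \S\ref{progroup}; once constructed, the contradiction is essentially formal.
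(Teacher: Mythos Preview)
Your core argument --- Frobenius reciprocity / the adjunction between induction (resp.\ coinduction) and restriction, combined with exactness of restriction and semisimplicity of the $\fg_0$-module category --- is precisely the paper's proof, which reads in its entirety: ``The induction functor is left adjoint to the restriction functor which is an exact functor. Consequently, induction maps projectives to projectives. Respectively, the co[i]nduction is right adjoint to the restriction functor and, therefore, maps injectives to injectives.'' Nothing more is given there.

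Your proposal goes well beyond this: you verify membership of $\bP_\lambda$ in $\fC^\star$, the cover/hull property, and enough projectives via a graded Nakayama argument. These are all sound and are details the paper leaves implicit. On the non-existence of injectives in $\fC^\star$, note that the paper's proof does not address this claim at all; your instinct that it is the delicate point is correct. Your extension argument can be streamlined: rather than building individual $E_n$'s, observe that the finite-degree truncations $\bI_\lambda^{(\le n)}$ of the coinduced module lie in $\fC^\star$, each contains $L_\lambda$ in its socle, and hence each embeds into any hypothetical injective $J$; compatibility of these embeddings forces $J$ to contain all of $\bI_\lambda$, which is unbounded below once $\fr$ has components in arbitrarily large positive degree. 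This last hypothesis holds for the current and Iwahori cases but is not forced by the axioms of \S\ref{progroup} (e.g.\ a Borel with bounded grading), so the ``no injectives'' clause tacitly assumes the infinite-dimensional setting --- a point neither the paper nor your sketch makes explicit.
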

\begin{proof}
 The proof is a standard trick in homological algebra.
 The induction functor is left adjoint to the restriction functor which is an exact functor. Consequently, induction maps projectives to projectives.
 Respectively, the conduction is right adjoint to the restriction functor and, therefore, maps injectives to injectives.
\end{proof}

Assume that $M$ is a left $\fg$-module. Recall the definition of the corresponding right module $M^o$. It is isomorphic to $M$ as a vector space and the right action of $\fg$ is the negated left action, i.e.:
\[m x:=- x m, ~ x \in \fg, ~m \in M.\]
We define the categories $\fC^o$, $\fC^{o\star}$ of right $\fg$-modules in the same way as we did for $\fC$, $\fC^\star$ (the only difference is that we consider right modules instead of left modules). Namely, $\fC^{o}$ consists of right $\fg$-modules with bounded from above grading and $\fC^{o\star}$ consists of right bounded from below graded $\fg$-modules.

For an irreducible $\fg$ representation $L_\lambda$, $\la\in P_+(\fg_0)$ with the highest weight $\lambda$ the right module $L_\lambda^o$ contains a vector of right weight $-w_0(\lambda)$, where $w_0$ is the longest element of  $\fg_0$ Weyl group.

\begin{prop}\label{moduleFunctions Decomposition}
    The bimodule $\Bbbk[G]$ is injective left $\fg$-module. Moreover, it has the following decomposition
    \[\Bbbk[G]=\bigoplus_{\la \in P_+(\fg_0)}\mathbb{I}_\la \otimes (L_{-w_0\la})^o\]
    as a $\fg\ttt\fg_{0}$-bimodule.
\end{prop}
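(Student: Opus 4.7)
The plan is to upgrade the left $\fg$-module isomorphism of Proposition~\ref{Gfunc} to a $(\fg,\fg_0)$-bimodule isomorphism, then insert the classical Peter-Weyl decomposition of $\Bbbk[G_0]$ inside the coinduction functor, and finally deduce injectivity from the right-adjoint property of coinduction.

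For the bimodule upgrade, I would realize the isomorphism $\Bbbk[G]\cong\mathrm{coInd}_{\fg_0}^{\fg}\Bbbk[G_0]$ explicitly via $\Phi\colon f\mapsto\phi_f$ with $\phi_f(x):=(x\cdot f)|_{G_0}$ for $x\in\U(\fg)$. Normality of $R$ in $G$ ensures that the left $G_0$-action preserves $\Bbbk[G]^R=\Bbbk[G_0]$, so $\phi_f$ takes values in $\Bbbk[G_0]$ and is $\U(\fg_0)$-linear; left $\fg$-equivariance of $\Phi$ is built into the definition. The right $\fg_0$-equivariance of $\Phi$ follows from two observations: the left $\fg$-action and the right $\fg_0$-action on $\Bbbk[G]$ commute, and the inclusion $G_0\hookrightarrow G$ is right $G_0$-equivariant (so restriction to $G_0$ intertwines the right $\fg_0$-actions on $\Bbbk[G]$ and $\Bbbk[G_0]$). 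Thus $\Phi$ is a $(\fg,\fg_0)$-bimodule map, its bijectivity being the content of Proposition~\ref{Gfunc}.

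Next, the classical Peter-Weyl theorem for the connected reductive group $G_0$ supplies the decomposition of $(\fg_0,\fg_0)$-bimodules
\[
\Bbbk[G_0]\;\cong\;\bigoplus_{\la\in P_+(\fg_0)}L_\la\otimes L_{-w_0\la}^o.
\]
Applying $\mathrm{coInd}_{\fg_0}^{\fg}$ to each summand, and using that for any left $\fg_0$-module $M$ and any right $\fg_0$-module $N$ (viewed as a bimodule with trivial left $\fg_0$-action) one has $\mathrm{coInd}_{\fg_0}^{\fg}(M\otimes N)\cong\mathrm{coInd}_{\fg_0}^{\fg}(M)\otimes N$ as $(\fg,\fg_0)$-bimodules, each summand coinduces to $\mathbb{I}_\la\otimes L_{-w_0\la}^o$. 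In the locally finite setting of $\fC$, where every $f\in\Bbbk[G]$ factors through some $\Bbbk[G/R_k]$, coinduction commutes with the infinite direct sum, yielding the desired
\[
\Bbbk[G]\;\cong\;\bigoplus_{\la\in P_+(\fg_0)}\mathbb{I}_\la\otimes L_{-w_0\la}^o.
\]
For injectivity, the restriction functor $\mathrm{Res}_{\fg_0}^{\fg}$ is exact, so its right adjoint $\mathrm{coInd}_{\fg_0}^{\fg}$ preserves injective objects; since $\Bbbk[G_0]$ is semisimple (by Weyl's complete reducibility on $G_0$-integrable $\fg_0$-modules), and hence injective in that category, $\Bbbk[G]\cong\mathrm{coInd}_{\fg_0}^{\fg}\Bbbk[G_0]$ is injective in $\fC$.

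The main obstacle I foresee is the bimodule upgrade in the first step. The ``geometric'' decomposition $\Bbbk[G]\cong\Bbbk[G_0]\otimes\U(\fr)^\vee$ arising from the semidirect product $G=G_0\ltimes R$ does not transparently split the right $\fg_0$-action, since right translation by $G_0$ acts on the $\U(\fr)^\vee$ factor through a conjugation-type action on $R$. The coinduction viewpoint circumvents this asymmetry by construction, as the right $\fg_0$-action on $\mathrm{coInd}_{\fg_0}^{\fg}(-)$ acts only on the coefficient module; the technical content is localized in verifying that the concrete map $\Phi$ above is indeed right $\fg_0$-equivariant.
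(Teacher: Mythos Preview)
Your proposal is correct and follows essentially the same approach as the paper: combine the coinduction isomorphism of Proposition~\ref{Gfunc} with the classical Peter--Weyl decomposition of $\Bbbk[G_0]$, then read off injectivity from the right-adjoint property of coinduction. The paper's proof is terse and does not spell out the $(\fg,\fg_0)$-bimodule compatibility that you carefully verify via the explicit map $\Phi$ and the restriction-to-$G_0$ argument; your treatment of this point is a genuine improvement in exposition, but the underlying strategy is the same.
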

\begin{proof}
Proposition \ref{Gfunc} says that 
$\Bbbk[G]\simeq \Bbbk[G_0]\otimes \U(\mathfrak{r})^\vee$, where 
$\U(\mathfrak{r})^\vee$ is the restricted dual space.
Now thanks to the Peter-Weyl theorem one has 
\[
\Bbbk[G_0]
=\bigoplus_{\la\in P_+(\fg_0)} L_\la\otimes (L_{-w_0\la})^{o}.
\]
Now the desired equality follows from the isomorphism  
$\mathbb{I}_\la\simeq 
\U(\mathfrak{r})^\vee\T L_\la$.
\end{proof}


Recall that $M^\vee$ is the restricted dual module.
\begin{lem}
\label{lem::standard}
The map 
$M\mapsto (M^{\vee})^{o}$ sends left modules to right modules. It   
interchanges projective and injective modules, and  (proper) standard and (proper) costandard modules. 
\end{lem}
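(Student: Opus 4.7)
The plan is to realize $M \mapsto (M^\vee)^o$ as a contravariant equivalence between $\fC^\star$ and $\fC^o$ (and between $\fC$ and $\fC^{o\star}$), and then transport the universal properties defining projectives, injectives, and (proper) standards/costandards across this equivalence.

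First I verify the functorial claims. For $M \in \fC^\star$, the restricted dual $M^\vee$ is naturally a left $\fg$-module via the antipode of $\U(\fg)$, that is $(x\cdot f)(m) := -f(x\cdot m)$; applying $(-)^o$ negates the action once more to produce a right module with $(f\cdot x)(m) = f(x\cdot m)$. The finite-dimensionality of each graded weight space, built into the definitions of $\fC$ and $\fC^\star$, ensures that $(-)^\vee$ preserves exact sequences and satisfies biduality, so the composition $(-)^o \circ (-)^\vee$ is a contravariant equivalence. The grading flip, from bounded-below to bounded-above, is what forces $\fC^\star \leftrightarrow \fC^o$ rather than $\fC^\star \leftrightarrow \fC^{o\star}$.

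Next I show the functor interchanges projectives and injectives. Using the explicit descriptions $\bP_\la = L_\la \otimes \U(\fr)$ and $\bI_\la = L_\la \otimes \U(\fr)^\vee$ from the preceding proposition, together with $L_\la^\vee \cong L_{-w_0\la}$ as $\fg_0$-modules, one computes that $((\bP_\la)^\vee)^o \cong L_{-w_0\la}^o \otimes \U(\fr)^\vee$, which is the injective hull in $\fC^o$ of $L_{-w_0\la}^o$, and symmetrically $((\bI_\la)^\vee)^o$ is the projective cover in $\fC^{o\star}$ of $L_{-w_0\la}^o$. This also follows abstractly from the fact that a contravariant equivalence sends projective covers to injective hulls and conversely.

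Finally, for the interchange of (proper) standard and costandard objects, I would use their universal characterizations from Definition \ref{StandardCostandard} rather than compute them explicitly. The equivalence sends the simple $L_\la$ to $L_{-w_0\la}^o$, preserves multiplicities of composition factors, and identifies the Serre subcategories $\fC^\star_{\le\la}$ with $\fC^o_{\le -w_0\la}$; the latter identification uses the fact that $-w_0$ permutes the positive roots, so the order $\preceq$ on $P_+(\fg_0)$, characterized in part by assumption (o1), is preserved under $\la \mapsto -w_0\la$. Consequently the universal properties defining $\Delta_\la$, $\nabla_\la$, $\overline\Delta_\la$, $\overline\nabla_\la$ are interchanged pairwise with their duals of opposite type. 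The main obstacle I anticipate is the careful verification that the truncation functors $\imath^*_{\le\la}$ and $\imath^!_{\le\la}$ intertwine properly under the duality, especially for the proper versions, where the multiplicity-one defining condition on simples $\{L_\nu : \rho(\nu)=\rho(\la)\}$ must be shown to remain self-dual under the $\la \mapsto -w_0\la$ twist.
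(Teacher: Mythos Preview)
Your proposal is correct and follows essentially the same approach as the paper, which simply observes that $M\mapsto (M^\vee)^o$ is the standard duality between left and right modules over the universal enveloping algebra and hence interchanges projectives with injectives and (proper) standards with (proper) costandards. The paper's proof is a single sentence to this effect; your more detailed verification of the functorial properties, the explicit computation with $\bP_\la$ and $\bI_\la$, and the tracking of weights via $\la\mapsto -w_0\la$ are all sound but not needed at the level of generality of the lemma, and the ``obstacle'' you anticipate regarding the truncation functors is not a real difficulty since these are defined by universal properties preserved under any contravariant equivalence.
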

\begin{proof}
For a left module $M$ its restricted dual $M^\vee$ is also a left module.
Hence the functor $M\mapsto (M^{\vee})^{o}$ is the standard duality between the left and right modules for the universal enveloping algebra;
in particular, the projective and injective modules, and the standard and costandard modules are interchanged.   
\end{proof}


\subsection{Highest weight algebras}
Recall the Lie algebra  $\overline{\fg}\subset \fg$, which is assumed to be abelian  \eqref{abelian}.
Let
\[
\mathcal{A}_\lambda = {\rm End}_{\fC}(\nabla_\lambda),\qquad 
\widetilde{\mathcal{A}}_\lambda = 
{\rm End}_{\fC^\star}(\Delta_{-w_0(\lambda)}) =
{\rm End}_{(\fC^\star)^{o}}((\Delta_\lambda^\vee)^o). 
\]
We note that $\Delta_\la$ being a quotient of the projective cover of the irreducible module is cyclic as a $\fg$-module. Similarly,   $\nabla_\la$ is a cocylcic $\fg$-module. We also note that by assumptions ${\rm (o1), (o2)}$ on p.\pageref{AssumptionsOrder}, 
both $\mathcal{A}_\lambda$ and $\widetilde{\mathcal{A}}_\lambda$ are quotients of the universal enveloping algebra $\U(\overline\fg)$ for all $\la\in P_+(\fg_0)$; in particular, both endomorphism algebras are commutative and graded (since we consider only graded morphisms). We denote the kernels of the corresponding surjections by $K_\la$ and $\widetilde{K}_\la$. According to assumption \eqref{Alambda}
\begin{equation}\label{kernel}
K_\la = \widetilde{K}_\la, \qquad \text{ implying }   \qquad
{\rm End}(\nabla_\lambda)\simeq {\rm End}(\Delta_\lambda^\vee)^o.
\end{equation}
We denote by $\mathcal{A}_\lambda$ both endomorphism algebras 
${\rm End}(\nabla_\lambda) \simeq {\rm End}(\Delta_\lambda^\vee)^o$.   

The proof of the following lemma is standard, but we include it for the reader's convenience.

\begin{lem}
The quotient category $\fC^\star_{=\lambda}:=\fC^\star_{\leq\lambda}/\fC^\star_{<\lambda}$ is isomorphic to the category of graded bounded from below $\cA_{\lambda}$-modules and the restriction on the weight $\lambda$-subspace defines a projection functor $r_{\lambda}:\fC^\star_{\leq\lambda} \rightarrow \cA_\la-\rm{mod}$, whose left adjoint  $r_{\lambda}^*$ coincides with $\Delta_{\lambda}\otimes_{\cA_\la} \texttt{-}$.

The quotient category $\fC_{=\lambda}:=\fC_{\leq\lambda}/\fC_{<\lambda}$ is isomorphic to the category of graded bounded from above $\cA_{\lambda}$-modules and the restriction on the weight $\lambda$-subspace defines a projection functor $r_{\lambda}:\fC_{\leq\lambda} \rightarrow \cA_\la-\rm{mod}$, whose right adjoint $r_{\lambda}^!$ coincides with $\Hom_{\cA_{\la}}(\nabla_{\la},\texttt{-})$.
\end{lem}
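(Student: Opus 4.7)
The two statements are dual, so I would give the projective-case argument in detail and indicate how the injective case follows by the same template with arrows reversed.

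First, I would identify $r_\la$ as a Hom functor. Property (o1) of the order implies that no simple $L_\mu$ with $\mu<\la$ carries a $\la$-weight vector, so for any $M\in\fC^{\star}_{\le\la}$ the $\la$-weight subspace $M_\la$ coincides with the $L_\la$-multiplicity space $\Hom_{\fg_0}(L_\la,M)$. By Frobenius reciprocity for induction from $\fg_0$ to $\fg$, this equals $\Hom_{\fg}(\bP_\la,M)$; and since $M\in\fC^{\star}_{\le\la}$ and $\Delta_\la=\imath_{\le\la}^{*}\bP_\la$, every such $\fg$-linear map factors uniquely through $\Delta_\la$. This yields
\[
r_\la(M)=M_\la\;\cong\;\Hom_{\fg}(\Delta_\la,M),
\]
under which the natural $\ov{\fg}$-action on $M_\la$ (well-defined because $\ov{\fg}$ preserves $\h$-weights, and commutative by~\eqref{abelian}) agrees with the action of $\cA_\la=\U(\ov{\fg})/K_\la=\mathrm{End}_{\fg}(\Delta_\la)$ by right composition.

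The left adjoint $r_\la^{*}$ is then produced by tensor-Hom adjunction, treating $\Delta_\la$ as a $(\fg,\cA_\la)$-bimodule:
\[
\Hom_{\fg}(\Delta_\la\otimes_{\cA_\la}V,M)=\Hom_{\cA_\la}(V,\Hom_{\fg}(\Delta_\la,M))=\Hom_{\cA_\la}(V,r_\la(M)).
\]
To establish the equivalence $\fC^{\star}_{=\la}\simeq\cA_\la\text{-mod}$, I would verify three points: (i) $r_\la$ vanishes on $\fC^{\star}_{<\la}$, so $r_\la$ descends to the Serre quotient; (ii) the unit $V\to r_\la r_\la^{*}(V)=(\Delta_\la)_\la\otimes_{\cA_\la}V=\cA_\la\otimes_{\cA_\la}V$ is the identity because $(\Delta_\la)_\la=\cA_\la$ as $\cA_\la$-modules; (iii) the counit $r_\la^{*}r_\la(M)\to M$ has cokernel $C\in\fC^{\star}_{\le\la}$ with $C_\la=0$ by construction, hence $C\in\fC^{\star}_{<\la}$, so the counit becomes an isomorphism in the quotient $\fC^{\star}_{=\la}$.

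The injective-case statement is handled by the dual recipe: $r_\la(M)=M_\la$ is again identified with the $L_\la$-multiplicity space and carries the $\cA_\la$-action via $\ov{\fg}$, and the formula $r_\la^{!}(V)=\Hom_{\cA_\la}(\nabla_\la,V)$ (with $\fg$-action inherited from $\nabla_\la$ via the antipode $x\mapsto -x$, as in the definition of $M^{o}$) is verified as the right adjoint by dual adjunction, using that $\nabla_\la$ is a $(\fg,\cA_\la)$-bimodule with $(\nabla_\la)_\la\cong\cA_\la$. The main obstacle I anticipate is the injective-case bookkeeping: the bimodule conventions on $\nabla_\la$ and the induced left $\fg$-action on $\Hom_{\cA_\la}(\nabla_\la,V)$ must be arranged so the adjunction lands in the correct module category with the correct boundedness of grading, but once this is done the equivalence $\fC_{=\la}\simeq\cA_\la\text{-mod}$ is obtained by the same three-step verification as in the projective case.
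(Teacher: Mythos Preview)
Your approach is correct and closely parallels the paper's, but is packaged more abstractly. The paper does not make the identification $r_\la(M)\cong\Hom_\fg(\Delta_\la,M)$ explicit; instead it constructs by hand the two maps
\[
\tau:\Hom_{\fC^\star_{\le\la}}(\Delta_\la\otimes_{\cA_\la}B,M)\to\Hom_{\cA_\la}(B,r_\la(M)),\qquad
\eta:\Hom_{\cA_\la}(B,r_\la(M))\to\Hom_{\fC^\star_{\le\la}}(\Delta_\la\otimes_{\cA_\la}B,M),
\]
and verifies $\tau\circ\eta=\id$, $\eta\circ\tau=\id$ directly. The key step in the paper is building the natural map $\Delta_\la\otimes_{\cA_\la}r_\la(M)\to M$ by recognizing $\Delta_\la\otimes_{\cA_\la}r_\la(M)$ as $\imath_{\le\la}^*(\U(\fg)\otimes_{\U(\ov\fg)}r_\la(M))$, then using $M\in\fC^\star_{\le\la}$. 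Your tensor--Hom adjunction produces exactly this counit map, so the two arguments are the same at heart; yours is cleaner, the paper's is more explicit about why the map exists.

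One small gap: in your step (iii) you check only that the cokernel of the counit $r_\la^*r_\la(M)\to M$ lies in $\fC^\star_{<\la}$. For the counit to become an isomorphism in the Serre quotient you must also check the kernel lies in $\fC^\star_{<\la}$. This follows by the same reasoning---on $\la$-weight spaces the counit is $\cA_\la\otimes_{\cA_\la}M_\la\to M_\la$, an isomorphism, so the kernel has zero $\la$-weight space as well---but it should be stated. Your caveat about the injective-case bookkeeping is well placed; the paper likewise treats only the projective case in detail and declares the injective case ``similar.''
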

\begin{proof}
We note that $\ker (r_\la) = \fC_{<\lambda}=\rm{Im} (\imath_\la)$. Hence it suffices to identify the left and right  adjoint functors as in the statement of our Lemma.

For that, one has  to check that for any $B\in\cA_\la-\rm{mod}$ and $M\in \fC_{\leq\lambda}$ there exist natural isomorphisms
    \[\Hom_{\cA_\la}(B,r_{\lambda}(M))\simeq \Hom_{\fC^\star_{\leq\lambda}}(\Delta_{\lambda}\otimes_{\cA_\la}B, M);\]
    \[\Hom_{\cA_\la}(r_{\lambda}(M),B)\simeq \Hom_{\fC_{\leq\lambda}}(M,\Hom_{\cA_{\la}}(\nabla_{\la},B)).\]
Let us check the first equality, the proof of the second one is similar.
First we have that the $\lambda$ weight component of $\Delta_{\lambda}$ is isomorphic to the free cyclic module over $\mathcal{A}_\lambda$.
Therefore $r_{\lambda}(\Delta_{\lambda}\otimes_{\cA_\la}B)\simeq B$. We have the natural map
\[\Hom_{\fC^\star_{\leq\lambda}}(N, M)\rightarrow \Hom_{\cA_\la} (r_{\lambda}N, r_{\lambda}M).\]
Thus we have a natural map
\[
\tau:\Hom_{\fC^\star_{\leq\lambda}}(\Delta_{\lambda}\otimes_{\cA_\la}B, M)\rightarrow \Hom_{\cA_\la}(B,r_{\lambda}(M)).
\]

  We have the natural morphism
  \[\Hom_{\cA_\la}(B,C)\rightarrow \Hom_{\fC^\star_{\leq\lambda}}(\Delta_{\lambda}\otimes_{\cA_\la}B,\Delta_{\lambda}\otimes_{\cA_\la}C).\]
  Moreover we have the homomorphism in the category $\fC^\star$: 
  \begin{equation}\label{M}\Delta_{\lambda}\otimes_{\cA_\la} r_{\lambda}(M)\rightarrow M.\end{equation}
In fact, $r_{\lambda}(M)$ is a $\cA_\la$ submodule in the weight $\la$ subspace of $M$.
Now $\Delta_{\lambda}\otimes_{\cA_\la}r_{\lambda}(M)$ also contains a natural subspace isomorphic to $r_{\lambda}(M)$ and 
$\Delta_{\lambda}\otimes_{\cA_\la}r_{\lambda}(M)$ is generated from this subspace as a $\U(\fg)$ module (since $\Delta_{\lambda}$ is cyclic). Now the definition of $\Delta_{\lambda}$ implies that 
as a $\fg$ module $\Delta_{\lambda}\otimes_{\cA_\la}r_{\lambda}(M)$ is isomorphic to 
$i_\la^*(\U(\fg)\otimes_{\U(\overline \fg)} r_{\lambda}(M))$. However, $M\in\fC^\star_{\leq\la}$, hence we obtain the desired homomorphism \eqref{M}.

Thus we have the natural map
  \[\eta:\Hom_{\cA_\la}(B,r_{\lambda}(M))\rightarrow \Hom_{\fC^\star_{\leq\lambda}}(\Delta_{\lambda}\otimes_{\cA_\la}B, M).\]
 We need to show that $\eta\circ\tau=\Id_{\fC^\star_{\leq\la}}$ and $\tau\circ\eta=\Id_{\cA_\la\rm{-mod}}$.  
First, we note that the composition  $\tau\circ\eta$ is equal to the identity map by construction. Second, let us consider the opposite composition $\eta\circ\tau$.  The map 
$\tau:\Hom_{\fC^\star_{\leq\lambda}}(\Delta_{\lambda}\otimes_{\cA_\la}B, M)\rightarrow \Hom_{\cA_\la}(B,r_{\lambda}(M))$
send a homomorphism $\varphi\in \Hom_{\fC^\star_{\leq\lambda}}(\Delta_{\lambda}\otimes_{\cA_\la}B, M)$ to its restriction to the weight $\lambda$ parts. After applying $\eta$ we get our $\phi$ back, since a map from $\Delta_{\lambda}\otimes_{\cA_\la}B$ is determined by its restriction to the highest weight part. This proves the equality $\eta\circ\tau=\Id_{\fC^\star_{\leq\la}}$ and completes the proof of our Lemma.
\end{proof}

\begin{prop}
\label{prp::Ext::propers}
If categories $\fC$ and $\fC^\star$ are stratified then
for all $\lambda\neq\mu$ we have the vanishing of all higher extension groups 
$$\Ext_{\fC}(\bar{\Delta}_{\la},\bar{\nabla}_{\mu}) = 
 \Ext_{\fC^\star}(\bar{\Delta}_{\la},\bar{\nabla}_{\mu}) = H_{Lie}^{\udot}(\fg,\fg_{0};\Hom(\bar{\Delta}_\lambda;\bar{\nabla}_{\mu}))=0.
 $$
\end{prop}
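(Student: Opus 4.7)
My plan is to combine the stratified condition (s3$^\star$) from Theorem~\ref{thm::Stratified} with a Koszul-type resolution of $\bar{\Delta}_\la$ by graded shifts of the standard module $\Delta_\la$, and dually for $\bar{\nabla}_\mu$.

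I would begin by identifying $\bar{\Delta}_\la \simeq \Delta_\la \otimes_{\cA_\la} \Bbbk$, where $\Bbbk$ denotes the one-dimensional $\cA_\la$-module obtained as the quotient of $\cA_\la$ by its augmentation ideal. Since $\cA_\la$ is a connected commutative graded algebra (a quotient of the polynomial algebra $\U(\overline{\fg})$), the residue field admits a minimal free resolution $F_\bullet \to \Bbbk$ over $\cA_\la$ with each $F_i$ ($i>0$) a direct sum of strictly positively shifted copies of $\cA_\la$. Tensoring over $\cA_\la$ with $\Delta_\la$, which is free over $\cA_\la$ on the highest weight subspace, then produces a resolution $F_\bullet \otimes_{\cA_\la} \Delta_\la \to \bar{\Delta}_\la$ in $\fC^\star$ whose terms in positive homological degree are direct sums of strictly positive graded shifts of $\Delta_\la$.

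Applying $\Hom_{\fC^\star}(-,\bar{\nabla}_\mu)$ to this resolution yields a complex computing $\Ext_{\fC^\star}^\bullet(\bar{\Delta}_\la,\bar{\nabla}_\mu)$. When $\rho(\la)\neq\rho(\mu)$, condition (s3$^\star$) gives $\RHom(\Delta_\la,\bar{\nabla}_\mu)=0$, so every term vanishes. When $\rho(\la)=\rho(\mu)$ but $\la\neq\mu$ (only the graded shifts differ), Lemma~\ref{lem::ext::1} implies that the graded space $\Hom(\Delta_\la,\bar{\nabla}_\mu)$ is concentrated in a single graded degree, and the strictly positive shifts appearing in the resolution force all graded contributions to vanish in the Ext computation. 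The same scheme, dualized to a coresolution of $\bar{\nabla}_\mu$ by graded shifts of $\nabla_\mu$ via a co-Koszul construction over $\cA_\mu$ together with the dual condition (s3), handles $\Ext_{\fC}^\bullet(\bar{\Delta}_\la,\bar{\nabla}_\mu)$. The identification of both Ext groups with the relative Lie algebra cohomology $H_{Lie}^\bullet(\fg,\fg_0;\Hom(\bar{\Delta}_\la,\bar{\nabla}_\mu))$ then follows from the $\fg_0$-integrability and appropriate boundedness of $\bar{\Delta}_\la$ and $\bar{\nabla}_\mu$, together with the standard Chevalley-Eilenberg computation for $(\fg,\fg_0)$-relative cohomology.

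I expect the main obstacle to lie in the case $\rho(\la)=\rho(\mu)$ with $\la\neq\mu$: here (s3$^\star$) is silent, and one must carefully track graded degrees through the Koszul resolution to extract the vanishing. A secondary technical point is to verify that the Koszul-type resolution actually lies within $\fC^\star$ (and its dual within $\fC$) — in particular, that each homological degree is locally finite and satisfies the relevant boundedness conditions imposed on these categories in Section~\ref{progroup}.
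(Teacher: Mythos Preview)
Your approach has a circularity problem. To conclude that $F_\bullet \otimes_{\cA_\la} \Delta_\la$ is an honest resolution of $\bar\Delta_\la$ (rather than a complex with higher homology $\mathrm{Tor}^{\cA_\la}_{>0}(\Delta_\la,\Bbbk)$), you need $\Delta_\la$ to be flat over $\cA_\la$. In the paper's logic, this flatness is Corollary~\ref{cor::r::exact} (exactness of $r_\la^*$), and freeness is Corollary~\ref{cor::global::local}; both are deduced \emph{from} Proposition~\ref{prp::Ext::propers}. So you are invoking a consequence of the very statement you are proving. In concrete situations such as the Iwahori case, freeness is established independently (Theorem~\ref{characters}), and then your Koszul argument does go through; but in the abstract framework of Section~\ref{BimoduleFunctions} it does not.

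The paper breaks this circularity by an induction on the partial order which plays $\fC$ and $\fC^\star$ off against each other. One proves simultaneously that $\bar\Delta_\la$ admits a projective resolution in $\fC^\star_{\leq\la}$ whose only indecomposable summands are shifts of $\Delta_\la$, and that $\bar\nabla_\la$ admits the dual injective resolution by $\nabla_\la$'s in $\fC_{\leq\la}$. The inductive step uses finite-dimensionality of $\bar\Delta_\la$ and $\bar\nabla_\mu$ to identify the Ext groups in either category with the relative Lie algebra cohomology and hence with each other; one then argues that any summand $\imath_{\leq\la}^*(\bP_\mu)$ with $\mu<\la$ appearing in the minimal projective resolution of $\bar\Delta_\la$ would force a nonzero $\Ext_{\fC^\star}(\bar\Delta_\la,\bar\nabla_\mu)$, contradicting what was just transferred from $\fC$. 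Once these resolutions exist, the Ext vanishing follows from (s3)/(s3$^\star$) exactly as you describe. Finally, note that here $\la,\mu\in P_+(\fg_0)$, so $\la\neq\mu$ already gives $\rho(\la)\neq\rho(\mu)$; your separate treatment of the case $\rho(\la)=\rho(\mu)$ is not needed.
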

\begin{proof}
Note that thanks to the vanishing of Ext's between proper standards and costandard in $\fC$ and respectively vanishing of Ext's between standard and proper costandard in $\fC^\star$ it 
is sufficient to show that 
$\bar{\Delta}_\la$ admits a projective resolution in $\fC_{\leq\la}$ whose indecomposable projective summands are only $\Delta_\la$ and 
the module $\bar{\nabla}_{\la}$ admits an injective resolution whose only indecomposable summands are isomorphic to $\nabla_{\la}$.
We prove the existence of these resolutions by induction with respect to the partial ordering on weights.

The base of induction is obvious, because when $\lambda$ is minimal then the only indecomposable projective in $\fC_{\leq\la}$ is isomorphic to $\Delta_{\la}$. 
For the induction step, we suppose that for all $\mu<\lambda$ the proper standard module $\bar\nabla_{\mu}$ admits an resolution by costandards $\nabla_{\mu}$. 
Consequently, $\Ext_{\fC}(\bar{\Delta}_{\la},\bar{\nabla}_{\mu})$ vanishes. 
But both proper standards and proper costandard modules are finite-dimensional and, in particular, belong to $\fC$ and $\fC^\star$ and the Ext's groups coincides with appropriate Lie algebra cohomology. What follows vanishing of Ext's in the dual category $\fC^\star$.  
Now consider the minimal projective resolution of 
of $\bar\Delta_{\la}$ in $\fC_{\leq\la}^{\vee}$, the projective summands here should be either $\Delta_{\la}$ or $\imath_{\lambda}^*(P_{\mu})$ with $\mu<\lambda$. However, the existence of the latter summand $\imath_{\lambda}^*(P_{\mu})$ will imply the nonzero Ext to $\bar\nabla_{\mu}$ which is supposed to be zero.
\end{proof}

\begin{cor}
\label{cor::r::exact}
If $\fC$ and $\fC^\star$ are stratified, then functors $r_{\la}^{*}$ and $r_{\la}^{!}$ are exact.
\end{cor}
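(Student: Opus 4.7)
The plan is to show that $r_\lambda^*$ is exact; the argument for $r_\lambda^!$ is dual, replacing tensor products by $\Hom$'s, $\Delta_\lambda$ by $\nabla_\lambda$, and proper costandards by proper standards. Since $r_\lambda^*$ is a left adjoint to the exact quotient functor $r_\lambda$, it is automatically right exact; because $r_\lambda^*(B) = \Delta_\lambda \otimes_{\cA_\lambda} B$, the remaining task is to show $\Delta_\lambda$ is flat as a right $\cA_\lambda$-module.

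Given an injection $B_1 \hookrightarrow B_2$ in $\cA_\lambda\text{-mod}$, set $K = \ker(r_\lambda^*(B_1) \to r_\lambda^*(B_2))$. Applying the exact functor $r_\lambda$ and using that $r_\lambda \circ r_\lambda^* \cong \id$ (which holds because the $\lambda$-weight space of $\Delta_\lambda$ is free of rank one over $\cA_\lambda$), one obtains $r_\lambda(K) = 0$, so $K \in \fC^\star_{<\lambda}$. To force $K = 0$, I would pass to the derived category. By Proposition~\ref{prp::substratified}, the category $\fC^\star_{\leq\lambda}$ is itself stratified, which produces a semi-orthogonal decomposition of $\cD^-(\fC^\star_{\leq\lambda})$ in which $Lr_\lambda^*$ is fully faithful. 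The derived adjunction
\[
\RHom_{\cD(\fC^\star_{\leq\lambda})}(L r_\lambda^*(B),\, M) \cong \RHom_{\cA_\lambda}(B,\, r_\lambda M),
\]
combined with the identities $r_\lambda(\overline{\nabla}_\mu) = \delta_{\lambda,\mu}\,\Bbbk$ and the Ext-vanishing from condition $(s3^\star)$ of Theorem~\ref{thm::Stratified} and Proposition~\ref{prp::Ext::propers}, forces the Ext-groups from $Lr_\lambda^*(B)$ to every proper costandard $\overline{\nabla}_\mu$ in $\fC^\star_{\leq\lambda}$ to vanish. A dévissage through the simples $L_\mu$ — each of which is detected by embedding into its proper costandard hull $\overline{\nabla}_\mu$ — then yields $L^i r_\lambda^*(B) = \Tor_i^{\cA_\lambda}(\Delta_\lambda,\, B) = 0$ for $i \geq 1$, giving flatness of $\Delta_\lambda$ and hence exactness of $r_\lambda^*$. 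In particular, the obstruction $K$ from the first step is zero.

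The main obstacle is the dévissage step: one must verify that the vanishing of Ext-groups against proper costandards is enough to detect the triviality of the higher cohomology objects $L^i r_\lambda^*(B) \in \fC^\star_{\leq\lambda}$. This relies on the assumption (carried throughout \S\ref{progroup}) that every object in $\fC^\star$ admits a separated filtration by simples, so that any nonzero object has a simple subquotient $L_\mu$ which in turn maps nontrivially into its proper costandard $\overline{\nabla}_\mu$. Once this is established, the dual argument for $r_\lambda^!$ proceeds by replacing tensor products by $\Hom$'s and proper costandards by proper standards, and yields the projectivity of $\nabla_\lambda$ as a left $\cA_\lambda$-module.
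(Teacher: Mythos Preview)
Your overall strategy---show that $\Delta_\lambda$ is flat over $\cA_\lambda$ by computing $Lr_\lambda^*$---is sound, but the d\'evissage step has a genuine gap. Your claim that the derived adjunction ``forces the Ext-groups from $Lr_\lambda^*(B)$ to every proper costandard $\overline{\nabla}_\mu$ in $\fC^\star_{\leq\lambda}$ to vanish'' is false for $\mu$ with $\rho(\mu)=\rho(\lambda)$: there the adjunction gives
\[
\RHom_{\fC^\star_{\leq\lambda}}(Lr_\lambda^*(B),\overline{\nabla}_\mu)\cong \RHom_{\cA_\lambda}(B,\,r_\lambda\overline{\nabla}_\mu)\cong \RHom_{\cA_\lambda}(B,\Bbbk),
\]
which has higher Ext's whenever $\cA_\lambda$ is not semisimple. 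You correctly note that $L^{>0}r_\lambda^*(B)\in\fC^\star_{<\lambda}$, so one only needs to test against $\overline{\nabla}_\mu$ with $\rho(\mu)<\rho(\lambda)$; but then the hyperext spectral sequence $E_2^{p,q}=\Ext^p(L^q r_\lambda^*(B),\overline{\nabla}_\mu)\Rightarrow 0$ mixes contributions from $L^0r_\lambda^*(B)=r_\lambda^*(B)$ (whose Ext's to $\overline{\nabla}_\mu$ you have not controlled) with those from $L^{>0}r_\lambda^*(B)$, and it is not clear how to extract the vanishing of the latter.

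The paper's argument avoids this entirely by reducing to the simple module $B=\Bbbk$ (enough, since the graded irreducibles of $\cA_\lambda$-mod are all shifts of $\Bbbk$) and then \emph{explicitly computing} $Lr_\lambda^*(\Bbbk)$. The key input is Proposition~\ref{prp::Ext::propers}, which you cite but do not use in the same way: its proof produces a projective resolution $Q_\bullet\to\overline{\Delta}_\lambda$ in $\fC^\star_{\leq\lambda}$ consisting only of copies of $\Delta_\lambda$. Applying the exact functor $r_\lambda$ gives a free $\cA_\lambda$-resolution $r_\lambda(Q_\bullet)\to\Bbbk$, and since $r_\lambda^*(\cA_\lambda)=\Delta_\lambda$ one has $r_\lambda^*(r_\lambda(Q_\bullet))=Q_\bullet$. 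Thus $Lr_\lambda^*(\Bbbk)$ is represented by the exact complex $Q_\bullet$, giving $L^{>0}r_\lambda^*(\Bbbk)=0$ directly---no spectral sequence or d\'evissage required.
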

\begin{proof}
The functor $r_{\la}^{*}$ is left adjoint to the exact functor $r_{\la}$ and admits a left derived functor $L_{\ldot}r_{\la}^{*}$.
The irreducibles in the category  $\cA_\lambda$-mod we are working with (graded $\cA_\lambda$-modules bounded from below or from above)
are isomorphic to $\Bbbk$ shifted in appropriate degree. Therefore, it is enough to show that higher derived $L_{>0}r_{\la}^{*}(\Bbbk)$ vanishes on the trivial module $\Bbbk$.

On the other hand, as we have mentioned above, Proposition~\ref{prp::Ext::propers} is 
equivalent to the existence of a projective resolution of the proper standard $\bar\Delta_{\la}$ by standards $\Delta_{\la}$. 
Let us denote this resolution by $Q_{\ldot} \rightarrow \bar\Delta_{\la}$. The functor $r_{\lambda}$ is exact and $r_{\lambda}(\Delta_{\lambda})=\cA_{\lambda}$. Therefore, $r_{\lambda}(Q_{\ldot})$ is a free resolution of $\Bbbk$ in the category of $\cA_{\lambda}$-mod. 
On the other hand, $r_{\lambda}^{*}(r_{\lambda} (Q_{\ldot})) = Q_{\ldot}$. Therefore, the left derived functors $L_{>0}{r_{\lambda}^{*}}(\Bbbk)$ vanish.
\end{proof}
The following corollary is an application of the classical homological algebra and satisfied under assumptions of Corollary~\ref{cor::r::exact} -- $\fC$ and $\fC^\star$ are stratified.
\begin{cor}
\label{cor::global::local}
If the standard module
 $\Delta_{\lambda}$ is finitely presented $\cA_{\lambda}$ module, then it is projective $\cA_{\lambda}$-module.
Moreover, if $\cA_\la$ is a polynomial ring, then we have an isomorphism of $\cA_{\la}$-modules:
$$
 \Delta_{\lambda} \simeq \cA_{\lambda}\otimes \bar\Delta_{\lambda} \text{ as $\cA_{\la}$-mod.}
$$
Similarly,
finitely presented $\cA_{\la}$-module $\nabla_\lambda^{\vee}$ is projective and for polynomial $\cA_{\lambda}$ this module is free:
$$
 \nabla_{\lambda}^{\vee} \simeq \cA_{\lambda} \otimes \bar{\nabla}_{\lambda}^{\vee}
 \text{ as $\cA_{\la}$-mod.}
$$
\end{cor}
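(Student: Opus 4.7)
The plan is to combine exactness of the adjoint functors from Corollary~\ref{cor::r::exact} with standard commutative algebra. First, since the preceding lemma identifies $r_\lambda^*(B)=\Delta_\lambda\otimes_{\cA_\lambda}B$, exactness of $r_\lambda^*$ is precisely the statement that $\Delta_\lambda$ is flat as a $\cA_\lambda$-module. Combined with the hypothesis of finite presentation and the classical fact that a finitely presented flat module over a commutative ring is projective, this immediately gives the projectivity of $\Delta_\lambda$ over $\cA_\lambda$, settling the first assertion.

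For the polynomial ring case, the next step is to promote projectivity to freeness and to identify the free basis with $\bar\Delta_\lambda$. Since the categories $\fC$ and $\fC^\star$ consist of graded modules, $\cA_\lambda$ inherits a non-negative grading with $(\cA_\lambda)_0=\Bbbk$; under the polynomial-ring hypothesis $\cA_\lambda=\Bbbk[t_1,\dots,t_n]$ with each $t_i$ of strictly positive degree. The graded Nakayama lemma then promotes graded projectivity of the (graded, bounded-below) $\cA_\lambda$-module $\Delta_\lambda$ to graded freeness, with a homogeneous basis obtained by lifting any homogeneous basis of $\Delta_\lambda/\cA_\lambda^+\Delta_\lambda$, where $\cA_\lambda^+$ denotes the augmentation ideal. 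To complete the identification, one verifies that $\Delta_\lambda/\cA_\lambda^+\Delta_\lambda\simeq\bar\Delta_\lambda$: the quotient has one-dimensional $\lambda$-weight space (since $(\Delta_\lambda)_\lambda=\cA_\lambda$ as free $\cA_\lambda$-module of rank one), kills all nontrivial graded endomorphisms of the highest weight part, and therefore satisfies the universal property of Definition~\ref{StandardCostandard} characterizing $\bar\Delta_\lambda$. A chosen lifted basis then delivers the asserted isomorphism $\Delta_\lambda\simeq\cA_\lambda\otimes\bar\Delta_\lambda$ of $\cA_\lambda$-modules.

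The second half of the corollary, concerning $\nabla_\lambda^\vee$, proceeds in parallel using exactness of $r_\lambda^!=\Hom_{\cA_\lambda}(\nabla_\lambda,-)$. Exactness of this Hom functor translates, via standard homological algebra (together with $\cA_\lambda$ being commutative, so that the actions on $\nabla_\lambda$ and its restricted dual $\nabla_\lambda^\vee$ can be unambiguously compared), to the projectivity of $\nabla_\lambda^\vee$ over $\cA_\lambda$. Given finite presentation and the polynomial-ring hypothesis, the same graded Nakayama argument applied to $\nabla_\lambda^\vee$, with $\bar\nabla_\lambda^\vee$ in place of $\bar\Delta_\lambda$, yields $\nabla_\lambda^\vee\simeq\cA_\lambda\otimes\bar\nabla_\lambda^\vee$.

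The main subtlety I anticipate is the identification $\Delta_\lambda/\cA_\lambda^+\Delta_\lambda\simeq\bar\Delta_\lambda$ (and its analogue for $\nabla_\lambda^\vee$). Morally, modding out by positive-degree endomorphisms kills the multiplicity of $L_\lambda$ in the $\lambda$-weight space down to one and enforces the universal property of Definition~\ref{StandardCostandard}; however, writing this cleanly requires tracing through the description of $\Delta_\lambda$ as $\imath_{\leq\lambda}^*(\bP_\lambda)$, verifying that the resulting quotient indeed lies in the appropriate subcategory of modules with $[M:L_\nu]\leq 1$ for $\rho(\nu)=\rho(\lambda)$, and matching universal properties on both sides.
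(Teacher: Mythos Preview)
Your argument for $\Delta_\lambda$ matches the paper exactly through the first step: exactness of $r_\lambda^*=\Delta_\lambda\otimes_{\cA_\lambda}(-)$ gives flatness, and finitely presented flat implies projective. Where you diverge is in passing from projective to free: the paper simply invokes the Quillen--Suslin theorem, whereas you exploit the grading and use graded Nakayama. Your route is more elementary and better adapted to the situation (everything is graded, bounded below, with $(\cA_\lambda)_0=\Bbbk$), and it has the bonus that the identification $\Delta_\lambda/\cA_\lambda^+\Delta_\lambda\simeq\bar\Delta_\lambda$ falls out as part of the argument rather than being left implicit. In fact this identification is already available from the proof of Corollary~\ref{cor::r::exact}: there one sees that $r_\lambda^*(\Bbbk)=\bar\Delta_\lambda$, which is precisely $\Delta_\lambda\otimes_{\cA_\lambda}\Bbbk$. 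So your flagged ``subtlety'' is not a genuine obstacle.

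There is one point to tighten in your treatment of $\nabla_\lambda^\vee$. Exactness of $r_\lambda^!=\Hom_{\cA_\lambda}(\nabla_\lambda,-)$ literally says that $\nabla_\lambda$ is projective as a $\cA_\lambda$-module, not $\nabla_\lambda^\vee$; your parenthetical about commutativity does not by itself bridge this. The cleanest fix is to argue by duality as in Lemma~\ref{lem::standard}: the functor $M\mapsto (M^\vee)^o$ carries the situation to one where $\nabla_\lambda^\vee$ plays the role of a standard module, and then your $\Delta$-argument applies verbatim. (The paper's own proof is equally terse here, treating the $\nabla_\lambda^\vee$ case with a bare ``Similarly''.)
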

\begin{proof}
The standard module $\Delta_\lambda$ is flat $\cA_{\lambda}$-module, because the functor $r_{\lambda}^{*}(\texttt{-})=\Delta_{\lambda}\otimes_{\cA_{\lambda}}\texttt{-}$ is exact by Corollary~\ref{cor::r::exact}.
A flat finitely presented module is projective (see e.g.~\cite{Roitman}) and projective module over polynomial ring is free thanks to the famous Quillen-Suslin theorem (\cite{Suslin-Quillen}).
\end{proof}

\subsection{Filtrations and bimodules}
\label{FandBi}
Within subsection~\S\ref{FandBi} we assume that
\begin{equation}
\label{SCat}
\fC\text{ and } \fC^\star \text{ are stratified with respect to partial orderings $\preceq$ and $\preceq^\vee$.}
\end{equation}

Consider the left $\fg$-module structure on $\Bbbk[G]$ and  the filtration
\[
\mathcal{F}_\lambda=\imath_{\leq\la}^! \Bbbk[G].
\]
In other words, 
$\mathcal{F}_\lambda$ is the maximal  left weight submodule of  $\Bbbk[G]$ 
with the property that all the nontrivial dominant weights $\mu$ of  $\mathcal{F}_\lambda$ satisfy $\mu\preceq\la$.
Since the left weights of $u$ and $ux$ are equal for any $u \in\Bbbk[G]$, $x \in \fg$, the space  $\mathcal{F}_\lambda\subset \Bbbk[G]$ is a sub-bimodule. By definition we have
\begin{equation}\label{InclusionFiltrations}
   \mathcal{F}_\lambda\supset \mathcal{F}_\mu,~\text{for } \lambda \succ \mu.
\end{equation}

\begin{lem}\label{Flambda}
One has the isomorphism of the left modules \[  \mathcal{F}_\lambda=\bigoplus_{\mu \in P_+(\fg_0)}\imath_{\leq\la}^!\mathbb{I}_\mu\otimes (L_{-w_0\mu})^o = \bigoplus_{\mu \preceq \lambda}\imath_{\leq\la}^!\mathbb{I}_\mu\otimes (L_{-w_0\mu})^o. \].
\end{lem}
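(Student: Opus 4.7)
The proof reduces to two elementary observations applied to the bimodule decomposition of $\Bbbk[G]$ given in Proposition~\ref{moduleFunctions Decomposition}:
\[
\Bbbk[G]=\bigoplus_{\mu\in P_+(\fg_0)}\mathbb{I}_\mu\otimes (L_{-w_0\mu})^o.
\]
From the perspective of the left $\fg$-action, each factor $(L_{-w_0\mu})^o$ is just a finite-dimensional multiplicity space (a vector space with trivial left $\fg$-action), so each summand $\mathbb{I}_\mu\otimes (L_{-w_0\mu})^o$ is left $\fg$-isomorphic to a direct sum of $\dim L_\mu$ copies of $\mathbb{I}_\mu$. It therefore suffices to compute $\imath^!_{\leq\la}$ on $\bigoplus_\mu \mathbb{I}_\mu$ and then restore the multiplicity spaces.

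The first step is to verify that $\imath^!_{\leq\la}$ commutes with arbitrary direct sums. For $M=\bigoplus_i M_i$ in $\fC$, the sum $\bigoplus_i \imath^!_{\leq\la}M_i$ is manifestly a submodule of $M$ lying in $\fC_{\leq\la}$, hence is contained in $\imath^!_{\leq\la} M$. Conversely, if $m\in \imath^!_{\leq\la} M$ decomposes as $m=\sum_{i\in F} m_i$ with $F$ finite, then the $\fg$-submodule $\langle m\rangle$ generated by $m$ lies in $\fC_{\leq\la}$; since Serre subcategories are closed under quotients, the image $\langle m_i\rangle$ under the projection $\pi_i:M\twoheadrightarrow M_i$ also lies in $\fC_{\leq\la}$, so $m_i\in \imath^!_{\leq\la} M_i$. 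Combining this with the preceding remark on multiplicity spaces yields the first equality
\[
\mathcal{F}_\la=\imath^!_{\leq\la}\Bbbk[G]=\bigoplus_{\mu\in P_+(\fg_0)}\imath^!_{\leq\la}\mathbb{I}_\mu\otimes (L_{-w_0\mu})^o.
\]

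The second step is to prove that $\imath^!_{\leq\la}\mathbb{I}_\mu=0$ whenever $\mu\not\preceq\lambda$. Here the essential-socle property of the injective hull is decisive: the embedding $L_\mu\hookrightarrow \mathbb{I}_\mu$ is essential, so every nonzero submodule of $\mathbb{I}_\mu$ has nontrivial intersection with $L_\mu$ and, by simplicity of $L_\mu$, must contain $L_\mu$. If $\mu\not\preceq\lambda$, then $L_\mu\notin \fC_{\leq\la}$, and so no nonzero submodule of $\mathbb{I}_\mu$ belongs to $\fC_{\leq\la}$; consequently $\imath^!_{\leq\la}\mathbb{I}_\mu=0$. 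This collapses the sum to indices $\mu\preceq\la$ and establishes the second equality.

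I do not anticipate a serious obstacle: the argument is formal, depending only on the Serre property of $\fC_{\leq\la}$ and the essential socle of injective hulls. The only point requiring minor care is that these equalities respect the bimodule structure, but this is automatic since the right $\fg_0$-action commutes with the left $\fg$-action, so the direct sum decomposition is a bimodule decomposition and $\imath^!_{\leq\la}$, being defined via the left action only, acts on each bimodule summand through its left factor alone.
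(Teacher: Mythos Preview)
Your proof is correct and follows exactly the same approach as the paper: apply $\imath_{\leq\lambda}^!$ to the decomposition of Proposition~\ref{moduleFunctions Decomposition}, then discard the summands with $\mu\not\preceq\lambda$ using the fact that $\imath_{\leq\lambda}^!\mathbb{I}_\mu=0$ in that case. The paper's proof is two sentences long and simply asserts these two steps; you have supplied the routine justifications (commutation of $\imath_{\leq\lambda}^!$ with direct sums, the essential-socle argument) that the paper leaves implicit.
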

\begin{proof}
We apply functor $\imath_\la^!$ to Proposition \ref{moduleFunctions Decomposition} and obtain the first equality. The second equality follows from the fact that  $\imath_\la^!\mathbb{I}_\mu=0$ unless 
$\mu \preceq \lambda$.    
\end{proof}

Recall the Peter-Weyl decomposition
\[
\Bbbk[G_0]\simeq \bigoplus_{\la\in P_+(\fg_0)} L_\la\T (L_{-w_0\la})^o. 
\]
Let $v_\la\T v_\la^o$ be the tensor product of  weight $\lambda$ vectors in each summand.  Now let us consider the subspace $\mathcal{F}^{hw}_\lambda\subset \mathcal{F}_\lambda$ consisting of vectors of bi-weight $(\la,\la)$ (recall that $\mathcal{F}_\lambda\subset \Bbbk[G]$ and hence we have the natural right and left actions of $\U(\fg)$).
In particular, $v_\la\T v_\la^o\in \mathcal{F}^{hw}_\lambda$.

Recall that  $\overline{\fg}\subset \fg$ is the $\fh$-weight zero subalgebra of $\fg$.
By definition, $\mathcal{F}^{hw}_\lambda$ admits a natural structure of $\U(\overline{\fg})$ bimodule. By properties of the order $\prec$ we have that:
\begin{equation}\label{HighestWeightAlgebraInclusion}
\mathcal{F}^{hw}_\lambda \subset\U(\overline{\fg})^\vee \otimes \Bbbk (v_\la\T v_\la^o).  
\end{equation}
Indeed, thanks to condition \eqref{assumption} for any $r\in \mathcal{F}^{hw}_\lambda$ there exists an element $u\in \U(\overline{\fg})$ such that
$u.r=v_\la\T v_\la^o$. In fact, since $\Bbbk[G]$ is coinduced from its subspace $\Bbbk[G_0]$, there exists $u'\in\U(\fg)$ such that $u'.r=v_\la\T v_\la^o$;
in particular, weight of $u'$ is equal to zero.
Since all the weights of $\mathcal{F}_\lambda$ are less than or equal to $\la$, the condition \eqref{assumption} implies that the existence of $u'$ as above leads to the existence of $u\in \U(\overline{\fg})$ such that
$u.r=v_\la\T v_\la^o$.

Recall \eqref{abelian} that we assume that $\overline{\fg}$ is abelian. 

\begin{lem}
For any $\la\in P_+(\fg_0)$ the left and right actions of $\U(\overline{\fg})$ on $\mathcal{F}^{hw}_\lambda$ coincide.
\end{lem}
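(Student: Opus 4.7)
The plan is to reduce the assertion to an evaluation at the identity. For $f \in \mathcal{F}^{hw}_\lambda$, define the linear functional $\Psi_f : \U(\overline{\fg}) \to \Bbbk$ by $\Psi_f(u) := (u \cdot f)(e)$, where $u \cdot f$ denotes the left $\U(\fg)$-action on $\Bbbk[G]$ and evaluation at $e \in G$ is legitimate by the earlier lemma on evaluation of functions at group elements. The proof has two ingredients: (i) $f \mapsto \Psi_f$ is injective on $\mathcal{F}^{hw}_\lambda$; (ii) $\Psi_{x \cdot f} = \Psi_{f \cdot x}$ for every $x \in \overline{\fg}$. Together these imply $x \cdot f = f \cdot x$.

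For (i), I would invoke the cyclicity established immediately before \eqref{HighestWeightAlgebraInclusion}: given a nonzero $r \in \mathcal{F}^{hw}_\lambda$, there exists $u \in \U(\overline{\fg})$ with $u \cdot r = v_\la \otimes v_\la^o$, and with the standard normalization of the Peter-Weyl matrix coefficient one has $(v_\la \otimes v_\la^o)(e) \neq 0$, so $\Psi_r(u) \neq 0$. For (ii), the key auxiliary identity is $(u \cdot h)(e) = (h \cdot u)(e)$ for all $u \in \U(\fg)$ and $h \in \Bbbk[G]$. Writing $u = x_1 \cdots x_n$ in a PBW basis, both sides unfold to the mixed derivative
\[ \tfrac{\partial^n}{\partial t_1 \cdots \partial t_n}\bigg|_{t_i = 0} h\bigl(\exp(t_1 x_1) \cdots \exp(t_n x_n)\bigr), \]
because at the identity the iterated left $\U(\fg)$-action (derivative of right translation) and the iterated right $\U(\fg)$-action (derivative of left translation) produce the same product of exponentials.

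Combining these for $x \in \overline{\fg}$ and $u \in \U(\overline{\fg})$,
\[ \Psi_{x \cdot f}(u) = \bigl((ux) \cdot f\bigr)(e), \qquad \Psi_{f \cdot x}(u) = \bigl((u \cdot f) \cdot x\bigr)(e) = \bigl(x \cdot (u \cdot f)\bigr)(e) = \bigl((xu) \cdot f\bigr)(e), \]
where the second chain uses the commutation of the left and right $\fg$-actions on $\Bbbk[G]$ followed by the swap identity applied to $h = u \cdot f$. Assumption \eqref{abelian} forces $ux = xu$ in $\U(\overline{\fg})$, so these expressions coincide, yielding $\Psi_{x \cdot f} = \Psi_{f \cdot x}$ and hence, by (i), $x \cdot f = f \cdot x$. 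The main obstacle is not conceptual but bookkeeping: pinning down the sign and ordering conventions in the definitions of the left and right $\fg$-actions on $\Bbbk[G]$ so that the swap identity and the commutation step line up cleanly.
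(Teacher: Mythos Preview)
Your proof is correct and shares the paper's overall architecture: establish a ``leading-term'' coincidence $\ell(x\cdot f)=\ell(f\cdot x)$ for a separating linear functional $\ell$ on $\mathcal{F}^{hw}_\lambda$ and each $x\in\overline{\fg}$, then upgrade to the full equality using commutativity of $\overline{\fg}$ together with the cocyclicity established just before \eqref{HighestWeightAlgebraInclusion}. The difference lies in the choice of $\ell$ and the justification of the one-step coincidence. You take $\ell$ to be evaluation at $e\in G$ (composed with the left $\U(\overline{\fg})$-action to get $\Psi_f$) and invoke the general swap identity $(u\cdot h)(e)=(h\cdot u)(e)$ on $\Bbbk[G]$, which is a universal fact about bi-invariant differentiation at the identity and holds for any $\fg$. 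The paper instead takes $\ell=(v_\la\otimes v_\la^o)^*$, the ``degree-zero'' coefficient in the decomposition $\Bbbk[G]\simeq\Bbbk[G_0]\otimes\U(\fr)^\vee$, and proves the coincidence via a group-theoretic observation specific to the setup: since $\overline{G}\subset R$, conjugation by $\overline{G}$ fixes $\Bbbk[G/R]=\Bbbk[G_0]$, hence preserves $\ell$. Your route is arguably cleaner and more portable (it does not reference the filtration by $R_k$), while the paper's route makes more explicit use of the pro-unipotent structure that is already in play. Your closing caveat about sign and ordering conventions is fair; with the standard conventions (left action by right translation, right action by left translation), the swap identity holds as stated, and the remainder of your computation goes through verbatim.
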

\begin{proof}
Let $\overline{G}\subset G$ be the abelian unipotent Lie group of the (possibly infinite dimensional) Lie algebra $\overline{\fg}$.
Also let $(v_\la\T v_\la^o)^*$ be a linear form on  $\mathcal{F}^{hw}_\lambda$ which returns the zero-degree coefficient of a function (in other words, the coefficient in front of $v_\la\T v_\la^o$).  

We prove lemma in two steps: first, we prove the coincidence of the matrix coefficients for the left and right actions of the group  $(v_\la\T v_\la^o)^*(\bar g.F)=(v_\la\T v_\la^o)^*(F.\bar g)$ for any $\bar g\in \overline G$ and $F\in \mathcal{F}^{hw}_\lambda$. 
After that we deduce the desired statement.

For the first step 
we need to show that for any $\bar g\in\overline{G}$ and $F\in \mathcal{F}^{hw}_\lambda$ one has $(v_\la\T v_\la^o)^*(\bar g.F.\bar g^{-1})=(v_\la\T v_\la^o)^*F$. Since $\overline G \subset R$,
the conjugation by $\bar g$ stabilizes $\Bbbk[G/R]$. This completes the first step. 

It is immediate from the first step that for any $a\in \overline\fg$ and $w\in \mathcal{F}^{hw}_\lambda$ one has
\begin{equation}\label{lr}
(v_\la\T v_\la^o)^*(a.w)=(v_\la\T v_\la^o)^*(w.a).
\end{equation}

Now let us consider $\mathcal{F}^{hw}_\lambda$ as a bimodule over $\fg$. For an element $a\in\overline \fg$ let $r(a)-l(a)$ be the difference between the right and left actions as an operator on $\mathcal{F}^{hw}_\la$. Formula \eqref{lr} implies that 
$v_\la\T v_\la^o\notin {\rm Im} (r(a)-l(a))$.
Now take an element $F\in \mathcal{F}^{hw}_\lambda$. Then from the commutativity of $\overline\fg$ one gets that 
\[
v_\la\T v_\la^o\notin
\U(\overline\fg)\bigl((r(a)-l(a))F\bigr).
\]
However, $\mathcal{F}^{hw}_\lambda$ is cocyclic and hence  $(r(a)-l(a))F=0$. This implies the desired statement. 
\end{proof}

We consider the $\fg\de\fg$ bimodule
\[
{\mathbb T}_\lambda:=\nabla_\lambda \otimes_{\mathcal{A}_\lambda}(\Delta_\lambda^\vee)^o.
\]

\begin{lem}
Assume that $\cA_\la$  is a polynomial algebra for all  $\la$. Then as a $\fg\de\fg$ bimodule ${\mathbb T}_\lambda$ is defined by the following relations:
\begin{itemize}
\item
${\mathbb T}_\lambda$ is a cocyclic bimodule with cocyclic vector $w_\la$ of bi-weight $(\la,\la)$,
\item all the bi-weights $(\mu,\nu)$ of ${\mathbb T}_\lambda$ satisfy $\mu\preceq\la$, $\nu\preceq\la$, 
\item the right and left actions of $\U(\overline\fg)$ on the $(\la,\la)$-weight subspace coincide.
\end{itemize}
\end{lem}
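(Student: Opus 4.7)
My plan is to prove the lemma in two parts: (I) verify that $\mathbb{T}_\lambda$ satisfies the three listed properties, and (II) show that any bimodule satisfying these properties is isomorphic to $\mathbb{T}_\lambda$.

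For part (I), I would take the candidate cocyclic vector to be $w_\lambda := v_\lambda \otimes v_\lambda^o$, where $v_\lambda$ generates the simple socle $L_\lambda\hookrightarrow\nabla_\lambda$ and $v_\lambda^o$ is its counterpart, a nonzero right-weight-$\lambda$ element of $(\Delta_\lambda^\vee)^o$. The bi-weight bound is immediate from $\nabla_\lambda\in\fC_{\leq\lambda}$ having left weights $\preceq\lambda$ and, dually, $(\Delta_\lambda^\vee)^o$ having right weights $\preceq\lambda$. Property~(3) follows because $\mathcal{A}_\lambda=\End(\nabla_\lambda)=\End((\Delta_\lambda^\vee)^o)$ acts from both sides on the tensor product, and the balancing over $\mathcal{A}_\lambda$ identifies the two actions on the $(\lambda,\lambda)$-weight subspace, which is canonically $\mathcal{A}_\lambda\otimes_{\mathcal{A}_\lambda}\mathcal{A}_\lambda\cong\mathcal{A}_\lambda$.

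The cocyclicity of $\mathbb{T}_\lambda$ at $w_\lambda$ requires more care. I would invoke Corollary~\ref{cor::global::local}: since $\mathcal{A}_\lambda$ is polynomial, both $\Delta_\lambda$ and $\nabla_\lambda^\vee$ are free $\mathcal{A}_\lambda$-modules, so $\mathbb{T}_\lambda$ is a tensor product over $\mathcal{A}_\lambda$ of free $\mathcal{A}_\lambda$-modules on each side. Given a nonzero sub-bimodule $N\subset\mathbb{T}_\lambda$, I would first use cocyclicity of $\nabla_\lambda$ as a left $\mathfrak{g}$-module (simple socle $\Bbbk v_\lambda$, since $\nabla_\lambda$ is the injective hull of $L_\lambda$) to deduce that $N$ intersects the right-weight-$\lambda$ slice $\nabla_\lambda\otimes_{\mathcal{A}_\lambda}\Bbbk v_\lambda^o\cong \nabla_\lambda$ nontrivially; then the dual cocyclicity of $(\Delta_\lambda^\vee)^o$ as a right $\mathfrak{g}$-module forces $N$ to contain $w_\lambda$.

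For part (II), let $M$ be a bimodule with the three properties and cocyclic vector $w$. The bi-weight $(\lambda,\lambda)$ subspace $M^{hw}$ contains $w$ and, by property~(3), carries a well-defined $\mathcal{U}(\overline{\mathfrak{g}})$-module structure with matching left and right actions. The crucial step is to show that this action factors through $\mathcal{A}_\lambda=\mathcal{U}(\overline{\mathfrak{g}})/K_\lambda$. I would argue that any element of $K_\lambda$ annihilates $w$: indeed, by property~(2) and the cocyclic property, if $u\in K_\lambda$ acted nontrivially on $w$, then $u\cdot w$ would generate a sub-bimodule of $M$ not containing $w$, contradicting cocyclicity. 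With $M^{hw}$ upgraded to an $\mathcal{A}_\lambda$-module, the universal property of $\nabla_\lambda$ (as injective hull of $L_\lambda$ in $\fC_{\leq\lambda}$) applied in the left direction, combined dually with the cocyclic property of $(\Delta_\lambda^\vee)^o$ on the right, yields a bimodule map $\mathbb{T}_\lambda\to M$ extending the identification of the $(\lambda,\lambda)$-slices. Cocyclicity of both $\mathbb{T}_\lambda$ and $M$ at their respective highest weight vectors, together with matching weight spaces at $(\lambda,\lambda)$, then forces this map to be an isomorphism.

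The main obstacle will be justifying that $K_\lambda$ annihilates $w$ cleanly — this uses properties~(2) and~(3) together with cocyclicity in a somewhat subtle way, and requires the polynomial structure of $\mathcal{A}_\lambda$ via Corollary~\ref{cor::global::local} to convert the abstract cocyclic condition into a concrete statement about the free $\mathcal{A}_\lambda$-module structure. Once this is established, the identification with $\mathbb{T}_\lambda$ follows from standard universal-property arguments in the stratified setting of Section~\ref{HWC}.
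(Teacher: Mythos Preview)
Your Part~(I) is in line with the paper's approach: it amounts to exhibiting the embedding ${\mathbb T}_\lambda\hookrightarrow\overline{\mathbb T}_\lambda$, where $\overline{\mathbb T}_\lambda$ is the universal cocyclic bimodule satisfying the three conditions. The difficulty lies entirely in Part~(II), and there your argument breaks down in several places.

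First, the statement of Part~(II) is incorrect: it is not true that every cocyclic bimodule $M$ satisfying the three properties is isomorphic to ${\mathbb T}_\lambda$. Any nonzero sub-bimodule of ${\mathbb T}_\lambda$ is again cocyclic with cocyclic vector $w_\lambda$ and inherits properties~(2) and~(3), so there are strictly smaller examples. What the lemma asserts is that ${\mathbb T}_\lambda$ is the \emph{universal} (largest) such bimodule, so the correct target of Part~(II) is the single object $\overline{\mathbb T}_\lambda$, and the goal is an inequality of characters.

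Second, your argument that $K_\lambda$ annihilates $w$ is backwards. You write that if $u\cdot w\neq 0$ then ``$u\cdot w$ would generate a sub-bimodule of $M$ not containing $w$, contradicting cocyclicity.'' But cocyclicity says precisely the opposite: every nonzero sub-bimodule \emph{does} contain $w$. No contradiction arises, and nothing prevents $K_\lambda$ from acting nontrivially at this level of generality.

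Third, the universal property of the injective hull $\nabla_\lambda$ produces maps \emph{into} $\nabla_\lambda$, not out of it: from $L_\lambda\hookrightarrow M$ one gets $M\to\nabla_\lambda$. So you cannot build a map ${\mathbb T}_\lambda\to M$ this way; and even if you had one, cocyclicity would give injectivity but not surjectivity.

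The paper's proof sidesteps all of this with a direct character bound on $\overline{\mathbb T}_\lambda$. One observes that the left-weight-$\lambda$ subspace of $\overline{\mathbb T}_\lambda$ is a cocyclic right $\fg$-module in $\fC^o_{\preceq\lambda}$ with cocycle of weight $\lambda$, hence embeds into $(\Delta_\lambda^\vee)^o$. Choosing $\cA_\lambda$-cogenerators $b_1,\dotsc,b_N$ of $(\Delta_\lambda^\vee)^o$ (with $N=\dim(\overline\Delta_\lambda^\vee)^o$, using Corollary~\ref{cor::global::local}), one sees that $\overline{\mathbb T}_\lambda$ is cogenerated as a left module by the $b_i$; condition~(3) forces each resulting left-cocyclic piece to be a quotient of $\nabla_\lambda$. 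This gives $\ch\,\overline{\mathbb T}_\lambda\leq\ch\,\nabla_\lambda\cdot\ch\,(\overline\Delta_\lambda^\vee)^o=\ch\,{\mathbb T}_\lambda$, and the embedding from Part~(I) finishes the proof.
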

\begin{proof}
Let $\overline{\mathbb T}_\lambda$ be the $\fg\de\fg$ bimodule defined by the above relations. One easily sees that there is an embedding of bi-modules 
${\mathbb T}_\lambda\to \overline{\mathbb T}_\lambda$. Now we show that the "size" of $\overline{\mathbb T}_\lambda$ is the expected one.

First note that the left-weight $\la$ subspace of $\overline{\mathbb T}_\lambda$ embeds into $(\Delta_\lambda^\vee)^o$ as a right module. Let $\{b_i\}_{i=1}^N$, $N=\dim (\overline \Delta_\lambda^\vee)^o$ be a set of cogenerators of $(\Delta_\lambda^\vee)^o$ as an $\mathcal{A}_\lambda$-module.  In particular, by Corollary \ref{cor::global::local},
the character $\mathrm{span}\{b_i\}_{i=1}^N$ coincides with the character of $(\overline{\Delta}_\lambda^\vee)^o$.
We use the same notation $b_i$ to denote the corresponding (left-weight $\la$) vectors in $\overline{\mathbb T}_\lambda$.
Then $\overline{\mathbb T}_\lambda$ is cogenerated by $\{b_i\}_{i=1}^N$ as a left $\fg$-module. However, the left submodule of $\overline{\mathbb T}_\lambda$ cogenerated by each vector $b_i$ is a quotient of $\nabla_\la$ (because of the third defining condition). Hence the character of $\overline{\mathbb T}_\lambda$ is at most the character of ${\mathbb T}_\lambda$.
\end{proof}

\begin{thm}\label{FT}
Assume that $\cA_\la$  is a polynomial algebra for all  $\la$. Then we have the following isomorphism of bimodules:
    \[\mathcal{F}_\lambda \left/ \sum_{\mu \prec \lambda}\mathcal{F}_\mu \right. \simeq {\mathbb T}_\la.\]
In other words, we have an isomorphism for the associated graded to this filtration:
$$
{\rm gr}\ \! \Bbbk[G] \simeq \bigoplus_{\la\in P_+} \nabla_\la\T_{\cA_\la} (\Delta_\la^\vee)^o.
$$
\end{thm}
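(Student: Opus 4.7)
The strategy is to verify that the associated graded piece $\mathcal{F}_\la/\sum_{\mu\prec\la}\mathcal{F}_\mu$ satisfies the three defining properties of ${\mathbb T}_\la$ from the lemma immediately preceding the theorem statement, and then to promote the resulting embedding to an isomorphism via a character comparison.

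First I would analyze the quotient as a left $\fg$ by right $\fg_0$-bimodule. By Lemma~\ref{Flambda}, $\mathcal{F}_\la=\bigoplus_{\mu'\preceq\la}\imath_{\leq\la}^!\mathbb{I}_{\mu'}\otimes (L_{-w_0\mu'})^o$, and the analogous decomposition of each $\mathcal{F}_\nu$ for $\nu\prec\la$ shows that the $\mu'$-summand of $\sum_{\mu\prec\la}\mathcal{F}_\mu$ equals $\imath_{<\la}^!\mathbb{I}_{\mu'}\otimes (L_{-w_0\mu'})^o$ when $\mu'\prec\la$ and vanishes when $\mu'=\la$. By the injective version of the BGG reciprocity (the proposition following Theorem~\ref{thm:strat-inj}), $\imath_{\leq\la}^!\mathbb{I}_{\mu'}$ admits an excellent $\nabla$-filtration whose top stratum consists of $[\overline\Delta_\la:L_{\mu'}]$ copies of $\nabla_\la$, with the remainder contained in $\imath_{<\la}^!\mathbb{I}_{\mu'}$. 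Hence
\[
\mathcal{F}_\la / \sum_{\mu\prec\la}\mathcal{F}_\mu \;\simeq\; \bigoplus_{\mu'\preceq\la}\nabla_\la^{[\overline\Delta_\la:L_{\mu'}]}\otimes (L_{-w_0\mu'})^o \;\simeq\; \nabla_\la\otimes (\overline\Delta_\la^\vee)^o
\]
as left $\fg$ by right $\fg_0$-bimodule. On the other side, Corollary~\ref{cor::global::local} (which uses the polynomial hypothesis on $\cA_\la$) gives ${\mathbb T}_\la = \nabla_\la\otimes_{\cA_\la}(\Delta_\la^\vee)^o \simeq \nabla_\la\otimes(\overline\Delta_\la^\vee)^o$ in the same sense, so the two sides already agree at the level of characters.

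Next I would verify the three characterizing properties of ${\mathbb T}_\la$ for the quotient. Property~(2) (all bi-weights $\preceq\la$) is immediate from the decomposition above, since each $(L_{-w_0\mu'})^o$ has highest right weight $\mu'\preceq\la$. Property~(3) (matched left and right $\U(\overline\fg)$-actions on the $(\la,\la)$-weight subspace) is precisely the content of the lemma stated just before the theorem, applied modulo $\sum_{\mu\prec\la}\mathcal{F}_\mu$. Property~(1) asks that the quotient be cocyclic with cocyclic vector $\bar v_\la$, the image of $v_\la\otimes v_\la^o$: equivalently, that every nonzero sub-bimodule contain $\bar v_\la$. This follows from a highest-bi-weight argument using assumptions (o1)--(o2): the only summand supporting a vector of bi-weight $(\la,\la)$ is $\nabla_\la\otimes (L_{-w_0\la})^o$, and the $(\la,\la)$-weight subspace there is one-dimensional, spanned by $\bar v_\la$. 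Once (1)--(3) are established, the universality of ${\mathbb T}_\la$ from the preceding lemma yields an embedding $\mathcal{F}_\la/\sum\mathcal{F}_\mu\hookrightarrow {\mathbb T}_\la$, which must be an isomorphism by the character agreement.

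The main obstacle I anticipate is the cocyclicity verification in property~(1). Although the left-module decomposition splits cleanly into $\mu'$-summands, the right $\mathfrak{r}$-action can genuinely mix them, so bimodule cocyclicity is not inherited summand-by-summand. The delicate point is to combine the independent highest-weight bounds on the left and right and use property~(3) to pin down the unique extremal bi-weight vector; once this is done, the remaining character comparison becomes routine.
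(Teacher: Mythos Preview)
Your proposal is correct and follows essentially the same route as the paper: establish the left-module isomorphism $\mathcal{F}_\la/\sum_{\mu\prec\la}\mathcal{F}_\mu\simeq\nabla_\la\otimes\overline\Delta_\la^\vee$ via Lemma~\ref{Flambda} and BGG reciprocity, verify the three characterizing properties of ${\mathbb T}_\la$ from the preceding lemma to obtain an embedding, and conclude by matching characters. The paper's proof differs only cosmetically---it glosses over cocyclicity with ``one easily checks'' where you (rightly) flag it as the delicate point, and it packages the character comparison through the lemma $\overline{\mathbb T}_\la={\mathbb T}_\la$ rather than invoking Corollary~\ref{cor::global::local} directly.
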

\begin{proof}
Due to Lemma \ref{Flambda} the right weights of elements of $\mathcal{F}_\lambda$ are $\preceq \lambda$. Now Lemma \ref{Flambda} and BGG reciprocity imply the following isomorphism of the left modules:
\begin{equation}\label{Ftonabla}
\mathcal{F}_\lambda \left/ \sum_{\mu \prec \lambda}\mathcal{F}_\mu \right. \simeq \nabla_\lambda \otimes_{\Bbbk}\overline \Delta_\lambda^\vee.
\end{equation}
In fact, 
\[
 \mathcal{F}_\lambda \simeq 
 \bigoplus_{\nu \preceq \lambda}\imath_{\leq\la}^!\mathbb{I}_\nu\otimes (L_{-w_0\nu})^o.
\]
Now
\[
\mathcal{F}_\lambda \left/ \sum_{\mu \prec \lambda}\mathcal{F}_\mu \right. \simeq \bigoplus_{\nu\preceq\lambda} \left(\imath_{\leq\la}^!\mathbb{I}_\nu\left/\sum_{\nu\preceq\mu\prec\la} \imath_{\leq\mu}^!\mathbb{I}_\nu\right. \right)\otimes (L_{-w_0\nu})^o.
\]
By BGG reciprocity (Corollary \ref{BGGreciprocity})
\[\imath_{\leq\la}^!\mathbb{I}_\nu\left/\sum_{\nu\preceq\mu\prec\la} \imath_{\leq\mu}^!\mathbb{I}_\nu\right. \simeq [\overline \Delta_\lambda^\vee : L_\nu]\nabla_\lambda.\]
Now summing up over all $\nu$ we obtain \eqref{Ftonabla}.

Now we note that     
$\mathcal{F}_\lambda \left/ \sum_{\mu \prec \lambda}\mathcal{F}_\mu \right.$ is a cocylic $\fg\de\fg$ bimodule. Moreover, one easily checks that all the conditions from the definition of the bimodule $\overline{\mathbb T}_\la$ are satisfied for  $\mathcal{F}_\lambda \left/ \sum_{\mu \prec \lambda}\mathcal{F}_\mu \right.$. Hence we obtain an embedding of bimodules
\[
\mathcal{F}_\lambda \left/ \sum_{\mu \prec \lambda}\mathcal{F}_\mu \right.\subset \overline{\mathbb T}_\la,\quad v_\la\T v_\la^o\mapsto w_\la.
\]
Since the characters of these bimodules are the same, they must coincide.
\end{proof}

\begin{cor}
    Assume that $\fg$ is finite dimensional Lie algebra with highest weight category of representations with respect to some order $\prec$ on the set of weights $\Theta=P_+(\fg_0)$ and $\overline \fg=\fh$. Then the following holds for the bimodule of functions on the corresponding connected group:
      \[\mathcal{F}_\lambda \left/ \sum_{\mu \prec \lambda}\mathcal{F}_\mu \right. \simeq {\mathbb T}_\la\simeq 
       \nabla_\lambda \otimes_{\Bbbk}\Delta_\lambda^\vee.\]
\end{cor}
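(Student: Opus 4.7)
The strategy is to invoke Theorem~\ref{FT} and then exploit the additional rigidity that the classical highest weight hypothesis (in the sense of \cite{CPS}, as recalled in the Remark after Theorem~\ref{thm::Stratified}) imposes on the endomorphism algebras $\cA_\la$ and on the (proper) standard/costandard modules.

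First I would check that the finite-dimensional $\fg$ satisfies all the technical assumptions of Section~\ref{progroup}: the grading is trivial (concentrated in degree $0$), the ideal $\fr$ can be taken to be the nilradical, finite-dimensionality makes all the finiteness conditions automatic, and the hypothesis $\overline{\fg}=\fh$ ensures the abelian condition~\eqref{abelian}. One also needs the partial order $\prec$ on $P_+(\fg_0)$ to satisfy (o1) and (o2), which follows from the hypothesis that $\cO$ is a classical highest weight category with order $\prec$. With these assumptions in hand, the categories $\fC$ and $\fC^\star$ coincide with the given highest weight category (which contains both enough projectives and enough injectives in the finite-dimensional setting) and are in particular stratified.

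Next I would apply Theorem~\ref{FT} to obtain
\[
\mathcal{F}_\lambda\big/\sum_{\mu\prec\lambda}\mathcal{F}_\mu \simeq {\mathbb T}_\la = \nabla_\la\otimes_{\cA_\la}(\Delta_\la^\vee)^o.
\]
The remaining task is to identify ${\mathbb T}_\la$ with $\nabla_\la\otimes_\Bbbk \Delta_\la^\vee$. For this I would use the two simplifications characteristic of the classical highest weight setting recorded in the Remark following Theorem~\ref{thm::Stratified}: $\Delta_\la=\overline\Delta_\la$ (and dually $\nabla_\la=\overline\nabla_\la$), and $\cA_\la={\rm End}(\nabla_\la)=\Bbbk$. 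The latter is the crux and can be seen directly: any endomorphism of $\nabla_\la$ must preserve its simple socle $L_\la$, on which $\fh=\overline\fg$ acts by the character $\la$; since $\nabla_\la$ is generated by its socle under the $\fg$-action and endomorphisms are determined by their restriction to $L_\la$, they are scalars. In particular $\cA_\la=\Bbbk$ is trivially a polynomial algebra, so Theorem~\ref{FT} applies. Substituting $\cA_\la=\Bbbk$ collapses the tensor product over $\cA_\la$ into a tensor product over $\Bbbk$, and the $(-)^o$ twist just repackages the vector space $\Delta_\la^\vee$ as a right module; on underlying spaces we get
\[
{\mathbb T}_\la \simeq \nabla_\la\otimes_\Bbbk \Delta_\la^\vee,
\]
as asserted.

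The main obstacle is really just the verification that $\cA_\la=\Bbbk$, i.e.\ the absence of nontrivial endomorphisms of $\nabla_\la$; this is the defining feature of the classical highest weight categories of \cite{CPS} (as contrasted with the more general stratified categories used throughout the paper) and is the reason the formula reduces from a tensor product over $\cA_\la$ to an ordinary $\Bbbk$-tensor product, recovering precisely the classical Peter-Weyl-type summand.
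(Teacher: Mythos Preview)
Your approach matches the paper's exactly: verify the hypotheses of Section~\ref{progroup}, apply Theorem~\ref{FT}, and then collapse $\cA_\la$ to $\Bbbk$ so that the tensor product over $\cA_\la$ becomes a tensor product over $\Bbbk$. The paper's own proof is just two sentences to this effect.

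One small slip in your supplementary ``direct'' argument for $\cA_\la=\Bbbk$: the costandard module $\nabla_\la$ is \emph{cocyclic}, not cyclic, so it is not in general generated by its socle, and an endomorphism is not determined by its restriction to the socle in the way you claim. Your primary justification (invoking the highest weight hypothesis via the Remark after Theorem~\ref{thm::Stratified}) is already sufficient and is what the paper uses. If you want a self-contained direct argument, note instead that a nonzero endomorphism of a finite-dimensional module with simple socle must be injective (its kernel, if nonzero, would contain the socle) and hence an isomorphism, so ${\rm End}(\nabla_\la)$ is a finite-dimensional division algebra over the algebraically closed field $\Bbbk$ and therefore equals $\Bbbk$.
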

\begin{proof}
    All the assumptions in the beginning of Section \ref{progroup} hold for finite dimensional Lie algebras. The property $\overline \fg=\fh$ implies  $\mathcal{A}_\lambda \simeq \Bbbk$ and $\Delta_\lambda=\overline\Delta_\lambda$, $\nabla_\lambda=\overline\nabla_\lambda$.
\end{proof}

\begin{rem}\label{VanDerKallen}
In the case when $\mathfrak{g}=\mathfrak{b}$ and $\mathfrak{g}_0=\mathfrak{h}$ are Borel and Cartan subalgebras in a reductive Lie algebra, Corollary 2.21 recovers a result of van der Kallen \cite{vdK}. The relevant order $\prec$ is that of Definition~\ref{ChOrder} below, and the modules $\Delta_\lambda=\overline{\Delta}_\lambda$ and $\nabla_\lambda=\overline{\nabla}_\lambda$ are the usual (finite-dimensional) Demazure and van der Kallen modules, respectively. To see the latter, one argues as in the proof of Theorem~\ref{DUStCost} below, using presentations for the modules $\Delta_\lambda$ and $\nabla_\lambda$ as cyclic $\mathfrak{b}$-modules (which are obtained by omitting relations of nonzero $z$-degree from those in Section~\ref{Relations}). The characters of the $\nabla_\lambda$ are known as Demazure atoms.
\end{rem}

\section{Nonsymmetric Macdonald polynomials}\label{MacdonaldPolynomials}

In this section we collect basic notation and constructions on the nonsymmetric Macdonald polynomials (see \cite{M1,M2,Ch2,CO1,CO2,OS}). 

\subsection{Affine Weyl groups}
Let $\fs=\fn^+\oplus \fh\oplus \fn^-$ be a finite dimensional simple Lie algebra with root system $\Delta=\Delta_+\cup\Delta_-$, dual root system $\Delta^\vee$, Weyl group $W$, root lattice $Q$, coroot lattice $Q^\vee$, and weight lattice $P$. 
We consider the affine  and extended affine Weyl groups
\[W^{a\vee}:=W \ltimes Q,\qquad W^{e\vee}:=W \ltimes P.\]
For $\mu \in P$, we denote by $t_{\mu}$ the corresponding translation element of $W^{e\vee}$. 
More precisely, $W^{e\vee}$ (resp., $W^{a\vee}$) consists of elements $t_{\mu}\sigma$ for $\sigma \in W$, $\mu \in P$ (resp., $\mu \in Q$). For $\sigma, \tau \in W$,
$\mu, \nu \in P$, we have
\[t_{\mu}\sigma t_{\nu}\tau=t_{\mu +\sigma(\nu)}\sigma\tau.\]
We use the notation 
\[\wt(t_{\mu}\sigma)=\mu,\qquad \dir(t_{\mu}\sigma)=\sigma.\]
Note that $\dir$ is a surjective homomorphism $W^{e\vee}\twoheadrightarrow W$.

By definition, we have that $W^{a\vee} \subset W^{e\vee}$ is a normal subgroup and
\[W^{e\vee} / W^{a\vee} \simeq P/Q=:\Pi.\]
Moreover, we can realize $\Pi$ as a subgroup of $W^{e\vee}$ such that
$W^{e\vee} \simeq \Pi \ltimes W^{a\vee}.$


Let $s_1^\vee, \dots, s_n^\vee$ be the simple reflections in $W$ and set $I=\{1,\dotsc,n\}$. Let $\theta$ be the highest short root of $\Delta$; then $\theta^\vee$ is the highest (long) coroot of $\Delta^\vee$. We denote
\[s_0^\vee=t_{\theta}s_{\theta^\vee}\in W^{a\vee},\qquad \alpha_0^\vee=-\theta^{\vee}+\delta^\vee\in Q^{a\vee},\]
where $Q^{a\vee}=Q^\vee\oplus\mathbb{Z}\delta^\vee$.

Then each element of $W^{a\vee}$ can be expressed as a product of reflections $s_i, i=0, \dots, n$ and each element   $\tau \in W^{e\vee}$ has the expression of the form $\tau=\pi s_{i_1}, \dots, s_{i_l}$, $\pi \in \Pi$. If such $l$ is the smallest possible number of simple reflections in the decomposition of $\tau$ then it is called the length of $\tau$ and is denoted by $l(\tau)$.

For an element $\beta^\vee\in Q^{a\vee}$, we introduce the notation $\beta^\vee = \bar\beta^\vee+ \deg(\beta)\delta^\vee$ where $\bar\beta^\vee\in Q^\vee$; we call $\bar\beta^\vee$ the classical part of $\beta^\vee$. For any affine coroot $\beta^\vee=\overline{\beta}^\vee+k\delta^\vee\in \Delta^{a\vee}:=\Delta^\vee\times \mathbb{Z}\delta^\vee$, we have an affine reflection $s_{\beta^\vee}=t_{-k\overline{\beta}}s_{\overline\beta^\vee}$.

For an element $\la\in P$, let $m_\lambda=t_\lambda \sigma_{\lambda}$ be the unique shortest element in $t_\lambda W$. Then $\sigma_\lambda\in W$ is characterized as the unique shortest element such that 
$\lambda=\sigma_{\lambda}(\lambda_-)$, where $\lambda_-\in P_-$ is the unique antidominant weight in the $W$-orbit of $\lambda$.

For any $i=1,\dotsc,n$ and $\sigma\in W$, we denote
\[\delta_{i,\sigma}=\begin{cases}
    0, ~\text{ if } \sigma(\alpha_i) \in \Delta_+,\\ 
    1, ~\text{ if } \sigma(\alpha_i) \in \Delta_-.
\end{cases}\]

\subsection{Nonsymmetric Macdonald polynomials}\label{NMp}



Let $\ge$ denote the Bruhat order on $W$.

\begin{dfn}\label{ChOrder}
The Cherednik partial order $\succeq$ on $P$ is defined as follows:  $\lambda \succeq \mu$ if and only if $\la_--\mu_-\in -Q_+$ and $\sigma_\lambda \leq \sigma_\mu$ when $\la_-=\mu_-$.

The dual Cherednik partial order $\succeq^\vee$ is defined as follows: $\lambda \succeq^\vee \mu$ if and only if $\la_--\mu_-\in -Q_+$ and $\sigma_\lambda \geq \sigma_\mu$ when $\la_-=\mu_-$.
\end{dfn}


Let $\QQ(q,t)[P]$ be the group algebra of the weight lattice over the field $\QQ(q,t)$ of rational functions in $q,t$. For each fundamental weight $\omega_i$, $i=1,\dots, n$
we denote by $x_i$ the formal exponential of $\omega_i$. Then
\[\QQ(q,t)[P]=\QQ(q,t)[x_i^{\pm1}]_{i=1, \dots,n}.\]
For $\lambda=\sum_{i=1}^n \lambda _i\omega_i \in P$, we write
\[x^{\lambda}=\prod_{i=1}^nx_i^{\lambda_i}.\]

Let
\[\mu=\prod_{\alpha \in \Delta_+}\prod_{i=0}^\infty\frac{\left(1-q^i x^{\alpha}\right)\left(1-q^{i+1} x^{-\alpha} \right)}{\left(1-tq^i x^{\alpha}\right)\left(1-tq^{i+1} x^{-\alpha} \right)}\in \ZZ[P][[q,t]] \subset \QQ[[q,t,P]],
\]
where we denote by $\QQ[[q,t,P]]$ the vector space consisting of all formal series $\sum_{a,b\in\ZZ,\lambda\in P} c_{a,b,\lambda}q^a t^b x^\lambda$.
For $f\in \QQ[[q,t,P]]$ we denote by $\langle f \rangle \in \QQ[[q,t]]$ the constant term of $f$, i.e., the coefficient of $x^0$ in $f$. It is known (see, e.g., \cite[Theorem 6.3]{H} for a proof) that
\[\mu_\circ:=\mu/\langle \mu\rangle \in \QQ(q,t)[[P]].\]
We define the scalar product on $\QQ(q,t)[P]$:
\[\langle f,g \rangle=\langle fg\mu_\circ \rangle.\]


The ring $\Bbbk(q,t)[P]$ has the monomial basis $\{x^{\lambda}\}_{\lambda \in P}$ over $\Bbbk(q,t)$. By \cite[\S 4, (4.4)]{Ch1}, there exists a unique basis $\{E_{\lambda}(x,q,t)\}_{\lambda\in P}$ of $\Bbbk(q,t)[P]$ which is lower unitriangular with respect to monomials under the partial order $\prec$ and satisfies the orthogonality relations
\[\langle  E_{\lambda}(x,q,t),E_{\mu}(x^{-1},q^{-1},t^{-1}) \rangle\neq 0 \quad \Leftrightarrow \quad \la=\mu. \]
The notation $f(x^{-1})$ means that we replace all $x^\nu$ by $x^{-\nu}$ ($\nu\in P$) in $f\in\Bbbk(q,t)[P]$.



The specialized Macdonald polynomials $E_{\lambda}(x,q,0)$ and $E_{\lambda}(x^{-1},q^{-1},\infty)$ are known to be well-defined and belong to $\mathbb{Z}[q][P]$ by \cite[Theorem 1]{I} or \cite[Corollary 3.6]{CO1} (and, in fact, they belong to $\mathbb{Z}_{\ge 0}[q][P]$ by \cite[Theorem 1]{I} or \cite[Corollary 4.4]{OS}). By specializing the orthogonality relations \cite[Main Theorem]{Ch1}, we have
\begin{equation}\label{EOrth0}
\langle E_{\lambda}(x,q,0),E_{\mu}(x^{-1},q^{-1},\infty) \rangle_0=(q)_\lambda\delta_{\lambda,\mu},
\end{equation}
where, for $f,g\in\Bbbk(q)[P]$,
\[\langle f, g \rangle_0 := \langle f,g\rangle|_{t=0} = (q;q)_\infty^n\langle f g \prod_{\alpha \in \Delta_+}\prod_{i=0}^\infty\left(1-q^i x^{\alpha}\right)\left(1-q^{i+1} x^{-\alpha} \right)\rangle
\]
and
\begin{equation}\label{Norm}
(q)_\lambda = \prod_{i=0}^n\prod_{j=1}^{\lambda_i-\delta_{i,\sigma}}\left(1-q^j\right)
\end{equation}
for $\lambda=\sum_{i=1}^n \lambda _i\omega_i \in P$.

\begin{rem}
The factor $(q;q)_\infty^n$ is $\langle \mu\rangle|_{t=0}^{-1}$.
\end{rem}

\subsection{Quantum Bruhat graphs}\label{qBg}

The Bruhat graph BG of $W$ (see e.g. \cite{BB}) is the directed graph whose set of vertices is identified with $W$ and we have an
arrow $w \to w s_\al$
for $w\in W$ and $\al \in \Delta_+$ if and only if $\ell ( w s_\al )=\ell ( w )+1$.
The quantum Bruhat graph QBG of $W$ (see e.g. \cite{BFP,LNSSS1}) is an enhancement of BG obtained by adding
a ``quantum" arrow $w \to w s_\al$ for each $w\in W$ and $\al \in \Delta_+$ so that
$$\ell ( ws_\al ) = \ell ( w ) - \sum_{\gamma \in \Delta_+} \bra \gamma, \al^{\vee} \ket + 1.$$

Assume that we are given an element $z_0\in W^e$ and a sequence of affine coroots
$\beta_1^\vee,\dots,\beta_l^\vee$. A path $p_J$ ($=:p$) corresponding to a set
$$J=\{1\le j_1<\dots <j_r\le l\} \subset 2^{[1,l]}$$
is a sequence $p_J=(z_0,z_1,\dots,z_r)$, where $z_{k+1}=z_ks_{\beta_{j_{k+1}}}$. Here we refer the last element $z_r$ as the end of the path $p$, and denote it by ${\rm end}(p)$ ($= {\rm end}(p_J) =z_r$). We denote
\[\wt(p):=\wt({\rm end}(p)).\]
We say that $p$ is a quantum alcove path if 
\[
{\rm dir}(z_0) 
\longrightarrow
{\rm dir}(z_1) 
\longrightarrow
\dotsm 
\longrightarrow
{\rm dir}(z_r)
\]
is a path in QBG.
For an alcove path $p$ we denote by $\qwt(p)$ the sum of all $\beta_{j_k}^\vee$ such that the edge
${\rm dir}(z_{k-1}) 
\longrightarrow {\rm dir}(z_k)$ is quantum.

For any $u\in W^e$, one denotes by ${\mathcal{QB}}(\id; u)$ the set of quantum alcove paths
with $z_0=\id$ and with affine coroots coming from a fixed reduced decomposition of $u=\pi s_{i_1}\dots s_{i_l}$:
\begin{equation}\label{beta-u}
\beta_k^\vee(u) = s_{i_l}\dots s_{i_{k+1}} \al_{i_k}^\vee.
\end{equation}
Note that the coroots $\beta^\vee_k(u)$ as well as the set ${\mathcal{QB}}(\id; u)$ depend
on a reduced expression of $u$, but we omit this from the notation. We also write simply $\beta^\vee_k$ when the element $u$ is clear from context.

By means of the Ram-Yip formula for $E_\la(x,q,t)$ \cite{RY}, the following formula for $t=0$ specialized nonsymmetric Macdonald polynomials was proved in \cite[Corollary 4.4]{OS}:
\begin{equation}\label{E0}
    E_{\lambda}(x,q,0)=\sum_{p \in {\mathcal{QB}}(\id; m_{\lambda})}x^{\wt(p)}q^{\deg(\qwt(p))},
\end{equation}
where we recall that $m_\lambda$ is the minimal representative of the coset $t_\lambda W$.
A similar formula for $E_\lambda(x^{-1},q^{-1},\infty)$ was also proved in \cite[Proposition 5.4]{OS}; we will not need this formula explicitly in this paper.

Recall the decomposition $t_\lambda = m_\lambda \sigma_\lambda^{-1}$ for $\lambda\in P$. By conjugation, we can also write this as $t_{\lambda_-}=\sigma_\la^{-1} m_\la$.
It is well known that this decomposition is reduced, i.e., $\ell(t_{\la_-})=\ell(\sigma_\la^{-1})+\ell(m_\la)$.
Then if we take reduced expressions
\[
\sigma_\la^{-1}=s_{i_1}\dots s_{i_r},\ m_\la=\pi s_{i_{r+1}}\dots s_{i_M},
\]
then we obtain a reduced expression
\begin{equation}\label{reddecV}
t_{\la_-}=\pi s_{\pi^{-1}i_1}\dots s_{\pi^{-1} i_r} s_{i_{r+1}}\dots s_{i_M}.
\end{equation}
Hence, with these choices of reduced expressions, we have
\begin{equation}\label{tm}
\beta^\vee_j(m_\la)=\beta^\vee_{j+r}(t_{\la_-}),\qquad j=1,\dots,\ell(t_{\la_-})-r.
\end{equation}

\section{Representations of the Iwahori algebra}\label{Iwahori}

\subsection{Iwahori algebra} \label{curalg}
Let $\mathfrak{s}[z]=\mathfrak{s}\T \bC[z]$ be the current algebra of a simple Lie algebra $\mathfrak{s}$. We have a grading on $\mathfrak{s} [z]$ by setting $\deg \, a \otimes z^m = m$ for each $a \in \mathfrak{s} \setminus \{ 0 \}$ and $m \ge 0$. We use the notation $a z^m =a \otimes z^m$, $a=a \otimes 1$. 

Let $e_\alpha, f_{-\alpha}, h_\alpha=-h_{-\alpha}$ for $\alpha\in \Delta_+$ be a set of Cartan generators for $\mathfrak{s}$. We continue to use the notation for objects associated with $\fs$ from the previous section.
We denote by $\mathcal{I} = \fn^+\oplus\fh \oplus(\mathfrak{s}\T z\bC[z])$ the Iwahori algebra, which is a subalgebra of the current algebra $\mathfrak{s}\T\bC[z]$. Let $S$ be the connected simply connected Lie group of $\mathfrak{s}$. The Iwahori group 
$\bfI\subset S[[z]]$ is defined as the preimage of the Borel subgroup 
$B\subset S$ under the $z=0$ evaluation map $S[[z]]\to S$. We note that the Lie algebra of $\bfI$ is a completion of $\mathcal{I}$ (see Remark \ref{rem:Iwahori}).

\begin{rem}
In this paper we only consider the case of simply connected $S$. In general, we expect that analogues of our results hold true for any
Lie group of the Lie algebra $\mathfrak{s}$. However, in the general case 
the structure of the corresponding global Weyl modules is more complicated.
\end{rem}

An $\mathcal{I}$-module $M$ is called graded if $M=\bigoplus_{j\in \bZ} M_j$ such that each $M_j$ is $\fh$ semi-simple with finite-dimensional weight spaces and $(a\T z^i) M_j\subset M_{i+j}$ for all $a\otimes z^i\in\mathcal{I}$. We define the character of $M$ as the formal linear combination
\[
\ch \, M=\sum_{j\in\bZ} \ch \, M_j\, q^j,
\]
where $\ch \, M_j$ is the $\fh$-module character. In what follows we always consider the modules $M$ whose
$\fh$-weights belong to $P$. Correpondingly, we have $\ch \, M \in \bZ[[P]][[q]]$. 
For such $M$, we denote by $M^\vee$ the restricted dual space $M^\vee=\bigoplus_{j\in \ZZ} (M^\vee)_j$ where $(M^\vee)_j=\bigoplus_{\la\in P}  ({}_{-\lambda} M_{-j})^*$. 

\begin{rem}
We use the notation ${}_\lambda M$ for the weight space (with respect to the left action) to reserve the right  lower index for the right weight whenever our space $M$ is endowed with a bimodule structure.   
\end{rem}

Recall the $W$-action on the set
$\{e_\al\}_{\al\in\Delta_+}\cup\{f_{-\al}\T z\}_{\al\in\Delta_+}$ following \cite{FeMa}: for an element
$\sigma\in W$
and $\al\in\Delta_+$ we set
\[
\widehat{\sigma} e_\al=\begin{cases}
e_{\sigma(\al)}, \qquad  \sigma(\al)\in\Delta_+,\\
f_{\sigma(\al)}\T z, \ \sigma(\al)\in\Delta_-,
\end{cases}
\widehat{\sigma} (f_{-\al}\T z)=\begin{cases}
e_{-\sigma(\al)}, \qquad  \sigma(\al)\in\Delta_-,\\
f_{-\sigma(\al)}\T z, \ \sigma(\al)\in\Delta_+.
\end{cases}
\]
We also use the following notation for $\al\in\Delta_+$ and $r\ge 0$:
\begin{align*}
e_{\widehat{\sigma}(\al)+r\delta}&=\begin{cases}
e_{\sigma(\al)}\T z^r, &  \sigma(\al)\in\Delta_+,\\
f_{\sigma(\al)}\T z^{r+1}, & \sigma(\al)\in\Delta_-,
\end{cases},\\
e_{\widehat{\sigma}(-\al+\delta)+r\delta}&=\begin{cases}
e_{-\sigma(\al)}\T z^r, &  \sigma(\al)\in\Delta_-,\\
f_{-\sigma(\al)}\T z^{r+1}, & \sigma(\al)\in\Delta_+.
\end{cases}
\end{align*}

\begin{dfn}
Suppose $\lambda\in P$ and $\sigma\in W$ satisfy $\lambda=\sigma(\lambda_-)$. The generalized Weyl module $W_{\lambda}$ is the cyclic $\mathcal{I}$-module with the cyclic vector $w_\lambda$ of $\fh$-weight $\lambda$ defined by the following relations for all $\al\in\Delta_+$:
\begin{align*}
 e_{\widehat{\sigma}(-\alpha+\delta)+r \delta} w_\la&=0,\ \al\in\Delta_+,\, r \geq 0; \\
e_{\widehat{\sigma}(\al)}^{\langle -\lambda_-, \al^\vee \rangle+1}  w_\lambda&=0,\\
 \fh\T z\bC[z] w_\lambda&=0.
\end{align*}
The definition of global generalized Weyl module $\mathbb{W}_{\lambda}$ is obtained by omitting the last relation.
\end{dfn}
\begin{rem}
If $\sigma$ is the identity element (i.e. $\la\in P_-$), then the generalized Weyl module $W_{\lambda}$ is isomorphic to the (local) Weyl module $W(w_0\lambda)$ (see \cite{FeMa}).
\end{rem}

\subsection{Two families of modules}\label{Relations}
In this subsection, we define two families of Iwahori algebra modules which play crucial role in this paper.

Suppose, as above, that $\lambda\in P$ and $\sigma\in W$ satisfy $\lambda=\sigma(\lambda_-)$. We define representations
$U_{\lambda}$ and ${\mathbb U}_{\lambda}$ as follows (see \cite{FKM}).
The module $U_{\lambda}$ is the cyclic $\mathcal{I}$-module with cyclic vector $u_{\lambda}$ of $\fh$-weight
$\lambda$
subject to the relations:
\begin{align*}
(e_{\widehat{\sigma}(-\alpha+\delta)+r \delta}) u_{\la}&=0,\ \al\in\Delta_+,\, r \geq 0, \\
 (f_{\sigma(\al)}\T z)^{-\bra \la_-,\al^\vee\ket+1} u_{\la}&=0,\ \al\in\Delta_+,\,
 \sigma\al\in\Delta_-,\\
(e_{\sigma(\al)}\T 1)^{-\bra \la_-,\al^\vee\ket} u_{\la}&=0,\ \al\in\Delta_+,\, \sigma\al\in\Delta_+,\\
\fh\T z\bC[z] u_{\la}&=0.
\end{align*}
The definition of the $\mathcal{I}$-module ${\mathbb U}_{\la}$ differs from the definition of the
$U_\la$ by removing the last relation.

In the similar way we define modules $D_{\la}$ and ${\mathbb D}_{\la}$ as follows.
$D_{\la}$ is the cyclic $\mathcal{I}$-module with cyclic vector $d_{\la}$ of $\fh$-weight
$\la$
subject to the relations:
\begin{align*}
(e_{\widehat{\sigma}(-\alpha+\delta)+r \delta}) d_{\la}&=0,\ \al\in\Delta_+,\,r \geq 0, \\
 (f_{\sigma(\al)}\T z)^{-\bra \la_-,\al^\vee\ket} d_{\la}&=0,\ \al\in\Delta_+,\,
 \sigma\al\in\Delta_-,\\
(e_{\sigma(\al)}\T 1)^{-\bra \la_-,\al^\vee\ket+1} d_{\la}&=0,\ \al\in\Delta_+,\, \sigma\al\in\Delta_+,\\
\fh\T z\Bbbk[z] d_{\la}&=0.
\end{align*}
The definition of the $\mathcal{I}$-module ${\mathbb D}_{\la}$ differs from the definition of the
$D_{\la}$ by removing the last relation.

\begin{rem}
It is easy to see that the $\mathcal{I}$-modules $U_\la$, ${\mathbb U}_{\la}$, $D_\la$, and ${\mathbb D}_{\la}$ (and $W_\la$, $\mathbb{W}_\la$) depend only on the weight 
$\la$, i.e., they do not depend on the choice of $\sigma\in W$ such that $\lambda=\sigma(\lambda_-)$. 
\end{rem}

\begin{rem}
The modules $U_{\la}$ and ${\mathbb U}_{\la}$ were introduced and studied in \cite{FKM}. In particular, it was shown that their characters are computed by the $t=\infty$ specializations of nonsymmetric Macdonald polynomials. See also \cite{Ka,NS,NNS} for related results on the $t=\infty$ specializations.

The modules $D_{\la}$ 
are analogues of affine Demazure modules (and coincide with these in type ADE, by \cite{FL}). We show below that the characters of $D_\la$ and ${\mathbb D}_{\la}$ are computed via the $t=0$ specializations of the nonsymmetric Macdonald polynomials in general.
\end{rem}

\subsection{Generalized Weyl modules with characteristic}\label{gWc}

Suppose $\la\in P$ and $\mu\in P_-$ satisfy $\la_- - \mu\in P_-$. We fix a reduced decomposition
\begin{equation}\label{pi}
t_\mu=\pi s_{j_1}\dots s_{j_l},\quad \pi\in\Pi,\ j_1,\dotsc,j_l\in I\cup\{0\},\ l=\ell(t_\mu)
\end{equation}
in the extended affine Weyl group $W^{a\vee}=P\rtimes W$. 
We consider the affine coroots $\beta_1^\vee,\dots,\beta_l^\vee$ defined \eqref{beta-u} for $u=t_\mu$ and this reduced expression.
Recall the decomposition $\beta_j^\vee=\bar\beta_j^\vee+ ({\rm deg}\, \beta_j^\vee )\delta^\vee$, where
$\bar \beta_j^\vee\in \Delta^\vee$, ${\rm deg} \, \beta_j^\vee \in \mathbb Z$. We note that $\bar\beta_j^\vee$ is always a negative coroot and ${\rm deg}\,
\beta_j^\vee>0$, since (\ref{pi}) is a reduced expression and $\mu\in P_-$.

For $\al\in\Delta_+$ and $m=1,\dots,l$ we define
\begin{equation}\label{l}
l_{\al,m}=-\bra\la_-,\al^\vee\ket - |\{j:\ \bar\beta_j^\vee=-\al^\vee, 1\le j\le m\}|.
\end{equation}

\begin{dfn}[\cite{FeMa,FMO1}]
Choose any $\sigma\in W$ such that $\lambda=\sigma(\lambda_-)$. The generalized Weyl module with characteristics $W_{\la}(m)$ is the ${\mathcal I}$-module which is the quotient of $W_{\la}$ by the submodule generated by
\begin{equation}\label{charrel}
e_{{\widehat \sigma}(\al)}^{l_{\al,m}+1} w_{\la}, \ \al\in\Delta_+.
\end{equation}
Similarly, we define the generalized global Weyl module with characteristics $\mathbb W_{\la}(m)$ as the ${\mathcal I}$-module  quotient of $\mathbb W_{\la}$ by the submodule generated by (\ref{charrel}) inside $\mathbb W_{\la}$.
\end{dfn}

\begin{rem}
The modules $W_\lambda(m)$ and $\mathbb W_\la(m)$ do not depend on the choice of $\sigma$, but they do depend on $\mu$ and the reduced decomposition of $t_\mu$.
\end{rem}

Now recall the  algebra $\mathcal{A}_{\la,m}$, which is
the quotient of the universal enveloping algebra $\U(z\fh[z])$ by the ideal annihilating the cyclic vector $w_{\la}\in \W_{\la}(m)$. 
The $\la$ weight space of the generalized global Weyl module with characteristics $\W_{\la}(m)$ is naturally
isomorphic to the algebra $\mathcal{A}_{\la,m}$ as a graded vector space \cite{FeMa,FMO1}.

For any $f\in \mathcal{A}_{\la,m}$ and $x\in \U(\mathcal{I})$ the map $xw_{\la}\mapsto xfw_{\la}$ is a well-defined $\mathcal{I}$-module endomorphism 
of $\W_{\la}(m)$. 
Thus, we may regard $\W_{\la}(m)$ as an $(\mathcal{I},\mathcal{A}_{\la,m})$-bimodule.

We need the following theorem (see \cite[Theorem 3.22]{FKM} and \cite[Theorem 3.17]{FMO1}):

\begin{thm} \label{characters}
Assume $\la,\mu\in P$, $\sigma\in W$, and $\beta_j^\vee\in\Delta^{a\vee}$ are as above.
\begin{itemize}
\item One has an isomorphism of graded algebras
\begin{equation}\label{Ah}
\mathcal{A}_{\la,m}:=\Bbbk[h_{\sigma(\alpha_j)}\otimes z^k]|_{j\in I,\, k=1,\dots,-\langle \lambda_-,\alpha_j^\vee \rangle-|\{i=1,\dots,m\,:\, -\bar \beta_i=\alpha_j^\vee\}|}.
\end{equation}
\item The right action of $\mathcal{A}_{\la,m}$ on $\W_{\sigma(\la_-)}(m)$ is free.
\item Let $\omega(m)$ be the sum of $\omega_j$ such that $-\bar\beta_i=\al_{j}^\vee$ is simple ($1 \le i \le m$). Then
\[
\ch \, \W_{\la}(m)=\frac{\ch \, W_{\la}(m)}{(q)_{\la_- + \omega(m)}}.
\]
\end{itemize}
\end{thm}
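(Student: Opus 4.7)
The plan is to prove the three assertions in order, with (iii) following essentially formally from (ii); the substantive work lies in (i) and (ii). The approach parallels and unifies the arguments carried out in detail in \cite{FKM} and \cite{FMO1}.

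For (i), I would establish both containments between $\mathcal{A}_{\lambda,m}$ and the polynomial algebra on the displayed generators. The surjection of the polynomial algebra onto $\mathcal{A}_{\lambda,m}$ amounts to showing that $h_{\sigma(\alpha_j)}\otimes z^k$ annihilates $w_\lambda$ whenever $k$ exceeds the bound $-\langle\lambda_-,\alpha_j^\vee\rangle - |\{i=1,\dotsc,m : -\bar\beta_i^\vee = \alpha_j^\vee\}|$. This vanishing is extracted by commuting the characteristic relation $e_{\widehat\sigma(\alpha_j)}^{l_{\alpha_j,m}+1}w_\lambda = 0$ from \eqref{charrel} against an appropriate power of $f_{\sigma(\alpha_j)}\otimes z^s$, using the bracket $[e_{\widehat\sigma(\alpha_j)}, f_{\sigma(\alpha_j)}\otimes z^s] = \pm h_{\sigma(\alpha_j)}\otimes z^{s+\epsilon}$ (with $\epsilon\in\{0,1\}$ determined by the sign of $\sigma(\alpha_j)$) and carefully tracking the degree shifts contributed by the negative coroots $\bar\beta_i^\vee$ appearing in the chosen reduced expression of $t_\mu$. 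The opposite direction, namely that no further relations hold among the remaining $h_{\sigma(\alpha_j)}\otimes z^k$, is equivalent to the freeness in (ii) and will be deduced from it.

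For (ii), the strategy is to combine an upper bound on $\ch\,\mathbb{W}_\lambda(m)$ obtained from the defining presentation with a matching lower bound obtained by exhibiting sufficiently many linearly independent vectors. On the upper-bound side, a PBW-type argument on the relations from Section~\ref{Relations} together with the vanishing established in (i) yields $\ch\,\mathbb{W}_\lambda(m) \leq \ch\,W_\lambda(m)\cdot \prod(1-q^k)^{-1}$, the product ranging over the generators of \eqref{Ah}. For the matching lower bound, I would lift a monomial $\Bbbk$-basis of $W_\lambda(m)$ to elements of $\mathbb{W}_\lambda(m)$ and show that their right translates by the proposed polynomial generators remain linearly independent; this may be done by specializing the $z$-variable to generic values and thereby reducing the question to a classical statement about tensor products of finite-dimensional evaluation modules of $\mathfrak{s}[z]$.

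For (iii), once freeness holds the character factorizes as $\ch\,\mathbb{W}_\lambda(m) = \ch\,\mathcal{A}_{\lambda,m}\cdot \ch\,W_\lambda(m)$, and one identifies
\[
\ch\,\mathcal{A}_{\lambda,m} = \prod_{j\in I}\prod_{k=1}^{-\langle\lambda_-,\alpha_j^\vee\rangle - |\{i\leq m :-\bar\beta_i^\vee=\alpha_j^\vee\}|}(1-q^k)^{-1}
\]
with $(q)_{\lambda_-+\omega(m)}^{-1}$ by unpacking \eqref{Norm} and the definition of $\omega(m)$. The main obstacle is part (ii): producing enough genuinely linearly independent vectors to match the a priori character upper bound, which rules out any hidden relations beyond those listed in \eqref{Ah}. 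Parts (i) and (iii) are then essentially bookkeeping on top of this freeness.
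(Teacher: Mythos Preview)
The paper does not supply its own proof of this theorem; it is quoted with the attribution ``(see \cite[Theorem 3.22]{FKM} and \cite[Theorem 3.17]{FMO1})'' and used as input for the rest of Section~\ref{Iwahori}. Your outline is broadly in line with how those references proceed: the upper bound on $\mathcal{A}_{\lambda,m}$ is extracted from the characteristic relations~\eqref{charrel} by commutation as you describe, and once freeness is known the character factorization in (iii) is indeed bookkeeping with~\eqref{Norm}.

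One point of divergence concerns your strategy for the lower bound in (ii). The specialization-to-generic-evaluation-points argument you sketch is the classical one for ordinary global Weyl modules over $\mathfrak{s}[z]$, where the local Weyl module is a tensor product of evaluation representations. For the generalized Weyl modules with characteristics $W_\lambda(m)$ this picture is not available in the same form: these modules are not naturally realized as such tensor products, and the cyclic vector is not a highest-weight vector for $\mathfrak{s}[z]$. In \cite{FKM,FMO1} the freeness and the matching character lower bound are instead obtained inductively along the chain $m=0,1,\dotsc,l$, using short exact sequences that relate $\mathbb{W}_\lambda(m)$ to $\mathbb{W}_{\lambda}(m+1)$ (and twisted versions thereof) together with the quantum alcove path combinatorics of Section~\ref{qBg}. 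Your plan would need to be recast in this inductive form; the single global specialization step as written does not go through for these modules.
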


\begin{rem}\label{GlobalSpecialization}
Consider the one-dimensional $\mathcal{A}_{\la,m}$ module $\Bbbk_0$ such that 
the elements $h_{\sigma(\alpha_j)}\otimes z^k$, $j\in I$, $k\geq 1$, act by zero. Then
\[ W_{\la}(m)\simeq  \W_{\la}(m)\otimes_{\mathcal{A}_{\la,m}}\Bbbk_0.\]
\end {rem}

\subsection{Identifying modules}\label{NM}

\begin{prop}\label{DW}
For any $\lambda\in P$, and with $\sigma=\sigma_\la$, $\mu=\la_-$, and $\beta_j^\vee$ defined by the decomposition \eqref{reddecV}, we have isomorphisms:
$$D_{\la} \simeq W_{\la}(\ell(\sigma)),\qquad
\mathbb{D}_{\la} \simeq \W_{\la}(\ell(\sigma)).$$
\end{prop}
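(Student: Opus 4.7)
The plan is to exhibit an isomorphism between the presentations of $D_\la$ and $W_\la(\ell(\sigma))$ (and similarly between $\mathbb{D}_\la$ and $\W_\la(\ell(\sigma))$), since both are cyclic $\mathcal{I}$-modules with a weight-$\la$ cyclic vector. Going through the definitions in \S\ref{Relations} and \S\ref{gWc}, I see that (after identifying the cyclic vectors) two families of relations appear identically on both sides: the vanishing of $e_{\widehat{\sigma}(-\al+\delta)+r\delta}$ for $\al\in\Delta_+$ and $r\ge 0$, and the annihilation by $\fh\otimes z\Bbbk[z]$. Recalling that $e_{\widehat{\sigma}(\al)}=e_{\sigma\al}$ when $\sigma\al\in\Delta_+$ and $e_{\widehat{\sigma}(\al)}=f_{\sigma\al}\otimes z$ when $\sigma\al\in\Delta_-$, and using the formula \eqref{l} for $l_{\al,\ell(\sigma)}$, the remaining comparison of ``top'' relations $e_{\widehat{\sigma}(\al)}^{N_\al}=0$ reduces to proving the combinatorial identity
\[
|\{1\le j\le \ell(\sigma):\bar\beta_j^\vee=-\al^\vee\}| = \begin{cases}1&\text{if }\sigma\al\in\Delta_-,\\ 0&\text{if }\sigma\al\in\Delta_+,\end{cases}
\]
for each $\al\in\Delta_+$, or equivalently that $\{-\bar\beta_j^\vee\}_{j=1}^{\ell(\sigma)}$ is precisely the inversion coroot set of $\sigma$.

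To establish this identity I will apply \eqref{beta-u} to the reduced expression \eqref{reddecV}, using $\pi s_i\pi^{-1}=s_{\pi i}$ to move the $\pi$-factors through the products. After cancellation I expect to obtain, for $1\le j\le r:=\ell(\sigma)$,
\[
\beta_j^\vee = m_\la^{-1}\gamma_j^\vee,\qquad \gamma_j^\vee := s_{i_r}\dotsm s_{i_{j+1}}\al_{i_j}^\vee,
\]
where $\{\gamma_j^\vee\}_{j=1}^r$ is (by the standard description of inversions from a reduced expression) the inversion coroot set of $\sigma^{-1}=s_{i_1}\dotsm s_{i_r}$. Writing $m_\la^{-1}=\sigma^{-1}t_{-\la}$ and using the formula $t_\mu(\bar\xi^\vee+k\delta^\vee)=\bar\xi^\vee+(k-\langle\mu,\bar\xi^\vee\rangle)\delta^\vee$ for the action of a translation on affine coroots (derivable by conjugating the reflection identity $s_{\beta^\vee}=t_{-k\bar\beta}s_{\bar\beta^\vee}$), this yields $\bar\beta_j^\vee=\sigma^{-1}\gamma_j^\vee$. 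The standard bijection $\gamma^\vee\mapsto -\sigma^{-1}\gamma^\vee$ between the inversion sets of $\sigma^{-1}$ and $\sigma$ then identifies $\{-\bar\beta_j^\vee\}_{j=1}^r$ with the inversion coroot set of $\sigma$, as required.

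With the presentations matched, the isomorphism $D_\la\simeq W_\la(\ell(\sigma))$ follows by sending $d_\la\mapsto w_\la$. The global version $\mathbb{D}_\la\simeq \W_\la(\ell(\sigma))$ follows verbatim upon dropping the Cartan relation $\fh\otimes z\Bbbk[z]=0$ on the cyclic vector from both sides. The main obstacle is the combinatorial identification of $\bar\beta_j^\vee$ for $j\le r$, which hinges on the specific choice of reduced expression in \eqref{reddecV}; all other steps are straightforward comparisons of generators and relations.
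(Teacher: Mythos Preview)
Your proof is correct and follows essentially the same approach as the paper: both reduce the comparison of presentations to showing that $\{-\bar\beta_j^\vee\}_{j=1}^{\ell(\sigma)}$ coincides with the inversion coroot set $\Delta_+^\vee\cap\sigma^{-1}\Delta_-^\vee$, and both verify this by direct computation from the reduced expression \eqref{reddecV}. The only cosmetic difference is that the paper rewrites the suffix product defining $\beta_k^\vee$ as $t_{-\la_-}$ times a prefix (obtaining $-\bar\beta_k^\vee=s_{i_1}\dotsm s_{i_{k-1}}\al_{i_k}^\vee$ directly), whereas you factor out $m_\la^{-1}$ and then invoke the bijection $\gamma^\vee\mapsto -\sigma^{-1}\gamma^\vee$ between $\mathrm{Inv}(\sigma^{-1})$ and $\mathrm{Inv}(\sigma)$; these two manipulations yield literally the same formula for $-\bar\beta_k^\vee$.
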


\begin{proof}
We have to prove that the defining relations \eqref{charrel} of $W_{\la}(\ell(\sigma))$ coincide
with the defining relations of $D_{\la}$. We set $r:=\ell(\sigma)$,
which is the cardinality of the set $\Delta_+\cap\sigma^{-1} \Delta_-$.

It suffices to show that $\{-\bar\beta_1^\vee,\dots,-\bar\beta_r^\vee\}=\Delta_+^\vee\cap\sigma^{-1} \Delta_-^\vee$. By definition, for $k=1,\dots,r$ we
have
\begin{eqnarray*}
\beta_k^\vee & = & s_{i_M}\dots s_{i_{r+1}} s_{\pi^{-1}i_r}\dots s_{\pi^{-1}i_{k+1}} \al_{\pi^{-1}i_k}^\vee\\
&= & t_{-\la_-} \pi s_{\pi^{-1}i_1}\dots s_{\pi^{-1}i_{k-1}} s_{\pi^{-1}i_k}  \al_{\pi^{-1}i_k}^\vee \\
&= & t_{-\la_-} s_{i_1}\dots s_{i_{k-1}} (-\al_{i_k}^\vee).
\end{eqnarray*}
The action of $t_{-\la_-}$ on $\Delta^{a\vee}$ changes only the coefficient of $\delta^\vee$, i.e., it preserves the classical part of any affine coroot). Therefore, $-\bar\beta_1^\vee,\dots,-\bar\beta_r^\vee$ are exactly the positive coroots which are mapped to negative coroots by $\sigma$. Comparison of defining relations then yields the assertion.
\end{proof}

We define $\mathcal{A}^D_{\la}:=\mathcal{A}_{\la,\ell(\sigma_\la)}$
 with respect to the reduced decomposition \eqref{reddecV}. (In fact, any choice of $\sigma\in W$ such that $\la=\sigma(\la_-)$ produces the same algebra $\mathcal{A}_{\la,\sigma}$.)

\begin{cor}\label{End=End}
The algebra $\mathcal{A}^D_{\la}$ acts freely on the modules $\mathbb{D}_{\la}$ and $\mathbb{U}_{-\la}$ and 
\begin{gather*}
\ch \, {\mathbb U}_{-\la} = \ch \, U_{-\la}\,\ch\,\mathcal{A}^D_{\la},\qquad
\ch \, {\mathbb D}_{\la} = \ch \, D_{\la}\,\ch\,\mathcal{A}^D_{\la}.
\end{gather*}
Moreover
\begin{equation}\label{EndomorphismsUD}
    \mathrm{End}_{\mathcal{I}}({\mathbb U}_{-\la})\simeq \mathrm{End}_{\mathcal{I}}({\mathbb D}_{\la})\simeq \mathcal{A}^D_{\la}.
\end{equation}
\end{cor}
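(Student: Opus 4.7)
The plan is to split the corollary into three parts and handle them in sequence. First I would deal with the $\mathbb{D}_\la$ side. By Proposition~\ref{DW}, the assignment $d_\la \mapsto w_\la$ induces an $\mathcal{I}$-module isomorphism $\mathbb{D}_\la \simeq \mathbb{W}_\la(\ell(\sigma_\la))$ (and similarly $D_\la \simeq W_\la(\ell(\sigma_\la))$) with respect to the reduced decomposition \eqref{reddecV} of $t_{\la_-}$. By construction, the algebra $\mathcal{A}^D_\la$ is literally $\mathcal{A}_{\la,m}$ from Theorem~\ref{characters} taken at $m = \ell(\sigma_\la)$. The freeness of the right action of $\mathcal{A}^D_\la$ on $\mathbb{D}_\la$ and the identity $\ch\,\mathbb{D}_\la = \ch\,D_\la \cdot \ch\,\mathcal{A}^D_\la$ then follow immediately from parts (ii) and (iii) of Theorem~\ref{characters}, after noting that $\ch\,\mathcal{A}^D_\la = 1/(q)_{\la_- + \omega(\ell(\sigma_\la))}$ is read off directly from the polynomial presentation \eqref{Ah}.

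Next I would turn to $\mathbb{U}_{-\la}$. The analogous freeness and character formula, asserting that $\mathbb{U}_{-\la}$ is a free module in its $(-\la)$-weight space over an explicit polynomial algebra $\mathcal{A}^U_{-\la}$ of the form $\Bbbk[h_{\tau(\al_j)}\otimes z^k]$ with $\tau = \sigma_{-\la}$ and $k$ running over an explicitly determined range, is provided by the parallel construction in \cite{FKM}. To finish I must then identify $\mathcal{A}^U_{-\la}$ with $\mathcal{A}^D_\la$ as graded algebras. This amounts to comparing the affine-coroot sequences from the two relevant reduced expressions (one of $t_{\la_-}$ and one of $t_{(-\la)_-}$) and checking that the classical parts occur with the same multiplicities, so that the generator ranges in the two presentations \eqref{Ah} coincide.

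Finally, the endomorphism statements follow from cyclicity combined with the freeness already established. Any $\phi \in \mathrm{End}_{\mathcal{I}}(\mathbb{D}_\la)$ is determined by $\phi(d_\la)$, which must lie in the $\la$-weight space of $\mathbb{D}_\la$. By cyclicity and the free $\mathcal{A}^D_\la$-action, this weight space equals $\mathcal{A}^D_\la \cdot d_\la$, so $\phi$ must be right multiplication by a unique element of $\mathcal{A}^D_\la$; conversely, such right multiplications commute with $\mathcal{I}$ by the bimodule statement recalled just after Theorem~\ref{characters}. Running the same argument for $\mathbb{U}_{-\la}$ at its $(-\la)$-weight vector gives $\mathrm{End}_{\mathcal{I}}(\mathbb{U}_{-\la}) \simeq \mathcal{A}^U_{-\la}$, which equals $\mathcal{A}^D_\la$ by the previous paragraph. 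The main obstacle in the proof is precisely this matching of $\mathcal{A}^U_{-\la}$ with $\mathcal{A}^D_\la$: everything else is a direct appeal to results already in hand, but reconciling the manifest asymmetry between the $D$- and $U$-type defining relations requires carefully tracking affine-coroot data for two a priori different reduced decompositions.
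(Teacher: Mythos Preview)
Your proposal is correct and follows essentially the same skeleton as the paper's proof: for the $\mathbb{D}_\la$ side both you and the paper invoke Proposition~\ref{DW} together with Theorem~\ref{characters}, and for the endomorphism statement both arguments boil down to ``cyclic module, so an endomorphism is determined by where it sends the generator, which lands in the highest-weight space.'' The one substantive difference is in how the $\mathbb{U}_{-\la}$ part is handled. The paper simply cites \cite[Proposition~4.20]{FMO2}, which already packages the freeness of $\mathbb{U}_{-\la}$ over $\mathcal{A}^D_\la$ (equivalently, of $\mathbb{U}^o_\la$ as in Proposition~\ref{UUo}) and the identification of the relevant highest-weight algebra with $\mathcal{A}^D_\la$; you instead propose to rederive this by appealing to the parallel construction in \cite{FKM} and then matching the affine-coroot data of two reduced decompositions to identify $\mathcal{A}^U_{-\la}$ with $\mathcal{A}^D_\la$. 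Your route is more self-contained and makes explicit exactly the combinatorial step that the paper outsources, at the cost of reproving part of \cite{FMO2}; the paper's route is shorter but relies on an external reference for the key matching you flag as the main obstacle.
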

\begin{proof}
The algebra $\mathcal{A}^D_{\la}$ acts freely on  $\mathbb{D}_{\la}$ thanks to Proposition \ref{DW} and Theorem \ref{characters}. The algebra $\mathcal{A}^D_{\la}$ acts freely on  $\mathbb{U}_{-\la}$ thanks to \cite[Proposition 4.20]{FMO2}. The second claim follows from Remark \ref{GlobalSpecialization}. Finally, highest weight algebras coincide with the algebras of endomorphisms of the corresponding Iwahori modules. This implies \eqref{EndomorphismsUD}.
\end{proof}

In the next theorem we compute the character of the modules $D_\lambda$, extending the interpretation of specialized Macdonald polynomials as affine Demazure characters \cite{Sa,I} to non-simply-laced untwisted affine types (see also \cite{CI,LNSSS2}).

\begin{thm}\label{Dcharacter}
The character of $D_\lambda$ is equal to $E_\lambda(x, q, 0)$.
\end{thm}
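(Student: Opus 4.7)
The plan is to combine Proposition \ref{DW} with the quantum alcove path formula \eqref{E0} for $E_\lambda(x,q,0)$, bridging the two via the character formula for generalized Weyl modules with characteristics coming from the Ram-Yip formalism of \cite{FMO1}.

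More concretely, set $r = \ell(\sigma_\lambda)$ and $M = \ell(t_{\lambda_-})$. By Proposition \ref{DW}, we have $D_\lambda \simeq W_\lambda(r)$ with characteristics defined via the reduced decomposition \eqref{reddecV} of $t_{\lambda_-}$, i.e. $t_{\lambda_-} = \pi s_{\pi^{-1}i_1}\dotsm s_{\pi^{-1}i_r} s_{i_{r+1}}\dotsm s_{i_M}$. The first step is to establish that $\ch W_\lambda(r)$ equals a Ram-Yip-type sum over quantum alcove paths associated with the \emph{tail} coroots $\beta^\vee_{r+1}(t_{\lambda_-}), \dotsc, \beta^\vee_M(t_{\lambda_-})$. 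This is the content that should be extracted from \cite{FeMa,FMO1}: the purpose of introducing the characteristics is precisely to truncate the coroot sequence to its final $M-r$ entries, and the effect of the $\fh\otimes z\bC[z]$-action relations (which distinguish $W_\lambda(m)$ from $\mathbb{W}_\lambda(m)$) is to restrict the sum further to \emph{quantum} alcove paths (as opposed to the full alcove paths giving the $t=\infty$ specialization studied in \cite{FKM}).

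The second step is purely combinatorial: by \eqref{tm}, we have $\beta^\vee_{r+j}(t_{\lambda_-}) = \beta^\vee_j(m_\lambda)$ for $j=1,\dotsc,M-r$, where the right-hand side uses the induced reduced expression $m_\lambda = \pi s_{i_{r+1}}\dotsm s_{i_M}$ appearing in \eqref{reddecV}. Therefore the quantum alcove path sum computing $\ch W_\lambda(r)$ is precisely indexed by $\mathcal{QB}(\id; m_\lambda)$, and combining with \eqref{E0} yields
\[
\ch D_\lambda \;=\; \sum_{p \in \mathcal{QB}(\id; m_\lambda)} x^{\wt(p)}\, q^{\deg(\qwt(p))} \;=\; E_\lambda(x,q,0).
\]

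The main obstacle is step one, the quantum alcove path character formula for $W_\lambda(m)$ in arbitrary (possibly non-simply-laced) untwisted type. In types $ADE$ this reduces to the affine Demazure character formula via \cite{FL,Sa,I}, but for general types it requires a genuine argument built on the Iwahori module structure. The most natural route is to produce a spanning set of $W_\lambda(r)$ indexed by quantum alcove paths, using PBW-type arguments with the generators $e_{\widehat{\sigma}(\beta)+k\delta}$ ordered according to the reduced decomposition, and then prove linear independence by comparing characters against the known specialization of the Ram-Yip formula. Establishing the matching vanishing conditions coming from the characteristic relations \eqref{charrel} with the quantum condition on alcove paths is the delicate point; once this is in place, everything else is formal bookkeeping via \eqref{tm} and \eqref{E0}.
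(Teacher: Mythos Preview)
Your proposal is correct and follows essentially the same two-step route as the paper: first identify $\ch D_\lambda = \ch W_\lambda(r)$ with a quantum alcove path sum over the tail coroots $\beta^\vee_{r+1}(t_{\lambda_-}),\dotsc,\beta^\vee_M(t_{\lambda_-})$, then use \eqref{tm} to match this sum with $\mathcal{QB}(\id; m_\lambda)$ and invoke \eqref{E0}. The only real divergence is that you treat step one as the ``main obstacle'' requiring a new PBW/spanning-set argument, whereas the paper simply cites it as an existing result, namely \cite[Corollary~3.21]{FKM} (not \cite{FeMa,FMO1}), which already holds in arbitrary untwisted type; once you have that citation the proof is just the combinatorial bookkeeping of your step two.
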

\begin{proof}
 Recall the reduced decomposition \eqref{reddecV} of $t_{\lambda_-}$.
For each $0 \le r < M=\ell(t_{\la_-})$, we consider the subset ${\mathcal{QB}}_{\id,\la_-}(r)$ of alcove paths $p_J$ in ${\mathcal{QB}}(\id; t_{\la_-})$ for which $J\subset\{r+1,\dotsc,M\}$, i.e., we allow only the reflections through
$\beta_{r+1}^\vee,\dots,\beta_M^\vee.$

Let $r=\ell(\sigma_\la)$.
By \cite[Corollary 3.21]{FKM}, we have
 \[
\ch \, W_{\la}(r)=\sum_{p\in {\mathcal{QB}}_{id,{\la_-}}(r)} x^{{\rm wt}({\rm end} (p))} q^{\deg(\qwt(p))}.
\]
The left hand side is equal to the character of $D_\lambda$ by Proposition \ref{DW} and the right hand side is equal to $E_\la(x,q,0)$ by \eqref{tm}.
\end{proof}

\begin{rem}
One has an analogue of Proposition \ref{DW} for the modules $U_\la$ and ${\mathbb U}_{\la}$. More precisely, it was shown in \cite{FKM} that these 
modules are also isomorphic to certain generalized Weyl modules with characteristics.
\end{rem}

\subsection{Right modules}
In this subsection we define right modules $U_{\la}^o$ and ${\mathbb U}_{\la}^o$.

For any $\la\in P$ and $\sigma\in W$ such that $\la=\sigma(\la_-)$, the right modules 
$U_{\la}^o$ and ${\mathbb U}_{\la}^o$ are defined (independently of the choice of $\sigma$) as follows.
$U_{\la}^o$ is the cyclic right $\mathcal{I}$-module with cyclic vector $u_{\la}^o$ of $\fh$-weight
$\la$
subject to the relations:
\begin{align*}
u_{\la}^o e_{\widehat{\sigma}(\alpha)+r \delta} &=0,\ \al\in\Delta_+,\, r \geq 0, \\
 u_{\la}^o (f_{\sigma(\al)}\T z)^{-\bra \la_-,\al^\vee\ket+1}&=0,\ \al\in\Delta_-,\,
 \sigma\al\in\Delta_-,\\
u_{\la}^o(e_{\sigma(\al)}\T 1)^{-\bra \la_-,\al^\vee\ket} &=0,\ \al\in\Delta_-,\, \sigma\al\in\Delta_+,\\
u_{\la}^o\fh\T z\Bbbk[z] &=0.
\end{align*}
As before the definition of the $\mathcal{I}$-module ${\mathbb U}^o_{\la}$ differs from the definition of the
$U^o_{\la}$ by removing the last relation.


One has the following proposition (see Proposition 4.16, Corollary 4.17, and Corollary 4.18 in \cite{FMO2}):

\begin{prop}\label{UUo}
The modules $U^o_\la$ and ${\mathbb U}^o_{\la}$ enjoy the following properties:
\begin{itemize}
\item 
One has an isomorphism of graded vector spaces $U_{\la}^o\simeq U_{-\la}$, with the $\mathcal{I}$-module structures related by the map $x\mapsto -x$ on $\mathcal{I}$.
\item 
The module ${\mathbb U}_{\la}^o$ has a structure of   $(\mathcal{A}^D_{\la},\mathcal{I})$-bimodule and is free as a  left $\mathcal{A}^D_{\la}$-module.
\item For any $\lambda \in P$, one has
$\ch\, U_{\lambda}^o=E_{\lambda}(y,q^{-1},\infty).$
\end{itemize}
\end{prop}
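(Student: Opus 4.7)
The three claims are handled in order, with part (i) providing the essential link between the right modules $U_\la^o, \mathbb{U}_\la^o$ and the left modules $U_{-\la}, \mathbb{U}_{-\la}$, whose structure is already controlled through Corollary~\ref{End=End}. The argument parallels \cite[Proposition 4.16 and Corollaries 4.17--4.18]{FMO2}, with the type-$A$ specifics replaced by our general-type identifications.

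For part (i), the idea is to compare presentations via the anti-involution $\omega\colon \mathcal{I} \to \mathcal{I}$, $x\mapsto -x$, which extends to an anti-automorphism of $\U(\mathcal{I})$ and hence converts left cyclic modules into right cyclic modules. Starting from the presentation of $U_{-\la}$ with the witness $\sigma':=\sigma w_0$ (using $(-\la)_-=-w_0\la_-$ and $-\la=\sigma'(-\la)_-$), I apply $\omega$ to each defining relation and perform the bijection $\al \leftrightarrow \beta:=-w_0\al$ on $\Delta_+$. The verification splits according as $\sigma\beta$ lies in $\Delta_+$ or in $\Delta_-$; in each case, direct inspection of the two-case definition of $\widehat{\sigma}(\cdot)$ yields the identification $e_{\widehat{\sigma'}(-\al+\delta)+r\delta}=e_{\widehat{\sigma}(\beta)+r\delta}$, matching the ``$e$-killing'' relations, while the $w_0$-invariance of the pairing gives $\langle(-\la)_-,\al^\vee\rangle=\langle\la_-,\beta^\vee\rangle$, matching the exponents in the power relations. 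The relation $\fh\T z\Bbbk[z]\,u_{-\la}=0$ is manifestly $\omega$-invariant. This produces mutually inverse surjections between the two cyclic modules, giving the desired isomorphism.

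Part (ii) follows by running the same argument on the global versions: the $\omega$-twist of $\mathbb{U}_{-\la}$ is $\mathbb{U}_\la^o$, and the subalgebra $\mathcal{A}^D_\la\subset\U(\fh\T z\Bbbk[z])$ is stable under $\omega$. By Corollary~\ref{End=End}, $\mathbb{U}_{-\la}$ is free as a right $\mathcal{A}^D_\la$-module; transport by $\omega$ converts this into a free \emph{left} $\mathcal{A}^D_\la$-action on $\mathbb{U}_\la^o$ that commutes with the right $\mathcal{I}$-action, giving the desired $(\mathcal{A}^D_\la,\mathcal{I})$-bimodule structure. For part (iii), I use part (i) to translate the character computation to the left-module side: a vector of left-weight $\mu$ in $U_{-\la}$ corresponds to a vector of right-weight $-\mu$ in $U_\la^o$, so $\ch\,U_\la^o$ is obtained from $\ch\,U_{-\la}$ by the substitution $x\mapsto y^{-1}$. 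The character of $U_{-\la}$ is computed in \cite{FKM} via the $t=\infty$ specialization of the Ram--Yip formula; combined with the standard duality of non-symmetric Macdonald polynomials at $t=\infty$, this yields $\ch\,U_\la^o=E_\la(y,q^{-1},\infty)$.

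The principal obstacle is the bookkeeping in part (i): the multiple sign conventions stemming from $\omega$, from the $W$-action through $w_0$, and from the two-case definition of $\widehat{\sigma}(\cdot)$ make the case analysis tedious though elementary. Once this is secured, parts (ii) and (iii) follow by transport of structure together with a routine variable substitution.
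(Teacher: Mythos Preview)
Your proposal is correct and takes essentially the same approach as the paper: the paper itself does not give a proof but simply cites \cite[Prop.~4.16, Cor.~4.17, Cor.~4.18]{FMO2}, and your sketch spells out precisely that argument, replacing the type-$A$ combinatorics by the general-type identifications available here (Corollary~\ref{End=End} for part (ii), and the character formula of \cite{FKM} together with the $w_0$-duality used later in the proof of Theorem~\ref{DUStCost} for part (iii)). The key computation in part (i)---matching the relations via $\sigma'=\sigma w_0$ and the bijection $\alpha\leftrightarrow -w_0\alpha$---is exactly the mechanism behind \cite[Prop.~4.16]{FMO2}.
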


\section{Iwahori modules and stratified categories}\label{IwahoriStratified}
\subsection{Standard and costandard objects}
Let $\fC$ be the category of $\mathcal{I}$-modules $M$ subject to the following conditions: 
\begin{itemize}
\item $M=\bigoplus_{n\in\bZ} M_{n}$ is a graded $\mathcal{I}$-module such that $M_{n}=0$ for $n$ sufficiently large;
\item each $M_{n}$ admits an integral $\fh$-weight space decomposition 
$M_{n}=\bigoplus_{\nu\in P} M_{n}(\nu)$ such that $\dim M_{n}(\nu)<\infty$ for all $\nu\in P$;
\item $M$ is cogenerated by a subspace $\overline{M}$ admitting $\fh$-weight decomposition  $\overline{M}=\bigoplus_{\nu\in P} \overline{M}(\nu)$ such that $\dim \overline{M}(\nu)<\infty$ for all $\nu\in P$.
\end{itemize}
The simple objects in $\fC$ are one-dimensional spaces of the form $L_{(\la,k)}$, $\la\in P$, $k\in \bZ$, where $\la$ is responsible for the $\fh$-weight and $k$ fixes the $z$-degree. As before, we write $L_\la$ to mean $L_{(\la,0)}$ and adopt this convention for all families of modules indexed by $P\times\bZ$ considered below.

We also have a similar definition for the ``dual'' category $\fC^\star$, where the last property above is replaced by 
\begin{itemize}
\item $M$ is generated by a subspace $\overline{M}$ admitting an $\fh$-weight decomposition $\overline{M}=\bigoplus_{\nu\in P} \overline{M}(\nu)$ such that $\dim \overline{M}(\nu)<\infty$ for all $\nu\in P$.
\end{itemize}
Slightly abusing the notation, 
we use the same notation $L_{(\la,k)}$ to denote the (same) simple objects.

The categories $\fC$ and $\fC^\star$ agree with those of Section~\ref{BimoduleFunctions}, with the space $\overline{M}$ given by $\mathrm{ker}\,\mathfrak{r}$ and its dual, respectively. Here $\fg(0)=\fn^+\oplus\fh$, $\fg(m)=\fs\otimes z^m$ for $m>0$, $\fr=\fn^+\oplus \fs\otimes z\Bbbk[z]$, and $\fg_0=\fh$.

We have the contravariant equivalence $M\mapsto M^\vee$ between $\fC$ and $\fC^\star$ given by the restricted dual.

Recall the orders $\preceq$ and $\preceq^\vee$ on $P$ from Definition~\ref{ChOrder}. For each $\la\in P$, we consider the subcategory $\fC_{\preceq \lambda}$ of the category $\fC$ consisting of modules $M$
such that $M_{n}(\nu)\ne 0$ for some $n$ implies $\nu\preceq\la$. We also define subcategories $\fC_{\preceq^\vee \lambda}$ of $\fC$ by replacing $\preceq$ by $\preceq^\vee$ in this definition.

In the same way, we define the subcategories $\fC_{\preceq^\vee  \lambda}^\vee$ and $\fC_{\preceq\la}^\vee$ of $\fC^\star$.

\begin{lem}\label{DualitySubcategories} The following holds:
 \begin{itemize}
     \item   $M \in \fC_{\preceq \lambda}$ if and only if $M^\vee \in \fC_{\preceq^\vee - \lambda}^\vee$;
     \item   $M \in \fC_{\preceq \lambda}^\vee$ if and only if $M^\vee \in \fC_{\preceq^\vee - \lambda}$.
\end{itemize}  
\end{lem}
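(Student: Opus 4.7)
The plan is to reduce both statements to a single poset equivalence on $P$ and then verify that equivalence using Definition~\ref{ChOrder}. First I would observe that the restricted dual negates weights and degrees: one has $(M^\vee)_{-n}(-\nu) \cong ({}_{\nu}M_n)^*$, so the set of weights appearing in $M^\vee$ is exactly the negative of the set of weights appearing in $M$. Consequently the condition that every weight $\nu$ of $M$ satisfies $\nu \preceq \lambda$ is equivalent to the condition that every weight $-\nu$ of $M^\vee$ satisfies $-\nu \preceq^\vee -\lambda$, provided we have the poset statement
\[
\nu \preceq \lambda \iff -\nu \preceq^\vee -\lambda, \qquad \lambda,\nu\in P.
\]
Applied once in each direction (using the involution $M \leftrightarrow M^\vee$), this handles both bullet points of the lemma.

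Next I would verify the poset statement. The $W$-equivariant involution $\eta \mapsto -\eta$ on $P$ sends the dominant representative $\mu_+ = -w_0\mu_-$ of $W\mu$ to the antidominant representative of $W(-\mu)$, so $(-\mu)_- = -w_0\mu_-$ for every $\mu \in P$. Therefore
\[
(-\lambda)_- - (-\nu)_- = -w_0(\lambda_- - \nu_-),
\]
and since $-w_0$ permutes the simple roots it preserves the cone $-Q_+$; this matches the antidominant-part conditions in the definitions of $\succeq$ and $\succeq^\vee$. Moreover $\lambda_- = \nu_-$ if and only if $(-\lambda)_- = (-\nu)_-$, so the conditional on $\sigma$'s engages simultaneously on both sides.

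The remaining task is the Bruhat equivalence $\sigma_\lambda \leq \sigma_\nu \iff \sigma_{-\lambda} \geq \sigma_{-\nu}$ when $\lambda_- = \nu_-$. The key identity I would establish is
\[
\sigma_{-\lambda} = \sigma_\lambda\, w_0^{\lambda_-}\, w_0,
\]
where $w_0^{\lambda_-}$ is the longest element of the stabilizer $W_{\lambda_-}$. Indeed, using $-\lambda = \sigma_\lambda(-\lambda_-)$ and $(-\lambda)_- = w_0(-\lambda_-)$ (up to linearity), one checks that the set of $\tau\in W$ with $\tau((-\lambda)_-)=-\lambda$ is precisely the shifted coset $\sigma_\lambda W_{\lambda_-} w_0$; its Bruhat minimum is obtained by taking the maximum of the parabolic left coset $\sigma_\lambda W_{\lambda_-}$, namely $\sigma_\lambda w_0^{\lambda_-}$, and then right-multiplying by $w_0$, since right-multiplication by $w_0$ is a length-reversing Bruhat anti-automorphism of $W$. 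Combining this identity with the classical fact that Bruhat order on minimal parabolic coset representatives of $W/W_{\lambda_-}$ agrees with Bruhat order on the corresponding maximal representatives, and then applying the $w_0$-reversal, yields $\sigma_\lambda \leq \sigma_\nu \iff \sigma_{-\lambda} \geq \sigma_{-\nu}$. The only mildly technical point is this last chain of equivalences, but it rests entirely on standard parabolic Bruhat theory and should not present a genuine obstacle.
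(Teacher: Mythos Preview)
Your proof is correct and follows the same approach as the paper: both reduce the lemma to the claim that $\lambda\mapsto -\lambda$ is an order-preserving bijection $(P,\preceq)\to(P,\preceq^\vee)$. The paper's proof is a single sentence asserting this fact without justification; you supply the verification in full, including the identity $\sigma_{-\lambda}=\sigma_\lambda w_0^{\lambda_-}w_0$ and the parabolic Bruhat argument, which are exactly the details one needs.
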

\begin{proof}
One checks that $\lambda\mapsto -\lambda$ is an order-preserving bijection between $(P,\preceq)$ and $(P,\preceq^\vee)$.
\end{proof}

\begin{rem}
The category $\fC$ has enough injective objects, but does not contain projective objects. On the contrary, 
 the category $\fC^\star$ has enough projective objects, but does not have injective objects.
\end{rem}

Recall the standard $\Delta_\la$, proper standard $\overline{\Delta}_\la$, costandard $\nabla_\la$ and proper costandard $\overline{\nabla}_\la$ objects from Definition \ref{StandardCostandard}. The (co)standard objects are guaranteed to exist by Remark~\ref{StandardExist}. We will see that the proper (co)standard objects exist and are finite-dimensional in Theorem~\ref{DUStCost} below.

\begin{example}
The modules $\overline{\Delta}_\la$, $\nabla_\la$, $\overline{\nabla}_\la$, $\Bbbk[{\bf I}]$ are elements of $\fC$. The modules  $\Delta_\la$, $\overline{\Delta}_\la$,  $\overline{\nabla}_\la$ are elements of $\fC^\star$.
\end{example}

\begin{lem}
The contravariant dualization functor $\vee: (\fC^\star)^{op} \rightarrow \fC$ and its restriction to subcategories
$\vee:(\fC_{\preceq^{\vee} \lambda}^{\vee})^{op} \rightarrow \fC_{\preceq\lambda}$ sends the projective objects to injectives and (proper) standard objects to (proper) costandard objects. In particular:
\[
\mathbb{P}_{\lambda}^{\vee} = \mathbb{I}_{\lambda}, \qquad \Delta_{\lambda}^{\vee} = \nabla_{\lambda}, \qquad (\overline{\Delta}_{\lambda})^{\vee} = \overline{\nabla}_{\lambda}.
\] 
\end{lem}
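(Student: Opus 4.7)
The plan is to show that the restricted dual $M \mapsto M^\vee$ is a contravariant exact equivalence between $\fC^\star$ and $\fC$ (and appropriate subcategories), and then to apply the general principle that such equivalences interchange projective/injective hulls and, by dualizing the universal properties, (proper) standard/costandard objects.

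First I would verify that $\vee$ is well-defined on each category and interchanges them. If $M \in \fC^\star$ is generated by a subspace $\overline M$ with finite-dimensional weight spaces, then $M^\vee$ is cogenerated by $(\overline M)^*$, and the condition on bounded-above grading becomes a condition on bounded-below grading; the finiteness of weight-space dimensions is preserved componentwise. Thus $\vee$ lands in $\fC$, and symmetrically for the reverse direction. Since each graded component $M_n$ has weight-space decomposition with finite-dimensional pieces, the natural map $M \to M^{\vee\vee}$ is an isomorphism, so $\vee$ is a contravariant equivalence. Exactness follows from exactness of linear duality on finite-dimensional spaces applied componentwise. Lemma~\ref{DualitySubcategories} then gives the restriction to the corresponding Serre subcategories (with the appropriate sign-swap on $P$ handled by that lemma). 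Finally, since the simple $L_\lambda$ is one-dimensional and supported in a single weight/degree, $L_\lambda^\vee \simeq L_\lambda$ under our conventions.

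Next I would exploit general categorical principles. Any contravariant exact equivalence of abelian categories interchanges projective and injective objects (via the natural isomorphism $\mathrm{Hom}(N,M^\vee) \simeq \mathrm{Hom}(M,N^\vee)$ and $M^{\vee\vee}\simeq M$), and sends essential epimorphisms to essential monomorphisms. Therefore $\mathbb{P}_\lambda^\vee$ is an injective object of $\fC$ with essential socle $L_\lambda^\vee \simeq L_\lambda$, and uniqueness of the injective hull yields $\mathbb{P}_\lambda^\vee \simeq \mathbb{I}_\lambda$. Applying $\vee$ to the identification $\Delta_\lambda = \imath^{*}_{\preceq^\vee\lambda}(\mathbb{P}_\lambda)$ in $\fC^\star$ and using the compatibility of $\vee$ with the functors $\imath^*$ and $\imath^!$ (which follows from the adjunction exchange under $\vee$) yields $\Delta_\lambda^\vee \simeq \imath^{!}_{\preceq\lambda}(\mathbb{I}_\lambda) = \nabla_\lambda$.

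For the proper standard modules, I would translate the universal property from Definition~\ref{StandardCostandard} directly. The defining diagram for $\overline{\Delta}_\lambda$ consists of a surjection $\overline{\Delta}_\lambda \twoheadrightarrow L_\lambda$ together with compatible maps $\psi_M:\overline{\Delta}_\lambda \to M$ for every quotient $N \twoheadrightarrow L_\lambda$ and embedding $M \hookrightarrow N$. Applying $\vee$ to this entire diagram reverses every arrow and yields precisely the diagram characterizing $\overline{\nabla}_\lambda$: an embedding $L_\lambda \hookrightarrow (\overline{\Delta}_\lambda)^\vee$, together with compatible maps $M^\vee \to (\overline{\Delta}_\lambda)^\vee$ for every embedding $L_\lambda \hookrightarrow M^\vee$ into a module satisfying the multiplicity condition (which is preserved under $\vee$ since $[M:L_\mu] = [M^\vee : L_\mu^\vee]$). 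Since $\vee$ is an equivalence with inverse $\vee$, this identifies $(\overline{\Delta}_\lambda)^\vee$ with the unique object characterized by this universal property, namely $\overline{\nabla}_\lambda$.

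The main obstacle, though largely bookkeeping, will be ensuring that the Serre-subcategory structure, multiplicities, and adjoints all transform correctly under $\vee$; once Lemma~\ref{DualitySubcategories} is in hand and the natural compatibility $[M:L_\mu]=[M^\vee:L_\mu]$ is noted, the remaining assertions reduce to categorical formalities.
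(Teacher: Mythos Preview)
The paper states this lemma without proof, treating it as a direct consequence of the general duality principle already recorded in Lemma~\ref{lem::standard} (whose proof is one sentence: ``it is the standard duality between left and right modules for the universal enveloping algebra''). Your argument is exactly this standard duality, spelled out in detail: $\vee$ is a contravariant exact equivalence, so it swaps projectives with injectives and, by dualizing the defining universal properties, swaps (proper) standard with (proper) costandard objects. So your approach coincides with what the paper has in mind.

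One point to revisit: under the paper's convention $(M^\vee)_j=\bigoplus_{\lambda}({}_{-\lambda}M_{-j})^*$, the simple $L_\lambda$ dualizes to $L_{-\lambda}$ (in degree $0$), not to $L_\lambda$. This is consistent with Lemma~\ref{DualitySubcategories}, where the sign change $\lambda\mapsto-\lambda$ appears explicitly. The displayed identities $\mathbb{P}_\lambda^\vee=\mathbb{I}_\lambda$, $\Delta_\lambda^\vee=\nabla_\lambda$, $(\overline{\Delta}_\lambda)^\vee=\overline{\nabla}_\lambda$ are therefore schematic: they hold with the understanding that the (co)standard objects on the two sides are taken with respect to the two different orders $\preceq^\vee$ and $\preceq$ (related by $\lambda\mapsto-\lambda$), exactly as the subcategory restriction in the lemma indicates. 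Your proof is correct once you replace the assertion $L_\lambda^\vee\simeq L_\lambda$ by $L_\lambda^\vee\simeq L_{-\lambda}$ and track this sign through the argument; this is precisely the ``bookkeeping'' you anticipated in your final paragraph.
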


\begin{rem}
The modules ${\Delta}_\la$ do not belong to $\fC$ because they contain vectors of arbitrary large $z$-degrees.
The modules ${\nabla}_\la$ and  $\Bbbk[{\bf I}]$ do not belong to $\fC^\star$ because they contains vectors of arbitrary small (negative) $z$-degrees.
\end{rem}

\begin{thm}\label{DUStCost}
 One has the isomorphism of modules of the Iwahori algebra: 
 \[
{\mathbb D}_\la=\Delta_\la,\qquad {D}_\la=\overline{\Delta}_\la,\qquad
{\mathbb U}^\vee_{-\la}=\nabla_\la,\qquad {U}^\vee_{-\la}=\overline{\nabla}_\la.
 \]
\end{thm}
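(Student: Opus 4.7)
The plan is to verify the universal properties characterizing the four modules on the right-hand side of the theorem. By Remark~\ref{StandardExist}, $\Delta_\la = \imath_{\preceq\la}^*(\mathbb{P}_\la)$ is the maximal quotient of the projective cover $\mathbb{P}_\la$ lying in $\fC^\star_{\preceq\la}$, and dually $\nabla_\la = \imath_{\preceq\la}^!(\mathbb{I}_\la)$. I would first establish $\mathbb{D}_\la = \Delta_\la$, then deduce $D_\la = \overline\Delta_\la$ from Remark~\ref{GlobalSpecialization}, and finally handle the costandard identifications by an entirely parallel argument in $\fC$ (or, alternatively, by the contravariant restricted-dual functor, which interchanges standards and costandards).

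The first step toward $\mathbb{D}_\la = \Delta_\la$ is a character bound. By Proposition~\ref{DW}, Corollary~\ref{End=End}, and Theorem~\ref{Dcharacter}, we have $\ch\,\mathbb{D}_\la = E_\la(x,q,0)\cdot\ch\,\mathcal{A}^D_\la$. The polynomial $E_\la(x,q,0)$ is lower-triangular with respect to the Cherednik order $\preceq$, with leading monomial $x^\la$; in particular its $x$-support lies in $\{\mu:\mu\preceq\la\}$. Since $\mathcal{A}^D_\la$ is concentrated in $\fh$-weight zero, every weight $\mu$ appearing in $\mathbb{D}_\la$ satisfies $\mu\preceq\la$, so $\mathbb{D}_\la\in\fC^\star_{\preceq\la}$. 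The surjection $\mathbb{D}_\la\twoheadrightarrow L_\la$, which is immediate from the defining relations, then lifts through the projective $\Delta_\la$ to give a surjection $\Delta_\la\twoheadrightarrow\mathbb{D}_\la$.

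The second step is the reverse surjection $\mathbb{D}_\la\twoheadrightarrow\Delta_\la$, obtained by verifying the defining relations of $\mathbb{D}_\la$ in $\Delta_\la$. Let $\bar v_\la\in\Delta_\la$ denote the image of the cyclic generator of $\mathbb{P}_\la$. For each defining relation $r\,d_\la = 0$ of $\mathbb{D}_\la$, the vector $r\,v_\la\in\mathbb{P}_\la$ has a specific $\fh$-weight that I would check to lie outside $\{\mu:\mu\preceq\la\}$, so that $r\,v_\la$ belongs to the submodule of $\mathbb{P}_\la$ quotiented out to form $\Delta_\la$. The composite $\Delta_\la\twoheadrightarrow\mathbb{D}_\la\twoheadrightarrow\Delta_\la$ is then a surjective endomorphism of $\Delta_\la$; since each graded weight component of $\Delta_\la$ is finite-dimensional, this forces an isomorphism. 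The proper standard identification $D_\la = \overline\Delta_\la$ follows since $D_\la\simeq\mathbb{D}_\la\otimes_{\mathcal{A}^D_\la}\Bbbk_0$ by Remark~\ref{GlobalSpecialization}, which collapses the right $\mathcal{A}^D_\la$-action on the weight-$\la$ space to a scalar and matches the defining universal property of the proper standard (whose weight-$\la$ multiplicity is $1$).

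For the costandard side, Proposition~\ref{UUo} provides $\ch\,U^o_{-\la} = E_{-\la}(y,q^{-1},\infty)$, and an analogous triangularity of the $t=\infty$ specialization with respect to $\preceq$ places $\mathbb{U}^\vee_{-\la}$ in $\fC_{\preceq\la}$; the universal property check then proceeds as above, now using the injective hull characterization of $\nabla_\la$ and the finite-multiplicity argument. The hard part will be the weight-by-weight verification of the defining relations: showing that each of the bounds $-\langle\la_-,\al^\vee\rangle$ and $-\langle\la_-,\al^\vee\rangle+1$ appearing in the exponents of the $\mathbb{D}_\la$-relations, and the analogous bounds for $\mathbb{U}_{-\la}$, is precisely the minimal power for which the corresponding weight leaves the Cherednik cone at $\la$. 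This is a delicate combinatorial assertion about the interaction of $\sigma_\la\in W$ with the action of the simple reflections on $P$, tightly connected to the triangularity of the nonsymmetric Macdonald polynomials recorded in Section~\ref{MacdonaldPolynomials}.
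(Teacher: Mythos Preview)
Your proposal is correct and follows essentially the same route as the paper: first use the triangularity of $E_\la(x,q,0)$ and $E_\la(x,q,\infty)$ with respect to the Cherednik order to place $\mathbb{D}_\la$, $D_\la$, $\mathbb{U}^\vee_{-\la}$, $U^\vee_{-\la}$ in $\fC^\star_{\preceq\la}$ (resp.\ $\fC_{\preceq\la}$), and then check that the defining relations of $\mathbb{D}_\la$ are automatically satisfied by any weight-$\la$ vector in any module of this subcategory, precisely via the weight inequalities you anticipate in your last paragraph. The paper records these inequalities (your ``hard part'') as a display labeled \eqref{highWeightrelation} and treats them as immediate from the definition of the Cherednik order, so that step is lighter than you expect. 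One small deviation: the paper obtains $D_\la=\overline\Delta_\la$ by observing directly that in any module whose total weight-$\la$ space is one-dimensional one has $z\fh[z]\cdot v=0$ (degree reasons), rather than passing through $\mathbb{D}_\la\otimes_{\mathcal{A}^D_\la}\Bbbk_0$; your route works too, but be careful not to invoke Corollary~\ref{cor::global::local}, since that relies on stratification, which is proved afterward using the present theorem.
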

\begin{proof}
 Recall that the character of $D_\la$ is equal to $E_\la(x,q,0)$ (see Theorem \ref{Dcharacter}).
 In addition, for the long element $w_0\in W$, we have
 \[
\ch\, U_{-\la}^\vee =  w_0E_{-w_0\la}(x^{-1},q,\infty) = E_{\la}(x,q,\infty),
 \]
where the first equality is proved in \cite{FKM} and the second equality holds because $-w_0$ is the second order diagram automorphism. 
 Since all the monomials $x^\mu$ showing up in  $E_{\la}(x,q,t)$
satisfy $\mu\preceq \la$, 
we obtain that $\mathbb{D}_{\lambda},D_{\lambda},{U}^\vee_{-\la}\in \fC^\star_{\preceq \la}$,
$\mathbb{U}^\vee_{-\lambda},D_{\lambda},{U}^\vee_{-\la}\in \fC_{\preceq \la}$.

Let $M \in \fC^\star_{\preceq\la}$ and suppose
    \[\psi:L_\lambda \rightarrow M \]
    is a nonzero morphism. Let $v$ be a nonzero element of $L_\lambda$. Then $\psi(v)$ is an element of weight $\lambda$.
    Choose any $\sigma\in W$ such that $\sigma(\la_-)=\la$. Then we clearly have:
\begin{align}
\wt\big(e_{\widehat{\sigma}(-\alpha+\delta)+r \delta} v\big)&\succ\lambda,\ \al\in\Delta_+,\,r \geq 0, \notag\\
\label{highWeightrelation}
\wt\big( (f_{\sigma(\al)}\T z)^{-\bra \la_-,\al^\vee\ket} v\big)&\succ\lambda,\ \al\in\Delta_+,\,
 \sigma\al\in\Delta_-,\\
\wt\big((e_{\sigma(\al)}\T 1)^{-\bra \la_-,\al^\vee\ket+1} v\big)&\succ\lambda,\ \al\in\Delta_+,\, \sigma\al\in\Delta_+,\notag
\end{align}
in any case where the vector inside $\wt$ is nonzero,
where $\wt$ denotes the weight of an $\fh$-weight vector.
Therefore
\begin{align*}
e_{\widehat{\sigma}(-\alpha+\delta)+r \delta} v&=0,\ \al\in\Delta_+,\,r \geq 0, \\
 (f_{\sigma(\al)}\T z)^{-\bra \la_-,\al^\vee\ket} v&=0,\ \al\in\Delta_+,\,
 \sigma\al\in\Delta_-,\\
(e_{\sigma(\al)}\T 1)^{-\bra \la_-,\al^\vee\ket+1} v&=0,\ \al\in\Delta_+,\, \sigma\al\in\Delta_+.
\end{align*}
Thus there exists a map 
\[\varphi:\mathbb{D}_\lambda \rightarrow M, ~d_\lambda \mapsto \psi(v).\]

If the $\la$-weight space in $M$ is one-dimensional then we also have
\[z\fh[z]\psi(v)=0.\] 
Therefore we have a map
\[\varphi:{D}_\lambda \rightarrow M, ~d_\lambda \mapsto \psi(v)\]
extending the map $\psi$. Thus $D_\la =\overline\Delta_\la$.

By the same arguments we have that $\mathbb{U_{-\la}}$ is the projective cover of $L_{-\la}$ in $\fC^\star_{\preceq \la}$. Thus using Lemma \ref{DualitySubcategories} we have ${\mathbb U}^\vee_{-\la}=\nabla_\la$. Similarly $U^\vee_{-\la} =\overline\nabla_\la$.
\end{proof}

\begin{dfn}
    For pairwise incomparable weights $\lambda_1, \dots, \lambda_k$, let $\fC_{\preceq\lambda_1, \dots, \lambda_k}$ be the full subcategory of the category $\fC$ consisting of modules $M$
such that $M_n(\nu)\ne 0$ implies $\nu\preceq \la_i$ for some $i=1,\dots,k$.
Similarly, we consider subcategories $\fC^\star_{\preceq \lambda_1, \dots, \lambda_k}$ of $\fC^\star$.
\end{dfn}

\begin{prop}
    For $i=1,\dotsc,k$, $\Delta_{\lambda_i}$ is projective in $\fC^\star_{\preceq \lambda_1, \dots, \lambda_k}$ and $\nabla_{\lambda_i}$ is injective in $\fC_{\preceq \lambda_1, \dots, \lambda_k}$. 
\end{prop}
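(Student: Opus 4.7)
The plan is to reproduce the relation-checking arguments from the proof of Theorem \ref{DUStCost}, adding only one new ingredient: even in the larger subcategories $\fC^\star_{\preceq\lambda_1,\dotsc,\lambda_k}$ and $\fC_{\preceq\lambda_1,\dotsc,\lambda_k}$, no weight strictly greater than any individual $\lambda_i$ can occur. First I would establish this weight-exclusion. Indeed, if $\mu$ is any weight appearing in an object $M$ of either subcategory, then $\mu\preceq\lambda_j$ for some $j$; if additionally $\mu\succ\lambda_i$, then transitivity of $\preceq$ gives $\lambda_i\prec\mu\preceq\lambda_j$, hence $\lambda_i\prec\lambda_j$, which contradicts pairwise incomparability (the case $j=i$ producing $\lambda_i\prec\lambda_i$ directly).

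With this in hand, I would argue projectivity of $\Delta_{\lambda_i}=\mathbb{D}_{\lambda_i}$ in $\fC^\star_{\preceq\lambda_1,\dotsc,\lambda_k}$ verbatim as in Theorem \ref{DUStCost}. Given a surjection $\pi\colon M\twoheadrightarrow N$ in this subcategory and a morphism $\phi\colon\Delta_{\lambda_i}\to N$, I would lift the image $\phi(d_{\lambda_i})$ to a weight-$\lambda_i$, degree-$0$ vector $m\in M$, which exists since $\pi$ is $\fh$-equivariant and grading-compatible, so induces a surjection on each weight-and-degree component. The inequalities \eqref{highWeightrelation}, applied with $\la=\lambda_i$ and $\sigma=\sigma_{\lambda_i}$, combined with the weight-exclusion, then force every vector on the left-hand side of those inequalities to vanish when applied to $m$. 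Hence $m$ satisfies the defining relations of the cyclic vector of $\mathbb{D}_{\lambda_i}$, so there is a well-defined $\mathcal{I}$-module map $\tilde\phi\colon\Delta_{\lambda_i}\to M$ with $\tilde\phi(d_{\lambda_i})=m$, which lifts $\phi$ because both maps agree on the cyclic vector.

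The injectivity of $\nabla_{\lambda_i}$ in $\fC_{\preceq\lambda_1,\dotsc,\lambda_k}$ is handled by the dual version of this argument: using $\nabla_{\lambda_i}=\mathbb{U}_{-\lambda_i}^\vee$, an extension of a map $\psi\colon M\to\nabla_{\lambda_i}$ along an injection $M\hookrightarrow N$ is produced via the corestricted weight-$\lambda_i$ data, again relying on the weight-exclusion to check the dual corelations; alternatively, one may invoke the restricted-dual equivalence of Lemma \ref{DualitySubcategories} to transport a projectivity statement for $\mathbb{U}_{-\lambda_i}$, proved by the same incomparability argument with $\preceq$ replaced by $\preceq^\vee$, into the desired injectivity. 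The main (and essentially only) obstacle is the weight-exclusion observation in the first paragraph; once that is in place, everything reduces to a direct repetition of the arguments already carried out in the proof of Theorem \ref{DUStCost}.
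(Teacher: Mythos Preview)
Your proposal is correct and follows essentially the same approach as the paper's proof, which is quite terse: the paper simply cites equations \eqref{highWeightrelation} for the projectivity of $\Delta_{\lambda_i}$ and then, for the injectivity of $\nabla_{\lambda_i}$, shows $\mathbb{U}_{-\lambda_i}$ is projective in $\fC^\star_{\preceq^\vee\lambda_1,\dotsc,\lambda_k}$ by the same relation argument and applies Lemma~\ref{DualitySubcategories}. Your explicit weight-exclusion paragraph is exactly the step the paper leaves implicit when invoking \eqref{highWeightrelation}, and your second option for the injectivity (via duality) matches the paper's route precisely.
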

\begin{proof}
  The first claim follows from equations \eqref{highWeightrelation}.  By the defining relations of $\mathbb{U}_{\la_i}$ we have that this module is projective in $\fC^\star_{\preceq^\vee \lambda_1, \dots, \lambda_k}$. Thus the second claim follows from Lemma \ref{DualitySubcategories}.
\end{proof}

\subsection{Main character identity}
Our goal now is to prove the following conjecture of \cite{FMO2}: 

\begin{thm}\label{conj}
 One has the equality
 \begin{equation}\label{BiCharacterEquality}
 \sum_{\la\in P} \ch\,\Delta_\la(x,q)\,\ch\,\overline\nabla_\la^{\vee o} (y,q) = \sum_{\la\in P} \ch\,\bP_\la(x,q)\,\ch\,L_\la(y,q)
 \end{equation}
 of well-defined series in $(\ZZ[[P\times P]])[[q]]$. 
\end{thm}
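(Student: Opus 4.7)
The plan is to establish \eqref{BiCharacterEquality} by showing that both sides coincide with the graded bicharacter of $\Bbbk[\bfI]^\vee$, and to extract the concrete identity \eqref{IntroBiCharacter} from the two computations as a byproduct.

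For the right-hand side of \eqref{BiCharacterEquality}, I would apply Proposition~\ref{moduleFunctions Decomposition} with $G=\bfI$. Since $\fg_0=\fh$ is abelian (so $w_0=1$ on $P=P_+(\fh)$), this yields a decomposition of $\mathcal{I}\text{-}\mathcal{I}$-bimodules
\[
\Bbbk[\bfI]\simeq \bigoplus_{\la\in P}\mathbb{I}_\la\otimes (L_{-\la})^o.
\]
Passing to the restricted dual interchanges the injective hull $\mathbb{I}_\la$ in $\fC$ with the projective cover $\bP_\la$ in $\fC^\star$ and matches $((L_{-\la})^o)^\vee$ with $L_\la$ (any reindexing $\la\mapsto -\la$ being absorbed by summation over $P$). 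Consequently $\Bbbk[\bfI]^\vee\simeq \bigoplus_\la \bP_\la\otimes L_\la$, whose bicharacter is $\sum_\la \ch\bP_\la(x,q)\,\ch L_\la(y,q)$. Computing $\ch\bP_\la = x^\la\,\ch\U(\fr)$ via the PBW basis of $\U(\fr)$ with $\fr=\fn^+\oplus(\mathfrak{s}\otimes z\Bbbk[z])$, one verifies that this series lies in $(\ZZ[[P\times P]])[[q]]$ and equals the left-hand side of \eqref{IntroBiCharacter}.

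For the left-hand side of \eqref{BiCharacterEquality}, I would invoke the stratification of $\fC$ (Theorem~\ref{TheoremStr}) and apply Theorem~\ref{FT} to obtain a bimodule filtration of $\Bbbk[\bfI]$ whose associated graded is $\bigoplus_\la \nabla_\la\otimes_{\cA_\la}(\Delta_\la^\vee)^o$. Since bicharacters are additive over filtrations and compatible with the restricted-dual functor, the bicharacter of $\Bbbk[\bfI]^\vee$ is read off from the dualized summands. Using Corollaries~\ref{End=End} and~\ref{cor::global::local} to split the free $\cA_\la$-action off of one factor and combine it with the other, the $\la$-summand has bicharacter $\ch\Delta_\la(x,q)\,\ch\overline\nabla_\la^{\vee o}(y,q)$. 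Substituting $\ch\overline\Delta_\la=E_\la(x,q,0)$ (Theorem~\ref{Dcharacter}), $\ch U_\la^o=E_\la(y,q^{-1},\infty)$ (Proposition~\ref{UUo}) and $\ch\cA_\la^D=(q)_\la^{-1}$ (Theorem~\ref{characters}), one identifies the resulting sum with the right-hand side of \eqref{IntroBiCharacter}.

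Equating the two computations of the bicharacter of $\Bbbk[\bfI]^\vee$ then yields both \eqref{BiCharacterEquality} and its concrete form \eqref{IntroBiCharacter}. The main obstacle will be the careful bookkeeping of the $(\cdot)^\vee$ and $(\cdot)^o$ operations on bimodules: one must verify (i) that the bicharacter of $\Bbbk[\bfI]^\vee$ is a genuinely well-defined element of $(\ZZ[[P\times P]])[[q]]$ despite $\Bbbk[\bfI]$ itself being unbounded in its natural grading; (ii) that applying $(\cdot)^\vee$ to the filtration of Theorem~\ref{FT} produces graded pieces contributing exactly $\ch\Delta_\la\,\ch\overline\nabla_\la^{\vee o}$, with the correct sign conventions relating left and right $\fh$-weights of the proper (co)standard and generalized Weyl modules; and (iii) that the infinite summations involved converge in $(\ZZ[[P\times P]])[[q]]$, a point which is ultimately controlled by the lower bound on the gradings and by the finiteness of the $\fh$-weight multiplicities of each $\bP_\la$.
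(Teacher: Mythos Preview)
Your argument is circular within the paper's logical structure. You invoke Theorem~\ref{TheoremStr} (stratification of $\fC$ and $\fC^\star$) and then Theorem~\ref{FT} to obtain the filtration on $\Bbbk[\bfI]$, but in the paper the proof of Theorem~\ref{TheoremStr} relies on Corollary~\ref{ProjStand}, which in turn is deduced from Corollary~\ref{CorPIntoE}, and the latter is exactly the coefficient-of-$y^\la$ reformulation of Theorem~\ref{conj}. In other words, the paper uses Theorem~\ref{conj} as an \emph{input} to verify condition~(ii) of Theorem~\ref{thm::characters::stratified} and hence to establish stratification; you cannot then turn around and use stratification to prove Theorem~\ref{conj}.

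The paper's own proof avoids the bimodule filtration entirely at this stage. It first shows (Proposition~\ref{PropWellDef}) that the left-hand side of~\eqref{BiCharacterEquality} converges in $(\ZZ[[P\times P]])[[q]]$, and then proves the $y^\la$-coefficient identity (Corollary~\ref{CorPIntoE}) directly: one writes the putative coefficients as $m_{\la,\mu}(q)=\langle \ch\,\bP_\la, E_\mu(x^{-1},q^{-1},\infty)\rangle_0$ using the specialized Cherednik pairing, forms the difference $Q_\la = \ch\,\bP_\la - \sum_\mu m_{\la,\mu}(q)(q)_\mu^{-1}E_\mu(x,q,0)$, observes by orthogonality~\eqref{EOrth0} that $\langle Q_\la, x^{-\mu}\rangle_0=0$ for all $\mu$, and concludes $Q_\la=0$ via a minimal-$q$-degree, minimal-weight argument exploiting the support properties of the $E_\mu$. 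Your bimodule-and-filtration picture is how the paper \emph{interprets} the identity after the fact (cf.\ Theorem~\ref{thmC} in the introduction), not how it is established. If you had an independent proof of stratification---say via direct $\Ext$-vanishing as in \cite{FKM,ChKa} for simply-laced types---your route would become a genuine alternative, but as written it assumes what it aims to prove.
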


Recall the formulas from Section~\ref{curalg}:
\[
\ch\,\Delta_\la (x,q)=(q)_\la^{-1}E_\la(x,q,0),\quad
\ch\,\overline\nabla_\la^{\vee o} (y,q) = E_\la(y,q^{-1},\infty).
\]
We also have
\[
\ch\,\bP_\la(x,q) = \frac{x^\la}{(q;q)_\infty^n \prod_{\al\in\Delta_+} (1-x^\alpha)\prod_{\alpha\in\Delta}(q x^\alpha;q)_\infty},\quad \ch\,L_\la(y,q)=y^\la.
\]
Substituting these into \eqref{BiCharacterEquality} converts Theorem~\ref{conj} into the form of \cite[Conjecture 2]{FMO2}.

\begin{prop}\label{PropWellDef}
The sum
\begin{equation}\label{BiCharacterE}
\sum_{\mu\in P} \ch\, \Delta_\mu (x,q)\, \ch\,\overline\nabla_\mu^\vee (y,q) = \sum_{\mu\in P} (q)_\mu^{-1}E_\mu (x,q,0) E_\mu (y^{-1},q^{-1},\infty).
\end{equation}
is well-defined in $(\ZZ[[P\times P]])[[q]]$.
\end{prop}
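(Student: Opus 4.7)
The plan is to combine non-negativity and triangularity of the specialized Macdonald polynomials with a $q$-degree estimate coming from the Iwahori-module interpretations of both factors.

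By \eqref{E0} we have $E_\mu(x,q,0)\in\ZZ_{\ge 0}[q][P]$, by Section~\ref{NMp} also $E_\mu(y^{-1},q^{-1},\infty)\in\ZZ_{\ge 0}[q][P]$, and $(q)_\mu^{-1}\in\ZZ_{\ge 0}[[q]]$ has constant term $1$. Hence every term in the sum has non-negative coefficients, and well-definedness reduces to showing that for each fixed $(\lambda,\eta,k)\in P\times P\times\ZZ_{\ge 0}$ only finitely many $\mu\in P$ contribute a nonzero $x^\lambda y^\eta q^k$-coefficient. Triangularity of the two polynomials in the Cherednik order forces $\lambda\preceq\mu$ and $-\eta\preceq\mu$, in particular $\lambda+\eta\in Q$, but these triangularity constraints leave infinitely many candidate $\mu$.

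The key input is a $q$-degree estimate obtained via the module interpretations $(q)_\mu^{-1}E_\mu(x,q,0)=\ch\,\Delta_\mu$ (Theorems~\ref{Dcharacter}, \ref{DUStCost}) and $E_\mu(y^{-1},q^{-1},\infty)=\ch\,\overline\nabla_\mu^\vee$ (Theorem~\ref{DUStCost}, Proposition~\ref{UUo}). The module $\Delta_\mu={\mathbb D}_\mu$ is cyclic with generator of $\fh$-weight $\mu$ in $q$-degree zero, and $\overline\nabla_\mu^\vee=U_{-\mu}$ is cyclic with generator of $\fh$-weight $-\mu$ in $q$-degree zero. Since the degree-zero part of $\mathcal{I}$ is $\fb=\fn^+\oplus\fh$, whose $\fh$-weights lie in $\Delta_+\cup\{0\}$ and thus can only raise or preserve $\fh$-weights, any lowering of $\fh$-weight in either module requires at least one application of an $\fs\otimes z^{\ge 1}$-generator, each contributing $\ge 1$ to the $q$-degree. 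Writing $\mu-\lambda=\sum_i a_i\alpha_i\in Q$ and $-\mu-\eta=\sum_i b_i\alpha_i\in Q$, a direct monomial-counting argument which separates positive-root from negative-root generator applications (minimizing $\mathrm{ht}(-\gamma_-)$ subject to $\gamma_++\gamma_-$ equal to the weight to be reached, with $\gamma_+\in Q_+$, $\gamma_-\in -Q_+$) yields
\[
d_\mu(\lambda)\;\ge\;\mathrm{ht}(\theta)^{-1}\textstyle\sum_i\max(a_i,0),\qquad
d'_\mu(\eta)\;\ge\;\mathrm{ht}(\theta)^{-1}\textstyle\sum_i\max(b_i,0),
\]
where $d_\mu(\lambda)$ and $d'_\mu(\eta)$ denote the minimum $q$-degrees of the $x^\lambda$- and $y^\eta$-coefficients.

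A contributing $\mu$ satisfies $d_\mu(\lambda)+d'_\mu(\eta)\le k$, hence $\sum_i(\max(a_i,0)+\max(b_i,0))\le\mathrm{ht}(\theta)\,k$. Since $a_i+b_i=-(\lambda+\eta)_i$ is independent of $\mu$, the elementary inequality $\max(a,0)+\max(b,0)\ge|a|-|a+b|$ (verified by a short case analysis on the signs of $a$ and $a+b$) summed over $i$ gives $\|\mu-\lambda\|_1\le\mathrm{ht}(\theta)\,k+\|\lambda+\eta\|_1$, where $\|\cdot\|_1$ is the $\ell^1$-norm in the simple-root basis. Thus $\mu$ is confined to a finite subset of the coset $\lambda+Q\subset P$, completing the proof. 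The main obstacle is to justify the monomial-counting $q$-degree bound rigorously in the presence of the defining relations of $\Delta_\mu$ and $\overline\nabla_\mu^\vee$; this is immediate from the observation that relations can only kill vectors and never create vectors of $q$-degree smaller than the a priori bound obtained by counting only the generators of $\mathcal{I}$.
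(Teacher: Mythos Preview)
Your argument is correct and rests on the same key observation as the paper's proof: in a cyclic $\mathcal{I}$-module generated in degree zero, any lowering of the $\fh$-weight forces at least one application of an element of $\fn_-\otimes z\Bbbk[z]$, hence costs positive $q$-degree. The paper phrases this as ``$\lambda-\mu$ admits a decomposition into a sum of roots containing no more than $M$ negative roots'' and then extracts finiteness by summing the two decompositions to obtain a decomposition of the \emph{fixed} element $\lambda-\nu$ with at most $2M$ negative roots (hence finitely many decompositions, and $\mu$ arises as a subset sum). You instead convert the same observation into a numerical bound and finish with an $\ell^1$-ball argument via the inequality $\max(a,0)+\max(b,0)\ge|a|-|a+b|$. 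This is a slightly more quantitative packaging of the identical idea.

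Two minor remarks. First, in the paper $\theta$ denotes the highest \emph{short} root, whereas your height bound on a single negative root requires the highest root; the constant $\mathrm{ht}(\theta)^{-1}$ should therefore be replaced by $h^{-1}$ with $h$ the height of the highest root (or simply any positive constant---the argument needs nothing sharper). Second, your closing sentence about the defining relations is exactly the right justification: the bound comes from the PBW spanning set for $\U(\mathcal{I})$, and imposing relations only kills vectors, never lowers the minimal $q$-degree at a given weight.
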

\begin{proof}
We will show that for arbitrary weights $\la,\nu\in P$ the coefficient of $x^\la y^{-\nu}$ in
$$\sum_{\mu\in P} (q)_\mu^{-1}E_\mu (x,q,0) E_\mu (y^{-1},q^{-1},\infty)$$ 
is a well-defined series in $q$. 
It suffices to prove that for any $M\in \mathbb{Z}_{\geq 0}$ there exists only finite number of weights $\mu\in P$ such that $(q)_\mu^{-1}E_\mu (x,q,0) E_\mu (y^{-1},q^{-1},\infty)$ contains a term $q^Mx^\la y^{-\nu}$. We show that there exists only finite number of weights $\mu$ such that the lowest $q$-degree terms in the coefficients in front of $x^\lambda$ in $E_\mu (x,q,0)$ and  in front of $y^{-\nu}$ in $E_\mu (y^{-1},q^{-1},\infty)$ both are less than or equal to $M$.

The coefficient in front of $x^\lambda$ in $E_\mu (x,q,0)$ has a chance to have a term with $q$-degree not exceeding  $M$ only if $\lambda-\mu$ admits a decomposition into a sum of roots containing no more than $M$ negative roots. 
In fact, $E_\mu (x,q,0)$ is the character of the Iwahori algebra module $\Delta_\mu$. The module $\Delta_\mu$ is cyclic and the Iwahori algebra is generated by $\fn_+$ and $z\fn_-$. Hence whenever one applies to the cyclic vector of $\Delta_\mu$ several operators from $z\fn_-$, one obtains a vector of $z$-degree equal to the number of operators applied. 
Similarly, one shows that the coefficient in front of $y^{-\nu}$ in $E_\mu (y^{-1},q^{-1},\infty)$ has a chance to have a term with $q$ degree not exceeding $M$ only if $-\nu-(-\mu)$ admits a decomposition into a sum of roots containing no more than $M$ negative roots
(recall that $E_\mu (y^{-1},q^{-1},\infty)$ is the character of the cyclic module $U_{-\mu}$, whose highest weight is equal to $-\mu$). 

So we are looking for the weight $\mu$ such that for fixed $\la$ and $\nu$ the following conditions hold true:
\begin{itemize}
\item   $\lambda-\mu$ admits a decomposition into a sum of roots containing no more than $M$ negative roots,
\item $\mu-\nu$ admits a decomposition into a sum of roots containing no more than $M$ negative roots.
\end{itemize}
Given a pair of such decompositions, we sum them up and obtain a decomposition of $\lambda-\nu$ into a sum of roots containing at most $2M$ negative roots.
The number of such decompostions is finite. Since $\mu-\nu$ is obtained as a sum of several roots from a decomposition of $\lambda-\mu$ as above, we conclude that the number of $\mu$ satisfying the two conditions above is finite.
Hence  
for any $M$ there exists only finite number of weights $\mu\in P$ such that the product $(q)_\mu^{-1}E_\mu(x,q,0) E_\mu (y^{-1},q^{-1},\infty)$ contains a term $q^Mx^\la y^{-\nu}$. 
\end{proof}

\begin{rem}
One can also derive the properties of specialized nonsymmetric Macdonald polynomials used in the preceding proof from the Ram-Yip formula \cite{RY} and its $t=0,\infty$ specialized forms \cite{OS} (see \eqref{E0}).
\end{rem}

Theorem \ref{conj} is an immediate consequence of the following (by considering the coefficient of $y^\la$ on both sides of \eqref{BiCharacterEquality}):

\begin{cor}\label{CorPIntoE}
For any $\la\in P$, we have
\begin{align}\label{PIntoE}
\ch\,\bP_\la = \sum_{\mu\in P} m_{\la,\mu}(q)\frac{E_\mu(x,q,0)}{(q)_\mu}
\end{align}
where $m_{\la,\mu}(q)$ is the coefficient of $y^\la$ in $E_\mu(y,q^{-1},\infty)$.
\end{cor}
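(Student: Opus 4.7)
The plan is to read off the claimed formula~\eqref{PIntoE} directly from Theorem~\ref{conj} by extracting the coefficient of $y^\la$ on both sides of the bicharacter identity~\eqref{BiCharacterEquality}.

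Concretely, I would proceed as follows. First, substitute the character formulas recorded immediately after Theorem~\ref{conj}, namely $\ch\,\Delta_\mu(x,q) = E_\mu(x,q,0)/(q)_\mu$, $\ch\,\overline\nabla_\mu^{\vee o}(y,q) = E_\mu(y,q^{-1},\infty)$, and $\ch\,L_\mu(y,q) = y^\mu$, into~\eqref{BiCharacterEquality}. It becomes
\[
\sum_{\mu \in P} \frac{E_\mu(x,q,0)}{(q)_\mu}\,E_\mu(y,q^{-1},\infty) \;=\; \sum_{\mu \in P} \ch\,\bP_\mu(x,q)\,y^\mu.
\]
Next, extract the coefficient of $y^\la$ from each side. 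On the right-hand side, only the term $\mu=\la$ contributes, yielding $\ch\,\bP_\la(x,q)$. On the left-hand side, by the very definition of $m_{\la,\mu}(q)$ as the coefficient of $y^\la$ in $E_\mu(y,q^{-1},\infty)$, one reads off $\sum_{\mu \in P} m_{\la,\mu}(q)\,E_\mu(x,q,0)/(q)_\mu$. Equating these two expressions recovers exactly~\eqref{PIntoE}.

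The only delicate point is the legitimacy of this coefficient extraction: the left-hand side of~\eqref{BiCharacterEquality} is an a priori infinite sum in two families of variables, and one must confirm that collecting the $y^\la$-contribution produces a well-defined element of $(\ZZ[[P]])[[q]]$. This is precisely what Proposition~\ref{PropWellDef} secures, by placing the left-hand side inside $(\ZZ[[P\times P]])[[q]]$; the projection onto the $y^\la$-coefficient is then a natural homomorphism onto $(\ZZ[[P]])[[q]]$, and the individual terms $m_{\la,\mu}(q)\,E_\mu(x,q,0)/(q)_\mu$ accumulate to a legitimate series. I expect this well-definedness to be the entire substance of the step; the rest is formal manipulation once Theorem~\ref{conj} has been established.
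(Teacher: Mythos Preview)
Your argument is circular relative to the paper's logical structure. In the paper, Theorem~\ref{conj} is not proved independently and then used to deduce Corollary~\ref{CorPIntoE}; rather, the sentence immediately preceding the corollary states explicitly that Theorem~\ref{conj} is an immediate consequence of Corollary~\ref{CorPIntoE}. So when you write ``once Theorem~\ref{conj} has been established,'' you are assuming precisely what the corollary is meant to supply. The coefficient-extraction step you describe is exactly how the paper passes from the corollary to the theorem, not the other way around.

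The paper's own proof of Corollary~\ref{CorPIntoE} is a direct argument that does not presuppose Theorem~\ref{conj}. It first invokes Proposition~\ref{PropWellDef} for convergence of the right-hand side. Then it identifies $m_{\la,\mu}(q)$ with the specialized Cherednik pairing $\langle \ch\,\bP_\lambda, E_\mu(x^{-1},q^{-1},\infty)\rangle_0$, using the orthogonality relation~\eqref{EOrth0}. Finally, it sets $Q_\la$ equal to the difference of the two sides of~\eqref{PIntoE}, observes that $\langle Q_\la, x^{-\mu}\rangle_0=0$ for all $\mu$, and argues directly that this forces $Q_\la=0$: if some $Q_{\la,k}\neq 0$, one can choose a monomial $x^\nu$ occurring in $Q_{\la,k}$ which is minimal with respect to the cone $Q_+$, and then the constant term computation gives a nonzero pairing $\langle Q_\la,x^{-\nu}\rangle_0$, a contradiction. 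This last step requires a finiteness property of the support of each $Q_{\la,k}$, which the paper extracts from the explicit form of $\ch\,\bP_\la$ and the combinatorics of $E_\mu(x^{-1},q^{-1},\infty)$. None of this appears in your proposal.
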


\begin{proof}
First, the convergence of the right-hand side of \eqref{PIntoE} is guaranteed by Proposition~\ref{PropWellDef}. The coefficients $m_{\la,\mu}(q)$ are given in terms of the Cherednik inner product as follows:
\begin{align}
m_{\la,\mu}(q) &= \langle \ch\,\bP_\lambda, E_\mu(x^{-1},q^{-1},\infty)\rangle_0.
\end{align}
Now consider the difference of both sides of \eqref{PIntoE}:
$$ Q_\la := \ch\,\bP_\la-\sum_{\mu\in P} (q)_\mu^{-1}E_\mu(x,q,0)m_{\la,\mu}(q). $$
This element satisfies $\langle Q_\la, E_\mu(x^{-1},q^{-1},\infty)\rangle_0 =0$ and hence $\langle Q_\la, x^{-\mu}\rangle_0=0$ for all $\mu\in P$. 

We claim that $Q_\la=0$. If not, write $Q_\la=\sum_{k\ge 0}Q_{\la,k}q^k$ in $(\ZZ[[P]])[[q]]$. By the formula for $\ch\,\bP_\la$ and the property of $E_\mu(x^{-1},q^{-1},\infty)$ used in the proof of Proposition~\ref{PropWellDef}, one sees that for any fixed $k\ge 0$, there exist finitely many distinct weights $\nu_1,\dotsc,\nu_s\in P$ such that $Q_{\la,k}\in\ZZ[[P]]$ contains only monomials $x^\eta$ satisfying $\eta\in\bigcup_{i=1}^s (\nu_i+Q_+)$. Now, suppose $k\ge 0$ is minimal such that $Q_{\la,k}\neq 0$. Choose $\nu\in P$ such that $x^\nu$ occurs in $Q_{\la,k}$ with nonzero coefficient and $x^{\eta}$ does not occur in $Q_{\la,k}$ for all $\eta\in\nu-Q_+$. We arrive the contradiction $\langle Q_{\la}, x^{-\nu}\rangle_0\neq 0$. Indeed, since 
\begin{align*}
\langle Q_{\la},x^{-\nu} \rangle_0 \equiv \langle Q_{\la,k}x^{-\nu}\prod_{\alpha\in\Delta_+} (1-x^\alpha)\rangle \pmod{q},
\end{align*}
we see that the coefficient of $q^0
$ in $\langle Q_\la, x^{-\nu}\rangle_0$ is nonzero.
\end{proof}



\subsection{Bimodule of functions}
Recall that irreducible objects in $\fC$ and in $\fC^\star$ are one-dimensional and are denoted by $L_{(\lambda,k)}$.
Let $\Lambda:=\{(\lambda,k) \colon \lambda\in P, k\in\bZ\}$, $\Theta=P$.
Consider the projection $\rho:\Lambda\to P$ that forgets the grading. 
Recall that we defined two partial orders on the set of weights $P$ in Definition~\ref{ChOrder}. The orders are  denoted by $\preceq$ and $\preceq^{\vee}$; we call  them the Cherednik and the dual Cherednik orderings. 

We have the grading shift functor $\cdot[k]$ such that for each graded $\mathcal{I}$-module $M$, the module $M[k]$ is isomorphic to $M$ as an $\mathcal{I}$-module and the $z$-degree of all its elements is increased by $k$. In particular we have:
\[\ch \,M[k]=q^k\ch \,M.\]
Let $M,N$ be $\mathcal{I}$-modules. We denote:
\begin{equation}\mathrm{ext}(M,N)=\sum_{k=0}^\infty \sum_{i=0}^\infty(-1)^i\dim\Ext^i(M[k],N)q^k\label{GradedExt}
\end{equation}
whenever the right-hand side is defined. We observe that
\begin{equation}\label{q0ext}
\text{the coefficient of $q^0$ in $\mathrm{ext}(M,N)$ is $([M],[N])$.}
\end{equation}

In \cite[Appendix A]{FKM} the following was proved:
\begin{lem}\label{ExtCherednik}
For $[M],[N]\in K_0(\fC^\star)$, with $M\in D(\fC^\star)$ and $N\in\fC^\star$ finite-dimensional as in \eqref{eq::pairing::def}, we have
\[\mathrm{ext}(M,N)=  
 \langle  \ch(M),\ch(N^\vee)\rangle_0.\]
\end{lem}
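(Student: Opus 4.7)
The plan is to reduce the identity to projective generators and verify it there by explicit character calculations.

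First, I would argue that both sides of the claimed identity are additive functions of $[M]$ once $N$ is fixed: additivity of $\mathrm{ext}(-, N)$ on short exact sequences follows from the long exact sequence of graded $\Ext$-groups (Euler characteristics vanish on exact triangles), while additivity of $\langle -, \ch\, N^\vee \rangle_0$ follows from the additivity of $\ch$ and the bilinearity of the pairing. Since $\fC^\star$ has enough projectives and every projective is a direct sum of grading-shifted indecomposables $\bP_\lambda[k]$, I would replace any $M \in \cD(\fC^\star)$ by a projective resolution whose convergence is guaranteed by the hypotheses on $M$ and the finite-dimensionality of $N$. This reduces the claim to the single case $M = \bP_\lambda$.

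For $M = \bP_\lambda$, projectivity kills all higher $\Ext^i$, and the identification $\bP_\lambda = \mathrm{Ind}_{\fh}^{\mathcal{I}} L_\lambda$ together with Frobenius reciprocity gives $\Hom_{\mathcal{I}}(\bP_\lambda[k], N) \cong {}_\lambda N_k$. Thus $\mathrm{ext}(\bP_\lambda, N) = \sum_k (\dim {}_\lambda N_k)\, q^k$, which is the coefficient of $x^\lambda$ in $\ch\, N$. For the right-hand side, I would use the PBW character formula
\[
\ch\, \bP_\lambda = \frac{x^\lambda}{\prod_{\alpha \in \Delta_+}(1-x^\alpha) \cdot (q;q)_\infty^n \prod_{\alpha \in \Delta}(qx^\alpha;q)_\infty}
\]
together with the crucial cancellation identity
\[
\ch\, \bP_\lambda \cdot \prod_{\alpha \in \Delta_+}\prod_{i=0}^\infty (1-q^i x^\alpha)(1-q^{i+1} x^{-\alpha}) = \frac{x^\lambda}{(q;q)_\infty^n},
\]
obtained by regrouping factors via $\alpha \mapsto -\alpha$ to combine $\prod_{\alpha\in\Delta_+}\prod_{i\ge 1}(1-q^i x^\alpha)$ with $\prod_{\alpha\in\Delta_+}\prod_{i\ge 1}(1-q^i x^{-\alpha})$ into $\prod_{\alpha\in\Delta}\prod_{i\ge 1}(1-q^i x^\alpha)$, while the $i=0$ term gives $\prod_{\alpha\in\Delta_+}(1-x^\alpha)$. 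Substituting this into the definition of $\langle -, -\rangle_0$ reduces the right-hand side to $\langle x^\lambda\, \ch\, N^\vee \rangle$, i.e.\ the coefficient of $x^{-\lambda}$ in $\ch\, N^\vee$, which by the construction of the restricted dual agrees with the coefficient of $x^\lambda$ in $\ch\, N$, matching the left-hand side.

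The hard part is precisely the cancellation identity in the previous step: the denominator of $\ch\, \bP_\lambda$, encoding the PBW basis of $U(\fr)$ for $\fr = \fn^+ \oplus \fs \otimes z\Bbbk[z]$, must cancel the product defining the $t=0$ Cherednik measure, and this matching is the representation-theoretic reason why Macdonald's $t=0$ theory is adapted to the Iwahori setting. A secondary technical point is ensuring that the projective resolution argument yields a well-defined Euler-characteristic identity on both sides, which is precisely what the hypotheses on $M \in \cD(\fC^\star)$ and the finite-dimensionality of $N$ are designed to guarantee.
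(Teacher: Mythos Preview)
The paper does not give its own proof of this lemma; it simply cites \cite[Appendix~A]{FKM}. Your approach---reduce to indecomposable projectives by additivity of both sides on exact triangles, then verify directly on $\bP_\lambda$ via Frobenius reciprocity and the PBW character formula---is the standard argument and is essentially what that reference does. Your cancellation identity is correct and is indeed the heart of the matter: the denominator of $\ch\,\bP_\lambda$ is exactly the $t=0$ Cherednik weight times $(q;q)_\infty^{-n}$, so the pairing $\langle \ch\,\bP_\lambda, g\rangle_0$ collapses to extracting the coefficient of $x^{-\lambda}$ in $g$.

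One point to tighten: with the paper's Iwahori convention $(M^\vee)_j=\bigoplus_{\nu}({}_{-\nu}M_{-j})^*$, the restricted dual flips both weight and $z$-degree, so ``coefficient of $x^{-\lambda}$ in $\ch\,N^\vee$'' equals the coefficient of $x^\lambda$ in $\ch\,N$ only after the substitution $q\mapsto q^{-1}$. You should track this against the direction of the shift in the definition $\mathrm{ext}(M,N)=\sum_{k\ge 0}\sum_i(-1)^i\dim\Ext^i(M[k],N)\,q^k$. The $q^0$ coefficients match regardless (which is all the paper actually uses, via the observation~\eqref{q0ext}), but for the full $q$-series identity as literally stated you need to be careful that the grading conventions on both sides are aligned.
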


\begin{cor}\label{ProjStand}
The sum $\sum_{(\mu,k)\in P\times\ZZ} ([\bP_\la],[\overline\nabla _{(\mu,k)}])[\Delta_{(\mu,k)}]$ is well-defined and equal to $[\bP_\la]$ in $K_0(\fC^\star)$.
\end{cor}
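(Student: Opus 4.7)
The plan is to derive the corollary as a formal consequence of Corollary~\ref{CorPIntoE} and the projectivity of $\bP_\la$ in $\fC^\star$. First I would evaluate the individual pairings. Because $\bP_\la$ is projective in $\fC^\star$, higher Ext groups from $\bP_\la$ vanish, so $([\bP_\la],[N]) = \dim\,\mathrm{Hom}_{\fC^\star}(\bP_\la,N) = [N:L_\la]$ for any $N\in\fC^\star$. Applied to $N = \overline\nabla_{(\mu,k)} = \overline\nabla_\mu[k]$ this gives $([\bP_\la],[\overline\nabla_{(\mu,k)}]) = [\overline\nabla_\mu[k]:L_\la] = \dim({}_\la\overline\nabla_\mu)_{-k}$. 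Since $\overline\nabla_\mu = U^\vee_{-\mu}$ is concentrated in non-positive degrees, this multiplicity vanishes for $k<0$, so the formal sum in the statement is automatically supported on $(\mu,k)\in P\times\ZZ_{\ge 0}$.

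Next I would assemble these pairings into a generating series in $q$ and identify it with the coefficient $m_{\la,\mu}(q)$ from the proof of Corollary~\ref{CorPIntoE}. Explicitly, $\sum_{k\ge 0}q^k([\bP_\la],[\overline\nabla_{(\mu,k)}]) = \sum_{k\ge 0}q^k\dim({}_\la\overline\nabla_\mu)_{-k}$, which equals the coefficient of $x^\la$ in $\ch\,\overline\nabla_\mu(x,q^{-1})$. Using $\ch\,\overline\nabla_\mu(x,q^{-1}) = \ch\,\overline\nabla_\mu^\vee(x^{-1},q)$ together with the character identity $\ch\,\overline\nabla_\mu^\vee(x,q) = E_\mu(x^{-1},q^{-1},\infty)$---obtained from Theorem~\ref{DUStCost} and the formula $\ch\,U^\vee_{-\la}(x,q) = E_\la(x,q,\infty)$ recorded in the proof of Theorem~\ref{Dcharacter}---this coefficient is exactly the coefficient of $y^\la$ in $E_\mu(y,q^{-1},\infty)$, i.e., $m_{\la,\mu}(q)$ in the notation of Corollary~\ref{CorPIntoE}. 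The same identification can be packaged through Lemma~\ref{ExtCherednik} as $\sum_{k\ge 0}q^k([\bP_\la],[\overline\nabla_{(\mu,k)}]) = \langle\ch\,\bP_\la,\ch\,\overline\nabla_\mu^\vee\rangle_0 = m_{\la,\mu}(q)$. Extracting $q^k$ coefficients yields $([\bP_\la],[\overline\nabla_{(\mu,k)}]) = [q^k]m_{\la,\mu}(q)$.

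Finally I would substitute these identifications into Corollary~\ref{CorPIntoE}. Translating that corollary to $K_0(\fC^\star)$ gives $[\bP_\la] = \sum_{\mu\in P}m_{\la,\mu}(q)\,[\Delta_\mu] = \sum_{(\mu,k)\in P\times\ZZ_{\ge 0}}[q^k]m_{\la,\mu}(q)\,[\Delta_{(\mu,k)}]$. Substituting the previous step yields $[\bP_\la] = \sum_{(\mu,k)\in P\times\ZZ_{\ge 0}}([\bP_\la],[\overline\nabla_{(\mu,k)}])\,[\Delta_{(\mu,k)}]$, and extending the sum to all $(\mu,k)\in P\times\ZZ$ is harmless by the vanishing noted in the first step. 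Well-definedness of the resulting sum in $K_0(\fC^\star)$ is ensured by Proposition~\ref{PropWellDef}. The main technical point is the middle step: one must consistently track grading and duality conventions when passing between $\overline\nabla_\mu$, $\overline\nabla_\mu^\vee$, the $o$-twist, and the specialized Macdonald polynomials $E_\mu(y,q^{-1},\infty)$; once these are aligned, the corollary reduces to a direct coefficient extraction from Corollary~\ref{CorPIntoE}.
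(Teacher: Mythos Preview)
Your argument is correct and follows the same route as the paper, which simply says the corollary is immediate from Corollary~\ref{CorPIntoE} together with the identification of $K_0(\fC^\star)$ with $\ZZ[[P]][[q]]$; you have just unpacked that identification carefully. One small correction: the formula $\ch\,U^\vee_{-\la}=E_\la(x,q,\infty)$ appears in the proof of Theorem~\ref{DUStCost}, not Theorem~\ref{Dcharacter}. Also, your aside invoking Lemma~\ref{ExtCherednik} is a bit loose, since the definition \eqref{GradedExt} of $\mathrm{ext}(M,N)$ shifts $M$ rather than $N$ and sums only over $k\ge 0$, so it does not literally produce $\sum_{k\ge 0}q^k([\bP_\la],[\overline\nabla_{(\mu,k)}])$; however, the two equalities you state there are nevertheless true by your direct computation, so this does not affect the argument.
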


\begin{proof}
This is immediate from Corollary~\ref{CorPIntoE} and the identification of $K_0(\fC^\star)$ with $\ZZ[[P]][[q]]$.
\end{proof}



Now we are ready to prove our main results concerning $\mathcal{I}$-modules (for the Ext pairing in the simply-laced case see  \cite{ChKa,FKM}).

\begin{thm}\label{TheoremStr}
One has:
\begin{itemize}
    \item the category $\fC$ is $(P,\preceq)$-stratified;
    \item the category $\fC$ is $(P,\preceq^{\vee})$-stratified;
    \item the category $\fC^\star$ is $(P,\preceq)$-stratified;
    \item the category $\fC^\star$ is $(P,\preceq^{\vee})$-stratified.
\end{itemize}
\end{thm}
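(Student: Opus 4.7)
The plan is first to reduce the four claims to two via the contravariant duality $\vee : (\fC^\star)^{op} \to \fC$. By Lemma~\ref{DualitySubcategories}, this functor sends $\fC^\star_{\preceq \la}$ to $\fC_{\preceq^\vee -\la}$; since negation is an order isomorphism $(P, \preceq) \to (P, \preceq^\vee)$, a $(P, \preceq)$-stratification of $\fC^\star$ transports through $\vee$ to a $(P, \preceq^\vee)$-stratification of $\fC$, and vice versa with the orderings swapped. Hence it suffices to prove that $\fC^\star$ is stratified under each of $\preceq$ and $\preceq^\vee$; both will be handled via Theorem~\ref{thm::characters::stratified}.

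For $(\fC^\star, \preceq)$-stratification, all inputs are already in place. Theorem~\ref{DUStCost} identifies $\Delta_\la = \mathbb{D}_\la$ and $\overline{\nabla}_\la = U^\vee_{-\la}$, the latter being finite-dimensional. The Proposition preceding Theorem~\ref{conj} shows that $\Delta_\la$ is the projective cover of $L_\la$ in every Serre subcategory $\fC^\star_{\preceq \la_1, \dotsc, \la_k}$ with $\la$ maximal; an arbitrary finite $S \subset \Lambda$ with $\rho(\la)$ maximal in $\rho(S)$ reduces to this case by passing to the maximal elements of $\rho(S)$. The character expansion $[\bP_\la] = \sum_\mu m_{\la, \mu}(q) [\Delta_\mu]$ as a well-defined element of $K_0(\fC^\star)$ is exactly Corollary~\ref{CorPIntoE}. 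The pairing $([\Delta_\la], [\overline{\nabla}_\mu]) = \delta_{\la, \mu}$ follows from Lemma~\ref{ExtCherednik} applied to $\ch \Delta_\la = (q)_\la^{-1} E_\la(x, q, 0)$ and $\ch \overline{\nabla}_\mu^\vee = E_\mu(x^{-1}, q^{-1}, \infty)$, combined with the Macdonald orthogonality~\eqref{EOrth0}, yielding $\mathrm{ext}(\Delta_\la, \overline{\nabla}_\mu) = \delta_{\la, \mu}$ and hence the required pairing after extracting the $q^0$-coefficient.

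For $(\fC^\star, \preceq^\vee)$-stratification, I proceed in structural parallel, taking as standard $\Delta^{(\preceq^\vee)}_\la := \mathbb{U}_\la$ and as proper costandard $\overline{\nabla}^{(\preceq^\vee)}_\la := D^\vee_{-\la}$; by the same Proposition preceding Theorem~\ref{conj}, these lie in the correct Serre subcategories and $\mathbb{U}_\la$ is the projective cover of $L_\la$ in each $\fC^\star_{\preceq^\vee \la_1, \dotsc, \la_k}$ with $\la$ maximal. Using $\ch \mathbb{U}_\la = (q)_{-\la}^{-1} E_{-\la}(x^{-1}, q^{-1}, \infty)$ (from Corollary~\ref{End=End}) and $\ch D^\vee_{-\mu}(x, q) = E_{-\mu}(x^{-1}, q^{-1}, 0)$, the orthogonality $([\mathbb{U}_\la], [D^\vee_{-\mu}]) = \delta_{\la, \mu}$ follows from Lemma~\ref{ExtCherednik} and the $(\la, \mu) \mapsto (-\mu, -\la)$ specialization of~\eqref{EOrth0}. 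The main obstacle will be the analog of Corollary~\ref{CorPIntoE}, namely a well-defined expansion $[\bP_\la] = \sum_\mu m'_{\la, \mu}(q) [\mathbb{U}_\mu]$ in $K_0(\fC^\star)$; I would obtain this by transporting the bicharacter identity of Theorem~\ref{conj} through the substitutions $(x, y) \mapsto (x^{-1}, y^{-1})$ and $\la \mapsto -\la$, verifying convergence by a modification of the argument in Proposition~\ref{PropWellDef}. The $\fC$-side stratification claims then follow from the duality reduction above.
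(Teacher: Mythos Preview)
Your proposal is correct and follows essentially the same approach as the paper: verify the hypotheses of Theorem~\ref{thm::characters::stratified} using the identifications of Theorem~\ref{DUStCost}, the Macdonald orthogonality~\eqref{EOrth0} via Lemma~\ref{ExtCherednik}, and the projective-into-standard expansion of Corollary~\ref{CorPIntoE}/\ref{ProjStand}. The only organizational difference is that you explicitly reduce the four claims to the two $\fC^\star$-cases via the contravariant duality $\vee$ and Lemma~\ref{DualitySubcategories}, whereas the paper simply asserts that ``the proof of each item is absolutely the same'' and writes out only the $(\fC^\star,\preceq)$ case; your duality reduction is a legitimate and slightly more explicit way to handle the $\fC$-side, and your sketch of the $(\fC^\star,\preceq^\vee)$ case (swapping the roles of $\mathbb{D}$ and $\mathbb{U}$, and of $t=0$ and $t=\infty$) is exactly what the paper's ``absolutely the same'' is pointing to.
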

\begin{proof}
The proof of each item is absolutely the same, so we will explain just the third one.
In order to show that $\fC^\star$ is stratified it is enough to verify the conditions of Theorem~\ref{thm::characters::stratified}.
Indeed, $\fC^\star$ has enough projectives; standard modules $\Delta_{(\lambda,k)}$ remain projective in $\fC^\star_{\preceq \lambda_1,\ldots,\lambda_k}$ for incomparable weights $\{\lambda_1,\ldots,\lambda_k\}$, which implies condition $\rm{(i)}$. Proper costandard modules are finite-dimensional and hence belong to $\fC^\star$ (condition $\rm{(iii)}$). The second condition $\rm{(ii)}$ is verified in Corollary~\ref{ProjStand}. 
Condition \ref{eq::standard::orthogonal} is implied by \eqref{GradedExt} and Lemma \ref{ExtCherednik}. 
Finally, thanks to Theorem~\ref{DUStCost} we know the characters of the (proper) standard and (duals of) costandard modules:
\begin{align*}
\ch\,\overline\Delta_{(\lambda,k)} &= q^k E_\la(x,q,0),
&&\ch\,\Delta_{(\lambda,k)} = q^k (q)_\lambda^{-1} E_\la(x,q,0),\\
\ch\,\overline\nabla_{(\lambda,k)}^\vee &= q^{-k} E_\la(x^{-1},q^{-1},\infty),
&&\ch\,\nabla_{(\lambda,k)}^\vee = q^{-k} (q^{-1})_\lambda^{-1} E_\la(x^{-1},q^{-1},\infty).
\end{align*}
It is then immediate from \ref{EOrth0} that $\ch\,\Delta_{(\la,k)}$ and $\ch\,\overline\nabla_{(\la,k)}^\vee$ are orthonormal with respect to pairing given by taking the coefficient of $q^0$ in the specialized Cherednik inner product $\langle\,,\,\rangle_0$. Condition $\rm{(iv)}$ of Theorem~\ref{thm::characters::stratified} then follows from Lemma~\ref{ExtCherednik} and the observation \eqref{q0ext}. We conclude that the category $\fC^\star$ is $(P,\preceq)$-stratified.
\end{proof}

\begin{thm}
 The space of function $\Bbbk[\bfI]$ admits a filtration such that 
 \[
 {\rm gr}\left( \Bbbk[\bfI]\right) \simeq\bigoplus_{\la\in P} \nabla_\la\T_{\cA_\la} (\Delta_\la^\vee)^o. 
 \]
\end{thm}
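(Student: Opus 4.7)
The plan is to obtain this theorem as a direct application of Theorem~\ref{FT} with $\fg = \mathcal{I}$ and $G = \bfI$. To do so, three packages of hypotheses must be checked: (a) the pair $(\mathcal{I},\bfI)$ satisfies the structural and order-theoretic assumptions of Subsection~\ref{progroup}, with the Cherednik order $\preceq$ playing the role of the abstract partial order; (b) both $\fC$ and $\fC^\star$ are stratified; (c) each highest weight algebra $\cA_\la$ is a polynomial ring. Items (b) and (c) are essentially already established earlier in the text: (b) is Theorem~\ref{TheoremStr}, while (c) follows by combining Corollary~\ref{End=End}, which identifies $\cA_\la$ with $\cA^D_\la$, with the explicit polynomial presentation \eqref{Ah}. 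The compatibility condition \eqref{Alambda} matching $\mathrm{End}_\fC(\nabla_\la)$ with $\mathrm{End}_{\fC^\star}(\Delta_{-w_0\la})$ is supplied by the isomorphism \eqref{EndomorphismsUD}.

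The substantive verification is (a). The graded structure, the decomposition $\mathcal{I} = \fh \oplus \fr$ with $\fr = \fn^+ \oplus \fs\otimes z\Bbbk[z]$, the realization of $\bfI$ as a semidirect product $H \ltimes R$ with $R$ pro-unipotent, and the identification of the Lie algebras of $\bfI/R_i$ with $\mathcal{I}/(\mathrm{ad}\,\fr)^i.\mathcal{I}$ are all handled by Remark~\ref{rem:Iwahori}. The finite-dimensionality of the weight spaces of $\U(\fr(0))$ and of each $\fr(m)$ is standard. The weight-zero subalgebra $\overline{\mathcal{I}} = \fh \oplus \fh\otimes z\Bbbk[z]$ is plainly abelian, so assumption~\eqref{abelian} holds.

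The order axioms (o1) and (o2) for the Cherednik order~$\preceq$ require a brief check. Property (o1) is essentially built into Definition~\ref{ChOrder}: if $\la - \mu$ is a sum of positive roots of $\fs$, then $\la_- - \mu_- \in -Q_+$ or $\la_- = \mu_-$, and in the latter situation $\sigma_\la \le \sigma_\mu$ by standard Bruhat-order considerations. For (o2), given a collection $\gamma_1,\dotsc,\gamma_s$ of nonzero $\fh$-weights of $\mathcal{I}$ summing to zero, I would partition them according to the sign of the classical part of the underlying affine root and let $S$ consist of those indices whose classical part is a positive root of $\fs$; since the total classical sum vanishes, $S$ and its complement are both nonempty, and the partial sum $\sum_{i \in S} \gamma_i$ is a nonzero sum of positive roots of $\fs$, which by (o1) produces $\la + \sum_{i \in S}\gamma_i \not\preceq \la$.

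With (a)--(c) verified, Theorem~\ref{FT} applies verbatim: the increasing filtration $\mathcal{F}_\la = \imath^!_{\le\la}\Bbbk[\bfI]$ is a filtration of $\bfI$--$\bfI$ bimodules whose associated graded pieces are canonically isomorphic to $\nabla_\la \otimes_{\cA_\la}(\Delta_\la^\vee)^o$, as required. The main potential obstacle is purely bookkeeping, namely ensuring that the restricted-dual and right-action conventions from Section~\ref{curalg} are compatible with the abstract identification $M \mapsto (M^\vee)^o$ of Lemma~\ref{lem::standard} and with the concrete models of standards and costandards from Theorem~\ref{DUStCost}; once those identifications are unwound, no further work is needed.
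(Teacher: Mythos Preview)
Your overall strategy coincides with the paper's proof: apply Theorem~\ref{FT} after verifying the hypotheses of \S\ref{progroup} and \S\ref{FandBi}, invoking Theorem~\ref{TheoremStr} for the stratified property and Theorem~\ref{DUStCost} together with Corollary~\ref{End=End} for condition~\eqref{Alambda} and the polynomial structure of $\cA_\la$. The paper's own proof is terser and simply declares the checks in \S\ref{progroup} to be easy.

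However, your explicit verifications of (o1) and (o2) are both flawed. Condition~(o1) refers to positive roots of $\fg_0$, and since $\fg_0=\fh$ is abelian there are none, so (o1) is vacuous; your reinterpretation in terms of positive roots of $\fs$ is neither what is required nor true (in $\msl_2$ with $\la=-\alpha$, $\mu=-2\alpha$ one has $\la-\mu=\alpha\in Q_+$ but $\la_- - \mu_- = \alpha\notin -Q_+$, so $\la\not\succeq\mu$). More seriously, your recipe for (o2), taking $S$ to be the indices with positive classical part and then appealing to your version of (o1), fails: again in $\msl_2$, with $\gamma_1=\alpha$, $\gamma_2=-\alpha$, $\la=-2\alpha$, your $S=\{1\}$ gives $\la+\alpha=-\alpha$, and one checks $-\alpha\preceq -2\alpha$ in the Cherednik order, so this $S$ does not witness $\la+\sum_{i\in S}\gamma_i\not\preceq\la$. (Here $S=\{2\}$ works instead.) A correct argument for (o2) must allow the choice of $S$ to depend on $\la$; the paper does not spell this out.
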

\begin{proof}
In order to use Theorem \ref{FT} we need to check that all the conditions of sections \ref{progroup} and \ref{FandBi} are satisfied. For $\fg=\mathcal{I}\subset \mathfrak{s}[z]$ one has:
\begin{itemize}
\item $\fg_0=\fh$, $\fr=\fn_+\oplus \mathfrak{s}\T z\Bbbk[z]$, $\overline\fg = \fh[z]$;
\item $G_0=(\Bbbk^*)^{\rm{rk}\mathfrak{s}}$,
$G=\bfI$, $R=\exp(\fr)$ -- the unipotent Lie 
group of $\fr$.
\end{itemize}
Then all the conditions from section \ref{progroup} are satisfied (this is an easy check apart from condition \eqref{SCat}, which is Theorem \ref{TheoremStr}). 
Now assumption \eqref{abelian} is obvious and condition \eqref{kernel} comes from Theorem \ref{DUStCost} and Corollary \ref{End=End}.
\end{proof}

\subsection{Character expansions}

\subsubsection*{Proper standard into projective}
Since the category $\fC^\star$ has enough projectives, there exists for each $\lambda\in P$ an exact sequence of the form
\begin{align}\label{StProjRes}
\dotsm \to P^1\to P^0 \to \overline{\Delta}_\lambda\to 0
\end{align}
with each $P^i$ given by a direct sum of the projectives $\bP_{(\mu,k)}$. In fact, we can construct such resolutions by starting with the BGG resolution of the trivial module $L_0$ in terms of lowest-weight Verma modules for the affine Kac-Moody algebra associated with $\mathfrak{s}$:
\begin{equation}
\dots\rightarrow\bigoplus_{\substack{w \in W^a\\ l(w)=1}}\bP_{\varrho^a-w(\varrho^a)} \rightarrow \bP_0 \rightarrow L_0,
\end{equation}
where $\varrho^a$ is the sum of affine fundamental weights and $W^a=W\ltimes Q^\vee$ is the affine Weyl group.
Restricted to the Iwahori subalgebra, this gives a projective resolution of  $L_0$ in the category $\fC^\star$. We can then take the tensor product with $\overline{\Delta}_\lambda$, which is finite-dimensional, to obtain a projective resolution \eqref{StProjRes}. Moreover, this resolution inherits the following properties from the BGG resolution:
\begin{itemize}
\item each $P^i$ is a finite direct sum of indecomposable projective modules;
\item for any fixed $k\ge 0$, the complex $P^\udot$ involves only  finitely many projectives of the form $\bP_{(\mu,k)}$ for various $\mu\in P$;
\item for any fixed $\mu\in P$, the complex $P^\udot$ involves only  finitely many projectives of the form $\bP_{(\mu,k)}$ for $k\ge 0$.
\end{itemize}

One then has a character expansion
\begin{align}
\label{StIntoProj}
\mathrm{ch}\,\overline{\Delta}_\lambda &= \sum_{i=0}^\infty (-1)^i \mathrm{ch}\,P^i
= \sum_{k=0}^\infty \sum_{\mu\in P}a_{\mu,k}^\lambda\mathrm{ch}\,\bP_{\mu}q^k\in \mathbb{Z}[P][[q]]
\end{align}
for certain integers $a_{\mu,k}^\lambda\in\mathbb{Z}$ and in which each inner sum (over $\mu\in P$ for fixed $k\ge 0$) contains finitely many nonzero terms. The coefficients are determined by the equality
$$\sum_{k=0}^\infty a_{\mu,k}^\lambda q^k = \mathrm{ext}([\overline{\Delta}_\lambda], [L_{\mu}]) = \langle E_\lambda(x,q,0), x^{-\mu} \rangle_0$$
which shows that $a_{\mu,k}^\lambda = 0$ unless $-\mu\succeq\lambda$ and also exhibits the property that $\sum_{k=0}^\infty a^\lambda_{\mu,k}q^k$ is a polynomial, i.e., has finitely many terms.

The combinatorial shadow of the BGG resolution above is Macdonald's identity \cite{Mac-eta}, which makes fully explicit the coefficients $a_k(x)\in\mathbb{Z}[Q]$ in the expansion
\begin{align}
\label{MacId}
(q;q)_\infty^{\mathrm{rk}(\mathfrak{s})}\prod_{\alpha\in\Delta_+}(1-x^\alpha)\prod_{\alpha\in\Delta}(qx^\alpha;q)_\infty &= \sum_{k=0}^\infty a_k(x) q^k.
\end{align}
Writing $a_k(x)=\sum_{\beta} a_{\beta,k}x^\beta \in\mathbb{Z}[Q]$, we have
\begin{align}
\label{Mac1}
1 &= \sum_{k=0}^\infty\frac{a_k(x)}{(q;q)_\infty^{\mathrm{rk}(\mathfrak{s})}\prod_{\alpha\in\Delta_+}(1-x^\alpha)\prod_{\alpha\in\Delta}(qx^\alpha;q)_\infty}\\
\notag
&= \sum_{k=0}^\infty \sum_{\beta\in Q} a_{\beta,k} \mathrm{ch}\,\bP_\beta q^k,
\end{align}
so that $a^0_{\beta,k}=a_{\beta,k}$ and $a^0_{\mu,k}=0$ for $\mu\notin Q$.
Macdonald's identity exhibits the two forms of finiteness described above:
\begin{itemize}
\item for each $k\ge 0$, one has $a_{\beta,k}=0$ for all but finitely many $\beta\in Q$;
\item for each $\beta\in Q$, one has $a_{\beta,k}=0$ for all but finitely many $k\ge 0$, i.e., each indecomposable projective $\bP_\beta$ for $\beta\in Q$ occurs with finite graded multiplicity.
\end{itemize}
Multiplying on both sides of \eqref{Mac1}, one sees that any polynomial in $\mathbb{Z}[q][P]$ admits an expansion $\sum_{k=0}^\infty \sum_{\mu\in P} b_{\mu,k}\mathrm{ch}\,\bP_\mu$ with $b_{\mu,k}\in\mathbb{Z}$ satisyfing the finiteness properties described above (with $\mu\in P$ replacing $\beta\in Q$).
Of course, to obtain the the coefficients $a_{\mu,k}^\lambda$ in the expansion \eqref{StIntoProj}, one multiplies by $\ch\,\overline{\Delta}_\la$.

\subsubsection*{Projective into standard}

What is more special about the category $\fC^\star$ is that the projectives $\bP_\lambda$ admit $\Delta$-filtrations. At the level of characters, we have for each $\lambda\in P$ the convergent expansion
\begin{align*}
\mathrm{ch}\,\bP_\lambda = \sum_{\mu\in P} m_{\lambda\mu}(q)\mathrm{ch}\,\Delta_\mu
\end{align*}
of Corollary~\ref{CorPIntoE},
where $m_{\lambda\mu}(q)=\langle \ch\,\bP_\lambda, E_\mu(x^{-1},q^{-1},\infty)\rangle_0$.  Moreover, because this expansion arises from a filtration, it is necessarily {\em positive}, i.e., $m_{\lambda\mu}(q)\in\mathbb{Z}_{\ge 0}[q]$. (Since $m_{\lambda\mu}(q)$ is nothing but the coefficient of $x^{-\lambda}$ in $E_\mu(x^{-1},q^{-1},\infty)$, this positivity is also known by the combinatorial formula of \cite{OS}.)

Concretely, for each $\lambda\in P$ we obtain the following ``reciprocal'' version of the Macdonald identity:
\begin{align}
\label{ProjIntoSt2}
\frac{x^\lambda}{(q;q)_\infty^{\mathrm{rk}(\mathfrak{g})}\prod_{\alpha\in\Delta_+}(1-x^\alpha)\prod_{\alpha\in\Delta}(qx^\alpha;q)_\infty} = \sum_{\mu\in P} m_{\lambda\mu}(q)\frac{E_\mu(x,q,0)}{(q)_\mu}.
\end{align}
expressing the reciprocal of the product in \eqref{MacId} (times any monomial $x^\lambda$) as a non-negative linear combination of specialized nonsymmetric Macdonald polynomials. By considering the left-hand side of \eqref{ProjIntoSt2} directly, one realizes that the non-negativity (and convergence) of the right-hand side is quite surprising.

\begin{example}
Suppose $G=SL(2)$ and $\lambda=0$. Then \eqref{ProjIntoSt2} gives
\begin{align*}
&\mathrm{ch}\, \bP_0\\ 
&= E_0(x,q,0)+\frac{E_{\alpha}(x,q,0)}{1-q}+(q+q^2)\frac{E_{-\alpha}(x,q,0)}{(1-q)(1-q^2)}+(1+q+q^2)\frac{E_{2\alpha}(x,q,0)}{(1-q)(1-q^2)(1-q^3)}+\dotsm
\end{align*}
where $\alpha=\alpha_1$.\footnote{In type $A_1$, one may use the explicit formulas for $E_\mu(x,q,t)$ from \cite[\S1.2]{CO2}, for instance.} On the other hand, one may expand the left-hand side of \eqref{ProjIntoSt2} directly:
\begin{align*}
&\frac{1}{(q;q)_\infty(x^\alpha;q)_\infty(qx^{-\alpha};q)_\infty}\\ 
&\qquad = \frac{1}{\big((1-q)(1-q^2)\dotsm\big)\big((1-x^\alpha)(1-qx^{\alpha})\dotsm\big)\big((1-qx^{-\alpha})(1-q^2x^{-\alpha})\dotsm\big)}\\
&\qquad =(1+x^\alpha+x^{2\alpha}+x^{3\alpha}+\dotsm)+q(x^{-\alpha}+2+3x^\alpha+3x^{2\alpha}+\dotsm)+q^2\dotsm.
\end{align*}
Writing monomials in terms of specialized nonsymmetric Macondald polynomials
\begin{align*}
x^0 &= E_0(x,q,0)\\
x^\alpha &= E_\alpha(x,q,0)-q E_0(x,q,0)\\
x^{-\alpha} &= E_{-\alpha}(x,q,0)-E_\alpha(x,q,0)-E_0(x,q,0) \\
x^{2\alpha} &= E_{2\alpha}(x,q,0)-q^3E_{-\alpha}(x,q,0)-(q+q^2)E_{\alpha}(x,q,0)+q^3E_0(x,q,0)\\
x^{-2\alpha} &= E_{-2\alpha}(x,q,0)-E_{2\alpha}(x,q,0)-(1+q+q^2)E_{-\alpha}(x,q,0)\\&\qquad\qquad+(q+q^2)E_{\alpha}(x,q,0)+qE_{0}(x,q,0)\\
x^{3\alpha} &= E_{3\alpha}(x,q,0)-q^5E_{-2\alpha}(x,q,0)-(q+q^2+q^3+q^4)E_{2\alpha}(x,q,0)\\&\qquad\qquad+(q^5+q^6+q^7)E_{-\alpha}(x,q,0)+(q^3+q^4+q^5)E_\alpha(x,q,0)-q^6 E_0(x,q,0)
\end{align*}
one then arrives after much cancellation at
\begin{align*}
\frac{1}{(q;q)_\infty(x^\alpha;q)_\infty(qx^{-\alpha};q)_\infty} &=E_0+E_\alpha+E_{2\alpha}+\dotsm+q(E_{-\alpha}+E_{\alpha}+2E_{2\alpha}+\dotsm)+q^2\dotsm,
\end{align*}
which agrees exactly with \eqref{ProjIntoSt2} on the terms for $E_\mu(x,q,0) q^k$ with $k<2$ and $\mu\preceq 2\alpha$.
\end{example}

\section*{Declaration}
\subsection*{Ethical Approval} Not applicable
\subsection*{Competing interests} None 
\subsection*{Funding} D.O. gratefully acknowledges support from the Simons Foundation (Collaboration Grants for Mathematicians) and the Max Planck Institute for Mathematics (MPIM Bonn).
\subsection*{Data availability} None

\end{document}